\documentclass[a4paper, 11pt]{article}
\usepackage[sc]{mathpazo}
\usepackage{verbatim}
\usepackage{amsmath,amssymb,amsfonts,mathrsfs}
\usepackage[amsmath,thmmarks]{ntheorem}

\usepackage{geometry}
\usepackage{varioref}
\usepackage[linkcolor=black,colorlinks=true,citecolor=black,filecolor=black]{hyperref}
\usepackage[nodayofweek]{datetime}
\usdate
\usepackage[skip=5pt plus1pt, indent=0pt]{parskip}
\geometry{
  left=25mm,
  right=25mm,
  top=20mm,
  bottom=20mm
}

\renewcommand{\epsilon}{\ensuremath\varepsilon}

\renewcommand{\phi}{\ensuremath{\varphi}}

\newtheorem{theorem}{Theorem}[section]

\newtheorem{remark}[theorem]{Remark}
\newtheorem{corollary}[theorem]{Corollary}
\newtheorem{definition}[theorem]{Definition}
\newtheorem{lemma}[theorem]{Lemma}
\newtheorem{proposition}[theorem]{Proposition}
\newtheorem{assumption}[theorem]{Assumption}

\theoremstyle{nonumberplain}
\theorembodyfont{\normalfont}
\theoremsymbol{\ensuremath{\square}}
\newtheorem{proof}{Proof}

\title {A Probabilistic Approach to Discounted Infinite Horizon and Invariant Mean Field Games}
\author{Ren{\'e} Carmona\footnote{Operations Research and Financial Engineering, Program in Applied and Computational Mathematics, Princeton University, Partially supported by AFOSR FA9550-23-1-0324} \and Ludovic Tangpi\footnote{Operations Research and Financial Engineering, Program in Applied and Computational Mathematics, Princeton University, Partially supported by NSF CAREER award DMS-2143861 and the AMS Claytor-Gilmer fellowship} \and Kaiwen Zhang\footnote{Operations Research and Financial Engineering, Princeton University, Partially supported by AFOSR FA9550-23-1-0324 and the AMS Claytor-Gilmer fellowship}}

\begin{document}
\maketitle

\begin{abstract}
    This paper considers discounted infinite horizon mean field games by extending the probabilistic weak formulation of the game as introduced in \cite{Carmona15}. Under similar assumptions as in the finite horizon game, we prove existence and uniqueness of solutions for the extended infinite horizon game. The key idea is to construct local versions of the previously considered stable topologies. Further, we analyze how sequences of finite horizon games approximate the infinite horizon one. Under a weakened Lasry-Lions monotonicity condition, we can quantify the convergence rate of solutions for the finite horizon games to the one for the infinite horizon game using a novel stability result for mean field games. Lastly, applying our results allows to solve the invariant mean field game as well.
\end{abstract}

\tableofcontents

\section{Introduction}
Mean field games have been introduced by Lasry and Lions \cite{LasryLions, LasryLionsI, LasryLionsII} and Caines, Huang and Malhamé \cite{HuangCainesMalhame04, CainesHuangMalhame06, HuangCainesMalhame07, HuangCainesMalhame072} as a way to study large symmetric games for which  interactions between players can be captured by \emph{mean field} quantities. Since then, they have been studied under various formulations, though in most of the literature, only in \emph{finite horizon}. For a broad depiction of the existing literature on probabilistic approaches to mean field games we refer to Carmona and Delarue \cite{CarmonaDelarue, CarmonaDelarue2}. 

\subsection{Generalities on infinite horizon mean field games}
In many applications, especially in economic, mean field game models are formulated in the infinite horizon. For an example in macroeconomic, Achdou, Han, Lasry, Lions and Moll \cite{Achdou22} consider the Aiyagari consumption model, or more generally heterogeneous agent models, in the mean field framework. A discrete time version can be found in Huang \cite{Huang}, and an explicitly solved version is provided in \cite{CarmonaDelarue}. These models are fundamentally built around a discounted infinite horizon control problem. In a different application area, synchronization models such as the Kuramoto model in Carmona, Cormier and Soner \cite{Cormier} and the Jetlag recovery model in Carmona and Graves \cite{CarmonaGraves} are rephrased as infinite horizon mean field games. In these models, one is interested in finding the invariant solutions and identifying the convergence behaviors to these invariant solutions. Moreover, for the sake of stability and numerical tractability of the solutions, there is also a focus on invariant solutions. Note that the infinite horizon model can also incorporate models with unbounded random time horizons and are thus also relevant for optimal execution problems, such as e.g.\ in Tangpi and Wang \cite{Wang22, Wang23}.
\par
Our goal is to provide a general framework for solving (extended) infinite horizon mean field games. In addition to deriving well-posedness results, we will put infinite horizon games in relation with both finite horizon games, as well as the invariant mean field game. We do so using a fully probabilistic approach. In contrast, the overwhelming majority of the literature on the problem uses Partial Differential Equation (PDE) techniques, i.e. they study the related master equation, or at least the PDE system consisting of the Hamilton-Jacobi-Bellman and the Fokker-Planck equation. This way, Cardaliaguet and Porretta \cite{CardaliaguetPoretta} first considered the discounted infinite horizon mean field game and related invariant game on a torus for small discount factors using the master equation. Comparable results, as well as turnpike estimates, were also derived without the use of the master equation by Cirant and Porretta \cite{CirantPorretta} under a weaker version of the typical monotonicity condition. The master equation for the finite state discounted infinite horizon game has been considered by Cohen and Zell \cite{CohenZell}. Also under an analytical approach, Masaero \cite{Masoero} and Bensoussan, Frehse and Yam \cite{Bensoussan} solve infinite horizon potential games, and in a linear quadratic framework, Priuli \cite{Priuli} shows convergence of the infinite horizon $N$-player game to the mean field game for small discount rates. We also refer to \cite{Leong, PengZhang, Ni, Li} for other papers on the problem.
\par
Conditions needed in the PDE approach are usually restrictive. In fact, solving the master equation or the mean field PDE system requires rather stringent regularity assumptions on the coefficients. Typically, well-posedness results need some kind of \emph{monotonicity condition}. Additionally, non Markovian models cannot be covered. So far, within the probabilistic literature, general models for infinite horizon mean field games are few and far between. To our knowledge, the only paper on that problem beyond linear quadratic models, is the one of Bayraktar and Zhang \cite{Bayraktar} that proves existence of solutions for the discounted mean field control and mean field game problem using infinite horizon Forward-Backward Stochastic Differential Equation (FBSDE) and the stochastic Pontryagin's principle. To solve such FBSDEs they require, on top of the usual regularity conditions, fairly restrictive monotonicity conditions which among other things, restrict the set of admissible discount factors. Further, their solution does not cover any non Markovian models, and as usual with the stochastic maximum principle, separability assumptions and restrictions on the volatility are required. We also refer to Jackson and Tangpi \cite{TangpiJackson} for results along the same lines.

\subsection{Framework of the paper}
In this paper, we consider a general discounted infinite horizon mean field game in the non Markovian setting and with modest regularity conditions. 
We do not deal with ergodic models for which finite time behavior is completely irrelevant. More precisely, we concentrate on the so-called probabilistic weak formulation which is standard in optimal control literature, see e.g.\ Davis \cite{Davis} and El Karoui, Peng and Quenez \cite{Karoui}. In essence, this means that the controlled state dynamics is not anymore required to be satisfied as a strong, but only as a weak Stochastic Differential Equation (SDE). This is achieved by controlling the probability measure via Girsanov's Theorem. One should be aware that in a non Markovian framework, strong and weak formulation are not necessarily equivalent (see e.g.\ \cite[3.3.1]{CarmonaDelarue}). As one can also see in \cite{CarmonaDelarue}, solving the game in the strong formulation relies on much stronger regularity on the state variable. Another benefit of working with the weak formulation is that it simplifies working in a fully non Markovian framework. Of course, working with the weak formulation comes with caveats as well, as integrability requirements of the Doléans-Dade exponential restrict the growth of the drift. In the finite horizon case, mean field games in the weak formulation have been introduced and analyzed by Carmona and Lacker \cite{Carmona15}. Other instances can be found in Lacker \cite{Lacker15} where the formulation is further weakened to a controlled martingale problem formulation so that volatility control can be considered as well. We also refer to Possamaï and Tangpi \cite{PossamaiTangpi} that relate the weak formulated games to the corresponding $N$ player game. Our extension to infinite horizon is close in spirit to \cite{Carmona15}.

\subsection{Summary of the results}
The main contributions of the paper are as follows
\begin{itemize}
    \item We begin by showing well-posedness of the infinite horizon problem: Existence can be shown under fairly weak continuity assumptions, uniqueness is a consequence of the (weak) Lasry-Lions monotonicity condition.
    \item We relate the infinite horizon games to finite horizon games and find both qualitative and quantitative long time asymptotic results.
    \item We formulate an invariant mean field game within our probabilistic setting and solve it using our previously found results.
\end{itemize}
Let us stress that in contrast to the existing literature, the majority of our results do not require any restrictions on the discounting factor. We also point out that whenever possible, we are working with an "extended" mean field game setting where the mean field interaction does not only occur through the law of the state variable, but of the control variable as well, such as introduced by Gomes and Voskanyan \cite{Gomes}. This is particularly relevant for applications in finance and economics.

\subsubsection{Existence and uniqueness results}
Since we allow the mean field interaction to enter through the law of the entire path of the state variable on the whole time horizon, and not only through the law of the past of the path, an approximation argument through finite horizon solutions is not possible. Although this level of generality is rarely required in applications, it is an important feature that we need when we discuss the invariant mean field game. Our proof of existence is based on a fixed point argument in a similar spirit as in \cite{Carmona15}, but the infinite horizon problem comes with numerous new technical difficulties. The first one is already apparent when formulating our mean field game. The weak formulation is based on Girsanov's Theorem, but the latter has to be used with caution in the infinite horizon case, as singular measures can appear. This already causes problems with the usual augmentation of the filtration, which is why we propose a new filtration in Appendix \ref{apdx: filtr}. Since our mean field interaction is not of Markovian nature but pathwise, the necessity to deal with singular measures is inevitable, forcing us to find local workarounds. The biggest problem occurs in the fixed point argument as we lose the crucial compactness in the previously considered set wise topology discussed in Doob \cite{Doob94}, or as the s-topology in Balder \cite{Balder}. In this topology, compactness comes from the existence of a common dominating measure which in finite horizon is an immediate consequence of the weak formulation, but is not in the infinite horizon case. A similar problem is caused by the use of "relaxation" for flows of measures. Relaxation is a commonly used tool in stochastic optimal control and mean field games, to provide compactness when needed. See for example \cite{Carmona15}, or earlier works of Haussmann and Lepeltier \cite{HaussmannLepeltier},  El Karoui, Nguyen and Jeanblanc-Picqué \cite{KarouiNguyen}, or Lacker \cite{Lacker15, Lacker17}. This \emph{compactification} is usually performed using the stable topology described in Jacod and Memin \cite{JacodMemin}. It can also be achieved through the use of Young measures as in Florescu and Godet-Thobie \cite{Florescu}, or of the ws-topology as in Balder \cite{Balder}. Again, these constructions rely on the time horizon being finite. Facing these two topological issues, to still be able to complete the fixed point argument, we find compactness in a suitable inverse limit type topology. Working with inverse limit topologies is particularly convenient to prove other required topological properties of the spaces we consider, such as Hausdorffness, behaviour of subspace topologies, as well as metrizability. In this way, we prove existence for the extended game in Theorem \ref{thm: exst E}. For non extended games, we provide in Theorem \ref{thm: exst NE} a more general existence result using set valued analysis in the same spirit as in \cite{Carmona15}. Our proof is made more complete by the use of a different notion of upper hemicontinuity of set valued maps detailed in Appendix \ref{apdx: setvm}. A main step in these proofs is to analyze the discounted infinite horizon control problem. Instead of analyzing the infinite horizon Hamilton-Jacobi-Bellman PDE as in Fleming and Soner \cite{FlemingSoner}, our approach here is based on the use of infinite horizon backwards stochastic equations (BSDEs) with monotonous drivers as introduced in Pardoux \cite{Pardoux98}, later  refined by Royer \cite{Royer04}, and applied to infinite horizon control problems in e.g.\ Confortola, Cosso and Fuhrman \cite{Confortola} or Hu and Tessitore \cite{Hu07}. We extend some results regarding these BSDEs in Appendix \ref{apdx: IBSDE}. Our uniqueness result in Theorem \ref{thm: unq} follows as an analogue of the finite horizon result under appropriate adaptation of the usual conditions. Let us also point out that the techniques we use in these proofs also apply to weakly formulated finite horizon mean field games that allow the change of measure to be singular at the end of the time horizon.

\subsubsection{Asymptotics}
We elucidate the asymptotic behaviour of large but finite horizon mean field games in three results. First, in Theorem \ref{thm: eps opt}, we show that for a fixed finite time horizon, the infinite horizon solution is an approximate solution for the finite horizon game with an error that declines exponentially fast in the time horizon. The notion of approximate equilibrium is well established in the literature, see e.g.\ \cite{CarmonaDelarue2} and Djete \cite{Djete21}, though usually in a context linking mean field games with their N-player game counterparts. In our case, this approximating effect is due to the discounting of the control problem after a finite horizon, but also, to the properties of the infinite horizon BSDEs that we consider. Secondly, in Theorem \ref{thm: cnv inft}, we exploit our insights from the proofs of existence to derive a compactness result for solutions of different time horizons. In a very general setting, solutions of games with diverging time horizons always admit accumulation points that will necessarily be a solution to the infinite horizon game, in a similar spirit as the $N$ player convergence results in Lacker \cite{Lacker17}. This result can be interpreted as a construction scheme for infinite horizon solutions out of finite horizon ones, as it has been done in practice e.g.\ in \cite{CardaliaguetPoretta, Cormier}. As a next step, in Theorem \ref{thm: cnv rate}, we quantify this convergence under a weak Lasry-Lions monotonicity condition that is similar to \cite{CirantPorretta}. Under additional conditions, weak monotonicity can not only provide a second uniqueness result, but in the finite horizon case, it allows us to control the stability of mean field game solutions with respect to small perturbations of the terminal reward. Applying this idea to our convergence analysis, we derive an exponential rate that is reminiscent of the turnpike phenomenon e.g.\ in \cite{CardaliaguetPoretta, CirantPorretta}. These approximation results have high potential for numerical algorithms, as they show that the infinite horizon game can be approximately solved by finite horizon games. And on the other hand, when the infinite horizon game is easily solvable, as it is e.g.\ in the invariant case, it can also speed-up and enhance numerical calculations for large finite horizon games, as demonstrated in Carmona and Zeng \cite{CarmonaZeng}.

\subsubsection{Invariant games}
In this context, the interest for solution of the invariant game naturally arises. We formulate the invariant version of the infinite horizon discounted mean field game as finding a solution in which the marginal distributions of the state variable stay invariant. In contrast to the previous framework, the initial distribution of the state variable is not held fixed but becomes part of the solution. To approach this problem we first introduce a novel, weaker form of invariant mean field game which we call stationary mean field game. In such, the state variable starts in a fixed initial distribution, and the mean field interaction term is assumed to be a function of the large time limit of the marginal distributions of the state variable. To ensure that this limit exists, we enforce a drift condition on the controlled state dynamics that forces convergence towards a stationary measure. Due to its general nature, we have decided to adapt the drift condition of Khasminskii and Milstein \cite{Khasminskii}. Similar drift conditions used in mean field game literature can be found e.g.\ in Lacker, Shkolnikov and Zhang \cite{LackerShkolnikov} and Porretta and Riciardi \cite{PorrettaRicciardi}. Under such conditions, in Theorem \ref{thm: ces cnv}, we quantify this convergence without the need of any regularity on the drift. After ensuring that this stationary mean field game is well defined, we show that it can be embedded into our previously analyzed mean field game framework so that its solvability follows immediately. Back to the invariant game, it turns out that the solvability of the invariant mean field game and the stationary mean field game are closely related. This allows us to solve the invariant mean field game in Theorem \ref{thm: inv mfg exst}. Apart from existence, although convergence of infinite horizon solutions to the invariant solution have been observed in practice, see e.g.\ \cite{CardaliaguetPoretta, Cormier} and Höfer and Soner \cite{Hofer}, in our general framework, such convergence behavior still remains to be discussed.
\par
We conclude this introduction by delineating the structure of this paper. In Section \ref{sec: prelim}, we state the framework and our results in detail. In Section \ref{sec: prf}, we present the proofs of the main existence and uniqueness results. Some topological constructions that we rely on have been relegated to Section \ref{sect: topo}. Section \ref{sec: asym} contains the proofs of our long time asymptotic results. 
Lastly, Section \ref{sect: StMFG} deals with the invariant mean field game. Some intermediate results are provided in the appendices to keep the paper self-contained.

\section{Preliminaries and main results}
\label{sec: prelim}

\subsection{Notation}

Let us gather some frequently used notation in this section:
For any Polish space $E$, we denote by $\mathcal{P}(E)$ the space of all Borel probability measures on its Borel $\sigma$ algebra that we write as $\mathcal{B}(E)$. For $x \in E$ and $r>0$, we denote with $B_x(r)$ the open ball around $x$ with radius $r$. Let $GL_d(\mathbb{R})$ be the space of real valued invertible $d \times d$ matrices. With $id_E$, we typically denote the identity map on any set $E$. For any two probability measures $\mu$ and $\nu$, we denote the relative entropy $\mathcal{H}$ as $\mathcal{H}(\mu, \nu) = \int \log\left(\frac{d\mu}{d\nu}\right) d\mu$ if $\mu \ll \nu$ and otherwise $\mathcal{H}(\mu, \nu) = \infty$. We write the total variation distance as $d_{TV}(\mu, \nu)$.\par
We write $\mathcal{C}$ and $\mathcal{C}^T$ for the space of continuous $\mathbb{R}^d$ valued functions on $[0, \infty)$ with $\mathcal{C}$ being equipped with the metric of uniform convergence on compacts
$$d(\omega^1, \omega^2) = \sum_{n \geq 1} 2^{-n} \sup_{t \in [0, T]} \max(\vert \omega^1_t - \omega^2_t\vert, 1)$$
and $\mathcal{C}^T$ with the supremum metric. With $\mathcal{C}_0$ and $\mathcal{C}^T_0$ we denote the respective subspaces of functions starting at zero.\par
Further, for any measure space $(\Omega, \mathcal{F})$, we call the topology of setwise convergence the coarsest topology such that for any $A \in \mathcal{F}$, the map $\mu \mapsto \mu(A)$ is continuous. Note that by \cite[Proposition 2.4]{JacodMemin} applied to the special case of the metric space being a singleton, the setwise topology can also be characterized as the coarsest topology such that the maps $\mu \mapsto \int_\Omega \phi d\mu$ are continuous for any bounded measurable real valued $\phi$. Note that this topology is weaker than the one induced by the total variation metric. If $\Omega$ is a Polish space with Borel $\sigma$ algebra $\mathcal{F}$, then the setwise topology is stronger than the weak topology.

\subsection{Probabilistic setup}
We work on the canonical probability space $(\Omega, \mathcal{F}^0, \mathbb{P})$ of a $d$ dimensional infinite time horizon Wiener process $(W_t)_{t \geq 0}$ together with an initial distribution $\upsilon \in \mathcal{P}(\mathbb{R}^d)$. That is, $\Omega := \mathbb{R}^d \times \mathcal{C}_0$. Under $\mathbb{P}$, we assume that the canonical element $\zeta = (\xi, \omega)$ has distribution $\upsilon \times \mathbb{P}_W$ for the Wiener measure $\mathbb{P}_W$. $\mathcal{F}^0 := \mathcal{B}(\Omega)$ is given by the Borel $\sigma$-algebra, and we consider the natural filtration $\mathbb{F}^0 = (\mathcal{F}^0_t)_{t \geq 0} = (\sigma(\xi, \omega_{\vert [0, t]}))_{t\geq 0}$.\par
We would like to work with a complete filtration, but in the current infinite horizon setup, the usual completed filtration will be too big, as it cannot support possibly singular measures.
Let us therefore introduce a \emph{locally completed filtration}. 
To define it, we first introduce $\mathfrak{E}$ to be the collection of probability measures $\mathbb{Q}$ such that for any $T >0$, the restriction of $\mathbb{Q}$ on $\mathcal{F}^0_T$ is equivalent to that of $\mathbb{P}$, i.e.\ $\mathbb{Q}_{\vert \mathcal{F}^0_T} \sim \mathbb{P}_{\vert \mathcal{F}^0_T}$. Such measures have been discussed in \cite{JS}, and we will also denote their defining property by $\mathbb{Q} \overset{loc}{\sim} \mathbb{P}$. For any $\mathbb{Q} \in \mathfrak{E}$, let $\mathcal{N}^\mathbb{Q}$ be the set of all $\mathbb{Q}$-negligible subsets in $\mathcal{F}^0$ of $\Omega$, that is all $A \subset \Omega$ such that there is $B \in \mathcal{F}^0$ with $A \subset B$ and $\mathbb{Q}[B] = 0$. For each $\mathbb{Q} \in \mathfrak{E}$, we can complete as usual via $\mathbb{F}^\mathbb{Q} = (\mathcal{F}^\mathbb{Q}_t)_{t \geq 0} := (\sigma(\mathcal{F}^0_t \cup \mathcal{N}^\mathbb{Q}))_{t \geq 0}$ and $\mathcal{F}^\mathbb{Q} := \sigma(\mathcal{F}^0 \cup \mathcal{N}^\mathbb{Q})$ so that $\mathbb{Q}$ can be naturally extended to a measure on $\mathcal{F}^\mathbb{Q}$. 
We then define the locally completed filtration as 
$$\mathbb{F} = (\mathcal{F}_t)_{t \geq 0} := (\cap_{\mathbb{Q} \in \mathfrak{E}} \mathcal{F}^\mathbb{Q}_t)_{t \geq 0}\quad \text{and}\quad \mathcal{F}:=\cap_{\mathbb{Q} \in \mathfrak{E}} \mathcal{F}^\mathbb{Q}.$$ 
Let us highlight some properties of the locally completed filtration just defined:
\begin{itemize}
    \item On $\mathcal{F}_T$ (rather than $\mathcal{F}^0_T$), elements of $\mathfrak{E}$ are locally equivalent, see Proposition \ref{Prop.A1}.
    \item $\mathbb{F}$ is right continuous and contains all negligible sets up to finite time horizons.
    In particular, on any finite time horizon, $\mathbb{F}$ satisfies the usual conditions, see Proposition \ref{Prop.FF.right.cont}.
    \item $\mathbb{F}$ satisfies the predictable representation property, see Proposition \ref{prop: prp}.
\end{itemize}
Throughout this paper, when working on some completion (or local completion) of the sigma algebra of a Polish space $E$, by abuse of notation we will identify elements of $\mathcal{P}(E)$ with their unique extension on the completed sigma algebra.

\subsection{Infinite horizon mean field games}
Let us describe the mean field game that we are interested in.
We start by fixing an uncontrolled state process 
\begin{equation}\label{FSDE}
    dX_s = \sigma(s, X) dW_s\;\; X_0 = \xi.
\end{equation}
where $\xi$ is $\mathcal{F}_0$ measurable with law $\upsilon$ and $\sigma: [0, \infty) \times \mathcal{C} \rightarrow GL_d(\mathbb{R})$ is a progressively measurable volatility function. We assume throughout that \eqref{FSDE} admits a unique strong solution.
We are working with an action space $(A, \Vert \cdot \Vert_A)$ that we assume to be a compact subset of a normed vector space.
Let us further define the set $\mathbb{A}$ of admissible controls consisting of all $\mathbb{F}$-progressively measurable $A$ valued processes. We also define $\mathbb{A}^T$ as the space of admissible control processes on the finite time horizon $[0, T]$. As $A$ is bounded, on these spaces, we can use the norm $\Vert \alpha \Vert_{\mathbb{A}^T} := \mathbb{E}\left[ \int_0^T \Vert \alpha_t \Vert_A dt \right]$ on $\mathbb{A}^T$. On $\mathbb{A}$, we use the norm
$$\Vert \alpha \Vert_\mathbb{A} := \mathbb{E}\left[\int_0^\infty e^{-t} \Vert \alpha_t \Vert_A dt\right].$$
Note that the factor $e^{-t}$ does not play an important role as we could also have used any other metric that metrizes the same topology.\par
Now, for any $(\alpha,\mu)\in \mathbb{A}\times \mathcal{P}(\mathcal{C})$, we define $\mathbb{P}^{\mu, \alpha}$ as the unique measure on $(\Omega, \mathcal{B}(\Omega))$ such that for any $T>0$ and any $B \in \mathcal{F}_T$
$$\mathbb{P}^{\mu, \alpha}\left[ B \right]:= \mathbb{E} \left[ \mathbb{1}_B \mathcal{E}\left( \int_0^\cdot \sigma(s, X)^{-1} b(s, X, \mu, \alpha_s ) dW_s \right)_T \right].$$
Under our standing assumptions \ref{asmp: stnd} blow, existence and uniqueness of such a measure is shown in \cite[Theorem VIII.1.13]{RY}. Here, $\mathcal{E}(M)_T:=\exp(M_T - \frac12\langle M\rangle_T)$ is the Doléans-Dade exponential of the martingale $M$, and $b: [0, \infty) \times \mathcal{C} \times \mathcal{P}(\mathcal{C}) \times A \rightarrow \mathbb{R}^d$ is a progressively measurable drift.  
That is, for any $\mu \in \mathcal{P}(\mathcal{C})$ and $a\in A$, the process $b(\cdot, X, \mu, a)$ is $\mathbb{F}$-progressively measurable. In particular, this implies that $b$ is not anticipative, i.e.\ for any $t \geq 0$ and $x, x'\in \mathcal{C}$ with $x_{\vert [0, t]} = x'_{\vert [0, t]}$, we have $b(t, x, \mu, a)=b(t, x', \mu, a)$.
The reason why we defined the change of measure this way is that on $\mathcal{F}$, the measures $\mathbb{P}^{\mu, \alpha}$ and $\mathbb{P}$ are generally singular.
However, under our assumptions on $b$ and $\sigma$ given below, the measures are locally equivalent with $\mathbb{P}^{\mu, \alpha} \in \mathfrak{E}$ and we have the usual characterization
$$\frac{d\mathbb{P}^{\mu, \alpha}_{\vert \mathcal{F}^0_T}}{d\mathbb{P}_{\vert \mathcal{F}^0_T}} = \mathcal{E}\left( \int_0^\cdot \sigma(s, X)^{-1} b(s, X, \mu, \alpha_s) dW_s \right)_T.$$
Moreover, by Girsanov's Theorem on infinite time horizon \cite[Corollary 3.5.2]{KS}, we know that the process $W^{\mu, \alpha} := W - \int_0^\cdot \sigma(s, X)^{-1}b(s, X, \mu, \alpha_s)ds$ is a $(\mathbb{P}^{\mu, \alpha}, \mathbb{F}^0)$ Wiener process and through completion, it is a $(\mathbb{P}^{\mu, \alpha}, \mathbb{F}^{\mathbb{P}^{\mu, \alpha}})$ Wiener process as well. Note that we always have $\mathbb{F}^0 \subset \mathbb{F} \subset \mathbb{F}^{\mathbb{P}^{\mu, \alpha}}$. Thus, $W^{\mu, \alpha}$ is a $(\mathbb{P}^{\mu, \alpha}, \mathbb{F})$ Wiener process and under $\mathbb{P}^{\mu,\alpha}$, $X$ satisfies the \emph{controlled SDE}
\begin{equation*}
    dX_s = b(s, X, \mu, \alpha_s)ds + \sigma(s,X)dW^{\mu,\alpha}_s, \quad \mathbb{P}^{\mu,\alpha}\text{--a.s.,}\;\; X_0 = \xi.
\end{equation*}
Further, under $\mathbb{P}^{\mu, \alpha}$, the process $W^{\mu, \alpha}$ stays independent of $\xi$ so that $X$ is a weak solution to the SDE with drift.\par
Let the running reward be given by a progressively measurable function $f: [0, \infty) \times \mathcal{C} \times \mathcal{P}(\mathcal{C})\times \mathcal{P}(A) \times A \rightarrow \mathbb{R}$ and fix a discounting factor $\lambda>0$.
Given any $\mu \in \mathcal{P}(\mathcal{C})$, a Borel measurable flow of measures $q: [0, \infty) \rightarrow \mathcal{P}(A)$, we define the discounted reward
$$J^{\mu, q}(\alpha) := \mathbb{E}^{\mu, \alpha}\left[ \int_0^\infty e^{-\lambda t} f(t, X, \mu, q_t, \alpha_t) dt \right]$$
where here and henceforth, we use the notation $\mathbb{E}^{\mu,\alpha}$ to denote the expectation under $\mathbb{P}^{\mu,\alpha}$ and we define
$$V^{\mu, q} := \sup_{\alpha \in \mathbb{A}} J^{\mu, q}(\alpha).$$\par
Intuitively, the probability measure $\mu\in \mathcal{P}(\mathcal{C})$ represents the distribution of the (whole path of the) state of the population, while $q_t$ represents the distribution of the control of the population at time $t$.
Considering the law of the control process puts us in the setting of extended mean field games.
A solution of the problem which we call a mean field equilibrium is defined as follows:
\begin{definition}
\label{def.MFE}
We call a tuple $(\mu, q,\alpha)$ a solution to the discounted infinite horizon (extended) mean-field game, if $\alpha \in \mathbb{A}$ satisfies $V^{\mu, q} = J^{\mu, q}(\alpha)$ and 
\begin{equation}
\label{eq:MFG}
    \mu = \mathbb{P}^{\mu, \alpha} \circ X^{-1}\quad \text{and}\quad q_t = \mathbb{P}^{\mu, \alpha} \circ \alpha_t^{-1}\quad \text{for almost every } t\ge0.
\end{equation}
\end{definition}
To ensure that our problem is well-defined, unless otherwise specified, we will always make the following standing assumptions throughout the paper:
\begin{assumption}[Standing Assumptions]\label{asmp: stnd}
\begin{itemize}
    \item[(i)]The process $\Vert \sigma^{-1}b \Vert$ is uniformly bounded by some $C>0$.
    \item[(ii)]For $\mu$, $q$ and $a$, the process $\vert f(\cdot, X, \mu, q, a)\vert$ is uniformly bounded by some $M < \infty$.
    \item[(iii)]$b$ and $f$ are continuous in $a$.
    \item[(iv)] There exist suitable functions $f_1$ and $f_2$ such that $f(t, x, \mu, q, a)= f_1(t, x, \mu, a) +f_2(t, x, \mu, q)$.    
\end{itemize}
\end{assumption}
The first assumption ensures that our previously considered Doléans-Dade exponential is always well-defined, and will also be important later to prove compactness properties needed for our fixed point arguments. The second condition ensures that our running reward is always finite. The continuity assumptions guarantees that the Hamiltonian we will define later always has a non empty set of maximizers. The last condition is fairly standard in notably the extended mean field game literature.
In some cases, we will focus on the standard case where the law of the control $q$ is not considered to derive stronger results.
In such cases, we will assume $f_2=0$, and mean field equilibria are defined exactly as in Definition \ref{def.MFE}, except that Equation \eqref{eq:MFG} simplifies to
\begin{equation*}
    \mu = \mathbb{P}^{\mu,\alpha}\circ X^{-1}.
\end{equation*}
\subsection{Main results}
We are now ready to state the main results of the paper.
The first set of results concern well-posedness of the problem.
We will give conditions guaranteeing existence and uniqueness of the discounted infinite horizon mean field game.
After that, the second set of results will be on large time asymptotics of mean field games, as well as stationary games.
\subsubsection{Existence and uniqueness}
\label{sec:exist.unique}
The existence result will require suitable regularity conditions on the coefficients.
This prompts the question of the choice of the right topologies for the measure-valued arguments. 
For the law of the control argument $q \in \mathcal{P}(A)$, we choose to work with the topology of weak convergence which, as $A$ is compact, makes $\mathcal{P}(A)$ a compact space as well. 
For the measure of the state variable $\mu \in \mathcal{P}(\mathcal{C})$, we define a topology as follows:
\begin{definition}\label{def: loc stw}
    Let $B(\mathcal{C})$ be the space of bounded measurable real valued functions $\phi$ on $\mathcal{C}$ for which there exists some $T>0$ so that $\phi(\omega)$ only depends on the finite horizon restriction $\omega_{\vert [0, T]}$. We define the local topology of setwise convergence which we also denote by $\tau$ to be the coarsest topology such that the maps $\mu \mapsto \int_{\Omega} \phi d\mu$ are continuous for each $ \phi \in B(\mathcal{C})$.
\end{definition}
This can be seen as a local convergence variation of the topology used in \cite{Carmona15}. For a thorough discussion of the topology $\tau$ and its properties that will be used later in the proofs, we refer to Section \ref{sect: topo} below.\par
Here, let us briefly point out how this topology behaves locally in time. As an illustrative example, we consider a function $G: [0, \infty) \times \mathcal{P}(\mathcal{C}) \rightarrow \mathbb{R}$ that can be written in the form $G(t, \mu) = g(t, \mu_t)$ for $g: [0, \infty) \times \mathcal{P}(\mathbb{R}^d) \rightarrow \mathbb{R}$ where $\mu_t$ is the marginal law of the canonical process $\omega$ at time $t$ under $\mu$. As $\mathcal{C} \rightarrow \mathbb{R}^d, \omega \mapsto \omega_t$ is continuous and thus measurable with respect to the respective $\sigma$ algebras, and since $\omega_t$ only depends on $\omega_{[0, t]}$, we can see that $\mathcal{P}(\mathcal{C}) \rightarrow \mathcal{P}(\mathbb{R}^d), \mu \mapsto \mu_t$ is continuous with respect to $\tau$ and the topology of setwise convergence on $\mathcal{P}(\mathbb{R}^d)$. This way, we can find that $G$ is continuous in $\mu$ iff $g$ is continuous in $\mu$. This example demonstrates that it matches the intuition to restrict $B(\Omega)$ to $\phi$ that only depend on a finite horizon restriction of the canonical process in our definition above.\par
\begin{assumption}[Continuity]\label{asmp: cont}
    For every $t\ge0$, 
    the maps $(\mu, a) \mapsto b(t, x, \mu, a)$ and $(\mu, q, a) \mapsto f(t, x, \mu, q, a)$ are jointly sequentially continuous.
\end{assumption}
Let us define the Hamiltonian
$$\tilde{h}(t, x, \mu, q, z, a):= f(t, x, \mu, q, a) + z^\top \sigma(t,x)^{-1}b(t, x, \mu, a)$$
and the set $\mathcal{A}(t, x, \mu, q, z) := \arg\max_{a \in A} \tilde{h}(t, x, \mu, q, z, a)$ of its maximizers. 
Note that as we have assumed continuity of $b,f$ and compactness and $A$, the set of maximizers is always non empty. Now, in the case of extended mean-field games, we additionally assume the following.
\begin{assumption}[Extended Mean-Field Game]\label{asmp: sngl}
    For any $t, x, \mu, z$, the set $\mathcal{A}(t, x, \mu, z)$ consists of a single element.    
\end{assumption}
The reason why this stronger condition is needed in the extended case is that showing compactness for the flows of measures in the control variable is a more delicate issue when set valued maps are used. The first main result of this work is the following:
\begin{theorem}\label{thm: exst E}
    Under the assumptions \ref{asmp: stnd}; \ref{asmp: cont}, and \ref{asmp: sngl}, there exists a solution to the discounted infinite horizon extended mean field game.
\end{theorem}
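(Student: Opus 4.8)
The plan is a Schauder-type fixed point argument: a mean field equilibrium is recast as a fixed point of the best-response map on the pair $(\mu,q)$. First I would fix $(\mu,q)$, with $\mu\in\mathcal P(\mathcal C)$ and $q$ a measurable flow $[0,\infty)\to\mathcal P(A)$, and solve the discounted infinite horizon control problem $V^{\mu,q}=\sup_\alpha J^{\mu,q}(\alpha)$. Writing $\tilde h^*(t,x,\mu,q,z):=\max_{a\in A}\tilde h(t,x,\mu,q,z,a)$, the infinite horizon BSDE with monotone driver from Appendix \ref{apdx: IBSDE} provides a bounded solution $(Y,Z)$ of
\[
  Y_t=Y_s+\int_t^s\bigl(\tilde h^*(r,X,\mu,q_r,Z_r)-\lambda Y_r\bigr)\,dr-\int_t^s Z_r\,dW_r,\qquad 0\le t\le s,
\]
with $Y_0=V^{\mu,q}$ (the discount term $-\lambda Y$ supplies the monotonicity/dissipativity, and boundedness of $Y$ uses Assumption \ref{asmp: stnd}(ii)). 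Under Assumption \ref{asmp: sngl} the maximizer $\hat a(t,x,\mu,q,z)$ of $\tilde h$ is a measurable function of its arguments, so $\alpha^*_t:=\hat a(t,X,\mu,q_t,Z_t)$ is an admissible control attaining the supremum, by the standard verification/martingale argument for the IBSDE combined with Girsanov's theorem. This yields a \emph{single-valued} best response, and I would set
\[
  \Phi(\mu,q):=\Bigl(\mathbb P^{\mu,\alpha^*}\circ X^{-1},\ \bigl(\mathbb P^{\mu,\alpha^*}\circ(\alpha^*_t)^{-1}\bigr)_{t\ge0}\Bigr),
\]
so that any fixed point of $\Phi$ is a solution in the sense of Definition \ref{def.MFE}, the relation $q_t=\mathbb P^{\mu,\alpha^*}\circ(\alpha^*_t)^{-1}$ holding for a.e.\ $t$.

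The second step is to produce a convex set $\mathcal K=\mathcal M\times\mathcal Q$, compact in a locally convex Hausdorff topology, with $\Phi(\mathcal K)\subseteq\mathcal K$. For the state-law component the key a priori bound comes from Assumption \ref{asmp: stnd}(i): since $\Vert\sigma^{-1}b\Vert\le C$, for every $T>0$
\[
  \mathcal H\!\left(\mathbb P^{\mu,\alpha^*}_{\vert\mathcal F_T},\,\mathbb P_{\vert\mathcal F_T}\right)
  =\mathbb E^{\mu,\alpha^*}\!\left[\tfrac12\int_0^T\Vert\sigma(s,X)^{-1}b(s,X,\mu,\alpha^*_s)\Vert^2\,ds\right]\le\tfrac12 C^2T,
\]
and by monotonicity of relative entropy under the pushforward $\zeta\mapsto X_{\vert[0,T]}$ the same bound holds for $\mathcal H(\mathbb P^{\mu,\alpha^*}\circ(X_{\vert[0,T]})^{-1},m_T)$, where $m_T$ is the $(\mu,\alpha)$-independent law on $\mathcal C^T$ of the uncontrolled process \eqref{FSDE}. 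I would therefore take $\mathcal M:=\{\mu\in\mathcal P(\mathcal C):\mathcal H(\mu_{\vert[0,T]},m_T)\le\tfrac12 C^2T\text{ for all }T>0\}$ (nonempty, it contains the law of the uncontrolled process). Each finite-horizon sublevel set $\{\nu\in\mathcal P(\mathcal C^T):\mathcal H(\nu,m_T)\le\tfrac12 C^2 T\}$ is convex and \emph{setwise} compact (lower semicontinuity of relative entropy, together with de la Vall\'ee Poussin uniform integrability and Dunford--Pettis), so $\mathcal M$ is a projective limit of compact convex sets and is convex and $\tau$-compact; its topological properties (Hausdorffness, metrizability, behaviour of subspace topologies) are the ones collected in Section \ref{sect: topo}. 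This projective-limit compactness is precisely what replaces the finite-horizon common-dominating-measure argument of \cite{Carmona15}, which fails here because the $\mathbb P^{\mu,\alpha}$ are mutually singular on $\mathcal F$. For the control component I would take $\mathcal Q$ to be the equivalence classes of measurable flows $[0,\infty)\to\mathcal P(A)$, topologized as the projective limit over $T$ of the spaces of Young measures on $[0,T]\times A$ (each compact since $A$ is compact), hence convex and compact; that $\Phi(\mathcal M\times\mathcal Q)\subseteq\mathcal M\times\mathcal Q$ follows from the entropy bound above and the fact that a genuine control induces such a flow.

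The third step is continuity of $\Phi$ on $\mathcal K$. Since $\mathcal M$ (with $\tau$) and $\mathcal Q$ are metrizable on these compact sets (Section \ref{sect: topo}), it suffices to verify sequential continuity, which matches the form of Assumption \ref{asmp: cont}. Given $(\mu_n,q_n)\to(\mu,q)$ I would proceed in order: (a) Assumption \ref{asmp: sngl} plus Berge's maximum theorem gives continuity of $\hat a$, hence of the driver $\tilde h^*$ in $(\mu,q,z)$; (b) a stability estimate for the infinite horizon BSDE with monotone driver — where the discounting $-\lambda Y$ again prevents a finite-horizon Gronwall blow-up — gives $Z^n\to Z$ in the relevant sense and so $\alpha^{*,n}\to\alpha^*$; (c) stability of the Dol\'eans-Dade densities on each $\mathcal F_T$ (uniform integrability, once more from the entropy bound) gives $\mathbb P^{\mu_n,\alpha^{*,n}}\to\mathbb P^{\mu,\alpha^*}$ locally, hence convergence of $\mathbb P^{\mu_n,\alpha^{*,n}}\circ X^{-1}$ in $\tau$ and of the control flows in $\mathcal Q$. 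Then $\Phi:\mathcal K\to\mathcal K$ is continuous on a convex compact subset of a locally convex space, so the Schauder--Tychonoff theorem yields a fixed point $(\mu^*,q^*)$; with the associated $\alpha^*$ this is the desired equilibrium.

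The main obstacle is Step 3(b)--(c): establishing joint stability of the control problem and of the change of measure in topologies that are only local in time, i.e.\ upgrading finite-horizon stability estimates to statements in the projective-limit topologies, and controlling the infinite horizon BSDE solution tightly enough (via boundedness of $f$ and the discount factor) to prevent mass from escaping to $t=\infty$. This delicacy, together with the need to work with the locally completed filtration and with changes of measure that are singular on $\mathcal F$, is exactly what forces the inverse-limit constructions; the remaining ingredients (the IBSDE theory of Appendix \ref{apdx: IBSDE}, the verification argument, de la Vall\'ee Poussin / Dunford--Pettis, and Berge's theorem) are standard once the right compact domain has been pinned down.
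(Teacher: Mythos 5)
Your overall architecture matches the paper's: reduce to a single-valued best-response map via the infinite-horizon BSDE with monotone ($-\lambda Y$) driver and the measurable selection of the Hamiltonian maximizer, identify a convex compact domain in a locally convex Hausdorff space via projective (inverse) limits of finite-horizon compact sets, and close with Schauder--Tychonoff. Your choice of compact set for the state-law component (entropy sublevel sets relative to the uncontrolled law, compact via de la Vall\'ee Poussin and Dunford--Pettis, projectively glued) is a clean and valid alternative to the paper's $\tilde{\mathcal{Q}}^T$ defined through $L^2$ and inverse-moment bounds on the Radon--Nikodym densities; the two constructions encode essentially the same uniform integrability and both work.

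There is, however, a genuine gap in the control-flow component. You propose to topologize the measurable flows $[0,\infty)\to\mathcal P(A)$ as a projective limit of spaces of Young measures on $[0,T]\times A$. Compactness is indeed immediate there, but the resulting (local) stable topology only controls functionals of the form $q_\cdot\mapsto\int_0^T\!\!\int_A\phi(t,a)\,q_t(da)\,dt$ with $\phi$ continuous in $a$ --- i.e.\ functionals that are \emph{affine} in the measure $q_t$. In the extended game the driver of the BSDE contains $f_2(t,X,\mu,q_t)$ with $f_2$ only assumed continuous, hence in general \emph{nonlinear}, in $q\in\mathcal P(A)$. Your Step 3(b)--(c) therefore cannot go through: convergence of $q^n$ in the Young-measure topology over $[0,T]\times A$ does not imply $\int_t^T f_2(s,X,\mu,q^n_s)\,ds\to\int_t^T f_2(s,X,\mu,q_s)\,ds$, so the BSDE stability lemma does not apply and $\Phi$ is not continuous on your compact set. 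This is precisely why the paper introduces a \emph{second} level of relaxation: $\mathcal M$ consists of measures on $[0,\infty)\times\mathcal P(A)$ with Lebesgue first marginal (equivalently, $\mathcal P(\mathcal P(A))$-valued flows $\nu_t$), topologized by the local stable topology whose test functions are bounded, measurable in $t$ and continuous in $q\in\mathcal P(A)$. With that choice the troublesome functional $\nu\mapsto\int_0^T\!\!\int_{\mathcal P(A)}f_2(s,X,\mu,q)\,\nu_s(dq)\,ds$ becomes affine in $\nu$ and hence continuous, while the strict flows $\mathcal M^{(0)}=\{\delta_{q_t}\,dt\}$ sit inside as a (generally non-compact) subset. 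In the single-valued case the best-response map $\mathfrak v$ automatically outputs strict flows, so a fixed point in $\mathcal Q\times\mathcal M$ is a bona fide equilibrium; but the fixed-point argument itself must run over the larger relaxed space to preserve both compactness and continuity. Your proposal as written runs over the wrong relaxation.
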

In the case of non-extended mean field game (i.e.\ $f_2=0$) we can relax the conditions imposed on the Hamiltonian.
In fact, in the non-extended case, we do not need to assume that the maximizer of the Hamiltonian is unique. Instead, we replace that assumption by a convexity condition.
\begin{assumption}[Non Extended Mean  Field Game]\label{asmp: NEMFG}
    \begin{itemize}
        \item[(i)]For any $t, x, \mu, z$, the set of optimal drifts defined by $B(t, x, \mu, z):= b(t, x, \mu, \mathcal{A}(t, x, \mu, z))$ is convex.
        \item[(ii)]The mean field game is independent of $q$, i.e.\ we assume $f_2 = 0$.
    \end{itemize}
\end{assumption}
The convexity assumption imposed in Assumption \ref{asmp: NEMFG}$(i)$ plays the role of \cite[Assumption (C)]{Carmona15} on the convexity of the set $\mathcal{A}(t, x, \mu, z)$.
In fact, here we use the fixed point Theorem in a different way than in \cite{Carmona15} which leads to an adjusted convexity assumption.
Observe that this condition is satisfied for instance when the running reward $f$ is pointwise quasiconcave in $a$ and the set of all possible drifts $b(t, x, \mu, A)$ is a convex subset of $\mathbb{R}^d$. In particular, in case $A$ is convex, $f$ is quasiconcave and $b$ is affine, both conditions are equivalent.
Moreover, our convexity assumption is closely related to the convexity assumption guaranteeing existence of optimal controls in \cite[(3.4)]{HaussmannLepeltier}. 
Therein, convexity is also demanded in terms of the drift, not the control variable itself. 
For comparison, one can check that our convexity assumption is strictly weaker than that of \cite{HaussmannLepeltier}.
In fact, in contrast to their assumption, we do not need any convexity on $f$. 
Under this set of assumptions we obtain our second existence result.
\begin{theorem}\label{thm: exst NE}
    Under the assumptions \ref{asmp: stnd}; \ref{asmp: cont} and  \ref{asmp: NEMFG}, there exists a solution to the discounted infinite horizon mean field game.
\end{theorem}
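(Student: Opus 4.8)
The plan is to run a Kakutani–Fan–Glicksberg-type fixed point argument on the flow of measures $\mu\in\mathcal P(\mathcal C)$ alone, since by Assumption \ref{asmp: NEMFG}(ii) the control distribution $q$ plays no role. First I would fix $\mu$ and study the discounted infinite horizon control problem $V^\mu=\sup_{\alpha\in\mathbb A}J^\mu(\alpha)$. Using the infinite horizon BSDE with monotone driver machinery (the results extended in Appendix \ref{apdx: IBSDE}), I expect to obtain a value process $Y^\mu$ solving an infinite horizon BSDE whose driver is the Hamiltonian $\sup_{a\in A}\tilde h(t,X,\mu,Z_t,a)$, well-posed thanks to the uniform bound in Assumption \ref{asmp: stnd}(i) on $\sigma^{-1}b$ and the boundedness of $f$ in (ii). A measurable selection theorem then produces, for each $\mu$, the (nonempty, by continuity of $b,f$ and compactness of $A$) set of optimal feedback drifts taking values in $B(t,X,\mu,Z_t)$, which is convex by Assumption \ref{asmp: NEMFG}(i). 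Via Girsanov each such optimal drift induces a measure $\mathbb P^{\mu,\alpha}$ and hence a candidate $\mathbb P^{\mu,\alpha}\circ X^{-1}\in\mathcal P(\mathcal C)$; denote by $\Phi(\mu)$ the set of all these. A fixed point $\mu\in\Phi(\mu)$ is exactly a mean field equilibrium.

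Next I would verify the hypotheses of the set-valued fixed point theorem on an appropriate compact convex domain. For the domain: the uniform bound $\|\sigma^{-1}b\|\le C$ forces all $\mathbb P^{\mu,\alpha}$ to lie, on every finite horizon $\mathcal F^0_T$, in a set with Doléans–Dade density uniformly controlled in $L\log L$, hence the candidate laws all live in a fixed $\tau$-compact convex subset $\mathcal K\subset\mathcal P(\mathcal C)$; this is where the inverse-limit / local-setwise topology $\tau$ from Section \ref{sect: topo} is essential, since in infinite horizon there is no single dominating measure and the usual setwise compactness fails. Convexity of $\mathcal K$ is clear; $\tau$-compactness follows horizon-by-horizon from de la Vallée-Poussin plus the inverse limit construction. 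For $\Phi$ I must check: (a) $\Phi(\mu)$ is nonempty — from existence of optimal controls via the BSDE; (b) $\Phi(\mu)$ is convex — this is exactly where Assumption \ref{asmp: NEMFG}(i) enters, since convexity of the set of optimal drifts $B(t,x,\mu,z)$ lets me mix two optimal drifts into another optimal one, and the induced laws then also form a convex set; (c) $\Phi$ has closed graph (equivalently is upper hemicontinuous with closed values), using the notion of upper hemicontinuity from Appendix \ref{apdx: setvm} together with Assumption \ref{asmp: cont}: if $\mu_n\to\mu$ in $\tau$ and $\nu_n\in\Phi(\mu_n)$, $\nu_n\to\nu$, I pass to the limit in the BSDE characterization and in the Girsanov density, using stability of infinite horizon BSDEs and the local equivalence properties of $\mathfrak E$ from Proposition \ref{Prop.A1}, to conclude $\nu\in\Phi(\mu)$.

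The main obstacle is step (c), the closed-graph property, combined with making $\tau$-convergence interact correctly with the BSDE limit. Convergence $\mu_n\to\mu$ in $\tau$ only controls integrals of functionals depending on finitely many coordinates, so passing to the limit in the infinite horizon value function — which a priori sees the whole path — requires carefully exploiting the discounting $e^{-\lambda t}$ and the exponential decay estimates for infinite horizon BSDEs with monotone drivers (Appendix \ref{apdx: IBSDE}) to truncate the horizon uniformly, reduce to a finite-horizon stability statement where $\tau$ behaves like ordinary setwise convergence, and only then send $T\to\infty$. A secondary subtlety is that the optimal drift selection need not be continuous in $\mu$, so I must work with the set-valued map and invoke the measurable-selection and upper-hemicontinuity tools rather than a single-valued Schauder argument; this is precisely why the theorem is stated with the convexity Assumption \ref{asmp: NEMFG}(i) replacing the single-valuedness Assumption \ref{asmp: sngl} used for the extended game. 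Once upper hemicontinuity with nonempty compact convex values on the compact convex $\mathcal K$ is established, the Kakutani–Fan–Glicksberg theorem yields a fixed point, completing the proof.
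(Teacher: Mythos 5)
Your high-level architecture matches the paper's: run Kakutani--Fan--Glicksberg on the compact convex set of admissible state laws $\mathcal Q\subset\mathcal P(\mathcal C)$ under the local-setwise topology $\tau$, characterize optimal controls through the infinite horizon BSDE, and use Assumption \ref{asmp: NEMFG}$(i)$ to get convex values for the best-response map. However, there is a genuine gap in your convexity step, plus a substantial unaddressed issue in showing the values are closed.

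\textbf{The convexity argument is backwards.} You assert that convexity of $B(t,x,\mu,z)$ lets you ``mix two optimal drifts into another optimal one, and the induced laws then also form a convex set.'' This does not follow: the map $\beta\mapsto\mathcal E(\int_0^\cdot\beta_s\,dW_s)$ is not affine, so taking a \emph{constant} convex combination $\lambda\beta^1+(1-\lambda)\beta^2$ of optimal drifts does not produce the law $\lambda\mathbb P^1+(1-\lambda)\mathbb P^2$. The correct argument goes in the opposite direction: start with the mixed law $\mathbb P^\lambda:=\lambda\mathfrak P_T(\mu,\alpha^1)+(1-\lambda)\mathfrak P_T(\mu,\alpha^2)$, observe it is equivalent to $\mathbb P^T$ and admits a representation $\frac{d\mathbb P^\lambda}{d\mathbb P^T}=\mathcal E(\int_0^\cdot\beta_s\,dW_s)_T$, and then compute that the resulting $\beta_t$ is a \emph{random, time-varying} convex combination $\tilde\lambda_t\,\sigma^{-1}b(t,X,\mu,\alpha^1_t)+(1-\tilde\lambda_t)\,\sigma^{-1}b(t,X,\mu,\alpha^2_t)$, where $\tilde\lambda_t$ is the ratio of Dol\'eans--Dade exponentials. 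Only at this point does Assumption \ref{asmp: NEMFG}$(i)$ (convexity of $B(t,x,\mu,z)$ \emph{pointwise} in $(t,x,z)$) guarantee $\beta_t$ is still an optimal drift, and Filippov's implicit function theorem then produces an optimal control $\gamma$ with $\mathfrak P_T(\mu,\gamma)=\mathbb P^\lambda$. As you stated it, the argument would not close.

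\textbf{Closedness of $\Phi(\mu)$ is not addressed.} You plan to verify a closed-graph property, which in a compact Hausdorff codomain would indeed give both upper hemicontinuity and closed values. But the paper shows these two separately, and the proof that the values $\mathfrak P(\mu,\mathbb A(\mu))$ are closed in $\tilde\tau$ is a multi-step argument you never touch: first prove closedness in total variation at each finite horizon $T$ (Lemma \ref{lem: str clsd}, which itself needs the ``full subset'' structure and Filippov's theorem to recover a control from a limit of densities), then upgrade to $\tilde\tau^T$-closedness via convexity and Mazur's lemma, and finally pass to infinite horizon via Lemma \ref{lem: full clsd}, which exploits the local structure of $\tilde\tau$. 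Your proposal to verify closed graph by ``passing to the limit in the BSDE characterization and in the Girsanov density'' would still have to confront exactly this difficulty — recovering an admissible optimal control from a $\tilde\tau$-limit of controlled laws — and you do not indicate how.
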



It is well-known that uniqueness of  mean field equilibria is obtained under stronger (structural) conditions on the coefficients of the game.
A common such condition is the Lasry-Lions monotonicity discussed in \cite{LasryLions}. 
We will extend this condition to our setup.
\begin{assumption}[Uniqueness]\label{asmp: unq}
    \begin{itemize}
         \item[(i)]For any $t, x, \mu, z$, the set $\mathcal{A}(t,x, \mu, z)$, consists of a single element.
        \item[(ii)]$b$ is independent of $\mu$.
        \item[(iii)]$f$ can be separated into the form $f(t, x, \mu, q, a) = f_1(t, x, \mu) + f_2(t, \mu, q) + f_3(t, x, a)$ for bounded functions $f_1$, $f_2$ and $f_3$.
        \item[(iv)]For all $\mu, \mu' \in \mathcal{P}(\mathcal{C})$, we have $$\int_{\mathcal{C}} \int_0^\infty e^{-\lambda s}(f_1(s, x, \mu) - f_1(s, x, \mu'))ds (\mu - \mu')(dx) \leq 0.$$
    \end{itemize}
\end{assumption}
\begin{theorem}\label{thm: unq}
    Under assumptions \ref{asmp: stnd} and \ref{asmp: unq}, there exists at most one solution to the discounted infinite horizon extended mean field game.
\end{theorem}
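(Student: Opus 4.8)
The plan is to run the classical Lasry--Lions duality argument, adapted to the weak infinite-horizon formulation. Let $(\mu,q,\alpha)$ and $(\mu',q',\alpha')$ be two mean field equilibria in the sense of Definition \ref{def.MFE}. Since $b$ does not depend on its measure argument (Assumption \ref{asmp: unq}(ii)), the controlled measure $\mathbb{P}^{\nu,\beta}$ depends on $(\nu,\beta)$ only through $\beta$; write it $\mathbb{P}^{\beta}$. Thus $\mathbb{P}^{\mu,\alpha}=\mathbb{P}^{\mu',\alpha}=\mathbb{P}^{\alpha}$ with $\mathbb{P}^{\alpha}\circ X^{-1}=\mu$, and likewise $\mathbb{P}^{\alpha'}\circ X^{-1}=\mu'$. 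First I would expand the four numbers $J^{\mu,q}(\alpha)$, $J^{\mu,q}(\alpha')$, $J^{\mu',q'}(\alpha')$ and $J^{\mu',q'}(\alpha)$ using the splitting $f(t,x,\nu,p,a)=f_1(t,x,\nu)+f_2(t,\nu,p)+f_3(t,x,a)$ of Assumption \ref{asmp: unq}(iii). In the difference $J^{\mu,q}(\alpha)-J^{\mu,q}(\alpha')$ the $f_2$-term depends only on the environment $(\mu,q)$ and cancels, while the $f_3$-term reduces to $\mathbb{E}^{\alpha}[\int_0^\infty e^{-\lambda t}f_3(t,X,\alpha_t)\,dt]-\mathbb{E}^{\alpha'}[\int_0^\infty e^{-\lambda t}f_3(t,X,\alpha'_t)\,dt]$, an expression in $(\alpha,\alpha')$ only, because $\mathbb{P}^{\mu,\alpha'}=\mathbb{P}^{\alpha'}$; the same expression, with opposite sign, occurs in $J^{\mu',q'}(\alpha')-J^{\mu',q'}(\alpha)$, so it drops out once the two differences are added. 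Using $\mathbb{P}^{\alpha}\circ X^{-1}=\mu$ and $\mathbb{P}^{\alpha'}\circ X^{-1}=\mu'$, the surviving $f_1$-contributions become integrals of $\int_0^\infty e^{-\lambda s}f_1(s,x,\cdot)\,ds$ against $\mu$ and $\mu'$.

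Putting the pieces together produces the identity
\begin{equation*}
  \bigl(J^{\mu,q}(\alpha)-J^{\mu,q}(\alpha')\bigr)+\bigl(J^{\mu',q'}(\alpha')-J^{\mu',q'}(\alpha)\bigr)=\int_{\mathcal{C}}\int_0^\infty e^{-\lambda s}\bigl(f_1(s,x,\mu)-f_1(s,x,\mu')\bigr)\,ds\,(\mu-\mu')(dx).
\end{equation*}
The left-hand side is nonnegative, since $\alpha$ is optimal for the environment $(\mu,q)$ and $\alpha'$ is optimal for $(\mu',q')$; the right-hand side is nonpositive by the monotonicity Assumption \ref{asmp: unq}(iv). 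Hence both sides vanish and, each summand on the left being separately nonnegative, each is zero. In particular $J^{\mu,q}(\alpha)=J^{\mu,q}(\alpha')$, i.e.\ $\alpha'$ is also optimal for the environment $(\mu,q)$.

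To conclude $\mu=\mu'$ I would invoke uniqueness of the optimal control for a fixed environment, which is exactly what Assumption \ref{asmp: unq}(i) buys. The analysis of the discounted infinite-horizon control problem, carried out via the infinite-horizon BSDE with monotone driver, shows that any control optimal for $(\mu,q)$ satisfies $\beta_t\in\mathcal{A}(t,X,\mu,Z_t)$ for $dt\otimes d\mathbb{P}$-a.e.\ $(t,\omega)$, where $Z$ is the uniquely determined martingale-representation term of the value process; since $\mathcal{A}(t,x,\mu,z)$ is a singleton, the optimal control for a given environment is therefore unique up to a $dt\otimes d\mathbb{P}$-null set (all measures in $\mathfrak{E}$ being locally equivalent to $\mathbb{P}$, this is unambiguous). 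Applying this to $(\mu,q)$ yields $\alpha=\alpha'$, $dt\otimes d\mathbb{P}$-a.e.; consequently $b(s,X,\alpha_s)=b(s,X,\alpha'_s)$, $ds\otimes d\mathbb{P}$-a.e., the Doléans--Dade densities on each $\mathcal{F}^0_T$ coincide, and $\mathbb{P}^{\mu,\alpha}=\mathbb{P}^{\mu',\alpha'}$. Taking the law of $X$ gives $\mu=\mu'$, and taking the law of $\alpha_t$ gives $q_t=q'_t$ for a.e.\ $t$.

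The structure of the argument is routine once set up; I expect the real work to be in two places. The first is the bookkeeping in the expansion step: verifying that the $f_2$- and $f_3$-cross terms genuinely cancel, which relies both on the separable form of Assumption \ref{asmp: unq}(iii) and, crucially, on the $\mu$-independence of $b$ (so that $\mathbb{P}^{\mu,\alpha'}$ really does coincide with $\mathbb{P}^{\alpha'}$ and hence has $X$-law $\mu'$); finiteness of each integral is automatic from the boundedness of $f_1,f_2,f_3$ together with the discount factor $e^{-\lambda t}$. The second is extracting uniqueness of the optimal control from the BSDE characterization of the control problem, which is the only point at which the single-maximizer hypothesis is genuinely used. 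A minor care point throughout is that the resulting equalities of controls, drifts and measures hold only modulo $dt\otimes d\mathbb{P}$- or $\mathcal{F}^0_T$-null sets, which is harmless precisely because the locally completed filtration was built so that all the relevant measures are locally equivalent.
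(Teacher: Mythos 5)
Your proof is correct, and it reaches the same conclusion through a cleaner and slightly different route than the paper. The paper works at the level of the BSDE solutions $(Y^i,Z^i)$: it applies It\^o to $Y^1-Y^2$, takes expectations under both $\mathbb{P}^{\alpha^1}$ and $\mathbb{P}^{\alpha^2}$, verifies along the way that the stochastic integrals $\int_0^\cdot Z^i_s\,dW^{\alpha^j}_s$ are true martingales on $[0,T]$, sends $T\to\infty$ using the bound $\|Y^1_T-Y^2_T\|_\infty\le \frac{2M}{\lambda}e^{-\lambda T}$, and extracts the monotonicity inequality from the pointwise Hamiltonian-maximizing inequalities \eqref{unqhml1}--\eqref{unqhml2}. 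You instead expand the four numbers $J^{\mu,q}(\alpha)$, $J^{\mu,q}(\alpha')$, $J^{\mu',q'}(\alpha')$, $J^{\mu',q'}(\alpha)$ directly; the separability of $f$ (so that $f_2$ is deterministic and $f_3$ is environment-independent) and the $\mu$-independence of $b$ (so that $\mathbb{P}^{\mu,\alpha'}=\mathbb{P}^{\alpha'}$ has $X$-law $\mu'$) make all cross terms cancel exactly, and the identity
\begin{equation*}
\bigl(J^{\mu,q}(\alpha)-J^{\mu,q}(\alpha')\bigr)+\bigl(J^{\mu',q'}(\alpha')-J^{\mu',q'}(\alpha)\bigr)
=\int_{\mathcal C}\int_0^\infty e^{-\lambda s}\bigl(f_1(s,x,\mu)-f_1(s,x,\mu')\bigr)\,ds\,(\mu-\mu')(dx)
\end{equation*}
follows by Fubini (justified by boundedness of $f_1$ and the discount). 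This bypasses the It\^o expansion, the martingale verification, and the $T\to\infty$ limit entirely. Both arguments then finish identically via Proposition \ref{prop.optim.charac} and the singleton hypothesis \ref{asmp: unq}$(i)$: once the two summands on the left vanish, $\alpha'$ is also optimal for $(\mu,q)$, so $\alpha_t,\alpha'_t\in\mathcal{A}(t,X,\mu,\tilde Z^{\mu,q}_t)$ a.e., forcing $\alpha=\alpha'$ a.e.\ and hence equality of measures. What your route buys is economy and transparency of the cancellation mechanism; what the paper's route buys is a template that adapts more directly to the quantitative stability estimate in Theorem \ref{thm: cnv rate}, where the identity must be localized to $[0,T]$ with nonzero remainder terms and the BSDE formulation is the natural vehicle.
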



\subsubsection{Links between finite and infinite horizon mean field games}\label{sct: LinkMFG}

After discussing existence and uniqueness, let us relate the infinite horizon mean field games to mean field games with large, but finite horizons.
We will do so in two ways.
We will first show that infinite horizon mean field equilibria give rise to what we call ``approximate mean field equilibria'' for finite horizon games.
Reciprocally, every sequence of solutions for the finite horizon mean field equilibria (indexed by the time horizon) converges to an infinite horizon mean field equilibrium.
As mentioned in the introduction, such convergence results seem not to have appeared in the probability theory literature on mean field games.\par
We consider these finite horizon games on the following collection of spaces. For any finite time horizon $T>0$ let us define the space $\Omega^T := \mathbb{R}^d \times \mathcal{C}_0^T$. We denote the product Borel $\sigma$-algebra with $\mathcal{F}^T$. Further, we consider the measure $\mathbb{P}^T := \upsilon \times \mathbb{P}^T_W$ where $\mathbb{P}^T_W$ is the unique Wiener measure on $\mathcal{C}_0^T$. We denote the canonical element by $\zeta = (\xi, \omega)$. Let $\mathbb{F}^T = (\mathcal{F}^T_t)_{t \in [0, T]}$ be the $\mathbb{P}^T$ completion of the filtration $(\sigma(\xi, \omega\vert s \leq t))_{t\in [0, T]}$.\par
To put the infinite horizon game in relation to the finite horizon ones, we define the maps 
$$\pi^T: \Omega \rightarrow \Omega^T,\, (\xi, \omega) \mapsto (\xi, \omega_{\vert [0, T]})$$ 
which induce the pushforward maps 
$$\tilde{\pi}^T: \mathcal{P}(\Omega) \rightarrow \mathcal{P}(\Omega^T),\, \mathbb{Q} \mapsto \mathbb{Q} \circ (\pi^T)^{-1}$$ 
on the spaces of measures.
Analogously, for $t \leq T$, we define $\pi^{T, t}: \Omega^T \rightarrow \Omega^t$ and $\tilde{\pi}^{T, t}:\mathcal{P}(\Omega^T) \rightarrow \mathcal{P}(\Omega^t)$.\par
To enable us to construct finite horizon games out of an infinite horizon one, we assume the following consistency assumption.
\begin{assumption}\label{asmp: cons}
    For each $t, x, q, a$, the maps $\mu \mapsto b(t, x,\mu, a)$ and $\mu \mapsto f(t, x, \mu, q, a)$ depend only on $\tilde{\pi}^t(\mu)$, i.e.\ for any $\mu, \mu'$ with $\tilde{\pi}^t(\mu) = \tilde{\pi}^t(\mu')$, the evaluations of $f$ and $b$ must coincide.
\end{assumption}
This is a reasonable assumption as it is intuitive that the interaction occurs only with past distributions.
In particular, in the classical setting where interaction happens only through the current state $\mu_t$, our assumption is fulfilled. 
Under slight abuse of notation, we will let $f(t,x, \cdot,q, a)$ and $b(t, x, \cdot, a)$ take arguments from $\mathcal{P}(\mathcal{C}^T)$ for any $T \geq t$ which is still well defined by the consistency assumption.\par

Let us now formulate the finite horizon mean field game reduction on a fixed horizon $T>0$. For a given measure $(\mu, q)$, we can consider the optimization problem 
\begin{equation}\label{fnt MFG}
    V^{\mu, q, T} = \sup_{\alpha \in \mathbb{A}^T} J^{\mu, q, T}(\alpha)\quad \text{with}\quad J^{\mu, q, T}(\alpha) := \mathbb{E}^{\mu, \alpha, T}\left[\int_0^T e^{-\lambda s} f(s, X, \mu, q_s, \alpha_s)ds\right]
\end{equation}
where the expectation is taken with respect to the equivalent measure $\mathbb{P}^{\mu, \alpha} \in \mathcal{P}(\Omega^T)$, defined via $\frac{d\mathbb{P}^{\mu, \alpha, T}}{d\mathbb{P}^T} = \mathcal{E}(\int_0^\cdot \sigma^{-1}b(s, X, \mu, \alpha_s) dW_s)_T$.
Note that in this definition, the terminal reward is $g=0$. 
Mean field equilibria are defined just as in the infinite horizon case.

We now define approximate mean field equililbria or more precisely $\epsilon$-mean field equilibria.
\begin{definition}
    We call $(\mu, q,\alpha)$ an $\epsilon$-mean field equilibrium for the game with horizon $T>0$ if  $\mu = \mathbb{P}^{\mu, \alpha} \circ (X_{\vert[0, T]})^{-1}$ and $q_t = \mathbb{P}^{\mu, \alpha} \circ \alpha_t^{-1}$ for almost every $t \in [0, T]$, and it holds
    $$
    J^{\mu, q, T}(\alpha) \geq V^{\mu, q, T} - \epsilon.$$
\end{definition}
Of course, a solution to the mean field game is a $0$-mean field equilibrium.
Our first asymptotic result allows to construct $\epsilon$-mean field equilibria for any finite horizon game via infinite horizon equilibria.
This Theorem can be seen as an inverse convergence theorem.
\begin{theorem}\label{thm: eps opt}
    Let $(\mu, q, \alpha)$ be a mean field equilibrium for the infinite horizon discounted mean field game.
    If assumptions \ref{asmp: stnd} and \ref{asmp: cons} are satisfied, then for every $T>0$, the tuple
    $(\mu^T, q^T,\alpha^T) := (\tilde{\pi}^T(\mu), q_{\vert [0, T]}, \alpha_{\vert [0, T]})$ is a $\frac{2M}{\lambda}e^{-\lambda T}$-mean field equilibrium.
\end{theorem}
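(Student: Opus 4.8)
The plan is to check the two defining requirements of a $\tfrac{2M}{\lambda}e^{-\lambda T}$-mean field equilibrium for the tuple $(\mu^T,q^T,\alpha^T)=(\tilde\pi^T(\mu),q_{|[0,T]},\alpha_{|[0,T]})$ in turn: first the consistency (fixed point) conditions $\mu^T=\mathbb{P}^{\mu^T,\alpha^T,T}\circ(X_{|[0,T]})^{-1}$ and $q^T_t=\mathbb{P}^{\mu^T,\alpha^T,T}\circ\alpha_t^{-1}$ for a.e.\ $t\in[0,T]$, then the near-optimality $J^{\mu^T,q^T,T}(\alpha^T)\ge V^{\mu^T,q^T,T}-\tfrac{2M}{\lambda}e^{-\lambda T}$. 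The single algebraic fact behind everything is the \emph{compatibility identity}
$$\tilde\pi^T\bigl(\mathbb{P}^{\mu,\gamma}\bigr)=\mathbb{P}^{\mu^T,\,\gamma_{|[0,T]},\,T}\qquad\text{on }\Omega^T,$$
valid for any admissible control $\gamma$. To prove it I would test $\mathbb{P}^{\mu,\gamma}$ against a set in $\mathcal{F}^0_T$, use that the Doléans--Dade density $\mathcal{E}(\int_0^\cdot\sigma(s,X)^{-1}b(s,X,\mu,\gamma_s)\,dW_s)_T$ is $\mathcal{F}^0_T$-measurable, invoke Assumption~\ref{asmp: cons} together with the factorization $\pi^s=\pi^{T,s}\circ\pi^T$ to replace $b(s,X,\mu,\gamma_s)$ by $b(s,X,\mu^T,\gamma_s)$ for $s\le T$, and finally push the resulting expression forward along $\pi^T$, which sends $\mathbb{P}$ to $\mathbb{P}^T$ and commutes with solving \eqref{FSDE} and forming the stochastic integral on $[0,T]$.

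Applying this identity with $\gamma=\alpha$ yields the consistency conditions at once: since $X_{|[0,T]}$ and each $\alpha_t$ with $t\le T$ factor through $\pi^T$, pushing the infinite-horizon equilibrium relations \eqref{eq:MFG} forward along $\pi^T$ gives $\mu^T=\tilde\pi^T(\mathbb{P}^{\mu,\alpha}\circ X^{-1})=\mathbb{P}^{\mu^T,\alpha^T,T}\circ(X_{|[0,T]})^{-1}$ and, for a.e.\ $t\in[0,T]$, $q^T_t=q_t=\mathbb{P}^{\mu^T,\alpha^T,T}\circ\alpha_t^{-1}$, which are exactly what is required of $(\mu^T,q^T,\alpha^T)$.

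For the near-optimality, the compatibility identity combined with the consistency of $f$ from Assumption~\ref{asmp: cons} (so that $f(s,X,\mu,q_s,a)=f(s,X,\mu^T,q^T_s,a)$ for $s\le T$) gives, for every admissible $\gamma$,
$$J^{\mu^T,q^T,T}\bigl(\gamma_{|[0,T]}\bigr)=\mathbb{E}^{\mu,\gamma}\Bigl[\int_0^T e^{-\lambda s}f(s,X,\mu,q_s,\gamma_s)\,ds\Bigr]=J^{\mu,q}(\gamma)-\mathbb{E}^{\mu,\gamma}\Bigl[\int_T^\infty e^{-\lambda s}f(s,X,\mu,q_s,\gamma_s)\,ds\Bigr],$$
where, by Assumption~\ref{asmp: stnd}(ii), the tail term is at most $\int_T^\infty e^{-\lambda s}M\,ds=\tfrac{M}{\lambda}e^{-\lambda T}$ in absolute value. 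Taking $\gamma=\alpha$ and using $J^{\mu,q}(\alpha)=V^{\mu,q}$ gives $J^{\mu^T,q^T,T}(\alpha^T)\ge V^{\mu,q}-\tfrac{M}{\lambda}e^{-\lambda T}$. Conversely, for any $\beta\in\mathbb{A}^T$ I would choose an admissible $\gamma\in\mathbb{A}$ with $\gamma_{|[0,T]}=\beta$ — immediate if $\mathbb{A}^T$ is understood as the set of restrictions to $[0,T]$ of admissible infinite-horizon controls, and otherwise after a routine modification of $\beta$ on a $\mathbb{P}^T\times dt$-null set relating $\mathbb{A}^T$ to a subset of $\mathbb{A}$; the displayed identity applied to this $\gamma$, together with $J^{\mu,q}(\gamma)\le V^{\mu,q}$, gives $J^{\mu^T,q^T,T}(\beta)\le V^{\mu,q}+\tfrac{M}{\lambda}e^{-\lambda T}$, hence $V^{\mu^T,q^T,T}\le V^{\mu,q}+\tfrac{M}{\lambda}e^{-\lambda T}$. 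Subtracting the two bounds yields $V^{\mu^T,q^T,T}-J^{\mu^T,q^T,T}(\alpha^T)\le\tfrac{2M}{\lambda}e^{-\lambda T}$, which is the claim.

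I expect the main obstacle to be bookkeeping rather than anything conceptual: one must handle with care the interplay between the locally completed filtration $\mathbb{F}$, its finite-horizon analogue $\mathbb{F}^T$, and the identification conventions — both when asserting that the density, $X_{|[0,T]}$ and $\alpha_t$ factor through $\pi^T$, and when passing from a finite-horizon admissible control to an admissible infinite-horizon one. Everything else reduces to the elementary estimate $\int_T^\infty e^{-\lambda s}M\,ds=\tfrac{M}{\lambda}e^{-\lambda T}$, used once on the upper bound for $V^{\mu^T,q^T,T}$ and once on the lower bound for $J^{\mu^T,q^T,T}(\alpha^T)$; this is what produces both the constant $2M/\lambda$ and the exponential rate $e^{-\lambda T}$.
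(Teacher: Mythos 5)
Your proof is correct, and it diverges from the paper's at the step establishing $V^{\mu^T,q^T,T} \leq V^{\mu,q} + \frac{M}{\lambda}e^{-\lambda T}$. The paper obtains this through the BSDE characterization: it identifies $V^{\mu^T,q^T,T} = \int\hat{Y}^T_0\,d\upsilon$ and $V^{\mu,q} = \int\tilde{Y}_0\,d\upsilon$ where $\hat{Y}^T$ and $\tilde{Y}$ solve the finite-horizon resp.\ infinite-horizon optimal BSDEs, notes via Assumption \ref{asmp: cons} that the two BSDEs share the same driver on $[0,T]$ (so $\hat{Y}^T$ is the finite-horizon approximation of $\tilde{Y}$), and invokes Lemma \ref{lem: YZ aprx} to conclude $|\tilde{Y}_0 - \hat{Y}^T_0| \leq \frac{M}{\lambda}e^{-\lambda T}$ almost surely. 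You bypass the BSDE machinery: for any $\beta \in \mathbb{A}^T$ you take an infinite-horizon extension $\gamma \in \mathbb{A}$, write $J^{\mu^T,q^T,T}(\beta) = J^{\mu,q}(\gamma) - \mathbb{E}^{\mu,\gamma}\bigl[\int_T^\infty e^{-\lambda s} f\,ds\bigr]$, bound the tail by $\frac{M}{\lambda}e^{-\lambda T}$, use $J^{\mu,q}(\gamma)\leq V^{\mu,q}$, and take the supremum over $\beta$. This is a genuinely more elementary argument, relying only on boundedness of $f$ and the compatibility identity $\tilde{\pi}^T(\mathbb{P}^{\mu,\gamma}) = \mathbb{P}^{\mu^T,\gamma_{\vert[0,T]},T}$, and it sidesteps Lemma \ref{lem: YZ aprx} and the finite-horizon BSDE $\hat{Y}^T$ entirely. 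The paper's BSDE route has the advantage of being uniform with the machinery that Theorems \ref{thm: cnv inft} and \ref{thm: cnv rate} genuinely need, where $Z$-level stability estimates are indispensable and cannot be replaced by a tail bound; but for the statement at hand your direct value-function comparison is the cleaner, more self-contained proof. Your treatment of the fixed-point conditions via the compatibility identity and the lower bound $J^{\mu^T,q^T,T}(\alpha^T) \geq V^{\mu,q} - \frac{M}{\lambda}e^{-\lambda T}$ coincide with the paper's.
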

Recall that $M$ was a bound for the non discounted running reward as introduced in Assumption \ref{asmp: stnd}.
\begin{remark}
    Observe that one can 
    revert the above result to start from 
    finite horizon equilibria $(\mu^T, q^T,\alpha^T)$.
    If there exists an infinite horizon continuation $(\mu, q)$ and $\alpha$ such that $\tilde{\pi}^T(\mu) = \mu^T$, $q_{\vert [0, T]} = q^T$ and $\alpha_{\vert [0, T]} = \alpha^T$ with $\mu = \mathbb{P}^{\mu, \alpha} \circ X^{-1}$ and $q_t = \mathbb{P}^{\mu, \alpha} \circ \alpha_t^{-1}$ (which exists under the previous existence assumptions), then $(\mu, q)$ is an $\frac{2M}{\lambda}e^{-\lambda T}$-mean field equilibrium in the same sense as above.
\end{remark}

The second question we discuss is how solutions of infinite horizon mean field games can be approximated by solutions of finite horizon mean field games. 
Here, for a sequence of finite horizons $T_1 < T_2 < \cdots$ that diverge to $\infty$, we want to use the respective solutions of the finite horizon mean field game to approximate a solution of the infinite horizon game.
To relate solution $(\mu^n, q^n, \alpha^n)$ of the game on horizon $T_n$ to infinite horizon ones, we first extend them onto infinite horizon. To do so, we define $\overline{q}^n := q^n_{\cdot \wedge T_n}$ and $\overline{\alpha}^n := \alpha_{\cdot \wedge T_n}$. To extend $\mu^n$, note that because of the fixed point property, we have $\mu^n = \mathbb{P}^{\mu^n, \alpha^n, T}$. By considering $\frac{d\mathbb{P}^{\mu, \alpha^T, T}}{d\mathbb{P}^T}$ as a $\mathcal{F}_T$ measurable random variable on $\Omega$, we can define the extended measure $\mathbb{P}^{\mu^n, \alpha^n} \in \mathcal{P}(\Omega)$ via $\frac{d\mathbb{P}^{\mu^n, \alpha^n}}{d\mathbb{P}} = \frac{d\mathbb{P}^{\mu, \alpha^T, T}}{d\mathbb{P}^T}$ and define $\overline{\mu}^n = \mathbb{P}^{\mu^n, \alpha^n} \circ X^{-1}$. Then, $\tilde{\pi}^{T_n}(\overline{\mu}^n) = \mathbb{P}^{\mu^n, \alpha^n} \circ (X_{T_n})^{-1} = \mu^n$.
We will show that the sequence $(\overline{\mu}^n, \overline{q}^n, \overline{\alpha}^n)$ belongs to a compact set and that there must exist a converging subsequence. For the space of $\mathcal{P}(A)$ valued flows of measures, we consider the topology of $dt$ a.e.\ convergence in the weak topology on $\mathcal{P}(A)$.

It is thus reasonable to think of these subsequential limits as solutions of the infinite horizon problem.
In fact, 
when the infinite horizon mean field equilibrium is unique, there is even no need to pass to a subsequence.\par

\begin{theorem}\label{thm: cnv inft}
    Let the assumptions \ref{asmp: stnd}; \ref{asmp: cont}, \ref{asmp: sngl} and \ref{asmp: cons} be satisfied.
    For any diverging sequence of time horizons $0<T_1<T_2<\cdots$, for each $n$ the mean field game with finite horizon $T_n$ admits a mean field equilibrium $(\mu^n, q^n,\alpha^n)$.
    The sequence of measures $(\overline{\mu}^n, \overline{q}^n)$ is compact. Further, for any accumulation point $(\mu,q)$ of $(\overline \mu^n,\overline q^n)$ there is an optimal control $\alpha$ such that $(\mu, q,\alpha)$ is a mean field equilibrium for the infinite horizon discounted mean field game.
    Moreover, for a converging subsequence $(\overline{\mu}^n, \overline{q}^n) \rightarrow (\mu, q)$, we have
    $$\lim_{n \rightarrow \infty} V^{\mu^n, q^n,T_n} = V^{\mu, q}, \; \; \Vert \overline{\alpha}^n - \alpha \Vert_\mathbb{A} \rightarrow 0.$$
    If the infinite horizon game admits a unique mean field equilibrium, then the full sequence $(\bar\mu^n, \bar q^n)$ converges to the mean field equilibrium for the infinite horizon problem.
\end{theorem}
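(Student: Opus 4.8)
The plan is to combine a compactness argument for the finite-horizon equilibria with a stability analysis for the infinite-horizon BSDEs attached to the underlying control problems, reusing the scheme from the proof of Theorem~\ref{thm: exst E}. That fixed-point argument applies verbatim to the weakly formulated finite-horizon game, so each $(\mu^n,q^n,\alpha^n)$ exists; we take $\alpha^n$ to be the feedback control $\alpha^n_t=\hat a(t,X,\mu^n,\tilde Z^n_t)$ given by the unique maximizer of the Hamiltonian (Assumption~\ref{asmp: sngl}), where $(\tilde Y^n,\tilde Z^n)$ solves the associated BSDE on $[0,T_n]$ with zero terminal datum and $\tilde Y^n_0=V^{\mu^n,q^n,T_n}$. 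Note that by Assumption~\ref{asmp: stnd}(iv) the maximizer, and hence $\hat a$, does not depend on $q$.

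For compactness I would argue as follows. For $n$ large enough that $T_n\ge T$, the restriction of $\mathbb{P}^{\mu^n,\alpha^n}$ to $\mathcal{F}^0_T$ has density $\mathcal{E}(\int_0^\cdot\sigma(s,X)^{-1}b(s,X,\mu^n,\alpha^n_s)\,dW_s)_T$ with respect to $\mathbb{P}$, which by Assumption~\ref{asmp: stnd}(i) is bounded in $L^p(\mathbb{P})$ for every $p$, uniformly in $n$; hence $\{\tilde\pi^T(\overline\mu^n)\}_n$ is uniformly integrable with respect to $\mathbb{P}\circ(X_{\vert[0,T]})^{-1}$ and relatively compact in the setwise topology on $\mathcal{P}(\mathcal{C}^T)$. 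Passing to the inverse limit (Section~\ref{sect: topo}) yields relative compactness of $\{\overline\mu^n\}_n$ in $(\mathcal{P}(\mathcal{C}),\tau)$, while compactness of $\mathcal{P}(A)$ gives relative compactness of $\overline q^n$ on each finite interval in the stable sense; a diagonal extraction over $T_k\to\infty$ then produces a subsequence along which $\overline\mu^n\to\mu$ in $\tau$ and $\overline q^n\to q$ stably.

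Next, working under the reference measure $\mathbb{P}$ throughout, $V^{\mu,q}$ is the initial value $Y_0$ of the infinite-horizon BSDE with driver $(t,y,z)\mapsto H^*(t,X,\mu,q_t,z)-\lambda y$, where $H^*$ is the maximized Hamiltonian: the $-\lambda y$ term supplies the strict monotonicity required by the theory of Appendix~\ref{apdx: IBSDE}, the drivers are uniformly Lipschitz in $z$ by Assumption~\ref{asmp: stnd}(i), the maximized part converges for every $t$ and $z$ by Assumption~\ref{asmp: cont}, and the $q$-dependence enters only additively through the bounded forcing $f_2(t,X,\mu^n,q^n_t)$, which is controlled by the stable convergence of $\overline q^n$. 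The stability results for infinite-horizon BSDEs then give convergence of the infinite-horizon solutions for data $(\overline\mu^n,\overline q^n)$ to those for $(\mu,q)$, while the truncation estimate behind Theorem~\ref{thm: eps opt} — the finite- and infinite-horizon problems for fixed data differ only through a tail discounted by $e^{-\lambda T_n}$ — shows the finite-horizon solution differs from the former by $O(e^{-\lambda T_n})$. Combining, $V^{\mu^n,q^n,T_n}\to V^{\mu,q}$ and $\tilde Z^n\to Z$; since $\hat a$ is continuous (Berge's maximum theorem, by Assumption~\ref{asmp: sngl}), $\overline\alpha^n_t\to\alpha_t:=\hat a(t,X,\mu,Z_t)$ for a.e.\ $(t,\omega)$ along a subsequence, hence $\|\overline\alpha^n-\alpha\|_{\mathbb{A}}\to0$ by bounded convergence and compactness of $A$, and $\alpha$ is the optimal control for $(\mu,q)$ as the feedback generated by the Hamiltonian maximizer.

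It remains to verify the fixed-point relations and the uniqueness statement. By Assumption~\ref{asmp: cont}, the convergences just obtained, and the consistency Assumption~\ref{asmp: cons}, $b(s,X,\mu^n,\alpha^n_s)\to b(s,X,\mu,\alpha_s)$ in $ds\otimes\mathbb{P}$, so the stochastic integrals converge in probability and, by the uniform $L^p$-bound, $\mathcal{E}(\int_0^\cdot\sigma(s,X)^{-1}b(s,X,\mu^n,\alpha^n_s)\,dW_s)_T\to\mathcal{E}(\int_0^\cdot\sigma(s,X)^{-1}b(s,X,\mu,\alpha_s)\,dW_s)_T$ in $L^1(\mathbb{P})$ for each $T$. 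Testing against $\phi\in B(\mathcal{C})$ depending only on $[0,T]$, and recalling $\overline\mu^n=\mathbb{P}^{\mu^n,\alpha^n}\circ X^{-1}$ with $\overline\mu^n\to\mu$ in $\tau$, passing to the limit in $\int\phi\,d\overline\mu^n=\mathbb{E}[\phi(X)\,\mathcal{E}(\cdots)_T]$ gives $\int\phi\,d\mu=\int\phi\,d(\mathbb{P}^{\mu,\alpha}\circ X^{-1})$, and Hausdorffness of $\tau$ forces $\mu=\mathbb{P}^{\mu,\alpha}\circ X^{-1}$. The same computation with bounded continuous test functions on $A$ and $\overline\alpha^n_t\to\alpha_t$ a.e.\ gives $q_t=\mathbb{P}^{\mu,\alpha}\circ\alpha_t^{-1}$ for a.e.\ $t$ and upgrades $\overline q^n\to q$ to $dt$-a.e.\ weak convergence, so $(\mu,q,\alpha)$ is an infinite-horizon mean field equilibrium; if it is unique, every subsequence of $(\overline\mu^n,\overline q^n)$ has a further subsequence converging to it, so the whole sequence converges. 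The main obstacle is the BSDE stability step: one needs a stability estimate for infinite-horizon BSDEs with merely monotone drivers that tolerates convergence of the measure argument only in the weak topology $\tau$ and of the flow $q^n$ only stably, together with a uniform control of the finite-horizon truncation error — this is exactly where the BSDE machinery of Appendix~\ref{apdx: IBSDE}, the strict discounting $\lambda>0$, and the uniqueness of the Hamiltonian maximizer (preventing the stable limit of $\overline q^n$ from being genuinely relaxed) are all essential.
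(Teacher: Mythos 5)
Your proposal is correct and follows essentially the same route as the paper: compactness of $(\overline\mu^n,\overline q^n)$ via $\mathcal{Q}\times\mathcal{M}$, convergence of the value and control through the two-step BSDE estimate (truncation error from Lemma~\ref{lem: YZ aprx} plus stability from Lemma~\ref{lem: inft bsde stab}, exactly as in Lemma~\ref{lem: cntrcont}), continuity of the unique Hamiltonian maximizer via Berge, and passage to the limit in the fixed-point relations. The only noteworthy difference is the finite-horizon existence step, where you assert the fixed-point argument ``applies verbatim'' while the paper instead embeds the finite-horizon game into an infinite-horizon one by appending the penalty $-\int_{T_n}^\infty e^{-\lambda s}\|\alpha_s-a_0\|_A\,ds$ and invoking Theorem~\ref{thm: exst E}, which is more self-contained since the paper's existence machinery is set up only in infinite horizon.
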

The previous qualitative convergence result is stated under fairly general assumptions. 
In particular, uniqueness is not required. 

In many applications, it is important to quantify the convergence rate to the infinite horizon problem.
It turns out that the Lasry-Lions monotonicity conditions, in combination with fairly mild additional concavity and Lipschitz assumptions, is the proper structural condition allowing to provide a convergence rate.
It is well known that, in addition to guaranteeing uniqueness, this monotonicity condition further admits a stabalizing effect. This observarion is notably exploited by Cardaliguet, Delarue, Lasry and Lions \cite{cardaliaguet19} to solve the convergence problem in mean field games (in finite horizon). 
Further, it is possible to weaken the Lasry-Lions monotonicity condition in a sense that we discuss below.\par
\begin{assumption}[Convergence rates]\label{asmp: cnv rate}
    \begin{itemize}
        \item[(i)]$b$ is independent of $\mu$. 
        \item[(ii)]$f$ can be separated into the form $f(t, x, \mu, q, a) = f_1(t, x, \mu) + f_2(t, \mu, q) + f_3(t, x, a)$ for bounded functions $f_1$, $f_2$ and $f_3$.
        \item[(iii)]The redefined Hamiltonian $h(t, x, z, a) = e^{-\lambda t} f_3(t, x, a) + z^\top \sigma^{-1}b(t, x, a)$ at time $t$ is $e^{-\lambda t}m$-strongly concave in $a$ for some $m > 0$, i.e.\ for any $ x, z$, i.e.\ $a \mapsto h(t, x, z, a) + \frac{e^{-\lambda t} m}{2}\Vert a \Vert_A^2$ is concave.
        \item[(iv)]For all $t \geq 0$ and $\mu, \mu' \in \mathcal{P}(\mathcal{C})$, we have
        $$\int_\mathcal{C} (f_1(t, x, \mu) - f_1(t, x, \mu'))(\mu - \mu')(dx) \leq \delta (\mathcal{H}(\tilde{\pi}^t(\mu), \tilde{\pi}^t(\mu'))+ \mathcal{H}(\tilde{\pi}^t(\mu'), \tilde{\pi}^t(\mu)))$$
        for some $0 \leq \delta < \frac{m\lambda }{2L^2}$.
        \item[(v)]For any $t, x$, the map $\sigma^{-1}b$ is $L$-Lipschitz in $a$.
        \item[(vi)]$\Vert \cdot \Vert_A$ is induced by an inner product.
    \end{itemize}
\end{assumption}
Observe that the assumed monotonicity for each time is stronger than the integrated one that could be formulated in the spirit of Assumption \ref{asmp: unq} in $(iv)$. The benefit of this assumption is that we can integrate the inequality over arbitrary time intervals, not just the whole time horizon. Comparing assumption \ref{asmp: cnv rate} with assumption \ref{asmp: unq}, we can see that for $\delta = 0$, the new assumptions immediately imply uniqueness by Theorem \ref{thm: unq}, and for $\delta > 0$, this condition is weaker than the classical Lasry-Lions monotonicity condition\par
The last assumption is merely to ensure that the notion of strong concavity is compatible with our computations. Note however that we do not need to impose any differentiability on $h$.
\begin{theorem}\label{thm: cnv rate}
    Let the assumptions \ref{asmp: stnd}; \ref{asmp: cons} and \ref{asmp: cnv rate} be satisfied.
    Let $(\mu,q, \alpha)$ be a solution to the discounted infinite horizon mean field game and for any $T > 0$, let $(\mu^T, q^T, \alpha^T)$ be a solution to the mean field game \eqref{fnt MFG} with finite horizon $T$. Then,
    $$\mathcal{H}(\tilde{\pi}^{T, t}(\mu^T), \tilde{\pi}^t(\mu)) + \mathcal{H}( \tilde{\pi}^t(\mu), \tilde{\pi}^{T, t}(\mu^T)) \leq \frac{2Me^{-\lambda (T-t)}}{\frac{m\lambda}{2 L^2} - \delta}$$
    and
    $$(\mathbb{E}^{\alpha} + \mathbb{E}^{\alpha^T})\left[\int_0^T e^{-\lambda s}\Vert \alpha_s - \alpha^T_s\Vert_A^2 ds\right]\leq\left(1 + \frac{\delta}{\frac{m\lambda}{2L^2} - \delta}\right)\frac{8Me^{-\lambda T}}{\lambda m}$$
    as well as
    $$\int_0^T e^{-\lambda s} \mathcal{W}_1(q_s, q^T_s)^2 ds\leq \left(\frac{8}{m} + \frac{1}{\frac{m\lambda}{2L^2} - \delta}\left(C_A^2 + \frac{8\delta}{m}\right)\right)\frac{Me^{-\lambda T}}{\lambda}$$
    with $C_A$ being an upper bound of $\Vert \cdot \Vert_A$ on $A$.\par
    In particular, any solution to the infinite horizon game or the finite horizon game has to be unique.
\end{theorem}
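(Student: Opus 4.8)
The proof follows the classical monotonicity/stability mechanism for mean field games, transported to the weak infinite-horizon setting. The basic ingredient is a quadratic suboptimality estimate for the discounted control problem: with $(\mu,q)$ fixed and $\alpha$ its optimal control, for every $\beta\in\mathbb{A}$
\[
V^{\mu,q}-J^{\mu,q}(\beta)\ \geq\ \frac m2\,\mathbb{E}^{\mu,\beta}\Big[\int_0^\infty e^{-\lambda s}\|\alpha_s-\beta_s\|_A^2\,ds\Big],
\]
and the analogous bound on $[0,T]$ for the game \eqref{fnt MFG}. I would obtain it through the value process $Y$ of the infinite-horizon BSDE of Appendix \ref{apdx: IBSDE}, whose driver is the pointwise supremum of the Hamiltonian: under $\mathbb{P}^{\mu,\beta}$ the process $Y_\cdot+\int_0^\cdot e^{-\lambda s}f(s,X,\mu,q_s,\beta_s)\,ds$ is a supermartingale with drift rate $-(h(s,X,Z_s,\alpha_s)-h(s,X,Z_s,\beta_s))$, the $f_1$- and $f_2$-parts cancelling thanks to the separated form in Assumption \ref{asmp: cnv rate}(ii); since $\alpha_s$ maximises $h(s,X,Z_s,\cdot)$, Assumptions \ref{asmp: cnv rate}(iii),(vi) give $h(s,X,Z_s,\alpha_s)-h(s,X,Z_s,\beta_s)\geq \tfrac{e^{-\lambda s}m}{2}\|\alpha_s-\beta_s\|_A^2$, and integrating the drift together with the (super)martingale property yields the estimate.

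Next comes a cross plug-in: I would extend $\alpha^T$ to an admissible $\bar\alpha^T\in\mathbb{A}$ (arbitrary on $(T,\infty)$), apply the estimate above to the infinite-horizon problem with $\beta=\bar\alpha^T$ and to the horizon-$T$ problem with $\beta=\alpha_{\vert[0,T]}$, add the two, and use $V^{\mu,q}=J^{\mu,q}(\alpha)$, $V^{\mu^T,q^T,T}=J^{\mu^T,q^T,T}(\alpha^T)$. Since $b$ is independent of $\mu$ (Assumption \ref{asmp: cnv rate}(i)), the change of measure on $\mathcal{F}_T$ induced by $\bar\alpha^T$ is the one defining the finite-horizon equilibrium, and the one induced by $\alpha_{\vert[0,T]}$ there is $\mathbb{P}^{\mu,\alpha}$ restricted to $\mathcal{F}_T$; hence the two right-hand sides combine into $\tfrac m2\,\mathcal{D}_T$, with $\mathcal{D}_T:=(\mathbb{E}^\alpha+\mathbb{E}^{\alpha^T})\big[\int_0^T e^{-\lambda s}\|\alpha_s-\alpha^T_s\|_A^2\,ds\big]$. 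For the left-hand side I would expand $f=f_1+f_2+f_3$ and insert the fixed-point identities $\mu=\mathbb{P}^{\mu,\alpha}\circ X^{-1}$, $\mu^T=\mathbb{P}^{\mu^T,\alpha^T}\circ (X_{\vert[0,T]})^{-1}$ and the consistency Assumption \ref{asmp: cons}: the $f_2$-terms drop (no $x$), the $f_3$-terms on $[0,T]$ cancel in pairs (within each difference the control and the controlling measure are identical), and the tail $[T,\infty)$ of the first difference is bounded in modulus by $\tfrac{2M}{\lambda}e^{-\lambda T}$ using $|f|\leq M$. What survives is $\int_0^T e^{-\lambda s}\int_{\mathcal{C}^s}\big(f_1(s,x,\tilde\pi^s\mu)-f_1(s,x,\tilde\pi^{T,s}\mu^T)\big)\big(\tilde\pi^s\mu-\tilde\pi^{T,s}\mu^T\big)(dx)\,ds$, which the weak monotonicity Assumption \ref{asmp: cnv rate}(iv) bounds by $\delta\int_0^T e^{-\lambda s}\big(\mathcal{H}(\tilde\pi^s\mu,\tilde\pi^{T,s}\mu^T)+\mathcal{H}(\tilde\pi^{T,s}\mu^T,\tilde\pi^s\mu)\big)\,ds$.

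Writing $a_s:=\tilde\pi^s(\mu)$ and $b_s:=\tilde\pi^{T,s}(\mu^T)$, these are the laws on $\mathcal{C}^s$ of the canonical path under $\mathbb{P}^{\mu,\alpha}$ and $\mathbb{P}^{\mu^T,\alpha^T}$ — two measures that on $\mathcal{F}^0_s$ differ only through the drift — so the entropy of the relevant exponential martingale together with the data-processing inequality give $\mathcal{H}(a_s,b_s)+\mathcal{H}(b_s,a_s)\le \tfrac12(\mathbb{E}^\alpha+\mathbb{E}^{\alpha^T})[\int_0^s|\sigma^{-1}b(r,X,\alpha_r)-\sigma^{-1}b(r,X,\alpha^T_r)|^2\,dr]\le \tfrac{L^2}{2}(\mathbb{E}^\alpha+\mathbb{E}^{\alpha^T})[\int_0^s\|\alpha_r-\alpha^T_r\|_A^2\,dr]$ by Assumption \ref{asmp: cnv rate}(v). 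Inserting this into the inequality of the previous paragraph and applying Fubini to the resulting $\int_0^T e^{-\lambda s}\int_0^s(\cdots)\,dr\,ds\le \tfrac1\lambda\mathcal{D}_T$ produces $(\tfrac m2-\tfrac{\delta L^2}{2\lambda})\mathcal{D}_T\le \tfrac{2M}{\lambda}e^{-\lambda T}$; since $\delta<\tfrac{m\lambda}{2L^2}$ the coefficient is strictly positive, which gives the control estimate for $\mathcal{D}_T$ of the stated form. Feeding this back through $\mathcal{H}(a_s,b_s)+\mathcal{H}(b_s,a_s)\le \tfrac{L^2}{2}e^{\lambda s}\mathcal{D}_T$ yields the turnpike entropy estimate; for the control-marginal estimate I would use $\mathcal{W}_1(q_s,q^T_s)\le \mathbb{E}^\alpha[\|\alpha_s-\alpha^T_s\|_A]+\mathcal{W}_1(\mathbb{P}^\alpha\circ(\alpha^T_s)^{-1},\mathbb{P}^{\alpha^T}\circ(\alpha^T_s)^{-1})$, bound the first term by the trivial coupling and the second by $2C_A$ times a total variation distance controlled through Pinsker by $\mathcal{H}(a_s,b_s)$, and integrate against $e^{-\lambda s}$ using $\int_0^T e^{-\lambda s}\mathbb{E}^\alpha[\|\alpha_s-\alpha^T_s\|_A^2]\,ds\le\mathcal{D}_T$ and a Fubini bound $\int_0^T e^{-\lambda s}\mathcal{H}(a_s,b_s)\,ds\le\tfrac{L^2}{2\lambda}\mathcal{D}_T$. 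Finally, running this scheme with two infinite-horizon (respectively two horizon-$T$) equilibria in place of the finite/infinite pair removes the $\tfrac{2M}{\lambda}e^{-\lambda T}$ tail altogether, so the same chain forces $\mathcal{D}_\infty=0$ (respectively $\mathcal{D}_T=0$), hence $\alpha=\alpha'$, $\mu=\mu'$, $q=q'$: this is the claimed uniqueness.

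The main difficulty is the bookkeeping in the identification step — matching the expectations across the finite/infinite-horizon boundary (precisely where "$b$ independent of $\mu$" and the separated structure of $f$ are used) and verifying that all $f_3$-contributions cancel pairwise, leaving only the monotone $f_1$-pairing and the exponentially small tail. The other delicate point is closing the resulting Gronwall-type inequality: the naive bound $\int_0^T(\cdots)\,ds\le T\,(\cdots)|_{s=T}$ would destroy the exponential rate, so one must instead spend the discount factor via Fubini to land a coefficient strictly below $\tfrac m2$ in front of $\mathcal{D}_T$. A more routine technical point, in the first step, is that the candidate supermartingale and the accompanying stochastic integral are genuinely (super)martingales, which follows from the uniform bounds in Assumption \ref{asmp: stnd} and the integrability afforded by the infinite-horizon BSDE theory.
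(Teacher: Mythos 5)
Your argument is correct and, while phrased differently, traces the same underlying structure as the paper's. The paper's proof is BSDE-centred: it writes the difference of the infinite- and finite-horizon value-process BSDEs, applies Girsanov under both $\mathbb{P}^\alpha$ and $\mathbb{P}^{\alpha^T}$, and arrives at the cross-term identity \eqref{wlldiff}; the Hamiltonian differences on the right are then lower-bounded by $\frac{m}{4}\mathcal{D}_T$ via Lemma \ref{lem: strcv}, and the estimate is closed with an explicit integration-by-parts decomposition of $\mathcal{D}_T$ to isolate the endpoint term $e^{-\lambda t}(\mathcal{H}+\mathcal{H})$. Your presentation replaces the BSDE bookkeeping with a modular quadratic-suboptimality estimate plus a cross plug-in after extending $\alpha^T$ arbitrarily to $[T,\infty)$, which is a cleaner way to organize the ``$b$ independent of $\mu$'' and consistency cancellations; and you close the estimate via Fubini rather than integration by parts. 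Both of these are equivalent to the paper's steps after unwinding, and your combined tail-$f_1$, tail-$f_3$ accounting correctly reassembles into $(\mathbb{E}^\alpha - \mathbb{E}^{\bar\alpha^T})[\int_T^\infty e^{-\lambda s} f\,ds]$, matching the paper's $(\mathbb{E}^\alpha-\mathbb{E}^{\alpha^T})[Y_T]$. One genuine (favourable) discrepancy: you invoke the optimal strong-concavity lower bound $h(z,\alpha)-h(z,\beta)\ge\frac{e^{-\lambda t}m}{2}\|\alpha-\beta\|_A^2$ (obtained by letting $t\to 0$ in the convexity inequality), whereas the paper's Lemma \ref{lem: strcv} sets $t=\tfrac12$ and only obtains $\frac{m}{4}$; your route therefore yields strictly sharper constants, which of course imply the stated bounds. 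Your uniqueness remark is also more explicit than the paper's: running the same chain with two equilibria of equal horizon removes the $\frac{2M}{\lambda}e^{-\lambda T}$ tail and forces $\mathcal{D}=0$, which is exactly what the paper leaves implicit.
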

The reason why we have stated the first bound in Theorem \ref{thm: cnv rate} in terms of the relative entropy is that it allows to deduce a variety of other bounds. 
For instance, if the the law of the solution for the finite or the infinite horizon mean field games satisfies a transportation cost inequality (see e.g.\ \cite{TangpiTI} and \cite{FBSDETI}), then the relative entropy serves as an upper bound for the Wasserstein distance.\par
For the last bound, as one would expect from the other proofs, convergence of the law of the control variable only occurs weakly. For our convenience, we provide the bound in terms of the $1$-Wasserstein metric on $\mathcal{P}(A)$ over the metric space $(A, \Vert \cdot \Vert_A)$, but it could again be rewritten in terms of any other metric metrizing the weak topology on $\mathcal{P}(A)$ such as the Bounded Lipschitz metric from \cite{Dudley} or (as $\Vert \cdot \Vert$ is bounded) any Wasserstein metric (see \cite[Theorem 5.5]{CarmonaDelarue}).\par
Further for $\delta > 0$, note that this result incorporates a novel uniqueness result that does not require the classic Lasry-Lions monotonicity, but only a weaker form.\par
Given the previous setting, we demonstrate the usefulness of the entropy formulation by rewriting it as a Theorem concerning the total variation distance.\par

\begin{corollary}\label{cor: cnv rate}
    Under the assumptions and notation of Theorem \ref{thm: cnv rate}, for any $0 \leq t \leq T$, we have
    $$d_{TV}(\tilde{\pi}^{T, t}(\mu^T), \tilde{\pi}^t(\mu)) \leq \sqrt{\frac{M}{2(\frac{m\lambda}{2L^2} - \delta)}}e^{-\frac{\lambda}{2} (T-t)}.$$
\end{corollary}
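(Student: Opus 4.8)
The plan is to deduce this directly from the first estimate in Theorem~\ref{thm: cnv rate} via Pinsker's inequality. Recall that Pinsker's inequality states $d_{TV}(\nu, \nu')^2 \le \tfrac12 \mathcal{H}(\nu, \nu')$ for any two probability measures on a common measurable space. Applying this to $\nu = \tilde{\pi}^{T, t}(\mu^T)$ and $\nu' = \tilde{\pi}^t(\mu)$, which both live on $\mathcal{P}(\mathcal{C}^t)$, gives
$$d_{TV}(\tilde{\pi}^{T, t}(\mu^T), \tilde{\pi}^t(\mu))^2 \le \tfrac12 \mathcal{H}(\tilde{\pi}^{T, t}(\mu^T), \tilde{\pi}^t(\mu)) \le \tfrac12\left(\mathcal{H}(\tilde{\pi}^{T, t}(\mu^T), \tilde{\pi}^t(\mu)) + \mathcal{H}(\tilde{\pi}^t(\mu), \tilde{\pi}^{T, t}(\mu^T))\right),$$
where the last step just uses nonnegativity of relative entropy to symmetrize.

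Now I would plug in the bound from Theorem~\ref{thm: cnv rate}, namely that the symmetrized entropy on the right is at most $\tfrac{2Me^{-\lambda(T-t)}}{\frac{m\lambda}{2L^2} - \delta}$. This yields
$$d_{TV}(\tilde{\pi}^{T, t}(\mu^T), \tilde{\pi}^t(\mu))^2 \le \tfrac12 \cdot \frac{2Me^{-\lambda(T-t)}}{\frac{m\lambda}{2L^2} - \delta} = \frac{Me^{-\lambda(T-t)}}{\frac{m\lambda}{2L^2} - \delta}.$$
Taking square roots and using $\sqrt{e^{-\lambda(T-t)}} = e^{-\frac{\lambda}{2}(T-t)}$ gives exactly the claimed inequality $d_{TV}(\tilde{\pi}^{T, t}(\mu^T), \tilde{\pi}^t(\mu)) \le \sqrt{\frac{M}{2(\frac{m\lambda}{2L^2} - \delta)}}\, e^{-\frac{\lambda}{2}(T-t)}$, once one notices that $\sqrt{\frac{M}{\frac{m\lambda}{2L^2}-\delta}} = \sqrt{2}\sqrt{\frac{M}{2(\frac{m\lambda}{2L^2}-\delta)}}$ — wait, that introduces an extra $\sqrt 2$, so in fact one should not pre-symmetrize with the factor $\tfrac12$ but rather observe that the quantity bounded in the theorem is already the \emph{sum} of the two entropies, and a sharper route is to apply Pinsker to whichever ordering is needed and bound that single entropy by the sum. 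Either way the constant $\sqrt{M/(2(\frac{m\lambda}{2L^2}-\delta))}$ falls out after being slightly careful with the factor of two between ``one entropy'' and ``sum of two entropies.''

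There is essentially no obstacle here: the only point requiring minor care is the bookkeeping of the constant — specifically whether Pinsker is applied to a single relative entropy (bounded by the sum appearing in Theorem~\ref{thm: cnv rate}) or to the symmetrized version, and making sure the resulting numerical constant matches $\sqrt{M/(2(\frac{m\lambda}{2L^2}-\delta))}$. Since $d_{TV}$ is symmetric, $d_{TV}(\tilde{\pi}^{T,t}(\mu^T),\tilde{\pi}^t(\mu))^2 \le \tfrac12\mathcal{H}(\tilde{\pi}^t(\mu),\tilde{\pi}^{T,t}(\mu^T)) \le \tfrac12\big(\mathcal{H}(\tilde{\pi}^{T,t}(\mu^T),\tilde{\pi}^t(\mu)) + \mathcal{H}(\tilde{\pi}^t(\mu),\tilde{\pi}^{T,t}(\mu^T))\big) \le \tfrac12 \cdot \frac{2Me^{-\lambda(T-t)}}{\frac{m\lambda}{2L^2}-\delta}$ is not quite it either; rather one applies Pinsker to get $d_{TV}^2 \le \tfrac12 \mathcal H \le \tfrac14(\mathcal H + \mathcal H) \le \tfrac14 \cdot \frac{2Me^{-\lambda(T-t)}}{\frac{m\lambda}{2L^2}-\delta} = \frac{Me^{-\lambda(T-t)}}{2(\frac{m\lambda}{2L^2}-\delta)}$, using that the single entropy is at most half the symmetric sum only when... no — the single entropy is at most the full sum. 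The honest statement: $d_{TV}^2 \le \tfrac12\mathcal H(\tilde\pi^{T,t}(\mu^T),\tilde\pi^t(\mu)) \le \tfrac12\big[\text{sum}\big] \le \tfrac12 \cdot \frac{2Me^{-\lambda(T-t)}}{\frac{m\lambda}{2L^2}-\delta}$ gives constant $\sqrt{M/(\frac{m\lambda}{2L^2}-\delta)}$; to recover the stated $\sqrt{M/(2(\cdots))}$ one instead uses the \emph{symmetric} Pinsker bound $d_{TV}^2 \le \tfrac14(\mathcal H(\nu,\nu')+\mathcal H(\nu',\nu))$, which is the sharp two-sided form. Thus the clean proof is: by the symmetric form of Pinsker's inequality, $d_{TV}(\tilde\pi^{T,t}(\mu^T),\tilde\pi^t(\mu))^2 \le \tfrac14\big(\mathcal H(\tilde\pi^{T,t}(\mu^T),\tilde\pi^t(\mu)) + \mathcal H(\tilde\pi^t(\mu),\tilde\pi^{T,t}(\mu^T))\big) \le \tfrac14 \cdot \frac{2Me^{-\lambda(T-t)}}{\frac{m\lambda}{2L^2}-\delta} = \frac{Me^{-\lambda(T-t)}}{2(\frac{m\lambda}{2L^2}-\delta)}$, and taking square roots concludes. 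The whole argument is two lines modulo citing Pinsker.
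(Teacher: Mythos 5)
Your proposal is correct and matches the paper's (unexpanded) hint that the bound is ``an immediate consequence of Pinsker's inequality.'' The final version of the argument you land on is the right one: since $d_{TV}$ is symmetric, applying Pinsker to both orderings and averaging gives
$$d_{TV}(\nu,\nu')^2 \le \tfrac12 \min\bigl(\mathcal H(\nu,\nu'),\mathcal H(\nu',\nu)\bigr) \le \tfrac14\bigl(\mathcal H(\nu,\nu')+\mathcal H(\nu',\nu)\bigr),$$
and plugging in the symmetrized-entropy bound $\frac{2Me^{-\lambda(T-t)}}{\frac{m\lambda}{2L^2}-\delta}$ from Theorem~\ref{thm: cnv rate} yields exactly $\sqrt{M/(2(\frac{m\lambda}{2L^2}-\delta))}\,e^{-\frac{\lambda}{2}(T-t)}$. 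The one thing to fix is the presentation: you should state the two-sided Pinsker bound up front (with its one-line derivation from the ordinary Pinsker inequality) rather than exhibiting the false starts in the text, since the only nontrivial content of the corollary is noticing that the naive one-sided Pinsker applied to a single entropy loses a factor of $\sqrt 2$, and the averaged form is what recovers the factor of $2$ inside the square root.
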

The proof is an immediate consequence of Pinsker's inequality.\par
\subsubsection{The invariant mean field game}
As an application of the results we have described, let us finish by discussing the "invariant mean field game". In this game, we would like to find a mean field equilibrium in which the state interaction measure stays invariant. Such a mean field game fundamentally differs from the ones we have considered so far as it becomes necessary to vary the initial distribution as well to ensure that the law of the state process stays invariant. In our previous results, the initial distribution was always held fixed which was essential to analyze the densities from our change of measures.\par
For our approach for the invariant mean field game, we need to ensure that our state process is given by a time homogeneous Markov SDE. Thus, we cannot work with path dependent coefficients as before, but instead, we will assume that $b$ and $f$ merely depend on the current state $X_s$. Further, for the invariant mean field game, we are also not interested in mean field interaction through the law of the whole path but only the time marginals. Thus, we assume that our coefficients are given in the form $b:\mathbb{R}^d \times \mathcal{P}(\mathbb{R}^d) \times A \rightarrow \mathbb{R}^d$, $\sigma: \mathbb{R}^d \rightarrow GL_d(\mathbb{R}^d)$ and $f: \mathbb{R}^d \times \mathcal{P}(\mathbb{R}^d) \times A \rightarrow \mathbb{R}^d$. In the following, we will call coefficients like this time homogeneous.\par
The way we have treated the extended mean field game does not allow us to apply our previous results to this new case without adjustments. Thus, for simplicity, we assume that there is no mean-field interaction through the law of the control.\par
As we will need to work with different initial distribution, for any $\upsilon \in \mathcal{P}(\mathbb{R}^d)$, we specify $$\mathbb{P}_\upsilon = \upsilon \times \mathbb{P}_W \in \mathcal{P}(\Omega)$$
Under $\mathbb{P}_\upsilon$, $X$ will thus be a solution of the driftless Markovian SDE
\begin{equation}\label{Mrkv SDE}
    dX_s = \sigma(X_s)dW_s
\end{equation}
with initial distribution $X_0 = \xi \sim \upsilon$. For any $\mu$ and $\alpha$, we define the change of measure $\mathbb{P}^{\mu, \alpha}_\upsilon$ with respect to $\mathbb{P}_\upsilon$ as before. For a given random variable $\xi$, we will also write $\mathbb{P}^{\mu, \alpha}_\xi$ directly. We use a similar notation for taking expectations, i.e.\ $\mathbb{E}^{\mu, \alpha}_\upsilon$ denotes taking expectations under $\mathbb{P}^{\mu, \alpha}_\upsilon$.\par
We can now define the invariant mean field game.
\begin{definition}
    We call $\mu \in \mathcal{P}(\mathbb{R}^d)$ a solution to the invariant mean field game, if there exists a control $\alpha\in \mathbb{A}$  that is a maximizer for
    $$\sup_{\alpha'\in \mathbb{A}} \mathbb{E}^{\mu, \alpha'}_\mu\left[\int_0^\infty e^{-\lambda s}f(X_s, \mu, \alpha'_s)ds\right]$$
    and we have for almost every $t \geq 0$ that $\mathbb{P}^{\mu, \alpha}_\mu \circ (X_t)^{-1} = \mu$.
\end{definition}
To analyze this game, we will base our framework on the constructions from \cite{Khasminskii}. In particular, we will be working with the following two domains. We fix two radii $0 < R' < R$ and write $\mathbb{S}'$ and $\mathbb{S}$ for the two spheres $\partial B_0(R')$ and $\mathbb{S}$ and $\mathcal{B}(\mathbb{S}')$ and $\mathcal{B}(\mathbb{S})$ for the Borel $\sigma$ algebras on them. In this setting, we assume
\begin{assumption}[Invariant mean field game]\label{asmp: erg}
    \begin{itemize}
        \item[(i)]$\sigma$ is continuous.
        \item[(ii)]The coefficients $b$, $\sigma$ and $f$ are time homogeneous as described above.
        \item[(iii)]$b$ is bounded on $B_0(R)$ by some $\Lambda > 0$ uniformly over all $\mu$ and $\alpha$. We choose $\Lambda$ such that it also bounds the spectral norms $\Vert \sigma \Vert$ and $\Vert \sigma^{-1}\Vert$.
        \item[(vi)]There is $k> 0$ such that for any $a \in A$ and $\mu \in \mathcal{P}(\mathcal{C})$, we have for any $x$ with $\Vert x \Vert \geq R'$ that $x^\top b(x, \mu, a) + \frac{1}{2}tr(\Sigma(x)) \leq -k$ with $\Sigma(x):= \sigma(x)\sigma(x)^\top$.
        \item[(v)]$b$ and $f$ are jointly continuous in $(\mu, a)$, for $\mu$ with respect to the topology of setwise convergence.
    \end{itemize}
\end{assumption}
The first four assumptions will ensure that for any $\mu, \alpha$, even if we don't start with an invariant measure, the state process will eventually end up in stationarity, with an uniformly quantifiable rate. Note that our last assumption replaces assumption \ref{asmp: cont}.\par
Further, this choice of coefficients can still be embedded within our previous framework. In fact, since for any $t \geq 0$, the map $\mathcal{C} \rightarrow \mathbb{R}^d, \omega \mapsto \omega_t$ is measurable, these coefficients could also be understood as function on $\mathcal{C}$ instead of $\mathbb{R}^d$. In particular, using this identification, we assume that assumption \ref{asmp: stnd} still holds.\par
Here and in the following, we speak of a Markov process having an invariant distribution, if its marginals are constant for all time. In that case, the common marginal distribution is called the invariant distribution. In contrast, in a more general sense, we speak of a stationary distribution as any distribution that if chosen as the initial distribution, would become an invariant distribution for the Markov process. In most of our discussed scenarios, stationary distributions arise as some limit of the marginal distributions.\par
Considering comparable sets of assumptions that deal with drift conditions ensuring the existence of stationary distributions such as in \cite{Veretennikov88, Veretennikov97, Veretennikov00, Malyshkin01}, we do not need to assume $\Vert \sigma \Vert$ or $\Vert b \Vert$ to be uniformly bounded like in these references. Under our set of weaker assumptions, we are still able to guarantee existence of a unique stationary distribution for the solution. However, we are not able to show convergence of the marginals to the stationary distribution: we can only show convergence in Césaro means. Still, this weaker convergence is sufficient to prove existence of a solution for the invariant game.
Even though we do not require uniform boundedness explicitly,  we still need to ensure that $\Vert \sigma^{-1} b\Vert$ stays uniformly bounded as assumed in assumption \ref{asmp: stnd}. These assumptions imply that $\sigma$ must satisfy
$$
-C\Vert x^\top \sigma(x)\Vert  + \frac{1}{2} tr(\Sigma(X)) \leq -k
$$
from which we see that even though $\sigma$ is allowed to be unbounded, it can at most grow linearly in $\Vert x \Vert$. Further, in this case, $\Sigma(x) b(x)$ always has to point sufficiently inwards. Let us also note that instead of using the euclidean norm as we do, the notion of distance to the origin can be generalized through the use of different Lyapunov functions as in \cite{Khasminskii, HairerMattingly, EthierKurtz}. 
\begin{theorem}\label{thm: inv mfg exst}
    Under assumptions \ref{asmp: stnd}, \ref{asmp: sngl} and \ref{asmp: erg}, there exists a solution to the invariant mean-field game.
\end{theorem}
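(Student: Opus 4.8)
The plan is to reduce the invariant game to a single Schauder-type fixed point on the space of candidate invariant measures, using the stationary mean field game only as the vehicle that packages the needed large-time estimates. Fix for the whole argument the convex set $\mathcal{K}\subset\mathcal{P}(\mathbb{R}^d)$ of all Fokker--Planck-invariant distributions of nondegenerate SDEs with bounded drift satisfying the Khasminskii condition of Assumption~\ref{asmp: erg}; the drift condition forces every such measure to put all but an exponentially small tail inside $B_0(R)$, so $\mathcal{K}$ is tight, and the uniform ellipticity of $\sigma$ on compacts together with standard parabolic regularity gives these measures densities with bounds uniform over $\mathcal{K}$, which is what makes a suitable superset of $\mathcal{K}$ compact for the (relatively strong) topology in which Assumption~\ref{asmp: erg}(v) phrases continuity of the coefficients.

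For a candidate $\mu\in\mathcal{K}$, I first solve the associated discounted infinite horizon control problem with mean field argument $\mu$ and initial law $\mu$. Because the coefficients are time homogeneous, the value process of this problem is of the form $Y_t=e^{-\lambda t}w^\mu(X_t)$ and the associated infinite horizon BSDE with monotone driver (Appendix~\ref{apdx: IBSDE}) has a control component of the form $Z_t=e^{-\lambda t}z^\mu(X_t)$; Assumption~\ref{asmp: sngl} then singles out a unique optimal control in time homogeneous Markovian feedback form $\alpha^{\mu}_t=\hat a(X_t,\mu,z^\mu(X_t))$, which is optimal among all of $\mathbb{A}$ and, being feedback, is optimal from every initial law. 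Under $\mathbb{P}^{\mu,\alpha^\mu}_\mu$ the state is therefore a time homogeneous Markov process; by Theorem~\ref{thm: ces cnv} (whose drift hypothesis holds for the feedback drift $x\mapsto b(x,\mu,\hat a(x,\mu,z^\mu(x)))$) its time marginals converge in Cés\`aro mean, at a rate uniform in $\mu$, to its unique stationary law, which I call $G(\mu)\in\mathcal{K}$.

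The map $G:\mathcal{K}\to\mathcal{K}$ is single valued precisely because the Hamiltonian maximizer is unique (Assumption~\ref{asmp: sngl}), and I claim it is continuous. If $\mu_n\to\mu$ in $\mathcal{K}$, then by Assumption~\ref{asmp: erg}(v) the drivers of the BSDEs converge, so by the stability of infinite horizon BSDEs with monotone drivers $z^{\mu_n}\to z^\mu$, and hence (Berge's maximum theorem together with uniqueness of the maximizer) $\hat a(\cdot,\mu_n,z^{\mu_n}(\cdot))\to\hat a(\cdot,\mu,z^\mu(\cdot))$; this is exactly the stability input already used for Theorems~\ref{thm: exst E} and \ref{thm: cnv inft}. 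The controlled laws then converge on every finite horizon, and since the Cés\`aro convergence rate in Theorem~\ref{thm: ces cnv} is uniform in the coefficients, the stationary laws converge, i.e.\ $G(\mu_n)\to G(\mu)$. Schauder's fixed point theorem on the compact convex set $\mathcal{K}$ now yields $\mu^*=G(\mu^*)$. For such a $\mu^*$, $\alpha^{\mu^*}$ is an optimal control for the problem with mean field argument $\mu^*$ and initial law $\mu^*$, and since $\mu^*=G(\mu^*)$ is the stationary law of the time homogeneous Markov optimally controlled process, the marginals $\mathbb{P}^{\mu^*,\alpha^{\mu^*}}_{\mu^*}\circ(X_t)^{-1}$ all equal $\mu^*$. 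Thus $\mu^*$ solves the invariant mean field game. Equivalently, one may first run the fixed point of Theorem~\ref{thm: exst E} for each initial law $\upsilon$ to solve the stationary mean field game and then a second fixed point over $\upsilon$; the route above merges the two and avoids the set-valuedness of the resulting correspondence.

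The step I expect to be the main obstacle is establishing that $G$ is a self-map of a genuinely compact convex set and is continuous for the topology in which Assumption~\ref{asmp: erg}(v) is stated. The state space $\mathbb{R}^d$ is unbounded, so compactness cannot come for free and must be extracted from the drift condition; and since $b$ and $f$ are only assumed continuous in $\mu$ for the setwise topology --- which does not even make weakly convergent sequences converge --- one must use the smoothing of the nondegenerate diffusion to upgrade weak convergence of the relevant density-bearing measures to the convergence that the coefficients respect, and one must make the Cés\`aro estimate of Theorem~\ref{thm: ces cnv} uniform enough to commute with the limit $n\to\infty$. Everything else --- the Markovian feedback representation of the optimizer and the verification that a fixed point is an invariant equilibrium --- is routine given the infinite horizon BSDE theory of Appendix~\ref{apdx: IBSDE} and Assumption~\ref{asmp: sngl}.
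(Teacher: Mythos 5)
Your route is genuinely different from the paper's and hits a real obstacle where you suspect it would. The paper does not run a fixed point over $\mathcal{P}(\mathbb{R}^d)$ at all. It keeps the initial law $\upsilon$ fixed, defines a ``stationary mean field game'' whose interaction term is the Cés\`aro limit $\mathfrak{S}(\mu)$ of the marginals, and embeds that game back into the framework of Theorem \ref{thm: exst NE} by working in the subset $\mathcal{Q}^0\subset\mathcal{Q}\subset\mathcal{P}(\mathcal{C})$ of path laws whose Cés\`aro marginals converge uniformly. Compactness of $\mathcal{Q}^0$ is then inherited from the already-established compactness of $\mathcal{Q}$ (which comes from Girsanov-density domination by a single reference measure $\mathbb{P}$, via Proposition \ref{prop: ttauT}), and closedness of $\mathcal{Q}^0$ is a soft Vitali--Hahn--Saks argument. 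The map $\mathfrak{S}:\mathcal{Q}^0\to\mathcal{P}(\mathbb{R}^d)$ only enters as a continuous pre-composition with the coefficients, never as the domain of the fixed point. Finally, the passage from the stationary game to the invariant game is not a second fixed point: it uses that the optimal control is a feedback independent of the initial law (via Proposition \ref{prop: Mkv BSDE}) and that the unique stationary distribution is independent of the initial law, so a solution of the stationary game already solves the invariant one.

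The step you flag as the main obstacle is indeed the gap. You need a compact convex subset of $\mathcal{P}(\mathbb{R}^d)$ for the setwise topology that contains all candidate invariant measures $G(\mu)$ and on which $G$ is continuous. Several things are not in place. First, the set of Fokker--Planck-invariant measures over the class of admissible drifts is not convex (mixing two invariant measures of different generators gives no invariant measure), so $\mathcal{K}$ must be replaced by a convex hull, and one then has to check that the hull is still in the domain of whatever regularity estimate is claimed. Second, and more seriously, the paper only assumes $b$ measurable and bounded on $B_0(R)$ (Assumption \ref{asmp: erg}(iii)) and continuous in $(\mu,a)$; there is no continuity in $x$. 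The ``standard parabolic regularity'' giving uniform density bounds and equicontinuity, and hence total-variation (a fortiori setwise) compactness, would require hypotheses the paper deliberately avoids; the paper's use of Harnack for merely measurable coefficients (proof of Theorem \ref{thm: stat MFG}) gives a Döblin constant $\Theta$ and Cés\`aro control, not density compactness. Third, even with density bounds on $B_0(R)$ the tails outside $B_0(R)$ would have to be controlled uniformly enough to get setwise precompactness, which is stronger than the tightness you get from the Lyapunov estimate. Without that compactness Schauder does not apply and the argument does not close. The rest of your sketch (Markovian feedback optimizer independent of the initial law, BSDE stability and Berge for continuity, uniform Cés\`aro rate used to pass to the limit, verification that a fixed point is invariant) is sound and closely parallels the ingredients the paper does use, just transplanted to a domain where the needed compactness is not available for free.
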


\section{Proofs of the existence and uniqueness results}\label{sec: prf}





This section covers the proofs of our existence and uniqueness results. 
The proofs will rely on some topological arguments that are deferred to  Section \ref{sect: topo} for the sake of readability.
Furthermore, to make the paper self-contained, in Appendices \ref{apdx: IBSDE} and \ref{apdx: setvm} we revisit some existing results on infinite horizon BSDEs and set-valued maps that are used in the proofs.

\subsection{The discounted infinite horizon control problem}\label{sect: ctrl}
For measures $\mu\in \mathcal{P}(\mathcal{C})$ and $q:[0, \infty)\to \mathcal{P}(A)$, we maximize
$$J^{\mu, q}(\alpha) := \mathbb{E}^{\mu, \alpha}\left[ \int_0^\infty e^{-\lambda s} f(s, X, \mu, q_s, \alpha_s) ds \right]$$
over $\alpha \in \mathbb{A}$. 
We consider the Hamiltonian
$$h:[0,\infty) \times \mathcal{C} \times \mathcal{P}(\mathcal{C}) \times \mathbb{R}^d \times A \rightarrow \mathbb{R}, (t, x, \mu, q, z, a)\mapsto e^{-\lambda t}f(t, x, \mu, q, a) + z^\top \sigma^{-1}b(t, x, \mu, a)$$
as well as $H(t, x, \mu, q, z):= \sup_{a \in A} h(t, x, \mu, q, z, a)$.\par
For fixed $\alpha$, let us define the expected remaining utility
$$Y^{\mu, q, \alpha}_t := \mathbb{E}^{\mu, \alpha}\left[ \int_t^\infty e^{-\lambda s} f(s, X, \mu, q_s, \alpha_s) ds \vert \mathcal{F}_t \right].$$
In particular, we have $J^{\mu, q}(\alpha) = \mathbb{E}^{\mu, \alpha}[Y^{\mu, q, \alpha}_0]$ where, since $Y^{\mu, q, \alpha}_0$ is $\mathcal{F}_0$ measurable, the expectation is taken with respect to $\upsilon$ and thus independent of the change of measure and thus, of $\mu$ and $\alpha$. Now,
$$Y^{\mu, q, \alpha}_t + \int_0^t e^{-\lambda s} f(s, X, \mu, q_s, \alpha_s) ds = \mathbb{E}^{\mu, \alpha}\left[ \int_0^\infty e^{-\lambda s} f(s, X, \mu, q_s, \alpha_s) ds \vert \mathcal{F}_t \right]$$
is by assumption a uniformly bounded $\mathbb{P}^{\mu, \alpha}$ martingale that converges $\mathbb{P}^{\mu, \alpha}$ a.s.\ and in $\mathbb{L}^1$ towards the random variable $\int_0^\infty e^{-\lambda s} f(s, X, \mu, q_s, \alpha_s) ds$ for $t \rightarrow \infty$.\par
By Proposition \ref{prop: prp}, the predictable representation property holds for our locally completed filtration $\mathbb{F}$ with respect to $W^{\mu, \alpha}$ for $(\mathbb{P}^{\mu, \alpha}, \mathbb{F})$ martingales, by the martingale representation Theorem there exists a $\mathbb{F}$ progressively measurable square integrable $Z^{\mu, q, \alpha}$ such that
$$\int_0^\cdot e^{-\lambda s} f(s, X, \mu, q_s, \alpha_s) ds = Y^{\mu, q, \alpha}_0 +  \int_0^\cdot Z^{\mu, q, \alpha}_sdW^{\mu, \alpha}_s$$
showing that $(Y^{\mu, q,\alpha}, Z^{\mu, q, \alpha})$ is a solution to the linear infinite time horizon BSDE
\begin{equation}\label{inft cntrl bsde}
    \begin{split}
        Y^{\mu, q, \alpha}_t =& Y^{\mu, q, \alpha}_T +   \int_t^T e^{-\lambda s} f(s, X, \mu, q_s, \alpha_s) ds - \int_t^T Z^{\mu, q, \alpha}_s dW^{\mu, \alpha}_s \\
        = & Y^{\mu, q, \alpha}_T +   \int_t^T h(s, X, \mu, q_s, Z^{\mu, q, \alpha}_s , \alpha_s) ds - \int_t^T Z^{\mu, q, \alpha}_s dW_s
    \end{split}
\end{equation}
holding for all $0\leq t \leq T$. 
Let us define
$$\tilde{Y}^{\mu, q, \alpha}_t := e^{\lambda t} Y^{\mu, q, \alpha}_t \quad \text{and}\quad \tilde{Z}^{\mu, q, \alpha}_t := e^{\lambda t}Z^{\mu, q, \alpha}_t. $$
Note that still, $\mathbb{E}^{\mu, \alpha}[\tilde{Y}^{\mu, q, \alpha}_0] = J^{\mu, q}(\alpha)$. Then, Itô's Lemma gives
\begin{equation*}
    \begin{split}
        d\tilde{Y}^{\mu, q, \alpha}_s = & e^{\lambda s} (-h(s, X, \mu, q_s, Z^{\mu, q, \alpha}_s, \alpha_s)ds + Z^{\mu, q, \alpha}_s dW_s) + \lambda e^{\lambda s} Y^{\mu, q, \alpha}_s ds\\
        = & -(\tilde{h}(s, X, \mu, q_s, \tilde{Z}^{\mu, q, \alpha}_s,\alpha_s) - \lambda \tilde{Y}^{\mu, q, \alpha}_s)ds + \tilde{Z}^{\mu, q, \alpha}_s dW_s
    \end{split}
\end{equation*}
where we recall that $\tilde{h}$ is given by
$$\tilde{h}(t, x, \mu, q, z, a) = f(t, x, \mu, q, a) + z^\top \sigma(t,x)^{-1}b(t, x, \mu, a).$$
Note that by construction, $\tilde{Y}^{\mu, q, \alpha}$ is uniformly bounded by $\frac{M}{\lambda}$, and we can see that $(\tilde{Y}^{\mu, q, \alpha}, \tilde{Z}^{\mu, q, \alpha})$ satisfies
\begin{equation*}
    \tilde{Y}^{\mu, q, \alpha}_t = \tilde{Y}^{\mu, q, \alpha}_T + \int_t^T( \tilde{h}(s, X, \mu, q_s, \tilde{Z}^{\mu, q, \alpha}_s, \alpha_s) - \lambda \tilde{Y}^{\mu, q, \alpha}_s )ds - \int_t^T \tilde{Z}^{\mu, q, \alpha}_s dW_s
\end{equation*}
for any $t \leq T$ and every $T>0$.\par
We define 
$$\tilde{H}(t, x, \mu, q, z):= \sup_{a \in A} \tilde{h}(t, x, \mu, q, z, a)$$ as the maximized Hamiltonian after this transformation. Since $h$ is maximized pointwise for every $t$, we have
\begin{equation*}\label{transfham}
    e^{\lambda t} H(t, x, \mu, q, z)= \sup_{a\in A} (f(t, x, \mu, q, a) + e^{\lambda t} z^\top \sigma(t,x)^{-1}b(t, x, \mu, a)) =   \tilde{H}(t, x, \mu, q, e^{\lambda t} z)
\end{equation*}
and the maximizers of $h$ and $\tilde h$ are the same. Recall that the set of maximizers of the transformed Hamiltonian is written as 
$$\mathcal{A}(t, x, \mu, q, z) := \arg\max_{a \in A} \tilde{h}(t, x, \mu, q, z, a).$$ 
Since $\tilde{h}$ is continuous in $a$ and $A$ is compact, for any $t, x, \mu, q, z$, the set of maximizers is non empty. As we assumed separability of $f$, the maximizers are independent of $q$ and we will drop the argument $q$ from our notation.\par
By Assumption \ref{asmp: stnd}$(i)$, the function $\tilde{h}$ is Lipschitz in the $z$ variable uniformly in the other variables, and thus $\tilde{H}$ is also uniformly Lipschitz in $z$ with a Lipschitz constant that does not depend on $\mu$ and $q$. We can now characterize the optimal controls with a standard optimal control argument.
\begin{proposition}\label{prop.optim.charac}
    Under Assumption \ref{asmp: stnd}, for any fixed $\mu$ and $q$, there exists a unique solution $(\tilde{Y}^{\mu, q}, \tilde{Z}^{\mu, q})$ to the infinite time horizon BSDE
    \begin{equation}\label{inft BSDE}
        \tilde{Y}^{\mu, q}_t = \tilde{Y}^{\mu, q}_T + \int_t^T( \tilde{H}(s, X, \mu, q_s, \tilde{Z}^{\mu, q}_s) - \lambda \tilde{Y}^{\mu, q}_s )ds - \int_t^T \tilde{Z}^{\mu, q}_s dW_s
    \end{equation}
    such that $\vert \tilde{Y}^{\mu, q}_t \vert \leq \frac{M}{\lambda}$ and $\mathbb{E}\left[\int_0^\infty e^{-2\lambda s} \Vert \tilde{Z}^{\mu, q}_s \Vert^2 ds \right] < \infty$. Further, $V^{\mu, q} = \mathbb{E}[\tilde{Y}^{\mu, q}_0]$, an optimal control exists, and $\alpha \in \mathbb{A}$ is optimal if and only if a.e., $\alpha_t \in \mathcal{A}(t, X, \mu, \tilde Z^{\mu,q}_t)$.
\end{proposition}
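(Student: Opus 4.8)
The plan is to prove Proposition \ref{prop.optim.charac} in three stages: (1) well-posedness of the infinite-horizon BSDE \eqref{inft BSDE} with the stated bounds, (2) the verification/comparison argument identifying $\mathbb{E}[\tilde Y^{\mu,q}_0]$ with the value $V^{\mu,q}$, and (3) the characterization of optimality via the maximizer set $\mathcal{A}$.

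For stage (1), I would invoke the infinite-horizon BSDE theory with monotone drivers recalled in Appendix \ref{apdx: IBSDE} (following Pardoux \cite{Pardoux98}, Royer \cite{Royer04}, Confortola–Cosso–Fuhrman \cite{Confortola}). The driver here is $(s,z)\mapsto \tilde H(s,X,\mu,q_s,z)-\lambda\tilde Y$, which by the discussion preceding the proposition is uniformly Lipschitz in $z$ (constant independent of $\mu,q$, coming from the bound $C$ on $\|\sigma^{-1}b\|$) and strictly monotone in the $y$-variable with monotonicity constant $-\lambda<0$ since the $y$-dependence is the linear term $-\lambda\tilde Y$; the generator evaluated at $y=z=0$, namely $\tilde H(s,X,\mu,q_s,0)=\sup_a f(s,X,\mu,q_s,a)$, is bounded by $M$ by Assumption \ref{asmp: stnd}(ii). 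These are exactly the hypotheses guaranteeing a unique solution $(\tilde Y^{\mu,q},\tilde Z^{\mu,q})$ with $\tilde Y$ bounded and $\mathbb{E}[\int_0^\infty e^{-2\lambda s}\|\tilde Z_s\|^2\,ds]<\infty$. The bound $|\tilde Y^{\mu,q}_t|\le M/\lambda$ should follow either from a comparison with the explicit solution of the BSDE with driver $\pm M-\lambda y$ (whose $Y$-component is the constant $\pm M/\lambda$), or directly by applying Itô to $e^{-\lambda(T-t)}$ times a truncation and letting $T\to\infty$; I'd use the comparison route since the appendix presumably states a comparison theorem.

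For stage (2), given any admissible $\alpha\in\mathbb{A}$, I compare the controlled pair $(\tilde Y^{\mu,q,\alpha},\tilde Z^{\mu,q,\alpha})$ derived above with $(\tilde Y^{\mu,q},\tilde Z^{\mu,q})$. Since $\tilde h(s,X,\mu,q_s,z,\alpha_s)\le\tilde H(s,X,\mu,q_s,z)$ pointwise, the driver for the controlled BSDE is dominated by the driver of \eqref{inft BSDE} in the $z$-slice at every $(s,\omega)$; the comparison theorem for infinite-horizon monotone-driver BSDEs then yields $\tilde Y^{\mu,q,\alpha}_0\le\tilde Y^{\mu,q}_0$, hence $J^{\mu,q}(\alpha)=\mathbb{E}[\tilde Y^{\mu,q,\alpha}_0]\le\mathbb{E}[\tilde Y^{\mu,q}_0]$, giving $V^{\mu,q}\le\mathbb{E}[\tilde Y^{\mu,q}_0]$. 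For the reverse inequality and existence of an optimizer, I use a measurable selection theorem (Filippov-type, using Assumption \ref{asmp: stnd}(iii) continuity in $a$ and compactness of $A$) to produce a progressively measurable $\hat\alpha_s\in\mathcal{A}(s,X,\mu,\tilde Z^{\mu,q}_s)$; by construction $\tilde h(s,X,\mu,q_s,\tilde Z^{\mu,q}_s,\hat\alpha_s)=\tilde H(s,X,\mu,q_s,\tilde Z^{\mu,q}_s)$, so the BSDE satisfied by $(\tilde Y^{\mu,q},\tilde Z^{\mu,q})$ coincides with the controlled BSDE under $\hat\alpha$; by uniqueness in stage (1) applied to the controlled (linear-in-$z$) BSDE, $\tilde Y^{\mu,q,\hat\alpha}=\tilde Y^{\mu,q}$, whence $J^{\mu,q}(\hat\alpha)=\mathbb{E}[\tilde Y^{\mu,q}_0]$ and $\hat\alpha$ is optimal.

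For stage (3): if $\alpha$ is optimal then $\tilde Y^{\mu,q,\alpha}_0=\tilde Y^{\mu,q}_0$, and since both solve infinite-horizon BSDEs whose drivers are ordered ($\tilde h(\cdot,\alpha_\cdot)\le\tilde H$) with equal terminal behavior, the strict-comparison (strict monotonicity) part of the comparison theorem forces $\tilde h(s,X,\mu,q_s,\tilde Z^{\mu,q}_s,\alpha_s)=\tilde H(s,X,\mu,q_s,\tilde Z^{\mu,q}_s)$ for $ds\otimes d\mathbb{P}$-a.e.\ $(s,\omega)$, i.e.\ $\alpha_s\in\mathcal{A}(s,X,\mu,\tilde Z^{\mu,q}_s)$ a.e.; the converse direction is the selection argument from stage (2). \textbf{The main obstacle} I anticipate is the careful handling of the infinite-horizon subtleties: justifying the strict comparison/uniqueness at infinite horizon under only the discount-induced monotonicity (no terminal condition is imposed — one compares the asymptotic behavior $e^{-\lambda T}\tilde Y_T\to 0$), and ensuring all stochastic-integral and martingale convergence arguments are valid under the locally completed filtration $\mathbb{F}$ and the measure $\mathbb{P}^{\mu,\alpha}$ — in particular that the predictable representation (Proposition \ref{prop: prp}) and the weighted $L^2$ bound on $\tilde Z$ interact correctly with the change of measure; these are precisely the points the appendix on infinite-horizon BSDEs must supply, and I would lean on it rather than re-derive them.
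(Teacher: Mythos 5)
Your stages (1) and (2) match the paper exactly: Theorem~\ref{thm: Royer} gives the well-posedness and the bounds $|\tilde Y^{\mu,q}_t|\le M/\lambda$, $\mathbb{E}[\int_0^\infty e^{-2\lambda s}\|\tilde Z^{\mu,q}_s\|^2\,ds]<\infty$; the comparison theorem of \cite{Royer04} gives $\tilde Y^{\mu,q}\ge\tilde Y^{\mu,q,\alpha}$; and the measurable maximum theorem produces a progressively measurable selector $\hat\alpha$ that is optimal, yielding $V^{\mu,q}=\mathbb{E}[\tilde Y^{\mu,q}_0]$. The gap is in stage (3). You invoke a ``strict-comparison'' theorem as a black box, but the appendix supplies only existence/uniqueness and ordinary comparison at infinite horizon; there is no off-the-shelf strict version, precisely because there is no terminal condition to anchor it. The paper instead derives the forcing mechanism by hand: it applies It\^o to $e^{-\lambda s}(\tilde Y^{\mu,q}_s-\tilde Y^{\mu,q,\alpha}_s)$ on $[0,T]$, linearizes the $z$-increment $\tilde h(\cdot,\tilde Z^{\mu,q}_s,\alpha_s)-\tilde h(\cdot,\tilde Z^{\mu,q,\alpha}_s,\alpha_s)$ by a bounded process $\beta$, introduces the Girsanov measure $\mathbb{Q}$ with density $\mathcal E(\int_0^\cdot\beta_s\,dW_s)_T$, and checks via Cauchy--Schwarz and Burkholder--Davis--Gundy that the resulting stochastic-integral term is a true $\mathbb{Q}$-martingale. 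Taking $\mathbb{Q}$-expectations --- and using that $\mathbb{Q}=\mathbb{P}$ on $\mathcal F_0$, so the left side vanishes --- yields $\mathbb{E}^\mathbb{Q}[\tilde Y^{\mu,q}_T-\tilde Y^{\mu,q,\alpha}_T]\le 0$, which combined with the a.s.\ inequality from comparison forces $\tilde Y^{\mu,q}_T=\tilde Y^{\mu,q,\alpha}_T$ a.s.\ for every $T$. Then $\int_0^\cdot(\tilde Z^{\mu,q}_s-\tilde Z^{\mu,q,\alpha}_s)\,dW_s$ is simultaneously a finite-variation process and a square-integrable martingale, hence vanishes, giving $\tilde Z^{\mu,q}=\tilde Z^{\mu,q,\alpha}$ a.e.\ and therefore $\alpha_t\in\mathcal{A}(t,X,\mu,\tilde Z^{\mu,q}_t)$ a.e. You correctly flagged the absence of a terminal condition as the obstacle, but this Girsanov/martingale argument is the content you would actually need to supply rather than appeal to a strict-comparison theorem that is not in the toolbox.
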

\begin{proof}
Under our assumptions, the driver $\tilde{H}(s, X, \mu, q_s, z) - \lambda y$ satisfies the requirements in Theorem \ref{thm: Royer}, showing existence, uniqueness, and bounds of $(\tilde{Y}^{\mu, q}, \tilde{Z}^{\mu, q})$. Now, for any $\alpha \in \mathbb{A}$, we have by an adaption of the comparison principle \cite[Theorem 2.2]{Royer04} that $\tilde{Y}^{\mu, q}_t \geq \tilde{Y}^{\mu, q, \alpha}_t$ a.s. for all $t \geq 0$. By the measurable maximum Theorem \cite[Theorem 18.19]{IDA}, there exists a progressively measurable $\hat{\alpha}^{\mu, q}$ such that a.e., $\hat{\alpha}_t \in \mathcal{A}(t, X, \mu, \tilde Z^{\mu,q}_t)$ and thus $\tilde{h}(s, X, \mu, q_s, \tilde{Z}^{\mu, q}_s, \hat{\alpha}^{\mu, q}_s) = \tilde{H}(s, X, \mu, q_s, \tilde{Z}^{\mu, q}_s)$.\par
For any $\alpha$ with this property, we have $\tilde{Y}^{\mu, q}_0 = \tilde{Y}^{\mu, q, \alpha}_0$, thus $V^{\mu, q} = \mathbb{E}[\tilde{Y}^{\mu, q}_0]$ and such $\alpha$ is optimal. It remains to show the converse. Consider an arbitrary optimal $\alpha$. Such $\alpha$ must satisfy $\mathbb{E}[\tilde{Y}^{\mu, q}_0] = \mathbb{E}[ \tilde{Y}^{\mu, q, \alpha}_0]$. By Itô's formula, for any $T \geq 0$, we have
\begin{equation*}
    \begin{split}
        \tilde{Y}^{\mu, q}_0 - \tilde{Y}^{\mu, q, \alpha}_0 =& e^{-\lambda T}(\tilde{Y}^{\mu, q}_T - \tilde{Y}^{\mu, q, \alpha}_T) + \int_0^T e^{-\lambda s} (\tilde{H}(s, X, \mu, q_s, \tilde{Z}^{\mu, q}_s) - \tilde{h}(s, X, \mu, q_s, \tilde{Z}^{\mu, q}_s, \alpha_s))ds\\
        -&\int_0^T e^{-\lambda s} (\tilde{Z}^{\mu, q}_s - \tilde{Z}^{\mu, q, \alpha}_s)(dW_s - \beta_s ds)
    \end{split}
\end{equation*}
where $\beta_s := \mathbb{1}_{\lbrace \tilde{Z}^{\mu, q}_s \neq \tilde{Z}^{\mu, q, \alpha}_s\rbrace} \frac{\tilde{h}(s, X, \mu, q_s, \tilde{Z}^{\mu, q}_s, \alpha_s) - \tilde{h}(s, X, \mu, q_s, \tilde{Z}^{\mu, q, \alpha}_s, \alpha_s)}{\Vert \tilde{Z}^{\mu, q}_s - \tilde{Z}^{\mu, q, \alpha}_s \Vert^2}(\tilde{Z}^{\mu, q}_s - \tilde{Z}^{\mu, q, \alpha}_s)$ and the last integral with respect to $\beta_s ds$ is to be understood as an inner product. As a.s., $\Vert \beta_s \Vert \leq C$, we can define a to $\mathbb{P}$ equivalent measure $\mathbb{Q}$ via $\frac{d\mathbb{Q}}{d\mathbb{P}} = \mathcal{E}(\int_0^\cdot \beta_s dW_s)_T$. Note that both $\tilde{Z}^{\mu, q, \alpha}$ and $\tilde{Z}^{\mu, q} = \tilde{Z}^{\mu, q, \hat{\alpha}}$ are square integrable on $[0, T]$ by Theorem \ref{thm: Royer}. As $\frac{d\mathbb{Q}}{d\mathbb{P}} = \mathcal{E}(\int_0^\cdot \beta_s dW_s)_T$ is square integrable as well, by the Cauchy-Schwarz inequality, $\mathbb{E}^\mathbb{Q}[(\int_0^T e^{-2\lambda s} \Vert \tilde{Z}^{\mu, q}_s - \tilde{Z}^{\mu, q, \alpha}_s \Vert^2 ds)^{\frac{1}{2}}]<\infty$. By the Burkholder-Davis-Gundy inequality, $\int_0^\cdot e^{-\lambda s} (\tilde{Z}^{\mu, q}_s - \tilde{Z}^{\mu, q, \alpha}_s)(dW_s - \beta_s ds)$ on $[0, T]$ is a $\mathbb{Q}$ local martingale of class DL, thus a true $\mathbb{Q}$ martingale.\par
Hence, taking expectation with respect to $\mathbb{Q}$ shows $0 = \mathbb{E}^\mathbb{Q}[\tilde{Y}^{\mu, q}_0 - \tilde{Y}^{\mu, q, \alpha}_0] \geq e^{-\lambda T}\mathbb{E}^\mathbb{Q}[\tilde{Y}^{\mu, q}_T - \tilde{Y}^{\mu, q, \alpha}_T]$ as $\mathbb{P}$ and $\mathbb{Q}$ coincide on $\mathcal{F}_0$. Since we have already shown $\tilde{Y}^{\mu, q}_T \geq \tilde{Y}^{\mu, q, \alpha}_T$ a.s., this implies $\tilde{Y}^{\mu, q}_T = \tilde{Y}^{\mu, q, \alpha}_T$ a.s. for any $T\geq 0$. One can thus see that $\int_0^\cdot \tilde{Z}^{\mu, q}_s - \tilde{Z}^{\mu, q, \alpha}_s dW_s = \int_0^\cdot \tilde{H}(s, X, \mu, q_s, \tilde{Z}^{\mu, q}_s) - \tilde{h}(s, X, \mu, q_s, \tilde{Z}^{\mu, q, \alpha}_s, \alpha_s)ds$ is a finite variation local martingale and thus zero. Since $\int_0^\cdot \tilde{Z}^{\mu, q}_s - \tilde{Z}^{\mu, q, \alpha}_s dW_s$ is a square integrable martingale, this shows that a.e., $\tilde{Z}^{\mu, q}_t =\tilde{Z}^{\mu, q, \alpha}_t$. In particular, this implies a.e.\ that $\tilde{H}(t, X, \mu, q_t, \tilde{Z}^{\mu, q}_t) = \tilde{h}(t, X, \mu, q_t, \tilde{Z}^{\mu, q, \alpha}_t, \alpha_t)$ and thus $\alpha_t \in \mathcal{A}(t, X, \mu, \tilde{Z}^{\mu, q}_t)$.
\end{proof}

\subsection{Compact containment of the laws}Fixed point theorems are at the heart of our existence proof for which compactness is essential.
In particular, we will identify topological spaces in which the sets of laws of states and of controls are (relatively) compact.

\subsubsection{The laws of the state variable}\label{sect: cmp cnt}

To begin with, observe that
for any $p$ with $\vert p \vert \geq 1$ and any $T > 0$, we always have
$$\mathbb{E}\left[\left(\frac{d\mathbb{P}^{\mu, \alpha}_{\vert \mathcal{F}_T}}{d\mathbb{P}_{\vert \mathcal{F}_T}}\right)^p\right] \leq e^{\frac{p(p-1)TC^2}{2}} =: M^T_p. \quad$$
This directly follows from Assumption \ref{asmp: stnd}$(i)$.\par
Recall the topology $\tau$ on $\mathcal{P}(\mathcal{C})$ defined in Definition \ref{def: loc stw}. It will be easier to work with a comparable topology on $\mathcal{P}(\Omega)$ directly. Thus, on $\mathcal{P}(\Omega)$, we define a local topology of setwise convergence which we also denote as $\tilde{\tau}$ as the coarsest topology on $\mathcal{P}(\Omega)$ such that all maps $\mu \mapsto \int \phi d\mu$ are continuous for all $\phi \in B(\Omega)$, namely all measurable bounded $\phi:\Omega \rightarrow \mathbb{R}$ that only depend on $\zeta = (\xi, \omega)$ via $(\xi, \omega_{\vert [0, T]})$ for some $T \geq 0$.\par
To find a proper compact restriction, let us consider the finite horizon spaces first that we have already introduced in Section \ref{sct: LinkMFG}. We denote with $\tilde{\tau}^T$ the topology of setwise convergence on $\mathcal{P}(\Omega^T)$. Now within $\mathcal{P}(\Omega^T)$, we define the subspace
\begin{equation*}
    \tilde{\mathcal{Q}}^T:= \left\lbrace \mathbb{Q} \in \mathcal{P}(\Omega^T) \vert \int_{\mathcal{C}_0^T} \mathbb{Q}(\cdot, d\omega) = \upsilon, \mathbb{Q} \sim \mathbb{P}^T, \mathbb{E}\left[\left(\frac{ d\mathbb{Q}}{d\mathbb{P}^T}\right)^2 \right] \leq M^T, \mathbb{E}\left[\left(\frac{ d\mathbb{Q}}{d\mathbb{P}^T}\right)^{-1} \right]\leq M^T\right\rbrace
\end{equation*}
with $M^T := M_2^T =M_{-1}^T$. We prove in Proposition \ref{prop: ttauT} that $\tilde{\mathcal{Q}}^T$ is a convex, metrizable and compact space. In the following, we fix one metric $d_{\tilde{\tau}^T}$ that we define in Proposition \ref{prop: tau mtrc}. This metric metrizes $\tilde{\tau}^T$ and is bounded by the total variation distance.\par
Using the notation we have introduced, note that we can also write
$$B(\Omega) = \lbrace f \in \mathbb{L}^\infty(\Omega, \mathbb{P}) \vert \exists T > 0, g \in B(\Omega^T): f = g \circ \pi^T \rbrace.$$
where $B(\Omega^T)$ denotes the set of real valued measurable bounded functions on $\Omega^T$. Note that any $f \in B(\Omega)$ is defined locally, the reference measure can be chosen to be any of the locally equivalent measures, i.e.\ $f\in \mathbb{L}^\infty(\Omega, \mathbb{Q})$ for any $\mathbb{Q}\in \mathfrak{E}$. Comparing with Definition \ref{def: loc stw}, it can thus be seen that a sequence $(\mathbb{Q}^n)_{n \geq 1} \subset \mathcal{P}(\Omega)$ converges towards some $\mathbb{Q} \in \mathcal{P}(\Omega)$ with respect to $\tilde{\tau}$ if and only if for any $T > 0$, the sequence $(\tilde{\pi}^T(\mathbb{Q}^n))_{n \geq 1} \subset \mathcal{P}(\Omega^T)$ converges towards $\tilde{\pi}^T(\mathbb{Q}) \in \mathcal{P}(\Omega^T)$.\par
This leads us to defining
\begin{equation}\label{def: Qtild}
    \tilde{\mathcal{Q}} := \lbrace \mathbb{Q} \in \mathcal{P}(\Omega) \vert \forall T > 0: \tilde{\pi}^T(\mathbb{Q}) \in \tilde{\mathcal{Q}}^T \rbrace.
\end{equation}
By construction, for any $\mu, \alpha$, we have $\mathbb{P}^{\mu, \alpha} \in \tilde{\mathcal{Q}} \subset \mathfrak{E}$. As it is shown later in Section \ref{sect: topo}, equipped with the topology $\tilde{\tau}$, this space is metrizable and compact, and a compatible metric reads
\begin{equation}\label{mtr tau def}
    d_{\tilde{\tau}}(\mathbb{Q}, \mathbb{Q}') := \sum_{n = 1}^\infty 2^{-n} d_{\tilde{\tau}^n} (\tilde{\pi}^n(\mathbb{Q}), \tilde{\pi}^n(\mathbb{Q'})),
\end{equation}
see Proposition \ref{prop: inft Q top}.
Coming back to the laws of the state process, consider the map $\mathfrak{u}:\mathcal{P}(\Omega) \rightarrow \mathcal{P}(\mathcal{C}), \mathbb{Q} \mapsto \mathbb{Q} \circ X^{-1}$. Under slight abuse of notation, we also define the maps $\pi^T: \mathcal{C} \rightarrow \mathcal{C}^T$ and $\tilde{\pi}^T: \mathcal{P}(\mathcal{C}) \rightarrow \mathcal{P}(\mathcal{C}^T)$ straightforwardly. On $\mathcal{P}(\mathcal{C}^T)$, we define $\tau^T$ to be the topology of setwise convergence. Clearly, the $\tilde{\pi}^T$ are continuous with respect to $\tau$ and $\tau^T$.\par
Note that $X$ considered as a function $\Omega \rightarrow \mathcal{C}$ is measurable. It is not only measurable, but since as a process it is adapted as well, $X$ is non anticipative, that is, for any $T > 0$, $X(\xi, \omega)_{\vert [0, T]}$ depends only on $(\xi, \omega_{\vert [0, T]})$. Thus, $\pi^T \circ X$ defines a measurable map $X^T: \Omega^T \mapsto \mathcal{C}^T$ such that $\pi^T \circ X = X^T \circ \pi^T$.\par
Thus, for any $\mu \in \mathcal{P}(\Omega)$, we have $(\tilde{\pi}^T \circ \mathfrak{u})(\mu) = \mu \circ (X^T \circ \pi^T)^{-1}$ which shows that $\tilde{\pi}^T \circ \mathfrak{u}$ is continuous for any $T \geq 0$, and by \cite[Proposition 3.2]{Brezis}, this implies that $\mathfrak{u}$ is continuous with respect to $\tilde{\tau}$ and $\tau$. We then define the space 
$$\mathcal{Q}:=\mathfrak{u}(\tilde{\mathcal{Q}}).$$ 
Again, for any $\mu$ and $\alpha$, we have that $\mathbb{P}^{\mu, \alpha} \circ X^{-1}$ lies in $ \mathcal{Q}$. Further, by the Hanai-Morita-Stone Theorem \cite[Theorem 4.4.17.]{Engelking}, since $\mathcal{Q}$ is (by Proposition \ref{prop: ext}) a subset of a Hausdorff space and thus Hausdorff itself, the set $\mathcal{Q}$ is metrizable again. Since $\tilde{\mathcal{Q}}$ is compact, (see Proposition \ref{prop: inft Q top}) it follows that $\mathcal{Q}$ is compact as well.
\subsubsection{The law of the control and the stable topology}
To find compactness in the space of laws of the control variable, we will work with relaxation. Let $\mathcal{M}$ be the space of $\sigma$-finite measures on $[0, \infty) \times \mathcal{P}(A)$ such that the first marginal is the Lebesgue measure $dt$. $\mathcal{P}(A)$ is considered as a compact Polish space using the weak topology. Usual disintegration results, such as \cite[Corollary 3.9]{JacodMemin}, immediately generalize to $\sigma$-finite measures as well, so that every element of $\nu \in \mathcal{M}$ can be disintegrated into the form $\nu_t dt$. This way, we can consider $\nu$ as a Borel measurable map $[0, \infty) \rightarrow \mathcal{P}(\mathcal{P}(A))$ as well.\par
Now, the present work differs from the usual application in that our base space is $[0, \infty)$ equipped with the non finite measure $dt$. We thus need to consider a slightly modified construction.
\begin{definition}
    We call the local stable topology which we also denote by $\mathcal{S}$ the coarsest topology that makes the maps $\nu \mapsto \int_0^\infty \int_{\mathcal{P}(A)} \phi(t, q) \nu_t(dq) dt$ continuous for every function $\phi:[0, \infty) \times \mathcal{P}(A) \rightarrow \mathbb{R}$ that is bounded, measurable in $t$ and continuous in measures $q\in \mathcal{P}(A)$ that are compactly supported in time, i.e.\ there is $T>0$ such that $\phi(t, q) = 0$ for any $t > T$.
\end{definition}
As we will show in Section \ref{sect: stblc}, under this topology, the space $\mathcal{M}$ is metrizable and compact.\par
Recall that we are still mainly interested in $\mathcal{P}(A)$-valued processes. 
We can embed any such $q: [0, \infty) \rightarrow \mathcal{P}(A)$ into $\mathcal{M}$ as the measure $\delta_{q_t}dt$. This embedding is continuous with respect to convergence in $dt$ measure of $\mathcal{P}(A)$ valued processes. 
We denote the image in $\mathcal{M}$ by $\mathcal{M}^{(0)}$ and will refer to $\mathcal{M}^{(0)}$ as the set of strict flows whereas elements $\nu \in \mathcal{M}$ will also be called relaxed flows. These \emph{relaxed} flows can thus be understood as a compactification of the \emph{strict} ones. In the following, we will consider any map $F: \mathcal{P}(A) \rightarrow \mathbb{R}$ as a map $F: \mathcal{P}(\mathcal{P}(A)) \rightarrow \mathbb{R}$ via $F(\nu) \mapsto \int_{\mathcal{P}(A)} F(q) \nu(dq)$. Note that this identification is consistent with the embedding above. As in our results so far, such as in Proposition \ref{prop.optim.charac}, we only worked with fixed $q$, these results still hold true if we replace the strict flows by relaxed flows $\nu \in \mathcal{M}$.\par

\subsection{Continuity of the solution map}
In Proposition \ref{prop.optim.charac}, we showed that for any fixed measures $\mu$ and $\nu$, a control is optimal iff we have $dt \times \mathbb{P} $ a.e.\ $\alpha_t \in \mathcal{A}(t, X, \mu, \tilde{Z}^{\mu, \nu}_t)$ for $(\tilde{Y}^{\mu, \nu}, \tilde{Z}^{\mu, \nu})$ solving \eqref{inft BSDE}. Let us define the set of optimal controls
$$\mathbb{A}(\mu, \nu) := \lbrace \alpha \in \mathbb{A} \vert \alpha_t \in \mathcal{A}(t, X, \mu, Z^{\mu, \nu}_t)\,dt \times d\mathbb{P}\, \, a.e.\rbrace$$
with $Z^{\mu, \nu}$ defined as in \eqref{inft BSDE}. 
With slight abuse of notation, in the following we will also view $\mathbb{A}$ as a set-value map on $\mathcal{P}(\mathcal{C}) \times \mathcal{M}$ as well.\par

Under this formulation, to solve the mean field game, our goal will be to find $(\mu, q)$ for which there exists $\alpha \in \mathbb{A}(\mu, q)$ so that $\mu = \mathbb{P}^{\mu, \alpha} \circ X^{-1}$ and $q_t = \mathbb{P}^{\mu, \alpha} \circ (\alpha_t)^{-1}$ for almost every $t \geq 0$. This of course automatically requires $q \in \mathcal{M}^{(0)}$. As we know that $\mathbb{P}^{\mu, \alpha} \circ X^{-1} \in \mathcal{Q}$, we can restrict ourselves to considering only the space $\mathcal{Q}$ instead of $\mathcal{P}(\mathcal{C})$.

In the following, we will work with upper hemicontinuous set-valued maps. 
As there are different notions of upper hemicontinuity in the literature, we decided to distinguish these by referring to them as metrically or topologically upper hemicontinuous maps, see Appendix \ref{apdx: setvm} for details.
We first consider continuity of the optimal control map.
\begin{lemma}\label{lem: cntrcont}
    The set-valued map $\mathbb{A}: \mathcal{Q} \times \mathcal{M} \twoheadrightarrow \mathbb{A}$ is metrically upper hemicontinuous.
\end{lemma}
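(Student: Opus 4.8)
The plan is to establish metric upper hemicontinuity of $\mathbb{A}$ by a sequential argument exploiting metrizability of $\mathcal{Q} \times \mathcal{M}$: take a convergent sequence $(\mu^n, \nu^n) \to (\mu, \nu)$ in $\mathcal{Q} \times \mathcal{M}$, take $\alpha^n \in \mathbb{A}(\mu^n, \nu^n)$, and show that some subsequence of $(\alpha^n)$ converges (in the relevant sense on $\mathbb{A}$, i.e. weak/stable or $\|\cdot\|_{\mathbb{A}}$) to a limit $\alpha \in \mathbb{A}(\mu, \nu)$. Concretely, I would first pass from the controls $\alpha^n$ to their relaxed counterparts $\delta_{\alpha^n_t(\cdot)} dt$ (or, to keep the underlying probability space visible, to the laws $\mathbb{P}^{\mu^n, \alpha^n} \circ (X, \delta_{\alpha^n_\cdot})^{-1}$ on an appropriate path-and-relaxed-control space) and use the compactness results already proven: $\mathcal{Q}$ is compact under $\tau$, $\mathcal{M}$ is compact under the local stable topology $\mathcal{S}$, and $\mathcal{P}(A)$-valued flows embed continuously into $\mathcal{M}$. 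So a subsequence of the joint laws converges; call the limit law $P$.

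The analytic heart is identifying the limit. I would use the BSDE characterization from Proposition~\ref{prop.optim.charac}: optimality of $\alpha^n$ is equivalent to $\alpha^n_t \in \mathcal{A}(t, X, \mu^n, \tilde Z^{\mu^n, \nu^n}_t)$ a.e., where $(\tilde Y^{\mu^n, \nu^n}, \tilde Z^{\mu^n, \nu^n})$ solves the infinite-horizon BSDE \eqref{inft BSDE}. The key steps are: (1) stability of the infinite-horizon BSDE solutions — show $\tilde Z^{\mu^n, \nu^n} \to \tilde Z^{\mu, \nu}$ in a suitable sense (e.g. $\mathbb{E}[\int_0^\infty e^{-2\lambda s}\|\tilde Z^n_s - \tilde Z_s\|^2 ds] \to 0$, or at least along a further subsequence $dt\times d\mathbb{P}$-a.e.), which should follow from continuity of $\tilde H$ in $(\mu, q)$ (Assumption~\ref{asmp: cont}), the uniform Lipschitz-in-$z$ bound on $\tilde H$, the uniform bounds $|\tilde Y^n| \le M/\lambda$, and the comparison/stability estimates for monotone-driver infinite-horizon BSDEs from Appendix~\ref{apdx: IBSDE} (Royer's estimates); here one must be careful that the reference Wiener process $W$ is fixed (the driver is written in terms of $dW_s$, not $dW^{\mu^n,\alpha^n}_s$), so the BSDEs all live on the same filtered space $(\Omega, \mathbb{F}, \mathbb{P})$ and genuine stability in $\mathbb{P}$ makes sense. (2) Passing the inclusion $\alpha^n_t \in \mathcal{A}(t, X, \mu^n, \tilde Z^n_t)$ to the limit: using joint sequential continuity of $\tilde h$ in $(\mu, a)$ and convergence $\mu^n \to \mu$, $\tilde Z^n \to \tilde Z$, I would show that any limit point $\alpha$ of $\alpha^n$ (as a relaxed flow, the barycenter of the limiting kernel) satisfies $\tilde h(t, X, \mu, \tilde Z^{\mu,\nu}_t, \alpha_t) = \tilde H(t, X, \mu, \tilde Z^{\mu,\nu}_t)$, i.e. $\alpha_t \in \mathcal{A}(t, X, \mu, \tilde Z^{\mu,\nu}_t)$ a.e. Since $\mathcal{A}$ is a closed-graph (upper hemicontinuous) set-valued map by the Berge maximum theorem applied to the continuous $\tilde h$ on the compact $A$, the limit stays in the optimal set. (3) Finally, translate this "every subsequence has a sub-subsequence converging into $\mathbb{A}(\mu,\nu)$" into metric upper hemicontinuity in the precise sense of Appendix~\ref{apdx: setvm}, noting $\mathbb{A}(\mu,\nu)$ is nonempty (Proposition~\ref{prop.optim.charac}) and, after relaxation, compact.

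**Main obstacle.** The delicate point is the BSDE stability step combined with the change of reference measure. The BSDEs \eqref{inft BSDE} are driven by the fixed $W$, but the natural a priori bounds and the optimality characterization involve the measures $\mathbb{P}^{\mu^n, \alpha^n}$, whose densities are only locally equivalent to $\mathbb{P}$ and become singular at infinity; one cannot pass to the limit $T \to \infty$ uniformly, so the argument must be localized on $[0,T]$ with the exponential discounting $e^{-\lambda s}$ absorbing the tail — exactly the $\frac{2M}{\lambda}e^{-\lambda T}$-type estimates that appear elsewhere. I expect the cleanest route is to prove an $L^2$-stability estimate for the infinite-horizon monotone-driver BSDE of the form $\mathbb{E}[\sup_t e^{-2\lambda t}|\tilde Y^n_t - \tilde Y_t|^2] + \mathbb{E}[\int_0^\infty e^{-2\lambda t}\|\tilde Z^n_t - \tilde Z_t\|^2 dt] \le C\, \mathbb{E}[\int_0^\infty e^{-2\lambda t} |\tilde H(t, X, \mu^n, \nu^n_t, \tilde Z_t) - \tilde H(t, X, \mu, \nu_t, \tilde Z_t)|^2 dt]$ (extending the Appendix~\ref{apdx: IBSDE} results), then invoke dominated convergence — the right-hand side vanishes because $\tilde H$ is continuous in $(\mu,q)$, bounded, and $\mu^n \to \mu$, $\nu^n \to \nu$ stably — and from $\tilde Z^n \to \tilde Z$ extract the a.e.\ convergence needed to close the inclusion. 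The relaxation machinery on $\mathcal{M}$ is what guarantees the limiting control exists at all without assuming uniqueness of maximizers of $\tilde h$ (that stronger hypothesis, Assumption~\ref{asmp: sngl}, is only needed later for the flow of control laws, not for this lemma).
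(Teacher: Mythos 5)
Your plan correctly identifies the two analytic ingredients — stability of the infinite-horizon BSDE in $\mathbb{Z}$ (so that $\tilde Z^{\mu^n,\nu^n}\to\tilde Z^{\mu,\nu}$ in the weighted $L^2$ sense and a.e.\ along a subsequence) and upper hemicontinuity of the pointwise argmax map $\mathcal{A}$ via Berge's maximum theorem. Those are exactly the ingredients the paper uses, and your observation that the BSDEs all live on the fixed space $(\Omega,\mathbb{F},\mathbb{P})$ with $dW$ as the integrator, so stability makes sense despite the singularity of the controlled measures at infinity, is the right way to dispose of the measure-change worry.

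However, there is a genuine gap in how you propose to close the argument. You want to pass from $\alpha^n\in\mathbb{A}(\mu^n,\nu^n)$ to a subsequential limit $\alpha$ (via relaxation/compactness of $\mathcal{M}$), verify $\alpha\in\mathbb{A}(\mu,\nu)$, and then conclude metric upper hemicontinuity. But metric upper hemicontinuity in the sense of Proposition~\ref{prop: metr uhc} demands that $\sup_{\alpha^n\in\mathbb{A}(\mu^n,\nu^n)}\inf_{\alpha\in\mathbb{A}(\mu,\nu)}\Vert\alpha^n-\alpha\Vert_{\mathbb{A}}\to 0$, and $\Vert\cdot\Vert_{\mathbb{A}}$ is an $L^1$-type norm. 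The convergence you obtain from the stable topology on $\mathcal{M}$ is weak: oscillating controls $\alpha^n$ converge stably to a genuinely relaxed limit (or, after taking barycenters, to a strict control) without $\Vert\alpha^n-\alpha\Vert_{\mathbb{A}}$ becoming small. So ``every subsequence has a sub-subsequence converging into $\mathbb{A}(\mu,\nu)$'' in the stable topology does not yield the needed norm estimate, and relaxation buys you the wrong compactness.

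The paper avoids this by never extracting a subsequential limit of $\alpha^n$. Instead, it exploits the \emph{pointwise} upper hemicontinuity of $\mathcal{A}(t,x,\cdot,\cdot)$ directly: after establishing $\tilde Z^{\mu^n,\nu^n}_t\to\tilde Z^{\mu,\nu}_t$ $dt\times d\mathbb{P}$-a.e., Berge gives $\sup_{a^n\in\mathcal{A}(t,X,\mu^n,\tilde Z^n_t)}\inf_{a\in\mathcal{A}(t,X,\mu,\tilde Z_t)}\Vert a^n-a\Vert_A\to 0$ a.e. A measurable-maximum-theorem selector $\hat\beta(t,\omega,a)$ realizing $\arg\min_{b\in\mathcal{A}(t,X,\mu,\tilde Z^{\mu,\nu}_t)}\Vert a-b\Vert_A$ then produces, for each $\alpha^n$, a competitor $\hat\beta(\cdot,\cdot,\alpha^n_\cdot)\in\mathbb{A}(\mu,\nu)$ whose $\Vert\cdot\Vert_{\mathbb{A}}$-distance to $\alpha^n$ is controlled pointwise, and dominated convergence (using $A$ bounded) finishes. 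The missing idea in your sketch is precisely this projection/selector construction, which converts pointwise set-convergence into the uniform norm estimate that metric upper hemicontinuity requires, bypassing the need for any compactness of $\mathbb{A}$ in its own metric.
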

\begin{proof}
It suffices by Proposition \ref{prop: metr uhc} to show that for any sequence $(\mu^n, \nu^n) \subset \mathcal{Q} \times \mathcal{M}$ that converges to some $(\mu, \nu) \in \mathcal{Q} \times \mathcal{M}$, we have
$$\sup_{\alpha^n \in \mathbb{A}(\mu^n, \nu^n)} \inf_{\alpha\in \mathbb{A}(\mu, \nu)} \Vert \alpha^n - \alpha \Vert_{\mathbb{A}} \rightarrow 0.$$
By our boundness and Lipschitz assumptions, we have $\tilde{H}(s, X, \mu^n, \nu^n_s, \tilde{Z}^{\mu, \nu}_s) \leq M + C \vert \tilde{Z}^{\mu, \nu}_s \vert$ and $\tilde{H}(s, X, \mu, \nu_s, \tilde{Z}^{\mu, \nu}_s) \leq M + C \vert \tilde{Z}^{\mu, \nu}_s \vert$. Further, using \cite[Lemma 5.6]{Carmona15}, one can prove that for any $t \geq 0$ and any $n$, one has
$$\vert \tilde{H}(t, X, \mu^n, \nu^n_t, Z^{\mu, \nu}_t) - \tilde{H}(t, X, \mu, \nu^n_t, Z^{\mu, \nu}_t) \vert \rightarrow 0$$
keeping in mind that $\nu^n$ may be relaxed and both terms need to be evaluated as integrals over $\mathcal{P}(A)$.\par

Let us consider arbitrary $0 \leq t \leq T$. By the definition of the local stable topology, we immediately get 
$$\left\vert \int_t^T \tilde{H}(s, X, \mu, \nu^n_s,  \tilde{Z}^{\mu, \nu}_s) ds - \int_t^T \tilde{H}(s, X, \mu, \nu_s,  \tilde{Z}^{\mu, \nu}_s) ds \right\vert \rightarrow 0.$$
As $Z^{\mu, \nu}$ is square integrable, by the dominated convergence Theorem, one has
\begin{equation*}
    \begin{split}
        &\mathbb{E}\left[\left(\int_t^T \tilde{H}(s, X, \mu^n, \nu^n_s,  \tilde{Z}^{\mu, \nu}_s) ds - \int_t^T \tilde{H}(s, X, \mu, \nu_s,  \tilde{Z}^{\mu, \nu}_s) ds\right)^2\right]\\
        \leq & 2 \mathbb{E}\left[\left(\int_t^T \tilde{H}(s, X, \mu^n, \nu^n_s,  \tilde{Z}^{\mu, \nu}_s) ds - \int_t^T \tilde{H}(s, X, \mu, \nu^n_s,  \tilde{Z}^{\mu, \nu}_s) ds\right)^2\right] \\
        \qquad  &+ 2 \mathbb{E}\left[\left(\int_t^T \tilde{H}(s, X, \mu, \nu^n_s,  \tilde{Z}^{\mu, \nu}_s) ds - \int_t^T \tilde{H}(s, X, \mu, \nu_s,  \tilde{Z}^{\mu, \nu}_s) ds\right)^2\right] \rightarrow 0.
    \end{split}
\end{equation*}
By Lemma \ref{lem: inft bsde stab}, this shows
$$\mathbb{E}\left[\int_0^\infty e^{-2\lambda s}\Vert \tilde{Z}^{\mu^n, \nu^n}_s - \tilde{Z}^{\mu, \nu}_s  \Vert^2 ds\right] \rightarrow 0.$$
In particular, this proves that $dt \times \mathbb{P}$ a.s., the sequence $(\tilde{Z}^{\mu^n, \nu^n}_s)_n$ converges to $\tilde{Z}^{\mu, \nu}_s$. Now, by Berge's maximum Theorem \cite[Theorem 17.31]{IDA}, for any fixed $(t, x)$, we know that $(\mu, z) \mapsto \mathcal{A}(t, x, \mu, z)$ is upper hemicontinuous. By Proposition \ref{prop: metr uhc}, $dt \times \mathbb{P}$ a.s., we have
$$\sup_{a^n \in \mathcal{A}(t, X, \mu^n, Z^{\mu^n, \nu^n}_t)} \inf_{a \in \mathcal{A}(t, X, \mu, Z^{\mu, \nu}_t)} \Vert a^n - a \Vert_A \rightarrow 0.$$
To extend this pointwise convergence to convergence of the $\mathbb{A}(\mu^n, \nu^n)$ to $\mathbb{A}(\mu, \nu)$ we apply an idea based on the measurable maximum Theorem \cite[Theorem 18.19]{IDA} as it was used in \cite[Lemma 7.11]{Carmona15}. First, by the measurable maximum Theorem, the set valued map $(t, \omega) \mapsto \mathcal{A}(t, X, \mu, Z^{\mu, \nu}_t)$ on $[0, \infty) \times \Omega$ equipped with the progressive $\sigma$ algebra is weakly measurable in a sense on which we will elaborate on in more detail in Definition \ref{def: wmsbl}.\par
We can apply the measurable maximum Theorem again to find a measurable selector $\hat{\beta}: [0, \infty) \times \Omega \times A \rightarrow A$ such that for any $t, \omega, a$, we have $\hat{\beta} \in \arg\min_{b \in \mathcal{A}(t, X, \mu, Z^{\mu, \nu}_t)} \Vert a - b\Vert_A$. This way, for any $\alpha^n \in \mathbb{A}$,
\begin{equation*}
    \begin{split}
        &\inf_{\alpha \in \mathbb{A}(\mu, \nu)} \mathbb{E}\left[\int_0^\infty e^{-\lambda s} \Vert \alpha^n_s - \alpha_s \Vert_A ds\right] \leq \mathbb{E}\left[\int_0^\infty e^{-\lambda s} \Vert \alpha^n_s - \hat{\beta}(s, \omega, \alpha^n_s)\Vert_A ds\right]\\
        = & \mathbb{E}\left[\int_0^\infty e^{-\lambda s} \inf_{b \in \mathcal{A}(s, X, \mu, Z^{\mu, \nu}_s)} \Vert \alpha^n_s - b \Vert_Ads\right] \leq \inf_{\alpha \in \mathbb{A}(\mu, \nu)} \mathbb{E}\left[\int_0^\infty e^{-\lambda s} \Vert \alpha^n_s - \alpha_s \Vert_A ds\right],
    \end{split}
\end{equation*}
so that all inequalities above are equalities. By the Berge maximum Theorem \cite[Theorem 17.31]{IDA}, for any $(t, \omega)$ fixed, $a \mapsto \Vert a - \hat{\beta}(t, \omega, a) \Vert_A$ is continuous, and we can repeat this argument with the measurable maximum Theorem to find
\begin{equation*}
    \begin{split}
        \sup_{\alpha^n \in \mathbb{A}(\mu^n, \nu^n)} \inf_{\alpha\in \mathbb{A}(\mu, \nu)} \Vert \alpha^n - \alpha \Vert_{\mathbb{A}} = \mathbb{E}\left[\int_0^\infty e^{-s} \sup_{a^n \in \mathcal{A}(t, X, \mu^n, Z^{\mu^n, \nu^n}_t)} \inf_{a \in \mathcal{A}(t, X, \mu, Z^{\mu, \nu}_t)} \Vert a^n - a \Vert_A ds\right].
    \end{split}
\end{equation*}
As the right hand side converges to zero by dominated convergence, this ultimately shows upper hemicontinuity.
\end{proof}
Next, let us introduce the map 
$$\mathfrak{P}: \mathcal{Q} \times \mathbb{A} \rightarrow \tilde{\mathcal{Q}},\,\, (\mu, \alpha) \mapsto \mathbb{P}^{\mu, \alpha}.$$ Consider the composition $\mathfrak{P} \circ \overline{\mathbb{A}}: \mathcal{Q} \times \mathcal{M} \twoheadrightarrow \tilde{\mathcal{Q}}$, where $\overline{\mathbb{A}}(\mu, \nu) := (\mu, \mathbb{A}(\mu, \nu))$ and $\mathfrak{P}(\overline{\mathbb{A}}(\mu, \nu)) := \lbrace \mathbb{P}^{\mu, \alpha} \vert \alpha \in \mathbb{A}(\mu, \nu)\rbrace$. To prove that this map is metrically upper hemicontinuous as well, we can use that $\overline{\mathbb{A}}$ is already metrically upper hemicontinuous by Lemma \ref{lem: cntrcont}. To extend that result we use that $\mathfrak{P}$ is uniformly continuous.

\begin{lemma}\label{lem: P uc}
    The map $\mathfrak{P}: \mathcal{Q} \times \mathbb{A} \rightarrow \tilde{\mathcal{Q}}$ is uniformly continuous. In particular, $\mathfrak{P} \circ \overline{\mathbb{A}}: \mathcal{Q} \times \mathcal{M} \twoheadrightarrow \tilde{\mathcal{Q}}$ is metrically upper hemicontinuous.
\end{lemma}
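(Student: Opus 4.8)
The plan is to establish uniform continuity of $\mathfrak{P}$ and then read off the statement about $\mathfrak{P}\circ\overline{\mathbb{A}}$ from Lemma~\ref{lem: cntrcont}. On the compact space $\mathcal{Q}$ the notion of uniform continuity does not depend on the choice of compatible metric, so we may fix one and use the standard criterion: $\mathfrak{P}$ is uniformly continuous iff for all sequences $(\mu^n,\alpha^n)$ and $(\tilde\mu^n,\tilde\alpha^n)$ in $\mathcal{Q}\times\mathbb{A}$ with $d_{\mathcal{Q}}(\mu^n,\tilde\mu^n)\to0$ and $\|\alpha^n-\tilde\alpha^n\|_{\mathbb{A}}\to0$ one has $d_{\tilde\tau}(\mathbb{P}^{\mu^n,\alpha^n},\mathbb{P}^{\tilde\mu^n,\tilde\alpha^n})\to0$. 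By the definition \eqref{mtr tau def} of $d_{\tilde\tau}$, since every $d_{\tilde\tau^n}$ is dominated by the total variation distance and the latter does not increase under the restriction maps $\tilde\pi^{N,n}$, it suffices to show that
\[
d_{TV}\big(\tilde\pi^{T}(\mathbb{P}^{\mu^n,\alpha^n}),\,\tilde\pi^{T}(\mathbb{P}^{\tilde\mu^n,\tilde\alpha^n})\big)\longrightarrow 0 \qquad\text{for every fixed }T>0.
\]
Writing $\theta^{\mu,\alpha}_s:=\sigma(s,X)^{-1}b(s,X,\mu,\alpha_s)$, Pinsker's inequality together with the Girsanov change of measure between the two path measures on $\mathcal{F}_T$ (both locally equivalent to $\mathbb{P}$ with explicit Doléans-Dade densities) reduces this to showing that the relative entropy
\[
\mathcal{H}\big(\mathbb{P}^{\mu^n,\alpha^n}_{\vert\mathcal{F}_T},\,\mathbb{P}^{\tilde\mu^n,\tilde\alpha^n}_{\vert\mathcal{F}_T}\big)=\tfrac12\,\mathbb{E}^{\mu^n,\alpha^n}\Big[\int_0^T\big\|\theta^{\mu^n,\alpha^n}_s-\theta^{\tilde\mu^n,\tilde\alpha^n}_s\big\|^2\,ds\Big]
\]
tends to $0$, the identity being the usual computation in which the $W^{\mu^n,\alpha^n}$-martingale part of the log-density ratio has vanishing expectation because its integrand is bounded and $T$ is finite.

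To handle this quadratic functional I would change measure back to $\mathbb{P}$: since $\mathbb{P}^{\mu^n,\alpha^n}\in\tilde{\mathcal{Q}}$ its density on $\mathcal{F}_T$ has $\mathbb{P}$-second moment at most $M^T$, so by Cauchy--Schwarz the expression above is bounded by $\sqrt{M^T}$ times the $L^2(\mathbb{P})$-norm of $\int_0^T\|\theta^{\mu^n,\alpha^n}_s-\theta^{\tilde\mu^n,\tilde\alpha^n}_s\|^2\,ds$, whose integrand is bounded by $4C^2$ by Assumption~\ref{asmp: stnd}$(i)$. Hence it is enough to prove $\theta^{\mu^n,\alpha^n}-\theta^{\tilde\mu^n,\tilde\alpha^n}\to0$ in $dt\times\mathbb{P}$-measure on $[0,T]\times\Omega$, after which two applications of dominated convergence finish the argument. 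By the subsequence principle it suffices to produce, along an arbitrary subsequence, a further subsequence along which this holds $dt\times\mathbb{P}$-a.e. Along such a subsequence I use compactness of $\mathcal{Q}$ to pass to $\mu^n\to\mu_\infty$, whence also $\tilde\mu^n\to\mu_\infty$ since $d_{\mathcal{Q}}(\mu^n,\tilde\mu^n)\to0$, and I use $\|\alpha^n-\tilde\alpha^n\|_{\mathbb{A}}\to0$ (hence $L^1(dt\times\mathbb{P})$-convergence on $[0,T]\times\Omega$) to pass to $\alpha^n-\tilde\alpha^n\to0$ $dt\times\mathbb{P}$-a.e. Decomposing $\theta^{\mu^n,\alpha^n}_s-\theta^{\tilde\mu^n,\tilde\alpha^n}_s$ as $(\theta^{\mu^n,\alpha^n}_s-\theta^{\mu_\infty,\alpha^n}_s)+(\theta^{\mu_\infty,\alpha^n}_s-\theta^{\mu_\infty,\tilde\alpha^n}_s)+(\theta^{\mu_\infty,\tilde\alpha^n}_s-\theta^{\tilde\mu^n,\tilde\alpha^n}_s)$, the two outer terms are, pointwise in $(s,\omega)$, bounded by $\sup_{a\in A}\|\sigma^{-1}b(s,X,\mu^n,a)-\sigma^{-1}b(s,X,\mu_\infty,a)\|$ (resp.\ with $\tilde\mu^n$), which tends to $0$ by joint sequential continuity of $b$ in $(\mu,a)$ (Assumption~\ref{asmp: cont}) and compactness of $A$ (extracting $a_n\to a_\infty$ would otherwise contradict joint continuity); the middle term is bounded by the modulus of continuity of the continuous map $a\mapsto\sigma^{-1}(s,X)b(s,X,\mu_\infty,a)$ on the compact set $A$, evaluated at $\|\alpha^n_s-\tilde\alpha^n_s\|_A\to0$. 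This gives the required $dt\times\mathbb{P}$-a.e.\ convergence, hence uniform continuity of $\mathfrak{P}$.

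For the concluding statement I would argue as in the proof of Lemma~\ref{lem: cntrcont} via the sup--inf characterization of metric upper hemicontinuity (Proposition~\ref{prop: metr uhc}): given $(\mu^n,\nu^n)\to(\mu,\nu)$ in $\mathcal{Q}\times\mathcal{M}$, any element of $\mathfrak{P}(\overline{\mathbb{A}}(\mu^n,\nu^n))$ has the form $\mathbb{P}^{\mu^n,\alpha^n}$ with $\alpha^n\in\mathbb{A}(\mu^n,\nu^n)$; by metric upper hemicontinuity of $\overline{\mathbb{A}}$ (Lemma~\ref{lem: cntrcont}) one can pick $\alpha(n)\in\mathbb{A}(\mu,\nu)$ with $\|\alpha^n-\alpha(n)\|_{\mathbb{A}}\to0$ uniformly in the choice of $\alpha^n$, so $d_{\mathcal{Q}}(\mu^n,\mu)+\|\alpha^n-\alpha(n)\|_{\mathbb{A}}\to0$ and, by the uniform continuity just proved, $d_{\tilde\tau}(\mathbb{P}^{\mu^n,\alpha^n},\mathbb{P}^{\mu,\alpha(n)})\to0$ with $\mathbb{P}^{\mu,\alpha(n)}\in\mathfrak{P}(\overline{\mathbb{A}}(\mu,\nu))$; this yields $\sup_{\mathbb{Q}^n}\inf_{\mathbb{Q}}d_{\tilde\tau}(\mathbb{Q}^n,\mathbb{Q})\to0$, i.e.\ metric upper hemicontinuity. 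The main obstacle is the convergence-in-measure step: because $\mu^n$ and $\tilde\mu^n$ are only known to approach each other while $b$ is merely sequentially continuous in the measure variable, the detour through compactness of $\mathcal{Q}$ and a common accumulation point $\mu_\infty$ seems unavoidable, and one must additionally promote continuity of $b$ in $\mu$ to uniformity over the action variable — which is exactly where joint continuity and compactness of $A$ are used.
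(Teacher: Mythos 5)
Your argument is correct, and it reaches the same conclusion by a structurally different route. Both proofs share the same skeleton: the geometric sum structure of $d_{\tilde\tau}$ and the bound $d_{\tilde\tau^N}\le d_{TV}$ reduce the problem to total variation convergence on each finite horizon $[0,N]$, after which Pinsker's inequality and the explicit Girsanov entropy
\[
\mathcal{H}\big(\mathbb{P}^{\mu,\alpha}_{\vert\mathcal{F}_N},\mathbb{P}^{\mu',\alpha'}_{\vert\mathcal{F}_N}\big)
=\tfrac12\,\mathbb{E}^{\mu,\alpha}\!\Big[\int_0^N\big\|\sigma^{-1}b(s,X,\mu,\alpha_s)-\sigma^{-1}b(s,X,\mu',\alpha'_s)\big\|^2\,ds\Big]
\]
together with the uniform $L^2(\mathbb{P})$-bound $M^N_2$ on the densities reduce everything to controlling the $dt\times\mathbb{P}$-integral of $\|\sigma^{-1}b(\cdot,\mu,\alpha)-\sigma^{-1}b(\cdot,\mu',\alpha')\|^2$. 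Where you diverge from the paper is in how this last piece is handled. The paper carries out an explicit $\epsilon$--$\delta$ argument: it introduces a pointwise modulus of continuity $\kappa^N(t,\zeta,\delta)$ of $\sigma^{-1}b$ on the compact $\mathcal{Q}\times A$, argues via convergence in $dt\times\mathbb{P}$-measure that the set where $\kappa^N$ is large has small measure, and combines Markov's and Cauchy--Schwarz inequalities to obtain an explicit $\tilde\delta^N$ for each $\epsilon$, later assembled across horizons $N\le\tilde N$. You instead use the sequential characterization of uniform continuity, pass (via compactness of $\mathcal{Q}$ and the subsequence principle for convergence in measure) to subsequences along which $\mu^n$ and $\tilde\mu^n$ share a common limit $\mu_\infty$ and $\alpha^n-\tilde\alpha^n\to0$ a.e., and then conclude by a three-term decomposition using joint sequential continuity of $b$ together with compactness of $A$. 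Both routes crucially exploit compactness of $\mathcal{Q}$ and of $A$; yours sidesteps the measure-theoretic estimates on the modulus of continuity at the price of being non-quantitative (no explicit $\delta$ as a function of $\epsilon$), which in particular means it cannot be adapted to produce convergence rates. For the second assertion you essentially reprove the relevant case of the paper's Lemma~\ref{lem: uhc comp} (composition of a metrically upper hemicontinuous map with a uniformly continuous one), which is what the paper cites directly; both are fine.
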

\begin{proof}
In this result, we are working with the metrics $d_{\tilde{\tau}}$ on $\tilde{\mathcal{Q}}$ that are defined in \eqref{mtr tau def}. For $\mathcal{Q}$, we already know that its topology $\tau$ is metrizable, and for this result, we can work with any metric $d_\tau$ that metrizes $\tau$.\par
Take $\epsilon > 0$ arbitrarily. We can fix some $\tilde{N} \geq 1$ such that $\sum_{n = \tilde{N} + 1}^\infty 2^{-n + 1} < \frac{\epsilon}{2}$. This way, we have for any $\mathbb{Q}, \mathbb{Q}' \in \tilde{\mathcal{Q}}$
$$d_{\tilde{\tau}}(\mathbb{Q}, \mathbb{Q}') = \sum_{n =1}^\infty 2^{-n} d_{\tilde{\tau}^n}(\tilde{\pi}^n(\mathbb{Q}), \tilde{\pi}^n(\mathbb{Q}')) \leq \sum_{n =1}^{\tilde{N}} 2^{-n} d_{\tilde{\tau}^n}(\tilde{\pi}^n(\mathbb{Q}), \tilde{\pi}^n(\mathbb{Q}')) + \frac{\epsilon}{2}.$$
For any $N \leq \tilde{N}$, let us consider the finite time horizon $[0, N]$ for now. Recall that for $\mu \in \mathcal{Q}$ and $\alpha \in \mathbb{A}$, the restriction of the measure $\mathbb{P}^{\mu, \alpha}$ on $\mathcal{F}_N$ does not depend on the whole path of $\alpha$, but only on $\alpha_{\vert [0, N]} \in  \mathbb{A}^N$. For $\alpha \in \mathbb{A}^N$, we also write $\mathbb{P}^{\mu, \alpha, N}$ for the measure equivalent to $\mathbb{P}$ with 
$$\frac{d\mathbb{P}^{\mu, \alpha, N}}{d\mathbb{P}} := \mathcal{E}\left(\int_0^\cdot \sigma^{-1}b(s, X, \mu, \alpha_s)dW_s\right)_N.$$
In the following, we will consider $\mathbb{P}^{\mu, \alpha, N}$ as a measure on $(\Omega, \mathcal{F}_N)$.
By the construction of $\mathcal{Q}^N$, we have for $\alpha \in \mathbb{A}^N$ that $\sup_{\mu \in \mathcal{Q}} \sup_{\alpha \in \mathbb{A}^N} \mathbb{E}[(\frac{d\mathbb{P}^{\mu, \alpha,N}}{d\mathbb{P}})^p] \leq M^N_p < \infty$.\par
In the following, unless specified otherwise, if we need to explicitly metrize the product topology of metric spaces $(\mathcal{X}, d_\mathcal{X})$ and $(\mathcal{Y}, d_\mathcal{Y})$, we choose to work with the maximum metric $d_{\mathcal{X}, \mathcal{Y}}((x_1, y_1), (x_2, y_2)) := \max(d_\mathcal{X}(x_1, x_2), d_\mathcal{Y}(y_1, y_2))$.\par
As $\mathcal{Q} \times A$ is compact, we know that for any $(t, \zeta)$ fixed, $(\mu, a) \mapsto \sigma^{-1}b(t, X, \mu, a)$ is uniformly continuous. To quantify this, we introduce the modulus of continuity
$$\kappa^N: [0, N] \times \Omega \times [0, \infty), (t, \zeta, \delta) \mapsto \sup_{d_{\mathcal{Q}, A}((\mu, a),(\mu', a')) \leq \delta} \vert \sigma^{-1}b(t, X(\zeta), \mu, a) - \sigma^{-1}b(t, X(\zeta), \mu', a') \vert.$$
Since $\mathcal{Q} \times A$ is compact metrizable and thus separable, this supremum can be replaced by a countable supremum, ensuring that $\kappa^N$ is measurable in $(t, \zeta)$. 
As $\sigma^{-1}b$ is uniformly continuous for almost any fixed $(t, \zeta)$, we thus have $\lim_{\delta \searrow 0} \kappa^N(t, \zeta, \delta) = 0$ almost everywhere.

Let us take $\eta > 0$ arbitrarily. Since $\kappa^N(t,\zeta,\delta)$ also converges in $dt \times \mathbb{P}$ measure on $[0, N] \times \Omega$, there must exist some $\delta^N>0$ such that $dt \times \mathbb{P}(\lbrace (t, \zeta) \vert \kappa^N(t, \zeta, \delta^N) \geq \eta^{\frac{1}{2}} \rbrace) < \frac{\eta^2}{N M^N_2}$.
As $A$ is bounded, there exists some $C_A$ such that $\Vert a \Vert_A < C_A$ for all $a\in A$.  Let us consider $\alpha, \alpha' \in \mathbb{A}^N$ arbitrarily such that $\Vert \alpha- \alpha' \Vert_{\mathbb{A}^N} < \frac{\eta^2 (\delta^N)^2 }{2 N C_A M^N_2}$. Then,
$$\mathbb{E}\left[\int_0^N \Vert \alpha_s - \alpha'_s \Vert_A^2 ds\right] \leq 2 C_A \Vert \alpha - \alpha' \Vert_{\mathbb{A}^N} < \frac{\eta^2 (\delta^N)^2}{N M^N_2}$$
and for any $\mu \in \mathcal{Q}$, we have
$$\mathbb{E}^{\mathbb{P}^{\mu, \alpha, N}}\left[\int_0^N \Vert \alpha_s - \alpha'_s \Vert_A ds\right] \leq \mathbb{E}\left[\int_0^N \Vert \alpha_s - \alpha'_s \Vert_A^2 ds\right]^{\frac{1}{2}}\mathbb{E}\left[N \left(\frac{d\mathbb{P}^{\mu, \alpha, N}}{d\mathbb{P}}\right)^2\right]^{\frac{1}{2}} < \eta \delta^N.$$
By Markov inequality, we therefore have
$$
    dt \times \mathbb{P}^{\mu, \alpha, N}(\lbrace (t, \zeta) : \Vert \alpha_t(\zeta) - \alpha'_t(\zeta) \Vert_A \geq \delta^N \rbrace) \leq \frac{1}{\delta^N} \mathbb{E}^{\mathbb{P}^{\mu, \alpha, N}}\left[\int_0^N \Vert \alpha_s(\zeta) - \alpha'_s(\zeta) \Vert_A ds\right] < \eta.
$$
Further, we have
\begin{equation*}
    \begin{split}
        &dt \times \mathbb{P}^{\mu, \alpha, N}(\lbrace (t, \zeta) : \kappa^N(t, \zeta, \delta^N) \geq \eta^{\frac{1}{2}} \rbrace) \leq \mathbb{E}\left[\left(\frac{d \mathbb{P}^{\mu, \alpha, N}}{d\mathbb{P}}\right)^2\right]^{\frac{1}{2}} \mathbb{E}\left[\left(\int_0^N \mathbb{1}_{\lbrace\kappa^N(s, \zeta, \delta^N) \geq \eta^{\frac{1}{2}}\rbrace} ds \right)^2\right]^{\frac{1}{2}} \\
        \leq & \left(N M^N_2 (dt \times \mathbb{P})(\lbrace (t, \zeta) \vert \kappa^N(t, \zeta, \delta^N) \geq \eta^{\frac{1}{2}} \rbrace) \right)^{\frac{1}{2}}  < \eta.
    \end{split}
\end{equation*}
Now, consider any $(\mu, \alpha), (\mu', \alpha') \in \mathcal{Q} \times \mathbb{A}^N$ so that
$$d_{\mathcal{Q}, \mathbb{A}^N}((\mu, \alpha),(\mu', \alpha')) < \tilde{\delta}^N:= \min\left(\delta^N, \frac{\epsilon^2(\delta^N)^2}{2 T C_A M_2}\right).$$
By Proposition \ref{prop: tau mtrc} and Pinkser's inequality, we know that
\begin{equation*}
    \begin{split}
        & d_{\tilde{\tau}^N}(\mathbb{P}^{\mu, \alpha, N}, \mathbb{P}^{\mu', \alpha', N}) \leq d_{TV}(\mathbb{P}^{\mu, \alpha, N}, \mathbb{P}^{\mu', \alpha', N}) \leq \sqrt{\frac{1}{2}\mathcal{H}(\mathbb{P}^{\mu, \alpha, N}, \mathbb{P}^{\mu', \alpha', N})} \\
        = & \frac{1}{2} \sqrt{\mathbb{E}^{\mathbb{P}^{\mu, \alpha, N}}\left[\int_0^N \vert\sigma^{-1}b(s, X, \mu', \alpha'_s) - \sigma^{-1}b(s, X, \mu, \alpha_s)\vert^2 ds\right]}
    \end{split}
\end{equation*}
where the last line uses Girsanov's Theorem to represent $\frac{d\mathbb{P}^{\mu', \alpha', N}}{d\mathbb{P}^{\mu, \alpha, N}}$. Now, for such $(\mu, \alpha)$ and $ (\mu', \alpha')$, we have
\begin{equation*}
    \begin{split}
       &\mathbb{E}^{\mathbb{P}^{\mu, \alpha, N}}\left[\int_0^N \vert \sigma^{-1}b(s, X, \mu, \alpha_s) - \sigma^{-1}b(s, X, \mu', \alpha'_s) \vert^2 ds\right] \\
       \leq & \mathbb{E}^{\mathbb{P}^{\mu, \alpha, N}}\left[\int_0^N \mathbb{1}_{\lbrace \kappa^N(s, \zeta, \delta^N) < \eta^{\frac{1}{2}}, \Vert \alpha_s - \alpha'_s \Vert_A < \delta^N \rbrace} \vert \sigma^{-1}b(s, X, \mu, \alpha_s) - \sigma^{-1}b(s, X, \mu', \alpha'_s) \vert^2 ds\right] + 8 C^2 \eta \\
       \leq & \mathbb{E}^{\mathbb{P}^{\mu, \alpha, N}}\left[\int_0^N \mathbb{1}_{\lbrace \kappa^N(s, \zeta, \delta^N) < \eta^{\frac{1}{2}}\rbrace} \kappa^N(s, \zeta, \delta^N)^2 ds\right] + 8 C^2 \eta < (N + 8C^2)\eta
    \end{split}
\end{equation*}
and therefore, $d_{\tilde{\tau}^N}(\mathbb{P}^{\mu, \alpha, N}, \mathbb{P}^{\mu', \alpha', N}) < \frac{1}{2}\sqrt{(N + 8C^2)\eta}$. Now, for every $N \leq \tilde{N}$, consider $\eta$ so that $\frac{1}{2}\sqrt{(N + 8C^2)\eta} < \frac{\epsilon}{2}$ and fix a corresponding $\tilde{\delta}^N$.\par
Let us go back to the infinite time horizon setting. For any $(\mu, \alpha), (\mu', \alpha') \in \mathcal{Q} \times \mathbb{A}$ with
$$d_{\mathcal{Q},\mathbb{A}}((\mu, \alpha), (\mu', \alpha')) < e^{-\tilde{N}} \min_{N\leq \tilde{N}} \tilde{\delta}^N$$
we have for every $N \leq \tilde{N}$ that $d_{\mathcal{Q}, \mathbb{A}^N}((\mu, \alpha_{\vert[0, N]}),(\mu', \alpha'_{\vert[0, N]})) < \tilde{\delta}^N$. Further, by definition, $\tilde{\pi}^N(\mathbb{P}^{\mu, \alpha})$ is the law of $(\xi, \omega_{\vert [0, N]})$ under $\mathbb{P}^{\mu, \alpha}$. Accordingly on $(\Omega^N, \mathcal{F}^N)$, the measure $\tilde{\pi}^N(\mathbb{P}^{\mu, \alpha})$ coincides with $\mathbb{P}^{\mu, \alpha_{\vert [0, N]}, N}$.
Therefore, by the discussion above,
$$d_{\tilde{\tau}^N}(\tilde{\pi}^N(\mathbb{P}^{\mu, \alpha}), \tilde{\pi}^N(\mathbb{P}^{\mu', \alpha'})) = d_{\tilde{\tau}^N}(\mathbb{P}^{\mu, \alpha_{\vert[0, N]}, N}, \mathbb{P}^{\mu', \alpha'_{\vert[0, N]}, N}) < \frac{\epsilon}{2}.$$
and hence,
$$d_{\tilde{\tau}}(\mathbb{P}^{\mu, \alpha}, \mathbb{P}^{\mu', \alpha'}) = \sum_{N = 1}^{\tilde{N}} 2^{-N} d_{\tilde{\tau}^N}(\tilde{\pi}^N(\mathbb{P}^{\mu, \alpha}), \tilde{\pi}^N(\mathbb{P}^{\mu', \alpha'})) + \frac{\epsilon}{2} < \epsilon.$$
As $\epsilon$ was arbitrary, this shows uniform continuity. For the second statement, note that by Lemma \ref{lem: cntrcont}, the set-valued maps $\mathbb{A}$ and thus, (by Lemma \ref{lem: uhc prod}) $\overline{\mathbb{A}}$ are both metrically upper hemicontinuous. The same follows for the composition by using Lemma \ref{lem: uhc comp}.
\end{proof}

\subsection{Single-valued case: extended mean field games}

We are now ready to prove the first of our two existence results. 
In order to deal with the case where players can (also) interact through their control variables, we will make the stronger assumption that the Hamiltonian admits a unique maximizer. 
In particular, $\mathbb{A}(\mu, \nu)$ will only consist of one element (i.e.\ Assumption \ref{asmp: sngl} is satisfied).
Thus, we view $\mathbb{A}: \mathcal{Q} \times \mathcal{M} \rightarrow \mathbb{A}$ as a function in the usual sense. It is immediate that in this case, Lemma \ref{lem: P uc} implies that $\mathfrak{P} \circ \overline{\mathbb{A}}: \mathcal{Q} \times \mathcal{M} \rightarrow \tilde{\mathcal{Q}}$ is actually a continuous map.
\begin{proof}[Theorem \ref{thm: exst E}]
Recall from section \ref{sect: cmp cnt} the map $\mathfrak{u}: \mathcal{P}(\Omega) \rightarrow \mathcal{P}(\mathcal{C}), \mathbb{Q} \mapsto \mathbb{Q} \circ X^{-1}$ which is continuous with respect to $\tilde{\tau}$ and $\tau$. Further, by construction, the image of $\tilde{\mathcal{Q}}$ under $\mathfrak{u}$ is $\mathcal{Q}$.\par
Let us further define $\mathfrak{v}: \mathcal{Q} \times \mathbb{A} \rightarrow \mathcal{M}, (\mu, \alpha) \mapsto \delta_{\mathfrak{P}(\mu, \alpha) \circ \alpha_t^{-1}} dt$. We also want to show that this map is sequentially continuous. Let us consider any $(\mu^n, \alpha^n)\rightarrow(\mu,\alpha)$ in $\mathcal{Q} \times \mathbb{A}$. By Lemma \ref{lem: P uc} and the properties of the total variation metric, we know for any $t$ that $d_{TV}(\mathfrak{P}(\mu^n, \alpha^n) \circ (\alpha^n_t)^{-1}, \mathfrak{P}(\mu, \alpha)\circ (\alpha^n_t)^{-1}) \leq d_{TV}(\mathfrak{P}(\mu^n, \alpha^n), \mathfrak{P}(\mu, \alpha)) \rightarrow 0$. Further, for almost every $t \geq 0$, we have $\alpha^n_t \rightarrow \alpha_t$ and thus $\mathfrak{P}(\mu, \alpha) \circ (\alpha^n_t)^{-1} \rightarrow \mathfrak{P}(\mu, \alpha) \circ (\alpha_t)^{-1}$ in $\mathcal{P}(A)$ with respect to weak convergence. As weak convergence is metrized by the Lévy-Prokhorov distance which is bounded by the total variation metric, we even have for almost every $t$ that $\mathfrak{P}(\mu^n, \alpha^n) \circ (\alpha^n_t)^{-1} \rightarrow \mathfrak{P}(\mu, \alpha) \circ (\alpha_t)^{-1}$.\par
By \cite[Proposition 3.5]{JacodMemin}, we have for any $T>0$ that
$$(\delta_{\mathfrak{P}(\mu^n, \alpha^n) \circ (\alpha^n_t)^{-1}} dt)_{\vert [0, T]} \rightarrow (\delta_{\mathfrak{P}(\mu, \alpha) \circ (\alpha_t)^{-1}} dt)_{\vert [0, T]}$$
in $\mathcal{M}^T$ with respect to $\mathcal{S}^T$. As this holds for all $T > 0$, by the definition of $\mathcal{S}$, we have
$$\delta_{\mathfrak{P}(\mu^n, \alpha^n) \circ (\alpha^n_t)^{-1}} dt\rightarrow \delta_{\mathfrak{P}(\mu, \alpha) \circ (\alpha_t)^{-1}} dt$$
in $\mathcal{M}$ with respect to the topology $\mathcal{S}$. This shows that $\mathfrak{v}$ is continuous as well.\par
Thus, the composition
$$(\mathfrak{u} \circ \mathfrak{P}, \mathfrak{v}) \circ \overline{\mathbb{A}}:\mathcal{Q} \times \mathcal{M} \rightarrow \mathcal{Q} \times \mathcal{M}, (\mu, \nu) \mapsto (\mathbb{P}^{\mu, \mathbb{A}(\mu, \nu)} \circ X^{-1}, \delta_{\mathbb{P}^{\mu, \mathbb{A}(\mu, \nu)} \circ (\mathbb{A}(\mu, \nu)_t)^{-1}})$$
is continuous.\par
As we explain below in remark \ref{rmk: lchtvs}, $\mathcal{Q}$ and $\mathcal{M}$ can both be embedded as subspaces of Hausdorff locally convex topological vector spaces. As $\mathcal{Q} \times \mathcal{M}$ is convex and by Propositions \ref{prop: inft Q top} and \ref{prop: inft M top} is compact as well, by Schauder-Tychonoff's fixed point Theorem \cite[Theorem §2]{Tychonoff}, $(\mathfrak{u} \circ \mathfrak{P}, \mathfrak{v}) \circ \overline{\mathbb{A}}$ must admit a fixed point $(\hat{\mu}, \hat{\nu})$. Such $(\hat{\mu}, \hat{\nu})$ together with its optimal control $\mathbb{A}(\hat{\mu}, \hat{\nu})$ form a solution of the discounted infinite horizon extended mean-field game.
\end{proof}
\begin{remark}\label{rmk: lchtvs}
    Let $\overline{\mathcal{Q}}$ be the space of all finite signed Borel measures on $\mathcal{C}$ and let $\overline{\mathcal{M}}$ be the space of all $\sigma$-finite signed Borel measures on $[0, \infty) \times \mathcal{P}(A)$. For $\overline{\mathcal{Q}}$, observe that for any $f \in B(\mathcal{C})$, the map $\mu \mapsto \int_{\mathcal{C}} f d\mu$ is linear, and $p_f: \overline{\mathcal{Q}} \rightarrow \mathbb{R}, \mu \mapsto \vert \int_{\mathcal{C}} f d\mu \vert$ thus defines a seminorm. The family $(p_f)_{f}$ for $f\in B(\mathcal{C})$ of seminorms defines a locally convex vector space topology on $\overline{\mathcal{Q}}$. It is immediate that under this topology, the induced topology on $\mathcal{Q} \subset \overline{\mathcal{Q}}$ is exactly $\tau$. Further, by using the Hahn-Jordan decomposition and Proposition \ref{prop: ext}, we can see that if for some $\mu \in \overline{\mathcal{Q}}$, we have $p_f(\mu) = 0$ for any $f \in B(\mathcal{C})$, we must have $\mu = 0$. Thus, the induced topology in $\overline{\mathcal{Q}}$ is Hausdorff.\par
    The same can be done with $\overline{\mathcal{M}}$ and seminorms $p_f(\nu) = \vert \int_0^\infty f(t, q) \nu_t(dq)dt\vert$ over all bounded, compactly supported and measurable $f$ that are continuous in $q$ to define a compatible Hausdorff locally convex Hausdorff topology. 
\end{remark}

\subsection{Set-valued case: non extended mean field games}
We now turn to the existence result when there is no interaction through the control.
In this case, we allow there to be several maximizers of the Hamiltonian.
Before presenting the proof, let us prove a necessary result that follows from the convenient structure of the values of $\mathbb{A}$.

\subsubsection{Full subsets of $\mathbb{A}$}

In the set-valued case, we would like the set-valued map $\mathfrak{P} \circ \overline{\mathbb{A}}$ to admit compact values. 
In general, once $\mathbb{A}$ does not admit almost everywhere single values, it is not compact but only closed! 
As $\mathfrak{P} \circ \overline{\mathbb{A}}$ maps into a compact space, it suffices to show that its values are closed.\par

First, we define measurability of set-valued maps. 
\begin{definition}\label{def: wmsbl}
    We call a set-valued map $\phi: \mathcal{X} \twoheadrightarrow \mathcal{Y}$ from a measure space $(\mathcal{X}, \Sigma)$ into a metric space $(\mathcal{Y}, d)$ weakly measurable if for each open subset $O \subset \mathcal{Y}$, we have
    $$\lbrace x \in \mathcal{X} \vert \phi(x) \cap O \neq \emptyset \rbrace \in \Sigma.$$
\end{definition}
\begin{remark}
    If we replace the open sets by closed set, then $\phi$ is called measurable set-valued function. In case the values of $\phi$ are compact, which is the case in most of our following applications, by \cite[Lemma 18.2]{IDA}, the notions of weak measurability and measurability are equivalent.
 \end{remark}
In the following, we equip the space $[0, \infty) \times \Omega$ with the progressive $\sigma$ algebra.
\begin{definition}
    We call a subset $\mathbb{V} \subset \mathbb{A}$ full if there exists a weakly measurable, nonempty, and compact set-valued function $V: [0, \infty) \times \Omega \twoheadrightarrow A$ with
    $$\mathbb{V} = \lbrace \alpha \in \mathbb{A} \vert \alpha_t(\zeta) \in V(t, \zeta), dt\times d\mathbb{P}\,\, a.e.\ \rbrace.$$
    Such a function $V$ will be called a spanning function.
\end{definition}
By the measurable maximum Theorem, \cite[Theorem 18.19]{IDA}, we know that for fixed $\mu, \nu$ the set-valued map $(t, x) \mapsto \mathcal{A}(t, x, \mu, Z^{\mu, \nu}_t)$ is compact set-valued and measurable, thus weakly measurable as well. In particular, this proves that it is a spanning function and for each $\mu$ and $\nu$, the set $\mathbb{A}(\mu, \nu)$ is thus full.

We are interested in the behaviour of full sets under the map $\mathfrak{P}$. Note that $\mathfrak{P}$ always maps into $\tilde{\mathcal{Q}}$. Let us first consider $\mathfrak{P}_T = \tilde{\pi}^T \circ \mathfrak{P}$ which always maps into $\tilde{\mathcal{Q}}^T \subset \mathcal{P}(\Omega^T)$. In the following, $\mu$ is a fixed element in $\mathcal{Q}$.

\begin{lemma}\label{lem: str clsd}
    For any full subset $\mathbb{V}$ with spanning function $V$, the subset $\mathfrak{P}_T(\mu, \mathbb{V})$ is closed with respect to the total variation metric on $\mathcal{P}(\Omega^T)$.
\end{lemma}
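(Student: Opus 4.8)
The goal is to show that $\mathfrak{P}_T(\mu, \mathbb{V}) = \{\mathbb{P}^{\mu,\alpha}_{|\mathcal{F}_T} : \alpha \in \mathbb{V}\}$ is total-variation closed in $\mathcal{P}(\Omega^T)$. Since $\tilde{\mathcal{Q}}^T$ is compact and metrizable by the total variation metric (Proposition \ref{prop: tau mtrc} and Proposition \ref{prop: ttauT}), it suffices to take a sequence $\alpha^n \in \mathbb{V}$ with $\mathbb{P}^{\mu,\alpha^n, T} \to \mathbb{Q}$ in total variation and exhibit some $\alpha \in \mathbb{V}$ with $\mathbb{P}^{\mu,\alpha, T} = \mathbb{Q}$. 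The first step is to pass from the convergence of measures to convergence of the associated drifts. Writing $b^n_s := \sigma(s,X)^{-1} b(s, X, \mu, \alpha^n_s)$, Girsanov's theorem represents the relative entropy $\mathcal{H}(\mathbb{P}^{\mu,\alpha^n,T}, \mathbb{P}^{\mu,\alpha^m,T})$ in terms of $\mathbb{E}^{\mathbb{P}^{\mu,\alpha^n,T}}[\int_0^T |b^n_s - b^m_s|^2\, ds]$, and Pinsker's inequality together with the uniform bound $\mathbb{E}[(d\mathbb{P}^{\mu,\alpha^n,T}/d\mathbb{P})^{\pm 1}] \le M^T$ lets me deduce that $(b^n)$ is Cauchy, hence convergent, in $\mathbb{L}^2(dt \times d\mathbb{P})$; call the limit $\bar b$.

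**Extracting the limiting control.** The delicate point is that $\bar b$ need not a priori be of the form $\sigma^{-1} b(\cdot, X, \mu, \alpha_\cdot)$ for an admissible $\alpha$, let alone one taking values in $V$. Passing to a subsequence, $b^n_s \to \bar b_s$ for $dt \times d\mathbb{P}$-a.e.\ $(s,\zeta)$; along this subsequence $\alpha^n_s(\zeta)$ lies in the compact set $A$, so I can extract (measurably, using the measurable maximum / measurable selection theorem) a limit point $\alpha_s(\zeta) \in A$. Because $V(s,\zeta)$ is closed and $\alpha^n_s(\zeta) \in V(s,\zeta)$ for a.e.\ $(s,\zeta)$, the limit satisfies $\alpha_s(\zeta) \in V(s,\zeta)$ a.e., so $\alpha \in \mathbb{V}$ provided $\alpha$ can be taken progressively measurable — this is where I invoke the measurable selection machinery (Definition \ref{def: wmsbl} and \cite[Theorem 18.19]{IDA}) applied to the weakly measurable set-valued map of subsequential limit points, which is nonempty and compact-valued. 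Continuity of $b$ in $a$ (Assumption \ref{asmp: stnd}(iii)) then gives $\sigma(s,X)^{-1} b(s, X, \mu, \alpha_s) = \bar b_s$ a.e., so $\mathbb{P}^{\mu,\alpha,T}$ has exactly the Doléans–Dade density whose $\mathbb{L}^2$-limit identifies it with $\mathbb{Q}$ (density convergence in $\mathbb{L}^2(\mathbb{P})$, after checking the exponential martingales converge, forces $\mathbb{Q} = \mathbb{P}^{\mu,\alpha,T}$).

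**The main obstacle.** The crux is the measurable selection argument: extracting a \emph{progressively measurable} $A$-valued process $\alpha$ from the pointwise subsequential limits of $\alpha^n$, while simultaneously respecting the constraint $\alpha_s(\zeta) \in V(s,\zeta)$ and achieving $b(s,X,\mu,\alpha_s) = $ the prescribed limiting drift. A clean way to do this is to define, for a.e.\ fixed $(s,\zeta)$, the set $\Theta(s,\zeta) := \{a \in V(s,\zeta) : \sigma(s,X(\zeta))^{-1} b(s, X(\zeta), \mu, a) = \bar b_s(\zeta)\}$; by continuity of $b$ in $a$ and compactness of $V(s,\zeta)$, $\Theta(s,\zeta)$ is compact, and it is nonempty for a.e.\ $(s,\zeta)$ precisely because the a.e.\ subsequential limit $a$ of $\alpha^n_s(\zeta)$ lies in it. Weak measurability of $(s,\zeta) \mapsto \Theta(s,\zeta)$ follows from weak measurability of $V$ and joint measurability of $b$, after which \cite[Theorem 18.19]{IDA} furnishes a progressively measurable selector $\alpha$. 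This $\alpha$ lies in $\mathbb{V}$ and satisfies $\mathbb{P}^{\mu,\alpha,T} = \mathbb{Q}$, proving closedness. The remaining verifications — the Girsanov/entropy identity, the Cauchy estimate, and the $\mathbb{L}^2$ identification of the limiting measure — are routine given Assumption \ref{asmp: stnd}(i) and the density bounds built into $\tilde{\mathcal{Q}}^T$.
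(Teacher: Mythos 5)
Your outline shares the paper's endgame: identify a limiting drift $\bar b$, get $dt\times\mathbb{P}$-a.e.\ convergence of the drifts along a subsequence, form the set-valued map $\Theta(s,\zeta)=\{a\in V(s,\zeta): \sigma^{-1}b(s,X(\zeta),\mu,a)=\bar b_s(\zeta)\}$, and apply measurable selection --- this is exactly the paper's $\mathfrak{B}$ followed by Filippov's implicit function theorem. The gap is in how you pass from total-variation convergence of $\mathfrak{P}_T(\mu,\alpha^n)$ to convergence of the drifts. Pinsker's inequality reads $d_{TV}\le\sqrt{\mathcal{H}/2}$: it bounds total variation \emph{by} relative entropy, not the reverse. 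Total-variation convergence does not imply relative-entropy convergence, so you cannot read off that $\mathbb{E}^{\mathbb{P}^{\mu,\alpha^n,T}}\bigl[\int_0^T|b^n_s-b^m_s|^2\,ds\bigr]\to 0$ from TV-Cauchyness. The uniform $\mathbb{L}^2$ and inverse-moment bounds built into $\tilde{\mathcal{Q}}^T$ do not rescue this: a reverse Pinsker inequality needs an $\mathbb{L}^\infty$ bound on the pairwise density ratios $d\mathbb{P}^{\mu,\alpha^n,T}/d\mathbb{P}^{\mu,\alpha^m,T}$, and the Dol\'eans-Dade exponentials in question have Gaussian tails, so they are not a.s.\ bounded.

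The paper's workaround is considerably more delicate, which is why this step is not ``routine'': total-variation convergence gives $\mathbb{L}^1$ convergence of the densities $d\mathfrak{P}_T(\mu,\alpha^n)/d\mathbb{P}^T$, which with the uniform $\mathbb{L}^2$ bound upgrades to $\mathbb{L}^{3/2}$ via Vitali; Doob's maximal inequality then gives convergence of the density martingales in the {\'E}mery topology; continuity of the stochastic logarithm (Lemma \ref{lem: stolog}) transfers this to {\'E}mery convergence of the stochastic integrals $\int_0^\cdot\beta^n_s\,dW_s$; and a quadratic-variation estimate of Kardaras then yields convergence in probability of $\int_0^T\Vert\beta^n_s-\beta_s\Vert^2\,ds$, hence $dt\times\mathbb{P}$-a.e.\ convergence of $\beta^n$ after a subsequence. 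Your Girsanov--entropy heuristic correctly identifies \emph{which} quantity must vanish, but does not by itself show that it does; some argument of the {\'E}mery-topology kind is required to close this step.
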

\begin{proof}
Consider an arbitrary sequence $(\alpha^n)_{n \geq 1} \subset \mathbb{V}$ such that $\mathfrak{P}_T(\mu, \alpha^n) \rightarrow \hat{\mathbb{P}}$ in total variation for some $\hat{\mathbb{P}} \in \mathcal{P}(\Omega^T)$.
Note that for any $\alpha$, by assumption, $\mathfrak{P}_T(\mu, \alpha)$ only depends on $\alpha_{\vert[0, T]}$. Hence, $\mathfrak{P}_T$ can also be considered as a map on $\Omega \times \mathbb{A}^T$. Let us thus consider the subset $\mathbb{V}^T = \lbrace \alpha_{\vert[0, T]} \vert \alpha \in \mathbb{V}\rbrace \subset \mathbb{A}^T$ which we can also write as 
$$\mathbb{V}^T = \lbrace \alpha \in \mathbb{A}^T \vert \alpha_t(\zeta) \in V(t, \zeta), dt\times d\mathbb{P} a.e.\ (t, \zeta) \in [0, T] \times \Omega \rbrace.$$
By construction, $\mathfrak{P}_T$ maps into $\tilde{\mathcal{Q}}^T$ which is a closed subset of $\mathcal{P}(\Omega^T)$ in $\tilde{\tau}^T$. As convergence in total variation implies convergence in $\tilde{\tau}^T$ as well, we must have $\hat{\mathbb{P}} \in \tilde{\mathcal{Q}}^T$. In particular, $\hat{\mathbb{P}}$ is still equivalent to $\mathbb{P}^T$. Since $\mathbb{L}^\infty(\Omega^T)$ is the dual space of $\mathbb{L}^1(\Omega^T)$, if we write $B^\infty_0(1)$ the unit ball in $\mathbb{L}^\infty(\Omega^T)$ around $0$, we can write the $\mathbb{L}^1$ norm as
\begin{equation*}
    \begin{split}
        &\mathbb{E}\left[\left\vert \frac{d\mathfrak{P}_T(\mu, \alpha^n)}{d\mathbb{P}^T} - \frac{d\hat{\mathbb{P}}}{d\mathbb{P}^T}\right\vert \right]=  \sup_{\phi \in B^\infty_0(1)} \mathbb{E} \left[\phi \left(\frac{d\mathfrak{P}_T(\mu, \alpha^n)}{d\mathbb{P}^T} - \frac{d\hat{\mathbb{P}}}{d\mathbb{P}^T}\right)\right] \\
        =& \sup_{\phi \in B^\infty_0(1)} \int_{\Omega^T} \phi d(\mathfrak{P}_T(\mu, \alpha^n) - \hat{\mathbb{P}}) = d_{TV}(\mathfrak{P}_T(\mu, \alpha^n), \hat{\mathbb{P}}).
    \end{split}
\end{equation*}
Now since by assumption the total variation converges, it follows that $\frac{d\mathfrak{P}_T(\mu, \alpha^n)}{d\mathbb{P}^T}$ converges in $\mathbb{L}^1$ towards $\frac{d\hat{\mathbb{P}}}{d\mathbb{P}^T}$ for $n \rightarrow \infty$. Recall that $\sup_{n \geq 1} \mathbb{E}[(\frac{d\mathfrak{P}_T(\mu, \alpha^n)}{d\mathbb{P}^T})^2] \leq M^T_2 < \infty$, thus by Vitali's convergence Theorem this convergence also holds in $\mathbb{L}^{\frac{3}{2}}$.\par
Let us write $\beta^n_t := \sigma^{-1}b(t, X, \mu, \alpha^n_t)$. Further, for $\mathfrak{L}$ defined as below in Lemma \ref{lem: stolog}, we use the predictable representation property to find a $\mathbb{R}^d$ valued process $\beta$ such that $\int_0^\cdot \beta dW_s = \mathfrak{L}(\mathbb{E}[\frac{d\hat{\mathbb{P}}}{\mathbb{P}^T} \vert \mathcal{F}^T_\cdot])$. By Doob's inequality and \cite[Remark 12.4.5.]{CE}, the $\mathbb{L}^{\frac{3}{2}}$ convergence above implies that the $\mathcal{E}(\int_0^\cdot \beta^n_s dW_s)$ converge to $\mathcal{E}(\int_0^\cdot \beta_s dW_s)$ in the Émery topology. Thus, by Lemma \ref{lem: stolog}, $\int_0^\cdot \beta^n_sdW_s$ converges in the Émery topology to $\int_0^\cdot \beta_sdW_s$ and hence by \cite[Proposition 2.10]{Kardaras}, $\int_0^\cdot \Vert \beta_s- \beta^n_s\Vert^2 ds$ converges to zero in the Émery topology. This implies that $\int_0^T \Vert \beta_s-\beta^n_s\Vert^2ds$ and by Hölder's inequality also $\int_0^T \Vert \beta_s - \beta^n_s \Vert ds$ convergences in probability to zero. By Jensen's inequality applied to $\frac{1}{T}dt$, it follows that
$$\mathbb{E}\left[\int_0^T \max(1, \Vert \beta_s - \beta^n_s\Vert)ds\right] \leq \mathbb{E}\left[\max\left(T, \int_0^T \Vert \beta_s - \beta^n_s \Vert ds\right)\right] \rightarrow 0$$
so that $\beta^n$ converges in $dt \times\mathbb{P}$ measure to $\beta$, and once we pass to a subsequence, we can thus assume $\beta^n_t \rightarrow \beta_t$ $dt \times d\mathbb{P}^T$ almost everywhere on $[0, T] \times \Omega$.\par
Let us define the set-valued map
$$\mathfrak{B}:[0, T] \times \Omega \rightarrow 2^{A},\,\, (t, \zeta) \mapsto \lbrace a \in V(t, \zeta) \vert \sigma^{-1}b(t, X(\zeta), \mu, a) = \beta_t(\zeta) \rbrace.
$$
As $A$ is compact, for any $(t, \zeta)$, the map $a \mapsto \sigma^{-1}b(t, X(\zeta), \mu, a)$ is closed as it is continuous.\par
Now, there exists a measurable set $E \subset [0, T] \times \Omega$ such that $(dt \times \mathbb{P})(E) = 0$ and on the set $([0, T] \times \Omega) \setminus E$, we have $\alpha^n_t(\zeta) \in V(t, \zeta)$ for any $n$ and $\beta^n_t(\zeta) \rightarrow \beta_t(\zeta)$. Thus, since $V(t, \zeta)$ is closed, on $([0, T] \times \Omega) \setminus E$, there exists $a \in V(t, \zeta)$ such that $\sigma^{-1}b(t, X(\zeta), \mu, a) = \beta_t(\zeta)$ making $\mathfrak{B}(t, \zeta)$ almost everywhere non empty. By Filippov's implicit function Theorem \cite[Theorem 18.17]{IDA}, there exists a measurable selector $\gamma:[0, T] \times \Omega \rightarrow A$ of $\mathfrak{B}$ such that $\sigma^{-1}b(t, X(\zeta), \mu, \gamma_t(\zeta)) = \beta_t(\zeta)$ almost everywhere.\par
Such $\gamma$ needs to lie in $\mathbb{V}^T$. By definition, there exists $\alpha \in \mathbb{A}$ so that $\alpha_{\vert [0, T]} = \gamma$ and therefore, $\mathfrak{P}_T(\mu, \gamma) = \mathfrak{P}_T(\mu, \alpha) = \hat{\mathbb{P}}$ which shows that $\mathfrak{P}_T(\mu, \mathbb{V})$ is closed in total variation.
\end{proof}

Later, we will relate this result to closedness under $\tilde{\tau}^T$ as well. 
Images under $\mathfrak{P}$ of full subsets have the desirable property that closedness under $\tilde{\tau}$ can be characterized locally. This is not true for general subsets of $\tilde{\mathcal{Q}}$.
\begin{lemma}\label{lem: full clsd}
    Let $\mathbb{V}$ be a full subset with spanning function $V$. Then, for any $\mu \in \mathcal{Q}$, we have that $\mathfrak{P}(\mu, \mathbb{V})$ is closed if for any $T > 0$, the sets $\tilde{\pi}^T(\mathfrak{P}(\mu, \mathbb{V}))$ are closed in $\mathcal{P}(\Omega^T)$.
\end{lemma}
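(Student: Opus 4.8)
The plan is to reduce the global closedness of $\mathfrak{P}(\mu,\mathbb{V})$ to the purely set-theoretic identity
$$\mathfrak{P}(\mu,\mathbb{V}) \;=\; \bigcap_{T>0}(\tilde{\pi}^T)^{-1}\big(\tilde{\pi}^T(\mathfrak{P}(\mu,\mathbb{V}))\big).$$
The inclusion ``$\subseteq$'' is trivial, and once ``$\supseteq$'' is established the claim follows immediately: each $\tilde{\pi}^T$ is $\tilde{\tau}$-continuous and each $\tilde{\pi}^T(\mathfrak{P}(\mu,\mathbb{V}))$ is closed by hypothesis, so $\mathfrak{P}(\mu,\mathbb{V})$ is an intersection of closed sets. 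Thus I would fix $\mathbb{Q}\in\mathcal{P}(\Omega)$ with $\tilde{\pi}^T(\mathbb{Q})\in\tilde{\pi}^T(\mathfrak{P}(\mu,\mathbb{V}))$ for every $T>0$ and construct $\alpha\in\mathbb{V}$ with $\mathbb{P}^{\mu,\alpha}=\mathbb{Q}$.

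First I would extract the common limiting object. Since $\tilde{\pi}^T(\mathfrak{P}(\mu,\mathbb{V}))\subseteq\tilde{\mathcal{Q}}^T$ for all $T$, one gets $\mathbb{Q}\in\tilde{\mathcal{Q}}\subseteq\mathfrak{E}$, so $\mathbb{Q}\overset{loc}{\sim}\mathbb{P}$ and the density process $D_t:=d\mathbb{Q}_{\vert\mathcal{F}^0_t}/d\mathbb{P}_{\vert\mathcal{F}^0_t}$ is a strictly positive continuous $\mathbb{P}$-martingale with $D_0=1$. By the predictable representation property (Proposition \ref{prop: prp}) write $D=\mathcal{E}(\int_0^\cdot\theta_s\,dW_s)$ for a progressively measurable $\mathbb{R}^d$-valued $\theta$. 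On the other hand, for each $T>0$ the hypothesis supplies $\alpha^{(T)}\in\mathbb{V}$ with $\tilde{\pi}^T(\mathbb{P}^{\mu,\alpha^{(T)}})=\tilde{\pi}^T(\mathbb{Q})$; identifying these with measures on $(\Omega,\mathcal{F}^0_T)$ and comparing their densities against $\mathbb{P}$ on $\mathcal{F}^0_t$ for $t\le T$ shows $\mathcal{E}(\int_0^\cdot\sigma^{-1}b(s,X,\mu,\alpha^{(T)}_s)\,dW_s)_t=D_t$ for all $t\le T$. Taking stochastic logarithms and using uniqueness of the integrand in the Brownian martingale representation yields $\sigma^{-1}b(s,X,\mu,\alpha^{(T)}_s)=\theta_s$ for $dt\times d\mathbb{P}$-a.e.\ $(s,\zeta)\in[0,T]\times\Omega$.

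Then I would run the selection argument exactly as in the proof of Lemma \ref{lem: str clsd}. Introduce on $[0,\infty)\times\Omega$ (with the progressive $\sigma$-algebra) the set-valued map
$$\mathfrak{B}(s,\zeta):=\{a\in V(s,\zeta)\;:\;\sigma^{-1}b(s,X(\zeta),\mu,a)=\theta_s(\zeta)\}.$$
It is compact-valued, being a closed subset of the compact set $V(s,\zeta)$ by continuity of $a\mapsto\sigma^{-1}b$; it is weakly measurable by Filippov's implicit function theorem / the measurable maximum theorem, using weak measurability of the spanning function $V$ together with the Carath\'eodory structure of $\sigma^{-1}b$ and measurability of $\theta$; and, applying the previous paragraph with $T=n$ and discarding a countable union of null sets, it is $dt\times d\mathbb{P}$-a.e.\ nonempty. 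A measurable selection of $\mathfrak{B}$, completed on the remaining null set by an arbitrary measurable selection of $V$, produces $\alpha\in\mathbb{A}$ with $\alpha_s(\zeta)\in V(s,\zeta)$ a.e., i.e.\ $\alpha\in\mathbb{V}$, and $\sigma^{-1}b(s,X,\mu,\alpha_s)=\theta_s$ a.e. Finally, for every $T>0$ the density of $\mathbb{P}^{\mu,\alpha}$ on $\mathcal{F}^0_T$ equals $\mathcal{E}(\int_0^\cdot\theta_s\,dW_s)_T=D_T=d\mathbb{Q}_{\vert\mathcal{F}^0_T}/d\mathbb{P}_{\vert\mathcal{F}^0_T}$, so $\mathbb{P}^{\mu,\alpha}$ and $\mathbb{Q}$ agree on $\bigcup_{T>0}\mathcal{F}^0_T$ and hence on $\mathcal{B}(\Omega)$, giving $\mathbb{Q}=\mathbb{P}^{\mu,\alpha}\in\mathfrak{P}(\mu,\mathbb{V})$.

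The one genuine obstacle is precisely the passage from the horizon-dependent controls $\alpha^{(T)}$ to a single admissible infinite-horizon control: the $\alpha^{(T)}$ need not be consistent as $T$ varies, so one cannot simply glue them. The resolution, and the conceptual content of the lemma, is that one should instead glue the associated \emph{densities} — which automatically form a consistent family and therefore determine the single drift field $\theta$ — and then recover a control by one global measurable selection against $\mathfrak{B}$; the reason this works for images of \emph{full} subsets, and not for arbitrary subsets of $\tilde{\mathcal{Q}}$, is that fullness is exactly what makes ``$\alpha$ reproduces $\theta$ and stays in $V$'' a pointwise-in-$(s,\zeta)$ constraint to which measurable selection applies.
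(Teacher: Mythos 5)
Your proof is correct, and it reaches the result by a route that is recognizably parallel to the paper's but organized and finished differently. You first extract the clean, topology-free identity $\mathfrak{P}(\mu,\mathbb{V})=\bigcap_{T>0}(\tilde{\pi}^T)^{-1}\bigl(\tilde{\pi}^T(\mathfrak{P}(\mu,\mathbb{V}))\bigr)$ and only then invoke continuity of $\tilde{\pi}^T$ together with the closedness hypothesis; the paper instead starts directly from a $\tilde{\tau}$-convergent sequence $\mathfrak{P}(\mu,\alpha^n)\to\hat{\mathbb{P}}$ and feeds the hypothesis in along the way. Both approaches identify the same density process $D=\mathcal{E}(\int\theta\,dW)$ and, for each horizon $N$, a control $\alpha^{(N)}\in\mathbb{V}$ that reproduces $\theta$ a.e.\ on $[0,N]\times\Omega$. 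The genuine divergence is in the last step. You run a second measurable-selection argument against the set-valued map $\mathfrak{B}(s,\zeta)=\{a\in V(s,\zeta):\sigma^{-1}b(s,X(\zeta),\mu,a)=\theta_s(\zeta)\}$, exactly as in Lemma \ref{lem: str clsd}. The paper avoids this by a simple time concatenation: it defines $\alpha$ by $\alpha_{\vert[N-1,N)}:=\alpha^{(N)}_{\vert[N-1,N)}$, and since each $\alpha^{(N)}$ lies in $V(t,\zeta)$ and satisfies the drift constraint a.e.\ on its strip, the glued control is again in $\mathbb{V}$ (fullness is a pointwise-a.e.\ criterion) and reproduces $\theta$ everywhere. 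So your remark that "one cannot simply glue them" is a little too strong — one cannot take a limit of the $\alpha^{(T)}$, but the piecewise splice works directly, and this is precisely what fullness buys the paper. Your version trades that elementary concatenation for one extra application of Filippov's theorem, at the benefit of a slightly more conceptual presentation (the intersection identity makes the role of the hypothesis and of fullness separately visible). Either is a valid proof.
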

\begin{proof}
Let $\mathbb{V}$ and $\mu$ be arbitrary such that for any $T > 0$, the sets $\tilde{\pi}^T(\mathfrak{P}(\mu, \mathbb{V}))$ are closed. Consider an arbitrary sequence $(\alpha^n)_{n \geq 1}\subset \mathbb{V}$ such that $\mathfrak{P}(\mu, \alpha^n ) \rightarrow \hat{\mathbb{P}}$ for some $\hat{\mathbb{P}}\in \tilde{\mathcal{Q}}$. By definition of our topology $\tilde{\tau}$, for any $N \geq 1$, we have $\tilde{\pi}^N(\mathfrak{P}(\mu, \alpha^n)) \rightarrow \tilde{\pi}^N(\hat{\mathbb{P}})$.
Since $\hat{\mathbb{P}} \in \tilde{\mathcal{Q}} \subset\mathfrak{E}$ is locally equivalent to $\mathbb{P}$, by \cite[Theorem 3.4]{JS}, there is a $\mathbb{P}$ martingale $D$ such that $\frac{d \hat{\mathbb{P}}_{\vert[0, T]}}{d\mathbb{P}_{\vert[0, T]}} = D_T$ for all $T\geq 0$. 
Just as in the proof of Lemma \ref{lem: str clsd}, we can find a progressively measurable $\mathbb{R}^d$ valued $\beta$ such that $\mathcal{E}(\int_0^\cdot \beta_s dW_s) = D$.\par
To show that $\hat{\mathbb{P}} \in \mathfrak{P}(\mu, \mathbb{V})$, we need to find $\alpha \in \mathbb{V}$ so that $\sigma^{-1}b(t, X(\zeta), \mu, \alpha_t(\zeta)) = \beta_t(\zeta)$ holds $dt \times \mathbb{P}$ a.e.\ In particular, if this holds, then for any $N \geq 1$, we have $\frac{d\mathfrak{P}(\mu, \alpha)_{\vert \mathcal{F}_N}}{d\mathbb{P}_{\vert \mathcal{F}_N}} = \frac{d\hat{\mathbb{P}}_{\vert \mathcal{F}_N}}{d\mathbb{P}_{\vert \mathcal{F}_N}}$. Since this implies $\tilde{\pi}^N(\mathfrak{P}(\mu, \alpha)) =\tilde{\pi}^N(\hat{\mathbb{P}})$, by Proposition \ref{prop: ext}, we then even get $\mathfrak{P}(\mu, \alpha) = \hat{\mathbb{P}}$.\par
To find such $\alpha$, note that as we know that for any $N\geq 1$, the set $\tilde{\pi}^N(\mathfrak{P}(\mu, \mathbb{V}))$ is closed, there must be $\alpha^N \in \mathbb{V}$ so that $\tilde{\pi}^N(\mathfrak{P}(\mu, \alpha^N)) = \tilde{\pi}^N(\hat{\mathbb{P}})$. In particular, this shows that on $[0, N] \times \Omega$, we have $dt \times \mathbb{P}$ a.e.\ that $\sigma^{-1}b(t, X(\zeta), \mu, \alpha^N_t(\zeta)) = \beta_t(\zeta)$. Now, let us define $\alpha \in \mathbb{A}$ so that for any $N\geq 1$, we have $\alpha_{\vert [N-1, N)} = \alpha^N_{\vert [N-1, N)}$. As $\mathbb{V}$ is full, we must still have $\alpha \in \mathbb{V}$. Further, this $\alpha$ satisfies $dt \times \mathbb{P}$ a.e., $\sigma^{-1}b(t, X(\zeta), \mu, \alpha_t(\zeta)) = \beta_t(\zeta)$.
\end{proof}

\subsubsection{Continuity of the stochastic logarithm}
Let us proof the properties about the stochastic logarithm that we have used above. For a fixed finite time horizon $T>0$, we define it as follows. Let $\mathbb{M}$ be the space of continuous local martingales on $[0, T]$ starting at zero and $\mathbb{M}^+$ the space of the ones that are positive and start at one at time zero. Then, we define the stochastic logarithm as $\mathfrak{L}: \mathbb{M}^+ \rightarrow \mathbb{M}, M \mapsto \int_0^\cdot \frac{1}{M_s}dM_s$. Then, considering the stochastic exponential as a map $\mathcal{E}:\mathbb{M} \rightarrow \mathbb{M}^+$, an application of Itô's formula immediately shows that the stochastic logarithm $\mathfrak{L}$ is the inverse of $\mathcal{E}$.\par
For our continuity result, we equip $\mathbb{M}^+$ and $\mathbb{M}$ with the Émery topology (or also known as the semimartingale topology) as in \cite[Definition 12.4.3.]{CE}. 
\begin{lemma}\label{lem: stolog}
    The stochastic logarithm $\mathfrak{L}$ is continuous with respect to the Émery topology.
\end{lemma}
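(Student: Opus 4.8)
The plan is to use metrizability of the Émery topology to reduce to sequences, the fact that Émery convergence is stronger than uniform convergence in probability to pass to an almost surely convergent subsequence, and then a standard stability result for stochastic integration in the semimartingale topology. Concretely, let $M^n \to M$ in $\mathbb{M}^+$ in the Émery topology; we must show $\mathfrak{L}(M^n) \to \mathfrak{L}(M)$. Taking the constant predictable integrand $1$ in the definition of the Émery topology already gives $\sup_{t\le T}|M^n_t - M_t| \to 0$ in probability. Since the Émery topology is metrizable, it suffices to show that every subsequence of $(\mathfrak{L}(M^n))$ admits a further subsequence converging to $\mathfrak{L}(M)$, so we may pass to a subsequence (not relabelled) along which $\sup_{t\le T}|M^n_t - M_t| \to 0$ almost surely.

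The crucial step I would carry out first is a uniform lower bound on the paths. On the almost sure event where $M$ has a continuous, strictly positive path (recall $M_0=1$) and the above convergence holds, $m := \inf_{t\le T} M_t > 0$, hence $M^n_t \ge m/2$ for all $t\le T$ and all large $n$, while each of the remaining finitely many $M^n$ has strictly positive infimum over the compact $[0,T]$; therefore $\tilde m := \inf_{n\ge 1}\inf_{t\le T} M^n_t$ is an almost surely strictly positive random variable. Consequently the integrands $H^n := 1/M^n$ are continuous adapted (hence predictable), they converge to $H := 1/M$ uniformly on $[0,T]$ almost surely (in particular uniformly on compacts in probability), and they are all dominated by $\sup_n H^n$, a countable supremum of predictable processes bounded by the finite random variable $1/\tilde m$, hence a locally bounded predictable process. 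Writing $\mathfrak{L}(M^n) = H^n\cdot M^n$ and $\mathfrak{L}(M) = H\cdot M$, the continuity of stochastic integration with respect to the semimartingale topology --- if $X^n\to X$ in that topology, $H^n\to H$ uniformly on compacts in probability, and the $H^n$ are uniformly dominated by a locally bounded predictable process, then $H^n\cdot X^n\to H\cdot X$ in the semimartingale topology, see e.g.\ \cite{CE} --- yields $\mathfrak{L}(M^n)\to\mathfrak{L}(M)$. One should also record the elementary facts that $\mathfrak{L}$ does take values in $\mathbb{M}$, since $1/M$ is locally bounded and predictable so $\int_0^\cdot M_s^{-1}\,dM_s$ is a continuous local martingale null at $0$, and that $\mathfrak{L}=\mathcal{E}^{-1}$, which is a one-line application of It\^o's formula.

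I expect the main obstacle to be exactly the uniform positivity of the paths of the $M^n$: without a uniform lower bound the integrands $1/M^n$ need not be dominated by a single locally bounded predictable process, and the stability theorem for stochastic integration is not available. This is precisely what the passage to an almost surely uniformly convergent subsequence buys us --- via $\tilde m > 0$ --- which is why the argument is organized around subsequences rather than the original sequence. A secondary technical point is checking that $\sup_n(1/M^n)$ really qualifies as a locally bounded predictable process, which holds because a countable supremum of predictable processes is predictable and a predictable process whose paths are bounded on $[0,T]$ by a finite random variable is locally bounded.
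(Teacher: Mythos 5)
Your proposal is correct in spirit and in its main steps, but it follows a genuinely different route from the paper, and one of your technical claims is stated too loosely to be used as you state it.

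The paper's proof is shorter and avoids passing to subsequences: it observes that ucp convergence of $M^n$ to $M$ is exactly convergence in probability of $\mathcal{C}^T_+$-valued random variables with the sup metric, that $\omega\mapsto 1/\omega$ is continuous from $\mathcal{C}^T_+$ to $\mathcal{C}^T$, and then invokes the continuous mapping theorem (for convergence in probability) to get $1/M^n \to 1/M$ in ucp directly. It then closes with a black-box stability result, \cite[Theorem 5.3]{Protter85}, to deduce $\mathfrak{L}(M^n)=H^n\cdot M^n \to H\cdot M = \mathfrak{L}(M)$ in the \'Emery topology. Your approach instead works subsequence-by-subsequence in order to produce a uniform lower bound $\tilde m$ on the paths and hence an explicit dominating process for the integrands $1/M^n$. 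This is a reasonable alternative and has the merit of making visible the domination hypothesis that any such stability theorem implicitly requires; the price is the extra bookkeeping of subsequences, which the paper avoids by citing a theorem that already carries those hypotheses.

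The one place where your argument, as written, is not airtight is the sentence asserting that ``a predictable process whose paths are bounded on $[0,T]$ by a finite random variable is locally bounded.'' This is false in that generality: take $\xi$ a finite but unbounded $\mathcal{F}_0$-measurable random variable and $K_t := \xi\, \mathbb{1}_{\{t>0\}}$. Then $K$ is predictable, $K_0 = 0$, and $\sup_{t\le T}|K_t| = \xi < \infty$, but for any $\tau_k \uparrow T$ with $\tau_k>0$ a.s.\ eventually one would be forced to bound $\xi$ by a constant, a contradiction. Fortunately, the conclusion still holds for the specific process you use: $\sup_n H^n = 1/\inf_n M^n$ is lower semicontinuous (since $\inf_n M^n$ is upper semicontinuous as an infimum of continuous processes), adapted, equal to $1$ at time $0$, and has a.s.\ finite sup on $[0,T]$. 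For a lower semicontinuous adapted process $L$ with $L_0$ bounded, the stopping times $\sigma_k := \inf\{t : L_t \ge k\}$ satisfy $L^{\sigma_k}\le k$ (approaching $\sigma_k$ from the left and using lower semicontinuity shows $L_{\sigma_k}\le k$), and $\sigma_k\uparrow T$ a.s.\ by finiteness of the path supremum, so $L$ is indeed locally bounded. If you replace the loose claim by this lower-semicontinuity argument (or, equivalently, simply mimic the paper's use of the continuous mapping theorem to obtain ucp convergence of $1/M^n$ directly and then cite a stability theorem whose hypotheses you have verified), the proof is complete.
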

\begin{proof}
Consider any $(M^n)_{n \geq 1} \subset \mathbb{M}^+$ that converge against some $M \in \mathbb{M}^+$ in the Émery topology. By \cite[Lemma 12.4.7.]{CE}, this implies that the $D^n$ also converge to $D$ in the ucp topology as in \cite[Definition 12.4.1.]{CE}. Note that we can consider $M^n$ and $M$ as $\mathcal{C}^T_+ := \lbrace \omega \in \mathcal{C}^T\vert \omega > 0\rbrace$ valued random variables so that ucp convergence convergence is the same as convergence in probability with respect to the supremum metric. Under this metric, the map $\mathcal{C}^T_+ \rightarrow \mathcal{C}^T, \omega_\cdot \mapsto \frac{1}{\omega_\cdot}$ is continuous. Thus, by the continuous mapping Theorem, $\frac{1}{M^n}$ converges to $\frac{1}{M}$ in ucp. Thus, by \cite[Theorem 5.3]{Protter85}, $\mathfrak{L}(M^n)$ converges in the Émery topology to $\mathfrak{L}(M)$.
\end{proof}

\subsubsection{Finishing the proof of Theorem \ref{thm: exst NE}}
Let us now prove our second existence result. This one only requires the convexity assumption \ref{asmp: NEMFG} and does not need to assume that the Hamiltonian can be uniquely maximized, but we need to further assume that there is no interaction through the law of the control variable. In this framework, reward and value function can be rewritten as
$$J^{\mu}(\alpha) := \mathbb{E}^{\mu, \alpha}\left[\int_0^\infty e^{-\lambda s} f(s, X, \mu, \alpha_s)ds\right], \; \; V^{\mu} := \sup_{\alpha \in \mathbb{A}} J^{\mu}(\alpha).$$
\begin{proof}[Theorem \ref{thm: exst NE}]
By Lemma \ref{lem: P uc}, we already know that $\mathfrak{P} \circ \overline{A}: \mathcal{Q} \rightarrow \tilde{\mathcal{Q}}$ is metrically upper hemicontinuous. By Lemma \ref{lem: str clsd}, we have for any $\mu \in \mathcal{Q}$ and any $T > 0$ that $\mathfrak{P}_T(\mu, \mathbb{A}(\mu))$ is closed in total variation. First, we would like to show that it is closed in $\tilde{\tau}^T$ as well.\par
To do so, we first show that $\mathfrak{P}_T(\mu, \mathbb{A}(\mu))$ is convex for any $\mu$ fixed. Consider any $\alpha^1, \alpha^2 \in \mathbb{A}(\mu)$ and any $\lambda \in [0, 1]$. Then, $\mathbb{P}^\lambda := \lambda \mathfrak{P}_T(\mu, \alpha^1) + (1- \lambda)  \mathfrak{P}_T(\mu, \alpha^2)$ defines a probability measure equivalent to $\mathbb{P}^T$ with density
$$\frac{d\mathbb{P}^\lambda}{d\mathbb{P}^T} = \lambda \mathcal{E}\left(\int_0^\cdot \sigma^{-1}b(s, X, \mu, \alpha^1_s)dW_s\right)_T + (1-\lambda) \mathcal{E}\left(\int_0^\cdot \sigma^{-1}b(s, X, \mu, \alpha^2_s)dW_s\right)_T.$$
Just as in previous proofs, we can find a predictable $\beta$ such that $\frac{d\mathbb{P}^\lambda}{d\mathbb{P}^T} = \mathcal{E}(\int_0^\cdot \beta_s dW_s)$. 
Indeed, by \cite{tarpodual} (see the proof of Theorem 3.10 therein) such $\beta$ is explicitly given as a convex combination by
\begin{equation*}
    \begin{split}
        \beta_t = \tilde{\lambda}_t \sigma^{-1}b(t, X, \mu, \alpha^1_t) + (1-\tilde{\lambda}_t) \sigma^{-1}b(t, X, \mu, \alpha^2_t)
    \end{split}
\end{equation*}
a.e.\ where we defined the $[0,1]$ valued process $$\tilde{\lambda}_t := \frac{\lambda \mathcal{E}(\int_0^\cdot \sigma^{-1}b(s, X, \mu, \alpha^1_s) dW_s)}{\lambda \mathcal{E}(\int_0^\cdot \sigma^{-1}b(s, X, \mu, \alpha^1_s) dW_s)+ (1-\lambda) \mathcal{E}(\int_0^\cdot \sigma^{-1}b(s, X, \mu, \alpha^2_s)dW_s)}.$$
We have $\tilde{\lambda}_t b(t, X, \mu, \alpha^1_t) + (1-\tilde{\lambda}_t) b(t, X, \mu, \alpha^2_t) \in B(t, x, \mu, z)$ a.e.\ by assumption \ref{asmp: NEMFG}. Since $\sigma$ is independent of $a$, a.e., $\lbrace a \in \mathcal{A}(t, X, \mu, z) \vert \sigma^{-1}b(t, X(\zeta), \mu, a) = \beta_t(\zeta) \rbrace$ is almost everywhere non empty. By Filippov's implicit function Theorem \cite[Theorem 18.17]{IDA}, there exists a progressively measurable $\gamma \in \mathbb{A}(\mu)$ such that $\beta_t(\zeta) = \sigma^{-1}b(t, X(\zeta), \mu, \gamma_t(\zeta))$ almost everywhere and thus $\mathbb{P}^\lambda = \mathfrak{P}_T(\mu, \gamma) \in \mathfrak{P}_T(\mu, \mathbb{A}(\mu))$, proving that $\mathfrak{P}_T(\mu, \mathbb{A}(\mu))$ is convex.\par
To relate convexity to closedness in $\tilde{\tau}^T$, we consider the map
$$\mathcal{I}^T: \tilde{\mathcal{Q}}^T \rightarrow \left\lbrace Z \in \mathbb{L}^1(\mathbb{P}^T) \vert Z > 0\,\, \text{ a.s.}, \mathbb{E}[Z \vert \sigma(\xi)] = 1, \int Z^2 d\mathbb{P}^T\leq M^T, \int Z^{-1} d\mathbb{P}^T \leq M^T\right\rbrace$$
given by $\mathcal{I}^T(\mathbb{Q}) := \frac{d\mathbb{Q}}{d\mathbb{P}^T}$ with $\tilde{\mathcal{Q}}^T$. The image space is considered as a subspace of $\mathbb{L}^1(\mathbb{P}^T)$ equipped with the weak topology. By the definition of $\tilde{\tau}^T$, it is immediate that $\mathcal{I}^T$ is an homeomorphism. Further, both the image as well as the domain are affine subspaces of vector spaces, and both $\mathcal{I}$ as well as its inverse are linear.\par
By our previous discussion, we know that $\mathcal{I}^T(\mathfrak{P}_T(\mu, \mathbb{A}(\mu)))$ is strongly $\mathbb{L}^1$ closed and convex. Therefore, by Mazur's Lemma, it is weakly $\mathbb{L}^1$ closed as well which implies that $\mathfrak{P}_T(\mu, \mathbb{A}(\mu))$ is closed in $\tilde{\tau}^T$. By Lemma \ref{lem: full clsd}, this further implies that $\mathfrak{P}(\mu, \mathbb{A}(\mu))$ is closed under $\tilde{\tau}$. As a closed subset of a compact space, it is in fact compact as well.\par
Recall that $\mathfrak{u}: \tilde{\mathcal{Q}} \rightarrow \mathcal{Q}, \mathbb{Q} \mapsto \mathbb{Q} \circ X^{-1}$ is a continuous map on a closed domain, and thus a closed map as well. Further, take any $\mathbb{Q}^1, \mathbb{Q}^2 \in \tilde{\mathcal{Q}}$ and $\lambda \in [0, 1]$. For any $A \in \mathcal{F}$, we have
\begin{equation*}
    \begin{split}
        &\left(\lambda \mathfrak{u}(\mathbb{Q}^1) + (1 - \lambda) \mathfrak{u}(\mathbb{Q}^2)\right)[A] = \lambda \mathfrak{u}(\mathbb{Q}^1)[A] + (1 - \lambda) \mathfrak{u}(\mathbb{Q}^2)[A]\\
        =& \lambda \mathbb{Q}^1[X^{-1}(A)] + (1 - \lambda) \mathbb{Q}^2[X^{-1}(A)] = \left(\lambda \mathbb{Q}^1 + (1 - \lambda) \mathbb{Q}^2\right)[X^{-1}(A)] \\
        =& \mathfrak{u}\left(\lambda \mathbb{Q}^1 + (1 - \lambda) \mathbb{Q}^2\right)[A]
    \end{split}
\end{equation*}
showing that the measures $\lambda \mathfrak{u}(\mathbb{Q}^1) + (1 - \lambda) \mathfrak{u}(\mathbb{Q}^2)$ and $\mathfrak{u}(\lambda \mathbb{Q}^1 + (1 - \lambda) \mathbb{Q}^2)$ coincide. By Lemma \ref{lem: uhc comp}, $\mathfrak{u} \circ \mathfrak{P} \circ \overline{\mathbb{A}}: \mathcal{Q} \rightarrow \mathcal{Q}$ is a upper hemicontinuous map with compact and convex values.\par
As shown in remark \ref{rmk: lchtvs}, $\mathcal{Q}$ is a compact convex subset of a locally convex Hausdorff topological vector space. Thus, by Kakutani-Fan-Glicksbergs's fixed point Theorem \cite[Corollary 17.55]{IDA}, $\mathfrak{u} \circ \mathfrak{P} \circ \overline{\mathbb{A}}$ must admit a fixed point $\hat{\mu}$. By construction, such a fixed point must have an optimal control $\hat{\alpha} \in \mathbb{A}(\hat{\mu})$ so that $\mathbb{P}^{\hat{\mu}, \hat{\alpha}} \circ X^{-1} = \mathfrak{u}(\mathfrak{P}(\hat{\mu}, \hat{\alpha})) = \hat{\mu}$, which proves that it is a solution to the discounted infinite time horizon mean field game.
\end{proof}

\subsection{Proof of the uniqueness result}
Recall from Proposition \ref{prop.optim.charac} that for any given $(\mu, q)$, optimal controls are given by the $\alpha$ that satisfy $dt \times \mathbb{P}$ a.e., $\alpha_t \in \mathcal{A}(t, X, \mu, \tilde Z^{\mu,q}_t)$ with $(Y^{\mu, q}, Z^{\mu, q})$ being a solution of \eqref{inft BSDE}. If the Hamiltonian is everywhere uniformly maximizable, the optimal control is thus unique. For $(Y^{\mu, q}_t, Z^{\mu, q}_t) := (e^{-\lambda t} \tilde{Y}^{\mu, q}_t, e^{-\lambda t}\tilde{Z}^{\mu, q}_t)$, we can see that together with the optimal control found above, it will be a solution to the infinite horizon BSDE \eqref{inft cntrl bsde} and $Y^{\mu, q}$ is the remaining utility process for the optimal control. Note that as we have assumed $b$ is independent of $\mu$, the change of measure $\mathbb{P}^{\mu, \alpha}$ is actually already fully determined by $\alpha$. For notational simplicity, we write $\mathbb{P}^\alpha = \mathbb{P}^{\mu, \alpha}$ and $\mathbb{E}^\alpha$ for the expectation taken with it.
\begin{proof}[Theorem \ref{thm: unq}]
    Having assumed that the cost is separated and satisfies the Lasry-Lions monotonicity condition, uniqueness follows by standard argument, see e.g.\ \cite{Carmona15}, with minor modifications to account for the infinite horizon. Let us consider any two mean field equilibria $(\mu^1, q^1,\alpha^1)$ and $(\mu^2, q^2,\alpha^2)$. 
    We write $(Y^1, Z^1) = (Y^{\mu^1, q^1}, Z^{\mu^1, q^1})$ and $(Y^2, Z^2) = (Y^{\mu^2, q^2}, Z^{\mu^2, q^2})$. As they both solve BSDE \eqref{inft cntrl bsde}, we have for any $T \geq 0$ that
\begin{equation*}
    \begin{split}
        \int Y^1_0 - Y^2_0 d\upsilon &=  \mathbb{E}^{\alpha^1}\left[Y^1_T - Y^2_T + \int_0^T e^{-\lambda s} (f(s, X, \mu^1, q^1_s, \alpha^1_s) - f(s, X, \mu^2, q^2_s, \alpha^2_s)) ds\right]\\
        &\quad +\mathbb{E}^{\alpha^1}\left[\int_0^T(Z^2_s)^\top(\sigma^{-1}b(s, X, \alpha^1_s) - \sigma^{-1}b(s, X, \alpha^2_s))ds\right]\\
        &= \mathbb{E}^{\alpha^2}\left[Y^1_T - Y^2_T + \int_0^T e^{-\lambda s} (f(s, X, \mu^1, q^1_s, \alpha^1_s) - f(s, X, \mu^2, q^2_s, \alpha^2_s)) ds\right]\\
        &\quad +\mathbb{E}^{\alpha^2}\left[\int_0^T(Z^1_s)^\top(\sigma^{-1}b(s, X, \alpha^1_s) - \sigma^{-1}b(s, X, \alpha^2_s))ds\right].
    \end{split}
\end{equation*}
where the left hand size takes on this form since both $\mathbb{P}^{\alpha^1}$ and $\mathbb{P}^{\alpha^2}$ are equal to $\upsilon$ on $\mathcal{F}_0$. Further, we have used that by the same argument as in the proof of Proposition \ref{prop.optim.charac}, $\int_0^\cdot Z^1_s dW^{\alpha^2}_s$ on $[0, T]$ is a true $\mathbb{P}^{\alpha^2}$ martingale and vice versa for $\int_0^\cdot Z^2_s dW^{\alpha^1}_s$.\par
By Proposition \ref{prop.optim.charac}, $\alpha^2$ maximizes the Hamiltonian at $(t, X, \mu^2, Z^2_t)$ a.e., and we have by the separability assumption that a.e.
\begin{equation}\label{unqhml1}
    e^{-\lambda t}f_3(t, X, \alpha^1_t) + (Z^2_t)^\top \sigma^{-1}b(t, X, \alpha^1_t) \leq e^{-\lambda t}f_3(t, X, \alpha^2_t) + (Z^2_t)^\top \sigma^{-1}b(t, X, \alpha^2_t).
\end{equation}
Similarly, since $\alpha^1$ maximizes the Hamiltonian at $(t, X, \mu^1, Z^1_t)$ a.e., we have
\begin{equation}\label{unqhml2}
    e^{-\lambda t}f_3(t, X, \alpha^1_t) + (Z^1_t)^\top \sigma^{-1}b(t, X, \alpha^1_t) \geq e^{-\lambda t}f_3(t, X, \alpha^2_t) + (Z^1_t)^\top \sigma^{-1}b(t, X, \alpha^2_t).
\end{equation}
Furthermore, since $f_2$ is not dependent on $x$ and thus deterministic, we can immediately see that $\mathbb{E}^{\alpha^1}[f_2(s, \mu^1, q^1)] = \mathbb{E}^{\alpha^2}[f_2(s, \mu^1, q^1)]$ and $\mathbb{E}^{\alpha^1}[f_2(s, \mu^2, q^2)] = \mathbb{E}^{\alpha^2}[f_2(s, \mu^2, q^2)]$. Subtracting the expectations above together with these bounds gives
$$
    (\mathbb{E}^{\alpha^1} - \mathbb{E}^{\alpha^2})\left[Y^1_T - Y^2_T + \int_0^T e^{-\lambda s} (f_1(s, X, \mu^1) - f_1(s, X, \mu^2))ds \right] \geq 0.
$$
Note that this holds for $T$ arbitrary. Since $\Vert Y^1_T - Y^2_T\Vert_\infty \leq \frac{2M}{ \lambda}e^{-\lambda T}$, we have for any $T$, that
$$
    (\mathbb{E}^{\alpha^1} - \mathbb{E}^{\alpha^2})\left[\int_0^T e^{-\lambda s} (f_1(s, X, \mu^1) - f_1(s, X, \mu^2))ds \right] \geq -\frac{4M}{\lambda}e^{-\lambda T}.
$$
As $f_1$ is assumed to be bounded, taking the limit $T\rightarrow\infty$ on both sides shows 
$$
    (\mathbb{E}^{\alpha^1} - \mathbb{E}^{\alpha^2})\left[\int_0^\infty e^{-\lambda s} (f_1(s, X, \mu^1) - f_1(s, X, \mu^2))ds \right] \geq 0.
$$
As we are working with equilibria, we have $\mathbb{P}^{\alpha^1} \circ X^{-1} = \mu^1$ and $\mathbb{P}^{\alpha^2} \circ X^{-1} = \mu^2$. We can thus use the monotonicity assumption to find that the reverse inequality should be true as well, which implies equality.
This then implies that \eqref{unqhml1} and \eqref{unqhml2} needed to hold with equality a.e.\ as well. In particular, $\alpha^1$ needs to be an a.e.\ maximizer of the Hamiltonian at $(t, X, \mu^2, Z^2_t)$ and vice versa. Under our assumptions, the maximizer is unique implying a.e., $\alpha^1 = \alpha^2$ and thus $\mu^1 = \mathbb{P}^{\alpha^1} \circ X^{-1} = \mathbb{P}^{\alpha^2} \circ X^{-1} = \mu^2$ and $q^1 = \delta_{\mathbb{P}^{\alpha^1} \circ (\alpha^1_t)^{-1}}dt = \delta_{\mathbb{P}^{\alpha^2} \circ (\alpha^2_t)^{-1}}dt = q^2$.
\end{proof}

\section{Topological prerequisites}\label{sect: topo}
This section's aim is to gather some definitions and give detailed proofs of abstract topological properties of the spaces $\tilde{\mathcal{Q}}$, $\mathcal{Q}$ and $\mathcal{M}$ and the topologies $\tilde{\tau}$, $\tau$ and $\mathcal{S}$ used in the existence proofs.
As already explained in the definitions of $\tau$ and $\mathcal{S}$, our infinite horizon constructions build upon the finite horizon setup of \cite{Carmona15}.
We start by reviewing it in the next subsection.

\subsection{Family of finite horizon probability spaces}
\label{sct: fnt hrz}
We begin by taking a look at the link between the collection of finite horizon probability spaces $(\Omega^T, \mathcal{F}^T, \mathbb{P}^T)_{T > 0}$. First we consider how our projection maps $\tilde{\pi}$.
\begin{proposition}\label{prop: cont proj incl}
For any $0 \leq t \leq T < \infty$, if we equip $\mathcal{P}(\Omega^T)$ and $\mathcal{P}(\Omega^t)$ with $\tilde{\tau}^T$ and $\tilde{\tau}^t$ respectively, then $\tilde{\pi}^{T, t}$ is continuous.\par
\end{proposition}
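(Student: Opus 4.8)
The plan is to use that $\tilde{\tau}^T$ and $\tilde{\tau}^t$ are both \emph{initial} topologies: by definition $\tilde{\tau}^t$ is the coarsest topology on $\mathcal{P}(\Omega^t)$ making each evaluation map $\ell_\psi\colon \nu \mapsto \int_{\Omega^t}\psi\,d\nu$ continuous, where $\psi$ ranges over the bounded real valued Borel functions on $\Omega^t$ (on a finite horizon space every bounded Borel function is admissible, so $B(\Omega^t)$ is all of them). By the universal property of initial topologies, $\tilde{\pi}^{T,t}$ is $\tilde{\tau}^T$--$\tilde{\tau}^t$ continuous if and only if, for every such $\psi$, the composition $\ell_\psi \circ \tilde{\pi}^{T,t}$ is $\tilde{\tau}^T$ continuous on $\mathcal{P}(\Omega^T)$. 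So it suffices to check this for a single arbitrary $\psi$.

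First I would record that the underlying map $\pi^{T,t}\colon \Omega^T \to \Omega^t$, $(\xi,\omega)\mapsto(\xi,\omega_{\vert[0,t]})$, is continuous with respect to the supremum metrics on $\mathcal{C}_0^T$ and $\mathcal{C}_0^t$, hence Borel measurable. Consequently, for any bounded Borel $\psi$ on $\Omega^t$, the function $\psi\circ\pi^{T,t}$ is a bounded Borel function on $\Omega^T$, i.e.\ an element of the generating family for $\tilde{\tau}^T$. Then, by the change of variables formula for pushforward measures, for every $\mathbb{Q}\in\mathcal{P}(\Omega^T)$,
$$
\bigl(\ell_\psi \circ \tilde{\pi}^{T,t}\bigr)(\mathbb{Q}) = \int_{\Omega^t}\psi\, d\bigl(\mathbb{Q}\circ(\pi^{T,t})^{-1}\bigr) = \int_{\Omega^T}\bigl(\psi\circ\pi^{T,t}\bigr)\, d\mathbb{Q}.
$$
The right-hand side is exactly the evaluation map attached to $\psi\circ\pi^{T,t}\in B(\Omega^T)$, which is $\tilde{\tau}^T$ continuous by the very definition of $\tilde{\tau}^T$. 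Since $\psi$ was arbitrary, the universal property yields continuity of $\tilde{\pi}^{T,t}$. Equivalently, in net language: if $\mathbb{Q}^\alpha\to\mathbb{Q}$ in $\tilde{\tau}^T$, then $\int\psi\,d(\tilde{\pi}^{T,t}\mathbb{Q}^\alpha)=\int(\psi\circ\pi^{T,t})\,d\mathbb{Q}^\alpha\to\int(\psi\circ\pi^{T,t})\,d\mathbb{Q}=\int\psi\,d(\tilde{\pi}^{T,t}\mathbb{Q})$ for all bounded Borel $\psi$, which is precisely $\tilde{\pi}^{T,t}\mathbb{Q}^\alpha\to\tilde{\pi}^{T,t}\mathbb{Q}$ in $\tilde{\tau}^t$.

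There is essentially no serious obstacle here; the only points deserving care are (i) confirming that $\pi^{T,t}$ is measurable so that $\psi\circ\pi^{T,t}$ genuinely lies in the generating family of $\tilde{\tau}^T$, and (ii) arguing via the universal property (or equivalently via nets) rather than via sequences, since the setwise topologies $\tilde{\tau}^T$ and $\tilde{\tau}^t$ are in general neither metrizable nor first countable, so sequential continuity would not by itself suffice. The identical argument, applied coordinatewise in $T$, also shows that each $\tilde{\pi}^T$ is continuous from $(\mathcal{P}(\Omega),\tilde{\tau})$ to $(\mathcal{P}(\Omega^T),\tilde{\tau}^T)$, which is what is used repeatedly in the compactness arguments of Section \ref{sect: topo}.
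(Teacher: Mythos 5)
Your proof is correct and is essentially the same as the paper's: the paper also pulls back an arbitrary bounded measurable $f$ on $\Omega^t$ along $\pi^{T,t}$, observes that $\mu \mapsto \int_{\Omega^T} (f\circ\pi^{T,t})\,d\mu$ is $\tilde{\tau}^T$-continuous by definition, and concludes via the universal property of initial topologies (citing [Brezis, Prop.~3.2] for that step). Your explicit remarks about measurability of $\pi^{T,t}$ and about using nets rather than sequences are good clarifications, but do not change the argument.
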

\begin{proof}
    Take any measurable bounded $f: \Omega^t \rightarrow \mathbb{R}$. Then, we have for any $\mu \in \mathcal{P}(\Omega)^T$ that $ \int_{\Omega^t} f d(\tilde{\pi}^{T, t}(\mu)) = \int_{\Omega^T} f \circ \pi^{T, t} d\mu$. As $f \circ \pi^{T, t} : \Omega^T \rightarrow \mathbb{R}$ is measurable and bounded as well, the map $\mu \mapsto \int_{\Omega^t} f d(\tilde{\pi}^{T, t}(\mu))$ is continuous. 
    By \cite[Proposition 3.2]{Brezis}, the map $\tilde{\pi}^{T, t}$ is thus continuous.\par
\end{proof}
A notable difficulty in our analysis is the fact that the topology $\tilde{\tau}^T$ is, in general, not metrizable. Therefore, we will consider the compact subspace $\tilde{\mathcal{Q}}^T \subset \mathcal{P}(\Omega^T)$
\begin{equation}\label{def: QtildT}
    \tilde{\mathcal{Q}}^T:= \left\lbrace \mathbb{Q} \in \mathcal{P}(\Omega^T) \vert \int_{\mathcal{C}_0^T}\mathbb{Q}(\cdot, d\omega) = \upsilon, \mathbb{Q} \sim \mathbb{P}^T, \mathbb{E}\left[\left(\frac{ d\mathbb{Q}}{d\mathbb{P}^T}\right)^2 \right] \leq M^T, \mathbb{E}\left[\left(\frac{ d\mathbb{Q}}{d\mathbb{P}^T}\right)^{-1} \right]\leq M^T\right\rbrace
\end{equation}
on which we can find an inducing metric.
\begin{proposition}\label{prop: ttauT}
    Under $\tilde{\tau}^T$, the subset $\tilde{\mathcal{Q}}^T$ is convex, compact and metrizable.
\end{proposition}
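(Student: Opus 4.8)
The plan is to prove the three claimed properties — convexity, metrizability, and compactness — essentially separately, exploiting the bijective correspondence between $\tilde{\mathcal{Q}}^T$ and a convex subset of densities in $\mathbb{L}^1(\mathbb{P}^T)$. First I would set up the map $\mathcal{I}^T: \mathbb{Q} \mapsto \frac{d\mathbb{Q}}{d\mathbb{P}^T}$ (the same map used in the proof of Theorem \ref{thm: exst NE}) which identifies $\tilde{\mathcal{Q}}^T$ with the set
$$
\mathcal{D}^T := \Bigl\{ Z \in \mathbb{L}^1(\mathbb{P}^T) : Z > 0 \text{ a.s.},\ \mathbb{E}[Z \mid \sigma(\xi)] = 1,\ \mathbb{E}[Z^2] \leq M^T,\ \mathbb{E}[Z^{-1}] \leq M^T \Bigr\}.
$$
Convexity of $\tilde{\mathcal{Q}}^T$ is then immediate: the constraints defining $\mathcal{D}^T$ are all convex in $Z$ (the constraint $Z>0$, the linear constraint $\mathbb{E}[Z\mid\sigma(\xi)]=1$, the convex constraints $\mathbb{E}[Z^2]\le M^T$ and $\mathbb{E}[Z^{-1}]\le M^T$ since $z\mapsto z^2$ and $z\mapsto z^{-1}$ are convex on $(0,\infty)$), and the bijection $\mathbb{Q}\mapsto Z$ is affine, so convex combinations of measures correspond to convex combinations of densities.

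For metrizability I would invoke the general principle that on $\mathcal{P}(\Omega^T)$, the setwise topology restricted to a set of measures dominated by a fixed $\sigma$-finite measure (here all of $\tilde{\mathcal{Q}}^T$ is equivalent to $\mathbb{P}^T$) is metrizable; concretely, I would exhibit an explicit metric. Since $\mathbb{L}^1(\mathbb{P}^T)$ is separable, pick a countable dense family $(\phi_k)_{k\ge1}$ in the unit ball of $\mathbb{L}^\infty(\Omega^T)$ — or rather, use that the dual unit ball of $\mathbb{L}^1$ with the weak-$*$ topology is metrizable on bounded sets — and set
$$
d_{\tilde{\tau}^T}(\mathbb{Q},\mathbb{Q}') := \sum_{k\ge1} 2^{-k}\Bigl| \int_{\Omega^T}\phi_k\, d\mathbb{Q} - \int_{\Omega^T}\phi_k\, d\mathbb{Q}'\Bigr|.
$$
One checks that on $\tilde{\mathcal{Q}}^T$ — where the densities are bounded in $\mathbb{L}^2$, hence uniformly integrable — this metric induces exactly $\tilde{\tau}^T$: convergence against the dense family $(\phi_k)$ plus uniform integrability upgrades to convergence against all bounded measurable $\phi$. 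This is also where one sees that $d_{\tilde{\tau}^T}$ is bounded by $d_{TV}$, since each summand is at most $2^{-k}\cdot d_{TV}(\mathbb{Q},\mathbb{Q}')\cdot\|\phi_k\|_\infty$. (This is the metric referred to in Proposition \ref{prop: tau mtrc}, so I may simply cite it.)

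For compactness, I would argue via the weak topology on $\mathbb{L}^1(\mathbb{P}^T)$: the set $\mathcal{D}^T$ is bounded in $\mathbb{L}^2(\mathbb{P}^T)$, hence a bounded subset of the reflexive space $\mathbb{L}^2$, hence relatively weakly compact there (and the $\mathbb{L}^2$-weak topology restricted to an $\mathbb{L}^2$-bounded set agrees with the $\mathbb{L}^1$-weak topology, which via $\mathcal{I}^T$ is exactly $\tilde{\tau}^T$). It then remains to show $\mathcal{D}^T$ is weakly closed. For this, take a net (or, by metrizability, a sequence) $Z_n \to Z$ weakly in $\mathbb{L}^2$. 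The constraint $\mathbb{E}[Z\mid\sigma(\xi)]=1$ is linear and preserved under weak limits; $\mathbb{E}[Z^2]\le M^T$ follows from weak lower semicontinuity of the $\mathbb{L}^2$-norm; and $\mathbb{E}[Z^{-1}]\le M^T$ follows from Fatou together with the fact that (after passing to a subsequence) $Z_n\to Z$ a.s. along a convex-combination subsequence by Mazur, so $Z_n^{-1}\to Z^{-1}$ a.s. The main obstacle — and the only genuinely delicate point — is verifying that the limit $Z$ is still \emph{strictly} positive a.s. (equivalently $\mathbb{Q}\sim\mathbb{P}^T$, not merely $\mathbb{Q}\ll\mathbb{P}^T$): this is precisely what the uniform bound $\mathbb{E}[Z_n^{-1}]\le M^T$ buys us, since by Fatou $\mathbb{E}[Z^{-1}]\le M^T<\infty$ forces $Z>0$ a.s. Once closedness is established, $\tilde{\mathcal{Q}}^T$ is a closed subset of a weakly compact set, hence compact, completing the proof.
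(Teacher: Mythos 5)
Your proof is correct but organizes the argument somewhat differently from the paper. For metrizability you write down the explicit metric directly; the paper first cites Balder's Proposition 2.3 to establish metrizability of the $s$-topology on uniformly integrable, $\mathbb{P}^T$-dominated sets, and only afterwards (in Proposition \ref{prop: tau mtrc}) exhibits the same explicit metric, using $\mathbb{L}^2$-density of a countable family in $B^1_T$ together with the $\mathbb{L}^2$-bound $M^T$ on densities — this is the precise version of your ``uniform integrability upgrades convergence'' step, and you should state it that way (note $\mathbb{L}^\infty$ itself is not separable, so the countable family is dense in the $\mathbb{L}^2$-metric, not the sup-norm). For compactness you argue through reflexivity of $\mathbb{L}^2$ and the agreement of the $\sigma(\mathbb{L}^1,\mathbb{L}^\infty)$ and $\sigma(\mathbb{L}^2,\mathbb{L}^2)$ topologies on $\mathbb{L}^2$-bounded sets, whereas the paper invokes Balder's Proposition 2.2 (a Dunford--Pettis-type criterion in the $s$-topology based on uniform integrability); both are valid and essentially equivalent, yours being more self-contained. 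The genuine structural difference is in the closedness step: you obtain equivalence $\mathbb{Q}\sim\mathbb{P}^T$ as a corollary of the bound $\mathbb{E}[Z^{-1}]\le M^T$ (itself from Mazur and Fatou), while the paper proves $\mathbb{P}^T\ll\mathbb{Q}$ directly by Cauchy--Schwarz ($\mathbb{P}^T[A]\le\mathbb{E}^{\mathbb{P}^T}[(d\mathbb{Q}^n/d\mathbb{P}^T)^{-1}]^{1/2}\mathbb{Q}^n[A]^{1/2}\to 0$) and then separately uses Mazur plus Fatou for the two moment bounds. Your version is tighter since it folds positivity into the integrability bound. One thing to say more carefully: Mazur gives a sequence of finite convex combinations (not a subsequence) converging $\mathbb{L}^1$-strongly, hence a.s.\ along a further subsequence; Fatou should then be applied to the $[0,\infty]$-valued sequence $\tilde Z_n^{-1}$ with $0^{-1}=\infty$, so that $\liminf\tilde Z_n^{-1}\ge Z^{-1}$ holds even on $\{Z=0\}$. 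Asserting $\tilde Z_n^{-1}\to Z^{-1}$ a.s.\ as you do tacitly presupposes $Z>0$; the bound $\mathbb{E}[\liminf\tilde Z_n^{-1}]\le M^T<\infty$ must come first, and only then does it force $Z>0$ a.s. You also need to note, as the paper does, that $\mathbb{E}[\tilde Z_n^{-1}]\le M^T$ follows from convexity of $z\mapsto z^{-1}$ applied to the convex combinations.
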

\begin{proof}
First, note $\tilde{\tau}^T$ coincides with the $s$ topology of \cite[Definition 1.2.]{Balder}. By construction, $\tilde{\mathcal{Q}}^T$ is dominated by $\mathbb{P}^T$ and the densities are uniformly integrable under $\mathbb{P}^T$. Thus, by \cite[Proposition 2.2]{Balder}, $\tilde{\mathcal{Q}}^T$ is relatively compact under $\tilde{\tau}^T$. Further, by \cite[Proposition 2.3]{Balder}, the topology $\tilde{\tau}^T$ is metrizable on $\tilde{\mathcal{Q}}^T$.\par
Next, it is straightforward to see that $\tilde{\mathcal{Q}}^T$ is convex, since $z \mapsto z^2$ and $z \mapsto z^{-1}$ are convex. It remains to show that $\tilde{\mathcal{Q}}^T$ is closed. As it is metrizable, it is sufficient to show that it is sequentially closed. Take any $(\mathbb{Q}^n)_{n\geq 1} \subset \tilde{\mathcal{Q}}^T$ that converge towards some $\mathbb{Q} \in \mathcal{P}(\Omega^T)$. First, for any $A \in \mathcal{B}(\mathbb{R}^d)$, we have $\mathbb{Q}[A \times \mathcal{C}_0^T] = \lim_{n \rightarrow \infty} \mathbb{Q}^n[A \times \mathcal{C}_0^T] = \upsilon[A]$. Further, for any $A \in \mathcal{F}^T$ with $\mathbb{P}^T[A] = 0$, we have $\mathbb{Q}[A] = \lim_{n\rightarrow \infty} \mathbb{Q}^n[A] = 0$. Additionally, for any $A \in \mathcal{F}^T$ with $\mathbb{Q}[A] = 0$, we have 
$$\mathbb{P}^T[A] = \mathbb{E}^{\mathbb{Q}^n}\left[\left(\frac{d\mathbb{Q}^n}{d\mathbb{P}^T}\right)^{-1} \mathbb{1}_A\right] \leq \mathbb{E}^{\mathbb{Q}^n}\left[\left(\frac{d\mathbb{Q}^n}{d\mathbb{P}^T}\right)^{-2}\right]^{\frac{1}{2}} \mathbb{Q}^n[A]^{\frac{1}{2}} = \mathbb{E}\left[\left(\frac{d\mathbb{Q}^n}{d\mathbb{P}^T} 0\right)^{-1}\right] \mathbb{Q}^n[A].$$
Now, by assumption the $\mathbb{E}\left[\left(\frac{d\mathbb{Q}^n}{d\mathbb{P}^T}\right)^{-1}\right]$ are uniformly bounded, and we have $\lim_{n \rightarrow \infty} \mathbb{Q}^n[A] = \mathbb{Q}[A]$, thus $\mathbb{P}^T[A] = 0$. Hence, $\mathbb{P}^T \sim \mathbb{Q}$.\par
In particular, $\frac{d\mathbb{Q}}{d\mathbb{P}^T} \in \mathbb{L}^1(\Omega^T, \mathbb{P}^T)$ exists and is by assumption, the weak $\mathbb{L}^1$ limit of the $\frac{d\mathbb{Q}^n}{d\mathbb{P}^T}$. Since we have already shown that $\tilde{\mathcal{Q}}^T$ is convex, and since the density of absolutely continuous measures behaves linearly, we can find another sequence $(\mathbb{Q}^n)_{n\geq 1} \subset \tilde{\mathcal{Q}}^T$ such that $\frac{d\mathbb{Q}^n}{d\mathbb{P}^T}$ converges to $\frac{d\mathbb{Q}}{d\mathbb{P}^T}$ $\mathbb{L}^1$ strongly and if we pass to another subsequence $\mathbb{P}^T$ a.s. as well. Thus, by Fatou's lemma, we have $\mathbb{E}\left[\left(\frac{ d\mathbb{Q}}{d\mathbb{P}^T}\right)^2\right]\leq M^T$ and $\mathbb{E}\left[\left(\frac{ d\mathbb{Q}}{d\mathbb{P}^T}\right)^{-1} \right]\leq M^T$, and hence $\mathbb{Q} \in \tilde{\mathcal{Q}}^T$.
\end{proof}
For our purposes, we would like to construct a specific metrizing metric to ensure that it can be bounded by the total variation metric. Let $B_T^1 := \lbrace f \in \mathbb{L}^\infty(\Omega^T, \mathbb{P}^T) \vert \Vert f \Vert_\infty \leq 1 \rbrace$. 
We can also consider $B_T^1$ as a subset of $\mathbb{L}^2(\Omega^T, \mathbb{P}^T)$. Since $\Omega^T$ is separable, it follows that $\mathbb{L}^2(\Omega^T, \mathbb{P}^T)$ is separable, and $B_T^1$ as a subset of $\mathbb{L}^2(\Omega^T, \mathbb{P}^T)$ is separable as well, with respect to the induced $\mathbb{L}^2$ metric. We can thus find an $\mathbb{L}^2$-dense countable sequence $(h^n)_{n\geq 1}$ in $B_T^1$. 
Then, we define
$$d_{\tilde{\tau}^T}(\mathbb{Q}, \mathbb{Q}') := \sum_{n\geq 1} 2^{-n} \left\vert \int h^n d\mathbb{Q} - \int h^n d\mathbb{Q}' \right\vert.$$
\begin{proposition}\label{prop: tau mtrc}
    The metric $d_{\tilde{\tau}^T}$ metrizes $\tilde{\tau}^T$ on $\tilde{\mathcal{Q}}$. 
    Moreover, the metric is bounded by the total variation, that is for any $\mathbb{Q}, \mathbb{Q}'\in\tilde{\mathcal{Q}}^T$, we have $d_{\tilde{\tau}^T}(\mathbb{Q}, \mathbb{Q}') \leq d_{TV}(\mathbb{Q}, \mathbb{Q}')$.
\end{proposition}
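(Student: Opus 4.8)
The plan is to handle the total-variation bound by a one-line estimate, and to obtain the metrization statement from the standard fact that a continuous bijection from a compact space onto a Hausdorff space is a homeomorphism. For the bound: since each $h^n$ satisfies $\Vert h^n\Vert_\infty\le 1$, for any $\mathbb{Q},\mathbb{Q}'\in\tilde{\mathcal{Q}}^T$ one has $\vert\int h^n d\mathbb{Q}-\int h^n d\mathbb{Q}'\vert\le d_{TV}(\mathbb{Q},\mathbb{Q}')$, and since $\sum_{n\ge 1}2^{-n}=1$, summing over $n$ gives $d_{\tilde{\tau}^T}(\mathbb{Q},\mathbb{Q}')\le d_{TV}(\mathbb{Q},\mathbb{Q}')$.

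Next I would check that $d_{\tilde{\tau}^T}$ is genuinely a metric on $\tilde{\mathcal{Q}}^T$. It is a weighted sum of the pseudometrics $(\mathbb{Q},\mathbb{Q}')\mapsto\vert\int h^n d\mathbb{Q}-\int h^n d\mathbb{Q}'\vert$, so non-negativity, symmetry, the triangle inequality, and finiteness (each term is bounded by $2^{-n+1}$) are immediate; only definiteness requires an argument. Suppose $d_{\tilde{\tau}^T}(\mathbb{Q},\mathbb{Q}')=0$, so $\int h^n d\mathbb{Q}=\int h^n d\mathbb{Q}'$ for all $n$. Given $f\in B_T^1$, choose a subsequence with $h^{n_k}\to f$ in $\mathbb{L}^2(\Omega^T,\mathbb{P}^T)$; because $\mathbb{Q},\mathbb{Q}'\in\tilde{\mathcal{Q}}^T$, the densities $d\mathbb{Q}/d\mathbb{P}^T$ and $d\mathbb{Q}'/d\mathbb{P}^T$ belong to $\mathbb{L}^2(\Omega^T,\mathbb{P}^T)$, so Cauchy--Schwarz yields $\int h^{n_k}d\mathbb{Q}=\mathbb{E}[h^{n_k}\,d\mathbb{Q}/d\mathbb{P}^T]\to\int f d\mathbb{Q}$, and likewise for $\mathbb{Q}'$; hence $\int f d\mathbb{Q}=\int f d\mathbb{Q}'$. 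Rescaling, this holds for every bounded measurable $f$, so $\mathbb{Q}=\mathbb{Q}'$ on $\mathcal{F}^T$.

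For the metrization itself, by Proposition \ref{prop: ttauT} the topology $\tilde{\tau}^T$ is metrizable on $\tilde{\mathcal{Q}}^T$, so it suffices to show the identity map $\iota\colon(\tilde{\mathcal{Q}}^T,\tilde{\tau}^T)\to(\tilde{\mathcal{Q}}^T,d_{\tilde{\tau}^T})$ is sequentially continuous. If $\mathbb{Q}^k\to\mathbb{Q}$ in $\tilde{\tau}^T$, then for each fixed $n$ the map $\mathbb{Q}\mapsto\int h^n d\mathbb{Q}$ is $\tilde{\tau}^T$-continuous (as $h^n$ is bounded measurable), so $2^{-n}\vert\int h^n d\mathbb{Q}^k-\int h^n d\mathbb{Q}\vert\to 0$ for every $n$; the series is dominated uniformly in $k$ by $\sum_n 2^{-n+1}<\infty$, so dominated convergence for series gives $d_{\tilde{\tau}^T}(\mathbb{Q}^k,\mathbb{Q})\to 0$. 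Thus $\iota$ is a continuous bijection from the compact space $(\tilde{\mathcal{Q}}^T,\tilde{\tau}^T)$ (Proposition \ref{prop: ttauT}) onto the Hausdorff space $(\tilde{\mathcal{Q}}^T,d_{\tilde{\tau}^T})$, hence a homeomorphism, and so $d_{\tilde{\tau}^T}$ induces exactly $\tilde{\tau}^T$ on $\tilde{\mathcal{Q}}^T$.

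The only mildly delicate step is the definiteness argument, where the $\mathbb{L}^2$-density of $(h^n)$ in $B_T^1$ must be paired with the square-integrability of the densities that is built into the definition of $\tilde{\mathcal{Q}}^T$; everything else — the total-variation estimate and the compact-to-Hausdorff argument — is routine.
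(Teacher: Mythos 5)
Your proof is correct, and the core analytic step — exploiting that $(h^n)$ is $\mathbb{L}^2$-dense in $B_T^1$ together with the built-in $\mathbb{L}^2$-bound on densities of elements of $\tilde{\mathcal{Q}}^T$ via Cauchy--Schwarz — is exactly the ingredient the paper uses. Where you diverge is in the topological bookkeeping. The paper proves two-sided sequential equivalence (a $\tilde{\tau}^T$-convergent sequence is $d_{\tilde{\tau}^T}$-convergent, and conversely, via the $\mathbb{L}^2$-approximation estimate), then concludes the topologies coincide by observing that $\tilde{\tau}^T$ is first countable on $\tilde{\mathcal{Q}}^T$ by Proposition \ref{prop: ttauT}. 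You instead prove only the easy direction of continuity of the identity map $(\tilde{\mathcal{Q}}^T,\tilde{\tau}^T)\to(\tilde{\mathcal{Q}}^T,d_{\tilde{\tau}^T})$, relocate the $\mathbb{L}^2$-density argument to establish definiteness of $d_{\tilde{\tau}^T}$ (hence Hausdorffness), and then invoke the compact-to-Hausdorff homeomorphism theorem to obtain the reverse inclusion for free. This is a clean trade: you need Hausdorffness of the metric topology, which you must argue explicitly (the paper glosses over definiteness with ``It is clear that $d_{\tilde{\tau}^T}$ defines a metric''), but you avoid proving directly that $d_{\tilde{\tau}^T}$-convergence entails $\tilde{\tau}^T$-convergence. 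Both routes rely on the compactness/metrizability established in Proposition \ref{prop: ttauT}, and both are correct; yours is arguably tidier in separating the metric-axiom check from the topology comparison.
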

\begin{proof}
It is clear that $d_{\tilde{\tau}^T}$ defines a metric. Clearly, if a sequence $(\mathbb{Q}^n)_{n\geq 1} \subset \tilde{\mathcal{Q}}^T$ converges to some $\mathbb{Q} \in \tilde{\mathcal{Q}}^T$, we have $\lim_{k \rightarrow \infty} \int h^n d\mathbb{Q}^k = \int h^n d\mathbb{Q}$ for all $n \geq 1$. 

Conversely, assume for all $n \geq 1$, we have $\lim_{k \rightarrow \infty} \int h^n d\mathbb{Q}^k = \int h^n d\mathbb{Q}$. 
Now, take any $f \in B_T^1$. 
For any $\epsilon > 0$, there exists $h^n\in \mathcal{D}$ such that $\mathbb{E}[|f - h^n|^2] < \frac{\epsilon^2}{16M^T}$. For such, there exists $K \geq 1$ such that for any $k \geq K$, we have $\vert \int h^n d\mathbb{Q}^k - \int h^n d\mathbb{Q} \vert < \frac{\epsilon}{2}$. Then,
\begin{equation*}
    \begin{split}
         \left\vert \int f d\mathbb{Q}^k - \int f d\mathbb{Q} \right\vert & \leq  \left\vert \int f d\mathbb{Q}^k - \int h^n d\mathbb{Q}^k \right\vert + \left\vert \int h^n d\mathbb{Q}^k - \int h^n d\mathbb{Q} \right\vert + \left\vert \int h^n d\mathbb{Q} - \int f d\mathbb{Q} \right\vert\\
         & <  \mathbb{E}\left[\vert f-h^n \vert\left(\frac{d\mathbb{Q}^k}{d\mathbb{P}^T} + \frac{d\mathbb{Q}}{d\mathbb{P}^T} \right) \right] + \frac{\epsilon}{2}\\
         & \leq \frac{\epsilon}{4(M^T)^{\frac{1}{2}}}\left(\mathbb{E}\left[\left(\frac{d\mathbb{Q}^k}{d\mathbb{P}^T}\right)^2\right]^{\frac{1}{2}} + \mathbb{E}\left[\left(\frac{d\mathbb{Q}}{d\mathbb{P}^T}\right)^2\right]^{\frac{1}{2}}\right) + \frac{\epsilon}{2} \leq \epsilon.
    \end{split}
\end{equation*}
Thus, $\lim_{k\rightarrow\infty}\int f d\mathbb{Q}^k = \int f d\mathbb{Q}$ and as $f$ was arbitrary, it follows that $\mathbb{Q}^k$ converges to $\mathbb{Q}$ in $\tilde{\tau}^T$.We have thus shown that sequences have the same limits under $\tilde{\tau}^T$  as under the topology induced by $d_{\tilde{\tau}^T}$. 
Since by Proposition \ref{prop: ttauT}, we already know that $\tilde{\tau}^T$ is metrizable on $\tilde{\mathcal{Q}}$ and thus first countable, it thus follows that the metric $d_{\tilde{\tau}^T}$ metrizes $\tilde{\tau}^T$.\par
Lastly, note that for any $\mathbb{Q}, \mathbb{Q}' \in \tilde{\mathcal{Q}}^T$ we have
$$d_{\tilde{\tau}^T}(\mathbb{Q}, \mathbb{Q}') \leq \sum_{n\geq 1} 2^{-n} \sup_{\phi \in B_T^1} \int \phi d(\mathbb{Q} - \mathbb{Q}') = d_{TV}(\mathbb{Q}, \mathbb{Q}')$$
as either for $\phi = h^n$ or $\phi = -h^n$ we have $\vert \int h^n d\mathbb{Q} - \int h^n d\mathbb{Q}' \vert = \int \phi d(\mathbb{Q} - \mathbb{Q}')$.
\end{proof}

\subsection{The controlled measure}\label{def: tau}

Let us first restate a version of Kolmogorov's extension theorem to our setting of topological spaces.
\begin{definition}\label{def: const}
    A sequence of measures $(\mu^n)_{n\geq 1}$ is called \emph{consistent} if there exists a diverging sequence of finite times $0 \leq t_1 < t_2 < \dots$, such that $\mu^n \in \mathcal{P}(\Omega^{t_n})$ for $n \geq 1$ and if for any $m \leq n$, we have $\tilde{\pi}^{t_n, t_m}(\mu^n) = \mu^m$.
\end{definition}
\begin{proposition}\label{prop: ext}
    Given a consistence sequence $(\mu^n)_{n\geq 1}$ with $\mu^n \in \mathcal{P}(\Omega^{t_n})$ for all $n\geq1$, there exists a unique $\mu\in \mathcal{P}(\Omega)$ such that $\tilde{\pi}^{t_n}(\mu) = \mu^n$ for all $n$.
\end{proposition}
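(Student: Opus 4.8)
The plan is to reduce this to the classical Kolmogorov extension theorem. The key observation is that each $\Omega^{t_n} = \mathbb{R}^d \times \mathcal{C}_0^{t_n}$ is a Polish space, and the natural surjective maps $\pi^{t_n, t_m}$ for $m \le n$ are continuous; hence the $\mu^n$ form a projective (inverse) system of Borel probability measures over the directed index set $\{t_n\}_{n \ge 1}$. Since the times $t_n$ diverge, this countable cofinal family determines the whole projective structure. First I would invoke the Kolmogorov–Bochner extension theorem for projective limits of Polish (or standard Borel) spaces — for instance in the form that a consistent family of probability measures on the finite-dimensional factors of a countable product extends uniquely to the product — applied here not to a literal product but to the projective limit $\varprojlim \Omega^{t_n}$. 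The identification $\Omega \cong \varprojlim \Omega^{t_n}$ is the content that needs to be checked: an element of the projective limit is a compatible family $((\xi^n, \omega^n))_{n}$ with $\omega^n \in \mathcal{C}_0^{t_n}$ and $\omega^{n}|_{[0,t_m]} = \omega^m$; such a family glues to a single continuous path $\omega \in \mathcal{C}_0$ with $\omega|_{[0,t_n]} = \omega^n$, and all the $\xi^n$ coincide with a common $\xi \in \mathbb{R}^d$. This gluing is elementary because continuity on $[0,\infty)$ is a local-in-time property and the $t_n \to \infty$.

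The steps, in order, would be: (1) Observe $\mathbb{R}^d$ and each $\mathcal{C}_0^{t_n}$, hence each $\Omega^{t_n}$, is Polish, and the restriction maps $\pi^{t_n,t_m}$ are continuous and surjective, with $\mathcal{B}(\Omega^{t_n})$ generated by the cylinder-type sets pulled back along evaluations. (2) Set up the bijection $\Omega \to \varprojlim \Omega^{t_n}$, $\zeta = (\xi,\omega) \mapsto (\pi^{t_n}(\zeta))_n$, and check it is a homeomorphism onto the projective limit (injectivity and surjectivity by the gluing argument above; bicontinuity since the metric on $\mathcal{C}_0$ is exactly the metric of uniform convergence on compacts, which is the projective-limit topology). (3) Verify that $\mathcal{F}^0 = \mathcal{B}(\Omega)$ is the $\sigma$-algebra generated by $\bigcup_n (\pi^{t_n})^{-1}(\mathcal{B}(\Omega^{t_n}))$ — this holds because this union generates the Borel sets of the projective-limit topology on a countable projective system of Polish spaces. (4) Apply the Kolmogorov extension theorem to the consistent family $(\mu^n)$ to obtain a unique probability measure $\mu$ on this generated $\sigma$-algebra with $\mu \circ (\pi^{t_n})^{-1} = \mu^n$; transport it along the homeomorphism to get $\mu \in \mathcal{P}(\Omega)$. (5) Finally, upgrade from the cofinal times $t_n$ to all $T > 0$: given arbitrary $T$, pick $t_n \ge T$; then $\tilde{\pi}^T(\mu) = \tilde{\pi}^{t_n,T}(\tilde{\pi}^{t_n}(\mu)) = \tilde{\pi}^{t_n,T}(\mu^n)$, which is independent of the choice of $n$ by consistency, so $\mu$ satisfies the required projection identities (and in particular $\tilde{\pi}^{t_n}(\mu) = \mu^n$). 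Uniqueness follows because two such $\mu$ agree on every set in $\bigcup_n (\pi^{t_n})^{-1}(\mathcal{B}(\Omega^{t_n}))$, a $\pi$-system generating $\mathcal{F}^0$, so the $\pi$-$\lambda$ theorem (or monotone class theorem) forces them equal.

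The main obstacle — really the only non-routine point — is Step (2)–(3): making rigorous that $\Omega = \mathbb{R}^d \times \mathcal{C}_0$ with its given metric genuinely \emph{is} the projective limit $\varprojlim(\Omega^{t_n}, \pi^{t_n,t_m})$, both topologically and as a measurable space, so that the abstract Kolmogorov/Bochner theorem for projective limits of Polish spaces applies verbatim. Concretely one must argue that any thread $(\omega^n)_n$ of compatible finite-horizon paths defines a bona fide element of $\mathcal{C}_0$ (continuity is checked on each compact $[0,t_n]$, where it is given) and that the Borel $\sigma$-algebra of $\mathcal{C}_0$ under uniform-convergence-on-compacts coincides with the $\sigma$-algebra generated by the restriction maps $\omega \mapsto \omega|_{[0,t_n]}$ — this is standard but deserves a sentence, citing separability of $\mathcal{C}_0$. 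Once this identification is in place, everything else is a direct invocation of the classical extension theorem plus a $\pi$-$\lambda$ argument for uniqueness and the cofinality remark to pass from the $t_n$ to arbitrary $T$.
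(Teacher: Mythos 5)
Your proof is correct and takes essentially the same route as the paper: the paper's argument is a one-line citation to Stroock--Varadhan (Theorem 1.3.5), which is precisely the Kolmogorov/Daniell-type extension theorem for consistent families of measures on the path spaces $\mathcal{C}_0^{t_n}$; your write-up simply unpacks what that citation contains, including the projective-limit identification $\Omega \cong \varprojlim \Omega^{t_n}$ and the $\pi$--$\lambda$ uniqueness argument, and is accurate throughout.
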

\begin{proof}
The existence of a unique $\mu$ has been proven in \cite[Theorem 1.3.5]{StroockVaradhan}. The convergence in $\tilde{\tau}$ is immediate by definition.
\end{proof}
This extension result has an immediate consequence for the topology $\tilde{\tau}$. This is best understood by considering the inverse system$(\mathcal{P}(\Omega^T), \tilde{\pi}^{T, t}, [0, \infty))$.\par
With an inverse system, we mean a family of topological spaces $(\mathcal{X}_i)_{i \in I}$ over an ordered index set $(I, \preceq)$ and continuous mappings $f_{j, i} : \mathcal{X}_i \rightarrow \mathcal{X}_j$ for any $i \preceq j$ such that for any $i$, we have $f_{i, i} = id_{\mathcal{X}_i}$ and for any $i \leq j \leq k$, we have $f_{k, i} = f_{j, i} \circ f_{f, j}$. For such an inverse system $(\mathcal{X}_i, f_{i, j}, I)$, we can consider the inverse limit $\lim\limits_{\longleftarrow} X_i= \lbrace (x_i)_{i \in I} \in \prod_{i \in I} \mathcal{X}_i \vert \forall i \preceq j: f_{j, i}(x_j) = x_i \rbrace $ that is equipped with the by the product topology induced subset topology.\par
We can thus consider the map $\mathcal{P}(\Omega) \rightarrow \lim\limits_{\longleftarrow} \mathcal{P}(\Omega^T), \mathbb{Q} \mapsto (\tilde{\pi}^{T}(\mathbb{Q}))_{T \in [0, \infty)}$. By Proposition \ref{prop: ext}, this map is a bijection between $\mathcal{P}(\Omega^T)$ and by the way we defined $\tilde{\tau}$ it is immediate that this map is a homeomorphism. This gives a much more convenient way to describe $\tilde{\tau}$.
\begin{proposition}\label{prop: inft Q top}
    $\tilde{\mathcal{Q}}$ is Hausdorff compact with respect to $\tilde{\tau}$. Further, the induced subspace topology on $\tilde{\mathcal{Q}}$ is metrizable, e.g.\ by the metric
    $$d_{\tilde{\tau}}(\mathbb{Q}, \mathbb{Q}') := \sum_{n = 1}^\infty 2^{-n} d_{\tilde{\tau}^n} (\tilde{\pi}^n(\mathbb{Q}), \tilde{\pi}^n(\mathbb{Q'})).
    $$
\end{proposition}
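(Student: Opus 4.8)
The plan is to exploit the homeomorphism $\mathcal{P}(\Omega) \cong \varprojlim \mathcal{P}(\Omega^T)$ established just above (via Proposition \ref{prop: ext}), under which $\tilde{\mathcal{Q}}$ corresponds to $\varprojlim \tilde{\mathcal{Q}}^T$ in view of its defining property \eqref{def: Qtild}. Hausdorffness is easiest to see directly: if $\mathbb{Q}, \mathbb{Q}' \in \tilde{\mathcal{Q}}$ are distinct, then since the $\sigma$-algebra $\mathcal{F}^0$ on $\Omega$ is generated by $\bigcup_{T} \mathcal{F}^0_T$, there is some $T$ and some $A \in \mathcal{F}^0_T$ with $\mathbb{Q}[A] \neq \mathbb{Q}'[A]$; then $\phi = \mathbb{1}_A \in B(\Omega)$ separates them, and disjoint preimages of disjoint neighborhoods in $\mathbb{R}$ under $\mu \mapsto \int \phi\,d\mu$ do the job. (Alternatively one invokes that a countable product of Hausdorff spaces is Hausdorff and inverse limits are closed subspaces, but the direct argument is cleaner.)

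For compactness, I would first restrict attention to the sequence of integer times, since the maps $\tilde{\pi}^{T}$ for general $T$ factor through $\tilde{\pi}^{\lceil T \rceil}$ followed by $\tilde{\pi}^{\lceil T\rceil, T}$, so that the inverse limit over $[0,\infty)$ coincides (homeomorphically) with the inverse limit over $\mathbb{N}$ of the system $(\tilde{\mathcal{Q}}^n, \tilde{\pi}^{n,m})$. Each $\tilde{\mathcal{Q}}^n$ is compact and metrizable by Proposition \ref{prop: ttauT}, and the connecting maps $\tilde{\pi}^{n,m}$ are continuous by Proposition \ref{prop: cont proj incl}; moreover they are \emph{surjective} onto $\tilde{\mathcal{Q}}^m$ because given $\mathbb{Q}^m \in \tilde{\mathcal{Q}}^m$ one can extend it by a driftless Wiener increment on $[m,n]$, which keeps the density, and hence the moment bounds defining $\tilde{\mathcal{Q}}^{\bullet}$, unchanged. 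The inverse limit of a sequence of nonempty compact Hausdorff spaces is nonempty compact (a standard fact; e.g.\ \cite[Corollary 2.5.11 and Theorem 3.2.13]{Engelking}), and a countable product of compact metrizable spaces is compact metrizable, hence so is the closed subset $\tilde{\mathcal{Q}}$ — this already gives metrizability. To pin down the explicit metric, note that under the embedding $\mathbb{Q} \mapsto (\tilde{\pi}^n(\mathbb{Q}))_{n\ge1}$ the subspace topology is exactly the product of the $d_{\tilde{\tau}^n}$-topologies restricted to $\prod_n \tilde{\mathcal{Q}}^n$, and the standard weighted-sum metric $\sum_n 2^{-n} d_{\tilde{\tau}^n}(\cdot,\cdot)$ metrizes that product topology (each $d_{\tilde{\tau}^n}$ is bounded by $1$ since it is bounded by total variation, by Proposition \ref{prop: tau mtrc}). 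One then checks that $d_{\tilde{\tau}}(\mathbb{Q},\mathbb{Q}') = \sum_n 2^{-n} d_{\tilde{\tau}^n}(\tilde{\pi}^n(\mathbb{Q}), \tilde{\pi}^n(\mathbb{Q}'))$ induces precisely $\tilde\tau$ on $\tilde{\mathcal{Q}}$: convergence in $d_{\tilde\tau}$ is equivalent to convergence of every $\tilde{\pi}^n$-projection, which by Definition \ref{def: loc stw}/the characterization of $\tilde\tau$ via $B(\Omega)$ is equivalent to $\tilde\tau$-convergence.

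The main obstacle I anticipate is making the inverse-limit compactness argument fully rigorous over the uncountable index set $[0,\infty)$ rather than $\mathbb{N}$, and in particular verifying that the connecting maps are surjective onto the $\tilde{\mathcal{Q}}^T$'s (not just onto $\mathcal{P}(\Omega^T)$) so that the inverse limit is genuinely nonempty — but as noted, cofinality of $\mathbb{N}$ in $[0,\infty)$ reduces everything to the metrizable sequential case, and the surjectivity follows from the driftless-extension observation together with the fact that the bound $M^T$ in the definition of $\tilde{\mathcal{Q}}^T$ is monotone in $T$. A secondary point to be careful about is closedness of $\tilde{\mathcal{Q}}$ inside $\mathcal{P}(\Omega)$ (equivalently inside the product), which follows because each $\tilde{\mathcal{Q}}^n$ is closed in $\mathcal{P}(\Omega^n)$ by Proposition \ref{prop: ttauT} and the inverse-limit condition is a closed condition; this is what lets us conclude compactness of $\tilde{\mathcal{Q}}$ from compactness of the ambient product.
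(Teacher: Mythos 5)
Your proposal is correct and takes essentially the same inverse-limit route as the paper. The paper also identifies $\tilde{\mathcal{Q}}$ with $\varprojlim \tilde{\mathcal{Q}}^T$ (via \cite[Corollary 2.5.7]{Engelking}), deduces compactness from \cite[Theorem II.2.1]{Maurin}, passes to the cofinal subsequence $\mathbb{N} \subset [0,\infty)$ via \cite[Corollary 2.5.11]{Engelking}, and obtains the explicit metric from its own Lemma \ref{lem: inv mtrz}. Your variants are a bit more hands-on — you separate points directly with $\mathbb{1}_A$ for $A \in \mathcal{F}^0_T$ rather than citing \cite[Theorem 2.5.2]{Engelking}, and you realize the inverse limit as a closed subset of the compact metrizable product $\prod_n \tilde{\mathcal{Q}}^n$ rather than invoking a dedicated metrization lemma; both shortcuts are sound. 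Two small remarks: the surjectivity of the connecting maps (your driftless-extension argument) is not needed for compactness or nonemptiness, since $\tilde{\mathcal{Q}}$ is automatically nonempty ($\mathbb{P} \in \tilde{\mathcal{Q}}$) and inverse limits of nonempty compact Hausdorff spaces over $\mathbb{N}$ are nonempty regardless; and with the paper's convention $d_{TV}(\mu,\nu) = \sup_{\|\phi\|_\infty \le 1}\int \phi\,d(\mu-\nu)$, the bound is $d_{\tilde{\tau}^n} \le 2$ rather than $1$, but uniform boundedness is all that the weighted-sum metric requires.
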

\begin{proof}
The Hausdorffness follows from \cite[Theorem 2.5.2.]{Engelking}. As an intersection of closed subsets of $\mathcal{P}(\Omega)$, we know that $\tilde{\mathcal{Q}}$ is closed. By \cite[Corollary 2.5.7.]{Engelking}, $\tilde{\mathcal{Q}}$ is thus homeomorph to inverse limit of $(\overline{\tilde{\pi}^T(\tilde{\mathcal{Q}})}, \tilde{\pi}^{T, t}, [0, \infty))$. As by Proposition \ref{prop: ttauT} all $\overline{\tilde{\pi}^T(\tilde{\mathcal{Q}})} \subset \tilde{\mathcal{Q}}^T$ are compact, by \cite[Theorem II.2.1]{Maurin}, $\tilde{\mathcal{Q}}$ is compact.\par
To show metrizability, first note that by \cite[Corollary 2.5.11.]{Engelking}, $\mathcal{Q}$ is also homeomorph to the inverse limit of the $(\overline{\tilde{\pi}^n(\tilde{\mathcal{Q}})}, \tilde{\pi}^{n, n'}, \mathbb{Z}_{\geq 0})$. The metrizability through $d_{\tilde{\tau}}$ then follows from Lemma \ref{lem: inv mtrz}.
\end{proof}
Note that metrizability of $\tilde{\tau}$ on $\tilde{\mathcal{Q}}$ is also important to show that $\tilde{\mathcal{Q}}$ is sequential compact as well. For metrizability of inverse limits for inverse sequences we prove the following result-.
\begin{lemma}\label{lem: inv mtrz}
    Let $\mathcal{X}$ be the inverse limit of an inverse sequence $(\mathcal{X}_n, f_{n, m}, \mathbb{Z}_{\geq 0})$ such that for any $n \geq 0$, the topology $\tau_n$ on $\mathcal{X}_n$ is metrized by some metric $d_n$ that are uniformly bounded by some constant $K\geq 0$. Then, the topology on the inverse limit $\mathcal{X}$ is metrized through the metric
    $$d(x, y) := \sum_{i \geq 1} 2^{-i} d_i(f_i(x), f_i(y)).$$
\end{lemma}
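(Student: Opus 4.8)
The plan is to check directly that the stated formula defines a metric on $\mathcal{X}$ and then to identify the topology it generates with the inverse-limit topology by proving the two inclusions separately.

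First I would dispose of the metric axioms. Since $0\le d_i\le K$ for every $i$, the series converges with $d(x,y)\le K\sum_{i\ge 1}2^{-i}=K<\infty$; symmetry and the triangle inequality pass termwise from the corresponding properties of the $d_i$. For point separation, if $d(x,y)=0$ then every summand vanishes, so $f_i(x)=f_i(y)$ for all $i\ge 1$, and then the bonding relation $f_0=f_{1,0}\circ f_1$ gives $f_0(x)=f_0(y)$ as well; hence $x$ and $y$ agree in every coordinate of $\prod_{i}\mathcal{X}_i$ and so are equal as points of the inverse limit.

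Next I would compare topologies, recalling that a subbasis of the inverse-limit topology is formed by the sets $f_i^{-1}(V)$ with $V$ open in $\mathcal{X}_i$, so that this topology is the coarsest one for which all the projections $f_i:\mathcal{X}\to\mathcal{X}_i$ are continuous. On the one hand, for each $i$ the single non-negative term $2^{-i}d_i(f_i(x),f_i(y))$ is dominated by $d(x,y)$, so $f_i$ is $2^i$-Lipschitz for $d$ and in particular $d$-continuous; hence the inverse-limit topology is contained in the $d$-topology. On the other hand, given $x\in\mathcal{X}$ and $\epsilon>0$, choose $N$ with $K\sum_{i>N}2^{-i}<\epsilon/2$ and set $U:=\bigcap_{i=1}^{N}f_i^{-1}\bigl(B_{d_i}(f_i(x),\epsilon/2)\bigr)$, an open set of the inverse-limit topology containing $x$; for $y\in U$, using $\sum_{i=1}^{N}2^{-i}\le 1$,
\[
d(x,y)=\sum_{i=1}^{N}2^{-i}d_i(f_i(x),f_i(y))+\sum_{i>N}2^{-i}d_i(f_i(x),f_i(y))<\tfrac{\epsilon}{2}+\tfrac{\epsilon}{2}=\epsilon,
\]
so $U\subseteq B_d(x,\epsilon)$ and every $d$-ball is an inverse-limit neighbourhood. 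The two topologies therefore coincide.

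I do not anticipate a genuine obstacle here: this is essentially the standard fact that a countable product of metric spaces with uniformly bounded metrics is metrized by the $2^{-i}$-weighted sum of coordinate distances, specialised to the subspace $\mathcal{X}\subseteq\prod_i\mathcal{X}_i$. The only two points needing a word of care are the convergence of the defining series, which is exactly why the uniform bound $K$ is assumed, and the omission of the index $i=0$ from the sum, which is harmless because $f_0$ factors through $f_1$ via the bonding map $f_{1,0}$.
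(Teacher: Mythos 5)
Your proof is correct and follows essentially the same two-inclusion strategy as the paper: one direction by observing each projection $f_i$ is $d$-continuous (you phrase it as $2^i$-Lipschitz, the paper phrases it via sequences), and the other direction by exhibiting, inside each $d$-ball, a finite intersection $\bigcap_{i\le N} f_i^{-1}(B_{d_i}(f_i(x),\epsilon/2))$ after truncating the tail of the series. Your extra remark that the omitted $i=0$ term is harmless because $f_0=f_{1,0}\circ f_1$ is a welcome clarification of a point the paper glosses over when it asserts that $d$ is a metric.
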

\begin{proof}
First, as $d_i \leq K$, we also have $d \leq K$, thus $d$ is finite and well defined. It is clear that $d$ does define a metric.\par
Let us first show that the topology induced by $d$ is coarser than $\tau$. For this, consider for an arbitrary $x\in \mathcal{X}$ the open ball $B_x(\epsilon)$ with some radius $\epsilon>0$. It suffices to show that there exists a $\tau$-open set $O$ such that $x \in O \subset B_{x}(\epsilon)$. We will denote the projection maps from $\mathcal{X}$ onto $\mathcal{X}_i$ by $f_i$.\par
With respect to $d_i$, let us for any $i \geq 1$ consider the ball $B_{f_i(x)}(\frac{\epsilon}{2}) \subset \mathcal{X}_i$ which is by assumption open with respect to $\tau_i$. Since $f_i$ is continuous, $f_i^{-1}(B_{f_i(x)}(\frac{\epsilon}{2}))$ is $\tau$-open. Take $I$ such that $\sum_{i = I + 1}^\infty 2^{-i + 1} < \frac{\epsilon}{2K}$. If we define $O := \cap_{i=1}^{I} f_i^{-1}(B_{f_i(x)}(\frac{\epsilon}{2}))$, as a finite intersection, $O$ is open with respect to $\tau$ as well.
Now, for any $y \in O$, we have
$$d(x, y) \leq \sum_{i = 1}^{I} 2^{-i} d_i(f_i(x), f_i(y)) + \sum_{n = I + 1}^\infty 2^{-i} K < \sum_{i = 1}^{I} 2^{-i} \frac{\epsilon}{2} + \frac{\epsilon}{2} < \epsilon$$
and thus $x \in O \subset B_{x}(\epsilon)$.\par
Let us now show that the topology induced by $d$ makes all $f_i$ continuous. Since this topology is metric, it suffices to show that for any $i\geq 1$, for any $(x_n)_{n \geq 1} \subset \mathcal{X}$ and $x\in \mathcal{X}$ with $d(x_n, x) \rightarrow 0$, we have $f_i(x_i) \rightarrow f_i(x)$ in $\tau_i$. But note that $d(x_n, x) \rightarrow 0$ implies $d_i(f(x_n), f(x))\rightarrow 0$ for all $i \geq 1$. As $d_i$ metrizes $\tau_i$, the $f_i(x_n)$ thus converge to $f_i(x)$ in $\tau_i$, making all $f_i$ continuous. It follows that $\tau$ must coincide with the topology that $d$ induces.
\end{proof}
\subsection{The local stable topology}\label{sect: stblc}
Let us now revisit the space $\mathcal{M}$ of relaxed flows of the laws in the control variable and its local stable topology $\mathcal{S}$. 
Here again, we prove metrizability and compactness of the local stable topology restricted on the set $\mathcal{M}$.
\begin{proposition}\label{prop: inft M top}
    The local stable topology $\mathcal{S}$ is Hausdorff and metrizable. Further, $\mathcal{M}$ under $\mathcal{S}$ is compact.
\end{proposition}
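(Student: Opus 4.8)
The plan is to realize $(\mathcal{M},\mathcal{S})$ as an inverse limit of finite horizon stable topology spaces, mirroring the treatment of $(\tilde{\mathcal{Q}},\tilde{\tau})$ in Section~\ref{def: tau}. For each $T>0$, let $\mathcal{M}^T$ be the set of $\sigma$-finite measures on $[0,T]\times\mathcal{P}(A)$ whose first marginal is $dt$ restricted to $[0,T]$; dividing by $T$ identifies $\mathcal{M}^T$ homeomorphically with the set of probability measures on $[0,T]\times\mathcal{P}(A)$ with first marginal $\tfrac1T dt$, i.e.\ with a space of Young measures over the finite measure space $([0,T],dt)$ taking values in the compact Polish space $\mathcal{P}(A)$. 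We endow $\mathcal{M}^T$ with the stable topology $\mathcal{S}^T$ of \cite{JacodMemin}. Since $\mathcal{P}(A)$ is compact and Polish, it is classical that a space of Young measures with prescribed first marginal on a standard base and values in a compact Polish space is compact and metrizable under the stable topology (see \cite{JacodMemin}, and Balder's treatment of the $ws$-topology in \cite{Balder}); hence $\mathcal{M}^T$ is compact and metrizable under $\mathcal{S}^T$, and we fix for it a metric $d_{\mathcal{S}^T}$, which we may take bounded by $1$.

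For $0\le t\le T$, let $\rho^{T,t}\colon\mathcal{M}^T\to\mathcal{M}^t$ send a measure to its restriction to $[0,t]\times\mathcal{P}(A)$, and let $\rho^T\colon\mathcal{M}\to\mathcal{M}^T$ be defined analogously. Given a bounded test function $\phi$ on $[0,t]\times\mathcal{P}(A)$ that is measurable in time and continuous in the measure variable, extending it by $0$ gives such a function on $[0,T]\times\mathcal{P}(A)$ (resp.\ a bounded, time-compactly supported such function on $[0,\infty)\times\mathcal{P}(A)$) with the same integral against $\rho^{T,t}\nu$ and $\nu$ (resp.\ against $\rho^T\nu$ and $\nu$); hence $\rho^{T,t}$ and $\rho^T$ are continuous, so $(\mathcal{M}^T,\rho^{T,t},(0,\infty))$ is an inverse system. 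A family $(\nu^T)_{T>0}$ with $\rho^{T,t}\nu^T=\nu^t$ for all $t\le T$ glues to a unique element of $\mathcal{M}$: the set function $\nu(B):=\lim_{n\to\infty}\nu^n\bigl(B\cap([0,n]\times\mathcal{P}(A))\bigr)$ is, by monotone convergence, a $\sigma$-finite Borel measure on $[0,\infty)\times\mathcal{P}(A)$ with first marginal $dt$ and $\rho^T\nu=\nu^T$ for all $T$ (cf.\ the extension argument of \cite[Theorem 1.3.5]{StroockVaradhan}). Therefore $\nu\mapsto(\rho^T(\nu))_{T>0}$ is a bijection of $\mathcal{M}$ onto $\lim\limits_{\longleftarrow}\mathcal{M}^T$, and since, by construction, $\mathcal{S}$ is generated by the maps $\nu\mapsto\int_0^\infty\int_{\mathcal{P}(A)}\phi(t,q)\,\nu_t(dq)\,dt$ with each $\phi$ supported in time in some $[0,T]$, this bijection is a homeomorphism onto the inverse limit $\lim\limits_{\longleftarrow}\mathcal{M}^T$ with its inverse limit topology.

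The three assertions now follow from this description exactly as for $\tilde{\mathcal{Q}}$ in Proposition~\ref{prop: inft Q top}. Hausdorffness of $\mathcal{S}$ follows from \cite[Theorem 2.5.2]{Engelking}, since each $\mathcal{M}^T$ is Hausdorff. Compactness of $\mathcal{M}$ follows from \cite[Theorem II.2.1]{Maurin}, since the inverse limit of an inverse system of nonempty compact spaces is nonempty compact. For metrizability, note that $\mathbb{N}$ is cofinal in $((0,\infty),\le)$, so by \cite[Corollary 2.5.11]{Engelking} the inverse limit $\lim\limits_{\longleftarrow}\mathcal{M}^T$ is homeomorphic to the inverse limit of the inverse sequence $(\mathcal{M}^n,\rho^{n,m},\mathbb{Z}_{\ge0})$; as each $\mathcal{M}^n$ is metrized by the bounded metric $d_{\mathcal{S}^n}$, Lemma~\ref{lem: inv mtrz} shows that $\mathcal{S}$ is metrized on $\mathcal{M}$ by $d_{\mathcal{S}}(\nu,\nu'):=\sum_{n\ge1}2^{-n}\,d_{\mathcal{S}^n}(\rho^n(\nu),\rho^n(\nu'))$.

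I expect the main obstacle to be the finite horizon input, namely compactness and metrizability of $\mathcal{M}^T$ under $\mathcal{S}^T$: this is the only step requiring a genuinely external fact about the stable topology on Young measures, whereas everything else is the same inverse-limit bookkeeping already developed for $\tilde{\tau}$ and $\tilde{\mathcal{Q}}$.
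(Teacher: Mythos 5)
Your proposal follows essentially the same route as the paper: identify $\mathcal{M}$ homeomorphically with the inverse limit of the finite-horizon spaces $(\mathcal{M}^T,\mathcal{S}^T)$ under the restriction maps, import compactness and metrizability of each $\mathcal{M}^T$ from the finite-horizon stable topology literature (the paper cites \cite[Corollary 3.9]{JacodMemin} and \cite[Proposition 3.25]{Florescu} for those two facts respectively), and then invoke \cite[Theorem II.2.1]{Maurin} for compactness of the limit and Lemma~\ref{lem: inv mtrz} together with cofinality of $\mathbb{N}$ for metrizability. The only cosmetic differences are your rescaling of $\mathcal{M}^T$ to a space of Young probability measures and your more explicit treatment of the gluing of a consistent family to a $\sigma$-finite measure, neither of which changes the substance of the argument.
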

\begin{proof}
Just as with $\mathcal{Q}$, we can relate $\mathcal{M}$ to its finite time horizon counterparts. That is, we write $\mathcal{M}^T$ for the space of finite measures on $[0, T] \times \mathcal{P}(A)$ with first marginal $dt$. We equip this space with the stable topology as defined in \cite{JacodMemin} which we denote by $\mathcal{S}^T$.\par
Any $\nu \in \mathcal{M}$ induces a measure $\nu_{\vert [0, T]}$ on $[0, T] \times \mathcal{P}(A)$ via $\nu_{\vert [0, T]}(\cdot) = \nu(\cdot \cap ([0, T] \times \mathcal{P}(A)))$ and we have $\nu_{\vert [0, T]} \in \mathcal{M}^T$. As
$$\int_0^T \int_{\mathcal{P}(A)} f(t, q) (\nu_{\vert [0, T]})_t(dq) dt = \int_0^\infty \int_{\mathcal{P}(A)} \mathbb{1}_{\lbrace t \leq T \rbrace} f(t, q) \nu_t(dq) dt$$
and $\mathbb{1}_{\lbrace \cdot \leq T \rbrace} f$ can be considered as a function on $[0, \infty) \times \mathcal{P}(A)$ that satisfies the above conditions on $f$, it follows that for any $T$, the restriction operator $\nu \mapsto \nu_{\vert [0, T]}$ is continuous. Further, $\mathcal{S}$ is by construction the coarsest topology that makes for all $T\geq 0$ the maps $\mathcal{M} \rightarrow \mathcal{M}^T, \nu \mapsto \nu_{\vert [0, T]}$ continuous. Since if two elements of $\mathcal{M}$ agree on any finite time horizon, they need to agree everywhere, it is immediate to see that $\mathcal{M}$ is homeomorph to the inverse limit of the $\mathcal{M}^T$ (with the corresponding restriction maps defined straightforwardly) and in particular Hausdorff. Since every $\mathcal{M}^T$ is already compact by \cite[Corollary 3.9]{JacodMemin}, we can again use \cite[Theorem II.2.1]{Maurin} to show that $\mathcal{M}$ is compact.\par
Now, for any $T>0$, because the Borel $\sigma$ algebra on $[0, T]$ is countably generated, by \cite[Proposition 3.25.]{Florescu}, the topologies $\mathcal{S}^T$ are metrizable. For each $T$, choose such a compatible metric $d_{\mathcal{S}^T}$ that is bounded by $1$. Then, a metric for $\mathcal{S}$ is by Proposition \ref{lem: inv mtrz} given by
\begin{equation}
\label{eq:def.dtau}
    d_{\mathcal{S}}(\nu, \nu') := \sum_{n = 1}^\infty 2^{-n} d_{\mathcal{S}^n}(\nu_{\vert[0, n]}, \nu'_{\vert[0, n]}).
\end{equation}
\end{proof}
\section{Proofs of the long time asymptotic results}\label{sec: asym}


We begin with some comments on the regularity of functions $F:[0, \infty) \times \mathcal{P}(\mathcal{C}) \rightarrow \mathbb{R}$.
Assume the function $F$ to be consistent in the sense of Assumption \ref{asmp: cons} and sequentially continuous in $\mu$ as in Assumption \ref{asmp: cont}.
To properly ensure continuity in the finite horizon problems, we will need a stronger notion of progressive sequential continuity.
That is, we need that given $t>0$, for any sequence $\mu^n$ and a measure $\mu$ such that $\tilde{\pi}^t(\mu^n) \rightarrow \tilde{\pi}^t(\mu)$ we also have $F(t, \mu^n) \rightarrow F(t, \mu)$. 
In general, this is stronger than mere continuity in $\mu$, but we will see that together with the consistency assumption the two continuity properties are equivalent.\par

To see this, let us take any sequence $\mu^n$, some $\mu$ and some $t \geq 0$ that satisfy $\tilde{\pi}^t(\mu^n) \rightarrow \tilde{\pi}^t(\mu)$. Writing $\omega$ as the canonical element on $\mathcal{C}$, we define $\hat{\mu}^n = \mu^n \circ (\omega_{\cdot \wedge t})^{-1} \in \mathcal{P}(\mathcal{C})$ and $\hat{\mu} = \mu \circ (\omega_{\cdot \wedge t})^{-1} \in \mathcal{P}(\mathcal{C})$ as the laws of $\omega$ stopped at $t$. As the restriction $\omega \mapsto \omega_{\cdot \wedge t}$ is measurable, it is easy to check that $\hat{\mu}^n \rightarrow \hat{\mu}$ in $\tau$. By consistency and continuity in $\tau$, we must have 
$$F(t, \mu^n) = F(t, \hat{\mu}^n) \rightarrow F(t, \hat{\mu}) = F(t, \mu).$$
The above discussion naturally extends to the case of maps taking more arguments.

\subsection{Infinite horizon equilibria as approximate equilibria}

The intuition behind the inverse convergence result is that 
the fixed point property is preserved when restricting to a smaller time horizon. 
That is,
given infinite horizon mean field equilibrium $(\mu, q,\alpha)$, the restriction $(\mu^T, q^T, \alpha^T):= (\tilde{\pi}^T(\mu), q_{\vert[0, T]}, \alpha_{\vert[0, T]}) \in \mathcal{P}(\mathcal{C}^T) \times \mathcal{M}^T \times \mathbb{A}^T$ satisfies the fixed point property.
However, in general, $\alpha^T $ needs not be optimal for the finite horizon control problem. 
The fixed point property for $q^T$ is immediate as $\mathbb{P}^{\mu, \alpha}_{\vert \mathcal{F}_T} = \mathbb{P}^{\mu^T, \alpha^T, T}$. For the state variable, we have
\begin{align*}
    \mu^T &= \tilde{\pi}^T(\mathbb{P}^{\mu, \alpha} \circ X^{-1}) = \mathbb{P}^{\mu, \alpha}_{\vert \mathcal{F}_T} \circ (X_{\vert[0, T]})^{-1}\\
    & = \mathbb{P}^{\mu^T, \alpha^T} \circ (X_{\vert[0, T]})^{-1}.
\end{align*}
\begin{proof}[Theorem \ref{thm: eps opt}]
Let $(\mu, q)$, $(\mu^T, q^T)$, $\alpha$ and $\alpha^T$ be taken as described above. In the finite horizon control problem, for the control $\alpha^T$, the "remaining utility process"
$$
    Y^{\mu^T, q^T, \alpha^T}_t = \mathbb{E}^{\mu^T, \alpha^T}\left[\int_t^T e^{-\lambda s} f(s, X, \mu^T, q^T_s, \alpha^T_s) ds \vert \mathcal{F}_t\right]
$$
is a solution of the finite horizon BSDE
$$Y^{\mu^T, q^T, \alpha^T}_t = \int_t^T h(s, X, \mu^T, q^T_s, Z^{\mu^T, q^T, \alpha^T}_s, \alpha^T_s)ds - \int_t^T Z^{\mu^T, q^T, a^T}dW_s$$
with terminal condition $Y^{\mu^T, q^T, \alpha^T}_T = 0$. Just as we have seen before for the infinite horizon control problem, the transformed process $(\tilde{Y}^T_\cdot, \tilde{Z}^T_\cdot) = (e^{\lambda \cdot} Y^{\mu^T, q^T, \alpha^T}_\cdot, e^{\lambda \cdot} Z^{\mu^T, q^T, \alpha^T}_\cdot)$ is then a solution of
$$\tilde{Y}^T_t = \int_t^T \tilde{h}(s, X, \mu^T, q^T_s, \tilde{Z}^T_s, \alpha^T_s) -\lambda\tilde{Y}^T_s ds -\int_t^T \tilde{Z}^T_s dW_s.
$$
with $\tilde{Y}^T_T = 0$. By construction, $\int Y^{\mu^T, q^T, \alpha^T}_0 d\upsilon= \int\tilde{Y}^T_0 d\upsilon= J^{\mu^T, q^T,T}(\alpha^T)$. Let us also consider the transformed BSDE corresponding to the optimal control on finite horizon given by
$$\hat{Y}^T_t = \int_t^T \tilde{H}(s, X, \mu^T, q^T_s, \hat{Z}^T_s) - \lambda \hat{Y}^T_s ds - \int_t^T \hat{Z}^T_s dW_s$$
with terminal condition $\hat{Y}_T = 0$.
By optimality,  we have $\int \hat{Y}_0d\upsilon = V^{\mu^T, q^T,T}$.\par
To connect these two solutions, let us recall the same BSDE \eqref{inft BSDE} on infinite horizon:
$$\tilde{Y}_t = \tilde{Y}_T +  \int_t^T \tilde{H}(s, X, \mu, q_s, \tilde{Z}_s) - \lambda \tilde{Y}_s ds - \int_t^T \tilde{Z}_s dW_s$$
holding for all $t \leq T < \infty$. As $\alpha$ is the optimal control on infinite horizon, we have
$$
    \int \tilde{Y}_0 d\upsilon = \mathbb{E}^{\mu, \alpha}\left[\int_0^\infty e^{-\lambda s} f(s, X, \mu, q_s, \alpha_s) ds\right] = \int \tilde{Y}^T_0 d\upsilon + \mathbb{E}^{\mu, \alpha}\left[\int_T^\infty e^{-\lambda s} f(s, X, \mu, q_s, \alpha_s) ds\right]
    $$
since by the consistency assumption \ref{asmp: cons}
$$\int \tilde{Y}^T_0 d\upsilon= \mathbb{E}^{\mu^T, \alpha^T}\left[\int_0^T e^{-\lambda s}f(s, X, \mu^T, q^T_s, \alpha^T_s)ds\right] = \mathbb{E}^{\mu, \alpha}\left[\int_0^T e^{-\lambda s}f(s, X, \mu, q_s, \alpha_s)ds\right]$$
and thus by Assumption \eqref{asmp: stnd}.$(ii)$ we have
$$\left\vert \int \tilde{Y}_0 - \tilde{Y}^T_0 d\upsilon \right\vert = \left\vert \mathbb{E}^{\mu, \alpha}\left[\int_T^\infty e^{-\lambda s} f(s, X, \mu, q_s, \alpha_s) ds\right]\right \vert \leq \frac{M}{\lambda}e^{-\lambda T}.$$
Now, using the consistency assumption, $\tilde{H}(s, X, \mu^T, q^T_s, \cdot) = \tilde{H}(s, X, \mu, q_s, \cdot)$ for $s \leq T$, and $\hat{Y}^T$ is actually the finite horizon approximation of $\tilde{Y}$ as discussed in Lemma \ref{lem: YZ aprx}. 
In particular, we a.s.\ have $\vert \tilde{Y}_0 - \hat{Y}^T_0 \vert \leq \frac{M}{\lambda} e^{- \lambda T}$. Together, this shows $J^{\mu^T, q^T,T}(\alpha^T) \geq V^{\mu^T, q^T, T} - \frac{2M}{\lambda}e^{-\lambda T}$.
\end{proof}

\subsection{Convergence to infinite horizon equilibria}\label{sect: cnv inft eq}

\begin{proof}[Theorem \ref{thm: cnv inft}]
For each $T_n$, We consider the infinite horizon mean field game that admits the reward functional
$$\mathbb{E}^{\mu, \alpha}\left[\int_0^{T_n} e^{-\lambda s} f(s, X, \mu, q_s, \alpha_s)ds - \int_{T_n}^\infty e^{-\lambda s} \Vert \alpha_s - a_0 \Vert_A ds\right]$$
for an arbitrary fixed element $a_0\in A$. For this infinite horizon game, we can find a solution $(\mu^{n, 0}, q^{n, 0}, \alpha^{n, 0})$ by Theorem \ref{thm: exst E}. Since after $T_n$, the control problem admits a unique optimal control given by the constant control at $a_0$, one can see that given $(\tilde{\pi}^{T_n}(\mu^{n, 0}), q^{n, 0}_{\vert[0, T_n]})$, the control $\alpha^{n}_{\vert [0, T_n]}$ is optimal for the control problem of the finite horizon game. Further, $\tilde{\pi}^{T_n}(\mu^{n, 0}) = \tilde{\pi}^{T_n}(\mathbb{P}^{\mu^{n, 0}, \alpha^{n, 0}}) = \mathbb{P}^{\tilde{\pi}^{T_n}(\mu^{n, 0}), \alpha^{n}_{\vert [0, T_n]}}$, and the fixed point property is fulfilled. Thus, $(\tilde{\pi}^{T_n}(\mu^{n, 0}), q^{n, 0}_{\vert[0, T_n]}, \alpha^{n}_{\vert [0, T_n]})$ is a solution to the finite horizon mean field game which shows existence.\par
Just as in the statement Theorem \ref{thm: cnv inft}, we now consider $(\mu^n, q^n, \alpha^n)$ as any solution to the game on horizon $[0, T_n]$. It is immediate that $\overline{q}^n \in \mathcal{M}$ and as $\overline{\mu}^n = \mathbb{P}^{\mu^n, \alpha^n} \circ X^{-1}$, we must also have $\overline{\mu}^n \in \mathcal{Q}$. As $\mathcal{Q} \times \mathcal{M}$ is compact, after passing to a subsequence, we will from now on assume $(\mu^n, q^n) \rightarrow (\mu, \nu) \in \mathcal{Q} \times \mathcal{M}$. Note that at this point, it is not clear yet whether $\nu$ is a strict flow of measures.\par
As we have done before, we can consider the solution $(\widetilde{Y}^n, \widetilde{Z}^n)$ of the finite horizon BSDE
$$
    \widetilde{Y}^n_t = \int_t^{T_n} \widetilde{H}(s, X, \mu^n, q^n_s, \widetilde{Z}^n_s) - \lambda \widetilde{Y}^n_s ds - \int_t^{T_n} \widetilde{Z}^n_s dW_s
$$
with terminal condition $\widetilde{Y}^n_{T_n} = 0$. We consider $(\widetilde{Y}^n, \widetilde{Z}^n)$ as processes on $[0, \infty)$ by defining $(\widetilde{Y}^n_t, \widetilde{Z}^n_t) = 0$ for $t > T_n$. For later use, we also consider the unique solution $(\overline{Y}^n, \overline{Z}^n)$ of the infinite horizon BSDE
$$\overline{Y}^n_t = \overline{Y}^n_T + \int_t^T \widetilde{H}(s, X, \overline{\mu}^n,  \overline{q}^n_s, \overline{Z}^n_s) - \lambda \overline{Y}^n_s ds - \int_t^T \overline{Z}^n_s dW_s$$
holding for all $t \leq T<\infty$.\par
Lastly, we can also consider the infinite horizon BSDE for the optimal control of the limit measures $(\mu, \nu)$ given by
$$
    \widetilde{Y}_t = \widetilde{Y}_T + \int_t^T \widetilde{H}(s, X, \mu, \nu_s, \widetilde{Z}_s) - \lambda \widetilde{Y}_s ds - \int_t^T \widetilde{Z}_s dW_s
$$
for all $t \leq T < \infty$.\par
Recall from Lemma \ref{lem: YZ aprx} that the solution to an infinite horizon BSDE can be constructed as a limit of a sequence of solutions for corresponding finite horizon BSDEs with increasing time horizons. In our case, by consistency, the $\widetilde{Z}^n$ are the finite horizon approximation of the $\overline{Z}^n$. Since the Lipschitz constant in $Z$ of $\widetilde{H}$ is independent of the measure arguments, by Lemma \ref{lem: YZ aprx}, they become uniformly close for big $n$, i.e.\ for any $\epsilon > 0$, there exists $\widetilde{n}$ so that for all $n \geq \widetilde{n}$, we have $\mathbb{E}[\int_0^\infty e^{-2\lambda s} \Vert \widetilde{Z}^n_s - \overline{Z}^n_s \Vert^2 ds] < \epsilon$. As in the proof of \ref{lem: cntrcont}, for $(\overline{\mu}^n, \overline{q}^n) \rightarrow (\mu, \nu)$ we also have $\mathbb{E}[\int_0^\infty e^{-2\lambda s} \Vert \widetilde{Z}_s - \overline{Z}^n_s \Vert^2 ds] \rightarrow 0$. Together, we have $\mathbb{E}[\int_0^\infty e^{-2\lambda s} \Vert \widetilde{Z}^n_s - \widetilde{Z}_s \Vert^2 ds] \rightarrow 0$. \par
Denote by $\alpha = \mathbb{A}(\mu, \nu)$ the optimal control for $(\mu, \nu)$. Applying Lemma \ref{lem: cntrcont} here shows $\Vert\overline{\alpha}^n - \alpha\Vert_\mathbb{A} \rightarrow 0$.\par
Recall that $\mathfrak{u} \circ \mathfrak{P}: \mathcal{Q} \times \mathbb{A} \rightarrow\mathcal{Q}, (\mu, \alpha) \mapsto \mathbb{P}^{\mu, \alpha} \circ X^{-1}$ and $\mathfrak{v}: \mathcal{Q} \times \mathbb{A} \rightarrow \mathcal{M}, (\mu, \alpha) \mapsto \delta_{\mathfrak{P}(\mu, \alpha) \circ \alpha_t^{-1}} dt$ as considered in Lemma \ref{lem: P uc} and the proof of Theorem \ref{thm: exst E} are continuous.\par
Under the consistency assumption, the law of $X_{\vert [0, T_n]}$ and $\alpha_{\vert [0, T_n]}$ under the optimal control are fully determined by $\mu^n$ and $q^n$ already. Thus, we can write the fixed point property of our finite horizon solutions also as $\mu^n = \tilde{\pi}^{T_n}(\overline{\mu}^n) = \tilde{\pi}^{T_n}(\mathfrak{u}(\mathfrak{P}(\overline{\mu}^n, \overline{\alpha}^n)))$ and $q^n = \mathfrak{v}(\overline{\mu}^n, \overline{\alpha}^n)_{\vert[0, T_n]}$.\par
Further, the fixed point property is preserved under restrictions, that is for $n \leq n'$ we have $\tilde{\pi}^{T_{n'}, T_n}(\mu^{n'}) = \tilde{\pi}^{T_n}(\mathfrak{u}(\mathfrak{P}(\overline{\mu}^{n'}, \overline{\alpha}^{n'})))$ and $q^{n'}_{\vert [0, T_n]} = \mathfrak{v}(\overline{\mu}^{n'}, \overline{\alpha}^{n'})_{\vert[0, T_n]}$.\par
Therefore, if we fix a horizon $T_n$ for now, we have
$\tilde{\pi}^{T_n}(\mu) = \lim_{n' \rightarrow \infty} \tilde{\pi}^{T_n}(\overline{\mu}^{n'})$ as well as $\nu_{\vert [0, T_n]} = \lim_{n' \rightarrow \infty} q^{n'}_{\vert[0, T_n]}$. By the continuity of $(\mathfrak{u} \circ\mathfrak{P} \circ \overline{A}, \mathfrak{v})$ and of the restrictions, we must have $\tilde{\pi}^{T_n}(\mu) = \tilde{\pi}^{T_n}(\mathfrak{u}(\mathfrak{P}(\mu, \alpha)))$ and $ \nu_{\vert [0, T_n]} = \mathfrak{v}(\mu, \alpha)_{\vert [0, T_n]}$.\par
This holds for all $T_n$. As the $T_n$ diverge, we immediately have $\nu = \mathfrak{v}(\mu, \alpha)$ and in particular, $\nu$ is a strict flow of measures. By uniqueness of Lemma \ref{prop: ext}, $\mu = \mathfrak{u}(\mathfrak{P}(\mu, \alpha))$, showing that $(\mu, \nu)$ is an equilibrium.\par
To see $\lim_{n \rightarrow \infty} V^{\mu^n, q^n, T_n} = V^{\mu, \nu}$, observe that $\widetilde{Y}^n_0$ and $\overline{Y}^n_0$ become arbitrarily close uniformly for big $n$ by Lemma \ref{lem: YZ aprx}. Further, by Lemma \ref{lem: inft bsde stab}, $\overline{Y}^n_0 \rightarrow \widetilde{Y}_0$ a.s.\ as well which implies $\lim_{n \rightarrow \infty} V^{\mu^n, q^n, T_n} = \lim_{n \rightarrow \infty} \mathbb{E}[\widetilde{Y}^n_0] = \mathbb{E}[\widetilde{Y}_0]  = V^{\mu, \nu}$.
\end{proof}
\begin{remark}
    Recall, as observed in the proofs of existence, that the optimal control map $\mathbb{A}$ is in general only metrically upper hemicontinuous which is too weak for us to infer convergence in $\mathcal{M}$. 
    Therefore, we decided to state our result under the additional assumption \ref{asmp: sngl}. 
    One can derive an equivalent result under assumption \ref{asmp: NEMFG} instead.
    However, in this case we do not have convergence of the optimal controls anymore.
\end{remark}
\subsection{Quantitative convergence to infinite horizon equilibria}\label{sect: rates}
We now present the proof of the quantitative convergence to the infinite horizon equilibrium. The proof is based on an idea that exploits the stabilizing effect of Lasry-Lions monotonicity.

\begin{proof}[Theorem \ref{thm: cnv rate}]
We again consider the solution of the infinite horizon BSDE
$$
Y_r = Y_t + \int_r^t e^{-\lambda s} f(s, X, \mu, \alpha_s) + Z_s^\top \sigma^{-1}b(s, X, \alpha_s) ds - \int_r^t Z_s dW_s$$
holding for any $0 \leq r \leq t<\infty$. 
Recall that $Y$ is the remaining utility process. By fixing $t$ at $T$, we can also consider the restriction of $Y$ and $Z$ on $[0, T]$ as a solution of a finite horizon BSDE on $[0, T]$ with terminal value $Y_T$.\par
For solutions of the finite horizon problem, we can consider the finite horizon BSDE
$$
    Y^T_t = \int_t^T e^{-\lambda s} f(s, X, \mu^T, \alpha^T_s) + (Z^T_s)^\top \sigma^{-1}b(s, X, \alpha^T_s) ds - \int_t^T Z^T_s dW_s
$$
with terminal condition $Y^T_T=0$. Just as in the proof of Theorem \ref{thm: unq}, we can find
\begin{equation}\label{wlldiff}
    \begin{split}
        & (\mathbb{E}^{\alpha} - \mathbb{E}^{\alpha^T})\left[Y_T + \int_0^T e^{-\lambda s}(f_1(s, X, \mu)-f_1(s, X, \mu^T))ds\right] \\
        =& \mathbb{E}^{\alpha}\left[\int_0^T h(s, X, Z^T_s, \alpha^T_s) - h(s, X, Z^T_s, \alpha_s)ds \right] + \mathbb{E}^{\alpha^T}\left[\int_0^T h(s, X, Z_s, \alpha_s) - h(s, X, Z_s, \alpha^T_s)ds\right]
    \end{split}
\end{equation}
where the $f_2$ terms cancel between the different expectations as they are deterministic and $f_3$ becomes part of the redefined Hamiltonian $h$ as in assumption \ref{asmp: cnv rate}. As $Y$ is defined as the value function and since each $f_1(s, \cdot, \cdot)$ satisfies the weak version of Lasry-Lions monotonicity in assumption \ref{asmp: cnv rate}, the left hand side is bounded from above by
$$\frac{2Me^{-\lambda T}}{\lambda} + \delta \int_0^T e^{-\lambda s} (\mathcal{H}(\tilde{\pi}^s(\mu), \tilde{\pi}^{T, s}(\mu^T))+\mathcal{H}(\tilde{\pi}^{T, s}(\mu^T), \tilde{\pi}^s(\mu)))ds.$$
Now using strong concavity of $h$ in $\alpha$, using Lemma \ref{lem: strcv} the right hand side of \eqref{wlldiff} can be lower bounded by $\frac{m}{4}(\mathbb{E}^{\alpha} + \mathbb{E}^{\alpha^T})[\int_0^T e^{-\lambda s}\Vert \alpha_s - \alpha^T_s\Vert_A^2 ds]$. Note that for any $t \leq T$, we have since $\sigma^{-1}b$ is $L$ Lipschitz in $\alpha$ and since the relative entropy decreases under pushforwards (see e.g. \cite{Lehec}) that
\begin{equation*}
    \begin{split}
        &(\mathbb{E}^{\alpha} + \mathbb{E}^{\alpha^T})\left[\int_0^t e^{-\lambda s}\Vert \alpha_s - \alpha^T_s\Vert_A^2 ds\right] \\
        \geq& \frac{1}{L^2}(\mathbb{E}^{\alpha} + \mathbb{E}^{\alpha^T})\left[\int_0^t \vert \sigma^{-1}b(s, X, \mu, \alpha_s) - \sigma^{-1}b(s, X, \mu', \alpha'_s)\vert^2ds\right] \\
        =&\frac{2}{L^2}(\mathcal{H}(\mathbb{P}^{\alpha^T}_{\vert \mathcal{F}_t}, \mathbb{P}^\alpha_{\vert \mathcal{F}_t}) + \mathcal{H}(\mathbb{P}^\alpha_{\vert \mathcal{F}_t}, \mathbb{P}^{\alpha^T}_{\vert \mathcal{F}_t})) \geq \frac{2}{L^2}(\mathcal{H}(\tilde{\pi}^{T, t}(\mu^T), \tilde{\pi}^t(\mu)) + \mathcal{H}( \tilde{\pi}^t(\mu), \tilde{\pi}^{T, t}(\mu^T)).
    \end{split}
\end{equation*}
Using integration by parts, we have for any $t\geq 0$,
\begin{equation*}
    \begin{split}
        & e^{-\lambda t} (\mathbb{E}^{\alpha} + \mathbb{E}^{\alpha^T})\left[\int_0^t \Vert \alpha_s - \alpha^T_s\Vert_A^2 ds\right] + \lambda\int_0^t e^{-\lambda s}(\mathbb{E}^{\alpha} + \mathbb{E}^{\alpha^T})\left[ \int_0^s \Vert \alpha_r - \alpha^T_r \Vert_A^2 dr\right] ds\\
        = &  (\mathbb{E}^{\alpha} + \mathbb{E}^{\alpha^T})\left[\int_0^t e^{-\lambda s}\Vert \alpha_s - \alpha^T_s\Vert_A^2 ds\right].
    \end{split}
\end{equation*}
Since $\delta < \frac{\lambda m}{2L^2}$,
\begin{equation*}
    \begin{split}
        &\frac{2Me^{-\lambda T}}{\lambda} + \delta \int_0^T e^{-\lambda s} (\mathcal{H}(\tilde{\pi}^s(\mu), \tilde{\pi}^{T, s}(\mu^T))+\mathcal{H}(\tilde{\pi}^{T, s}(\mu^T), \tilde{\pi}^s(\mu)))ds\\
        \geq &\frac{m}{4}(\mathbb{E}^{\alpha} + \mathbb{E}^{\alpha^T})\left[\int_0^T e^{-\lambda s}\Vert \alpha_s - \alpha^T_s\Vert_A^2 ds\right]\\
        \geq & \frac{L^2\delta}{2} \int_0^T e^{-\lambda s}(\mathbb{E}^{\alpha} + \mathbb{E}^{\alpha^T})\left[ \int_0^s \Vert \alpha_r - \alpha^T_r \Vert_A^2 dr\right] ds + \left(\frac{m}{4} - \frac{L^2\delta}{2\lambda}\right) e^{-\lambda t} (\mathbb{E}^{\alpha} + \mathbb{E}^{\alpha^T})\left[\int_0^t \Vert \alpha_s - \alpha^T_s\Vert_A^2 ds\right]  \\
        \geq &\delta \int_0^T e^{-\lambda s} (\mathcal{H}(\tilde{\pi}^s(\mu), \tilde{\pi}^{T, s}(\mu^T))+\mathcal{H}(\tilde{\pi}^{T, s}(\mu^T), \tilde{\pi}^s(\mu)))ds\\
        + &\left(\frac{m}{2L^2} - \frac{\delta}{\lambda}\right) e^{-\lambda t}(\mathcal{H}(\tilde{\pi}^{T, t}(\mu^T), \tilde{\pi}^t(\mu)) + \mathcal{H}( \tilde{\pi}^t(\mu), \tilde{\pi}^{T, t}(\mu^T))).
    \end{split}
\end{equation*}
which proves our first statement.
Similarly, we can also find
\begin{equation*}
    \begin{split}
        &\frac{2Me^{-\lambda T}}{\lambda} + \delta \int_0^T e^{-\lambda s} (\mathcal{H}(\tilde{\pi}^s(\mu), \tilde{\pi}^{T, s}(\mu^T))+\mathcal{H}(\tilde{\pi}^{T, s}(\mu^T), \tilde{\pi}^s(\mu)))ds\\
        \geq &\frac{m}{4}(\mathbb{E}^{\alpha} + \mathbb{E}^{\alpha^T})\left[\int_0^T e^{-\lambda s}\Vert \alpha_s - \alpha^T_s\Vert_A^2 ds\right]\geq  \frac{m\lambda}{4} \int_0^T e^{-\lambda s}(\mathbb{E}^{\alpha} + \mathbb{E}^{\alpha^T})\left[ \int_0^s \Vert \alpha_r - \alpha^T_r \Vert_A^2 dr\right] ds\\
        \geq & \delta \int_0^T e^{-\lambda s} (\mathcal{H}(\tilde{\pi}^s(\mu), \tilde{\pi}^{T, s}(\mu^T))+\mathcal{H}(\tilde{\pi}^{T, s}(\mu^T), \tilde{\pi}^s(\mu)))ds \\
        +& \left(\frac{m\lambda}{2 L^2} - \delta\right)\int_0^T e^{-\lambda s}(\mathcal{H}(\mathbb{P}^{\alpha^T}_{\vert \mathcal{F}_s}, \mathbb{P}^\alpha_{\vert \mathcal{F}_s}) + \mathcal{H}(\mathbb{P}^\alpha_{\vert \mathcal{F}_s}, \mathbb{P}^{\alpha^T}_{\vert \mathcal{F}_s}))ds
    \end{split}
\end{equation*}
showing
$$\int_0^T e^{-\lambda s}(\mathcal{H}(\mathbb{P}^{\alpha^T}_{\vert \mathcal{F}_s}, \mathbb{P}^\alpha_{\vert \mathcal{F}_s}) + \mathcal{H}(\mathbb{P}^\alpha_{\vert \mathcal{F}_s}, \mathbb{P}^{\alpha^T}_{\vert \mathcal{F}_s}))ds \leq \frac{1}{\frac{m\lambda}{2L^2} - \delta} \frac{2Me^{-\lambda T}}{\lambda}$$
and
$$(\mathbb{E}^{\alpha} + \mathbb{E}^{\alpha^T})\left[\int_0^T e^{-\lambda s}\Vert \alpha_s - \alpha^T_s\Vert_A^2 ds\right]\leq\left(1 + \frac{\delta}{\frac{m\lambda}{2L^2} - \delta}\right)\frac{8Me^{-\lambda T}}{\lambda m}.$$
Let us now take a look at the law of the control variable. First, note that for any $0 \leq t \leq T$, we have
\begin{equation*}
    \begin{split}
        &2\mathcal{W}_1(q_t, q^T_t) \\
        \leq&  \mathcal{W}_1(\mathbb{P}^\alpha \circ (\alpha_t)^{-1}, \mathbb{P}^\alpha \circ (\alpha^T_t)^{-1}) + \mathcal{W}_1(\mathbb{P}^\alpha \circ (\alpha^T_t)^{-1}, \mathbb{P}^{\alpha^T} \circ (\alpha^T_t)^{-1})\\
        + &  \mathcal{W}_1(\mathbb{P}^\alpha \circ (\alpha_t)^{-1}, \mathbb{P}^{\alpha^T} \circ (\alpha_t)^{-1}) + \mathcal{W}_1(\mathbb{P}^{\alpha^T} \circ (\alpha_t)^{-1}, \mathbb{P}^{\alpha^T} \circ (\alpha^T_t)^{-1})\\
        \leq & C_A (d_{TV} (\mathbb{P}^\alpha \circ (\alpha_t)^{-1}, \mathbb{P}^{\alpha^T} \circ (\alpha_t)^{-1}) + d_{TV}(\mathbb{P}^\alpha \circ (\alpha^T_t)^{-1}, \mathbb{P}^{\alpha^T} \circ (\alpha^T_t)^{-1}))+(\mathbb{E}^\alpha + \mathbb{E}^{\alpha^T})[\Vert \alpha_t - \alpha^T_t \Vert_A]\\
        \leq &2C_A d_{TV}(\mathbb{P}^\alpha_{\vert \mathcal{F}_t}, \mathbb{P}^{\alpha^T}_{\vert \mathcal{F}_t}) + \mathbb{E}^{\alpha}[\Vert \alpha_t - \alpha^T_t \Vert_A^2]^{\frac{1}{2}} + \mathbb{E}^{\alpha^T}[\Vert \alpha_t - \alpha^T_t \Vert_A^2]^{\frac{1}{2}}
    \end{split}
\end{equation*}
where $C_A$ is as before an upper bound on $\Vert \cdot \Vert_A$ on $A$, and we have used in the last line that $\alpha_t$ and $\alpha^T_t$ are essentially measurable maps $\Omega^t \rightarrow A$. Thus, we can bound
$$\int_0^T e^{-\lambda s} \mathcal{W}_1(q_s, q^T_s)^2 ds \leq  2 C_A^2 \int_0^T e^{-\lambda s} d_{TV}(\mathbb{P}^\alpha_{\vert \mathcal{F}_s}, \mathbb{P}^{\alpha^T}_{\vert \mathcal{F}_s})^2 ds + (\mathbb{E}^{\alpha} + \mathbb{E}^{\alpha^T})\left[\int_0^T e^{-\lambda s} \Vert \alpha_s - \alpha^T_s \Vert_A^2 ds\right]$$
and we can conclude using Pinsker's inequality and the bounds above.
\end{proof}
Here, we state the (probably well-known) convex analysis result we used above.
\begin{lemma}\label{lem: strcv}
    Let $\phi: E \rightarrow \mathbb{R}$ be strongly $m$ concave for some $m > 0$ defined on some convex subset $E$ of a normed vector space where the norm is induced by an inner product. Assume that $\phi$ admits on $E$ a minimizer $e^\star$. Then, for any $e \in E$, we have $\Vert e - e^\star \Vert^2 \leq \frac{4}{m}(\phi(e^\star) - \phi(e))$.
\end{lemma}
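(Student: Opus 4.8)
The plan is to reduce the statement to the standard quantitative form of strong concavity and then to specialise it at the midpoint of $e$ and $e^\star$. The first step is to record the polarisation identity, which is available precisely because the norm is induced by an inner product: for all $x,y\in E$ and $\lambda\in[0,1]$,
$$\lambda\Vert x\Vert^2+(1-\lambda)\Vert y\Vert^2-\Vert\lambda x+(1-\lambda)y\Vert^2=\lambda(1-\lambda)\Vert x-y\Vert^2.$$
By definition, strong $m$-concavity of $\phi$ means that $a\mapsto\phi(a)+\frac m2\Vert a\Vert^2$ is concave on $E$. Writing out this concavity along the segment joining $x$ and $y$ and substituting the identity above yields the quantitative concavity estimate
$$\phi(\lambda x+(1-\lambda)y)\ \ge\ \lambda\,\phi(x)+(1-\lambda)\,\phi(y)+\frac m2\,\lambda(1-\lambda)\,\Vert x-y\Vert^2,\qquad x,y\in E,\ \lambda\in[0,1].$$

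The second step is to take $\lambda=\frac12$, $x=e^\star$ and $y=e$. By convexity of $E$ the midpoint $\bar e:=\frac12(e^\star+e)$ lies in $E$, so the estimate reads $\phi(\bar e)\ge\frac12\phi(e^\star)+\frac12\phi(e)+\frac m8\Vert e-e^\star\Vert^2$. Since $e^\star$ maximises $\phi$ over $E$ we also have $\phi(e^\star)\ge\phi(\bar e)$; chaining the two inequalities and rearranging gives $\phi(e^\star)-\phi(e)\ge\frac m4\Vert e-e^\star\Vert^2$, which is exactly the asserted bound $\Vert e-e^\star\Vert^2\le\frac4m\big(\phi(e^\star)-\phi(e)\big)$.

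I do not expect any real obstacle here: the argument is a two-line convex-analysis computation. The two points that deserve a moment of care are, first, that the inner-product structure of $\Vert\cdot\Vert$ is exactly what makes the polarisation identity — and hence the clean two-sided strong-concavity estimate — available; on a merely normed space one would have to replace it by a modulus of convexity and would lose the explicit constant. Second, evaluating at $\lambda=\frac12$ is what produces precisely the constant $\frac4m$ needed in the application to Theorem~\ref{thm: cnv rate}: letting $\lambda\to0^+$ in the estimate above would in fact give the sharper bound $\Vert e-e^\star\Vert^2\le\frac2m(\phi(e^\star)-\phi(e))$, but the midpoint choice is preferable because it uses only convexity of $E$ and never any interiority of $e^\star$.
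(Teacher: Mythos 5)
Your proof is correct and follows essentially the same route as the paper's: both arguments reduce to the inner-product identity (your polarisation identity is just the recentred form of the paper's observation that $\Vert\cdot\Vert^2-\Vert\cdot-e^\star\Vert^2$ is affine), both evaluate the resulting strong-concavity estimate at the midpoint $\frac{e+e^\star}{2}$, and both then invoke maximality of $e^\star$ — which you correctly read despite the statement's wording of ``minimizer'' (evidently a typo, since the paper's own proof and the application in Theorem~\ref{thm: cnv rate} also require $\phi(e^\star)\ge\phi(\bar e)$). Your side remark that the midpoint choice avoids ``interiority of $e^\star$'' is not quite the right reason the paper stops at $4/m$ (taking $\lambda\to1^-$ in the strong-concavity estimate with $x=e^\star$, $y=e$ also stays inside $E$ by convexity and would give $2/m$), but this does not affect the correctness of the proof itself.
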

\begin{proof}
    By assumption, $\phi + \frac{m}{2}\Vert \cdot\Vert^2$ is concave. In the case that $\Vert \cdot \Vert$ is induced by an inner product, $\Vert \cdot \Vert^2 - \Vert \cdot - e^\star \Vert^2$ is linear, thus $\phi + \frac{m}{2}\Vert \cdot - e^\star\Vert^2$ is concave as well. Hence,
    $$\frac{1}{2}\left(\phi(e) + \frac{m}{2}\Vert e - e^\star\Vert^2 + \phi(e^\star)\right) \leq \phi\left(\frac{e + e^\star}{2}\right) + \frac{m}{2}\left\Vert  \frac{e + e^\star }{2} - e^\star \right\Vert^2 \leq \phi(e^\star) + \frac{m}{8}\Vert e - e^\star \Vert^2$$
    and thus $\Vert e - e^\star \Vert^2 \leq \frac{4}{m}(\phi(e^\star) - \phi(e))$.
\end{proof}

\section{The invariant mean field game}\label{sect: StMFG}
The goal of this section is to prove Theorem \ref{thm: inv mfg exst}, and on the way, we will discuss Markovian properties of solutions to the mean field games as well as the existence of stationary distribution. Differently than in Theorem \ref{thm: inv mfg exst} we will for now work with a fixed initial distribution $\upsilon$ instead of the case where the initial distribution becomes part of the solution. Indeed, our approach consists of reducing the invariant mean field game to the so called stationary mean field game that we need to define first.\par
\subsection{Time homogeneous mean field games and feedback controls}
We first consider whether we can always find $\mu$ and $\alpha$ that solve the mean field game in the sense of Definition \ref{def.MFE} so that the state process becomes a time homogeneous Markovian SDE under the controlled measure. In general, both $b$ and $f$ depend on $\alpha$ which was allowed to be any progressively measurable process. Such controls are also known as open loop controls, in what follows, we would like to consider so called feedback controls, in which we can write $\alpha_s = \alpha(X_s)$ for a deterministic function $\alpha : \mathbb{R}^d \rightarrow A$. For such controls, under $\mathbb{P}^{\mu, \alpha}$, the forward process $X$ is a weak solution to a time homogeneous Markovian SDE.\par
Note that for the notion of mean-field equilibria, we are working with the open loop formulation of the game. The notion of a closed loop or feedback equilibrium generally differs from the one we are considering, for a more detailed distinction we refer to \cite[2.1.2]{CarmonaDelarue}. We are merely interested in open loop equilibria with an optimal control that happens to be in feedback form.
\begin{theorem}\label{thm: MKV MFG}
    Assume that our coefficients are time homogeneous and that assumption \ref{asmp: stnd} holds. Then given any $\mu$, there exists an optimal control in feedback form. If additionally, assumption \ref{asmp: sngl} holds, the optimal control is unique and hence in feedback form.
\end{theorem}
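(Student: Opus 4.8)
The plan is to combine the optimality characterization of Proposition \ref{prop.optim.charac} with the Markovian structure that the BSDE \eqref{inft BSDE} inherits in the time-homogeneous regime. Since $b$ and $f$ see the state path only through its current value, the maximizer set $\mathcal{A}(t,x,\mu,z)$ depends on $(x,\mu,z)$ only through $(x_t,\mu,z)$; write it as $\hat{\mathcal{A}}(x_t,\mu,z)$. Under $\mathbb{P}$ the canonical process $X$ solves the time-homogeneous SDE $dX_s=\sigma(X_s)dW_s$, hence is a strong Markov process, and the driver $(y,z)\mapsto \tilde H(X_s,\mu,z)-\lambda y$ of \eqref{inft BSDE} carries no explicit time dependence. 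The key step is therefore to produce a \emph{Markovian representation} of the solution: measurable maps $v^\mu:\mathbb{R}^d\to\mathbb{R}$ and $p^\mu:\mathbb{R}^d\to\mathbb{R}^d$ with $\tilde Y^\mu_t=v^\mu(X_t)$ and $\tilde Z^\mu_t=p^\mu(X_t)$ for $dt\times\mathbb{P}$-a.e.\ $(t,\omega)$. Granting this, $\hat{\mathcal{A}}(X_t,\mu,\tilde Z^\mu_t)=\hat{\mathcal{A}}(X_t,\mu,p^\mu(X_t))$ depends only on $X_t$, the measurable maximum theorem \cite[Theorem 18.19]{IDA} furnishes a Borel map $a^\star:\mathbb{R}^d\to A$ with $a^\star(x)\in\hat{\mathcal{A}}(x,\mu,p^\mu(x))$, and by Proposition \ref{prop.optim.charac} the admissible feedback control $\alpha_s:=a^\star(X_s)$ is optimal, giving the first assertion.

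For the $Y$-representation I would set $v^\mu(x)$ to be the value $\tilde Y^\mu_0$ of the control problem run with initial law $\delta_x$; measurability of $x\mapsto v^\mu(x)$ follows from stability of infinite-horizon BSDE solutions with respect to the initial datum (as in Lemma \ref{lem: inft bsde stab} and Theorem \ref{thm: Royer}), while $\tilde Y^\mu_t=v^\mu(X_t)$ follows from the flow property of the BSDE solution restricted to $[t,\infty)$ together with time-homogeneity and the Markov property of $X$ (equivalently, from the dynamic programming principle, which is essentially already contained in the verification argument of Proposition \ref{prop.optim.charac}). For the $Z$-representation I would recover, for a.e.\ $t$, $\tilde Z^\mu_t$ as the $\mathbb{L}^1$-limit $\lim_{h\downarrow0}\frac{1}{h}\mathbb{E}\bigl[(\tilde Y^\mu_{t+h}-\tilde Y^\mu_t)(W_{t+h}-W_t)^\top\mid\mathcal{F}_t\bigr]$; since $\tilde Y^\mu_{t+h}-\tilde Y^\mu_t=v^\mu(X_{t+h})-v^\mu(X_t)$ and $W_{t+h}-W_t=\int_t^{t+h}\sigma(X_s)^{-1}dX_s$, the conditioned quantity is the conditional expectation of a functional of $(X_s)_{s\in[t,t+h]}$, which by the Markov property and time-homogeneity equals $\Psi_h(X_t)$ for a measurable $\Psi_h$ independent of $t$; passing to the limit along a sequence $h_k\downarrow0$ and putting $p^\mu:=\limsup_k h_k^{-1}\Psi_{h_k}$ yields $\tilde Z^\mu_t=p^\mu(X_t)$ $dt\times\mathbb{P}$-a.e. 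Alternatively this step can be routed through the finite-horizon approximations of Lemma \ref{lem: YZ aprx}, establishing the Markov representation for the finite-horizon Markovian BSDEs first and then passing to the limit.

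The uniqueness statement is then immediate: under Assumption \ref{asmp: sngl} the set $\hat{\mathcal{A}}(x,\mu,z)$ is a singleton, so Proposition \ref{prop.optim.charac} already gives that the optimal control is unique, and by the construction above it coincides $dt\times\mathbb{P}$-a.e.\ with $a^\star(X_\cdot)$, hence is automatically in feedback form. I expect the $Z$-part of the Markovian representation to be the main obstacle: in contrast with the PDE approach there is no regularity of $\sigma$ or of the value function to exploit, so $p^\mu$ must be extracted purely probabilistically, and one must be careful with the $dt$-a.e.\ quantifiers, with the admissibility/progressive measurability of the resulting $\alpha_s=a^\star(X_s)$ under the locally completed filtration, and with the higher-order error terms in the difference-quotient limit.
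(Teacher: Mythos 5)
Your proposal follows the same outer architecture as the paper: reduce the feedback claim to a Markovian representation $\tilde Z^\mu_t = p^\mu(X_t)$ of the infinite-horizon BSDE solution under $\mathbb{P}$, then use the measurable maximum (Filippov) theorem to select $a^\star(x,z)\in\hat{\mathcal A}(x,\mu,z)$ and compose, with uniqueness under Assumption \ref{asmp: sngl} handled identically via Proposition \ref{prop.optim.charac}. The genuinely different ingredient is how the $Z$-representation is established. The paper's Proposition \ref{prop: Mkv BSDE} observes that $Y_\cdot - Y_0 = u(X_\cdot) - u(X_0)$ and $W_\cdot - W_0$ are strongly additive continuous semimartingale additive functionals of the time-homogeneous Markov process $X$, then invokes the structure theory of \cite{Cinlar80} to write the predictable covariation as $\langle Y - Y_0, W^i - W^i_0\rangle_t = \int_0^t v^i(X_s)\,ds$ for deterministic Borel $v^i$, so that $Z_t = v(X_t)$ by Lebesgue differentiation. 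Your difference-quotient route --- recovering $\tilde Z^\mu_t$ as $\lim_{h\downarrow 0} h^{-1}\mathbb{E}\bigl[(v^\mu(X_{t+h})-v^\mu(X_t))(W_{t+h}-W_t)^\top\mid\mathcal{F}_t\bigr]$, which equals $h^{-1}\Psi_h(X_t)$ by the Markov property and time-homogeneity --- is an elementary alternative that bypasses the additive-functional machinery. It does work, but the limiting argument needs to be spelled out: the drift contribution is controlled by $O(h^{1/2})\bigl(\frac{1}{h}\int_t^{t+h}(1+|\tilde Z_s|)^2\,ds\bigr)^{1/2}$ and vanishes using the $L^2_{\mathrm{loc}}$ bound on $\tilde Z$, and passing from $L^1(\mathbb P)$-convergence for fixed $t$ to $dt\times\mathbb{P}$-a.e.\ convergence along one fixed subsequence $h_k\downarrow 0$ (so that $p^\mu := \limsup_k h_k^{-1}\Psi_{h_k}$ is a deterministic Borel function) requires a Fubini argument together with continuity of translation in $L^1(dt\times\mathbb{P})$ --- the subsequence cannot be allowed to depend on $t$. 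One small misattribution: measurability of $v^\mu$ does not follow from Lemma \ref{lem: inft bsde stab}, which concerns perturbations of the driver rather than of the initial condition; the paper obtains it directly from the Doob--Dynkin lemma, since $Y^{t,\chi}_t$ is $\sigma(\chi)$-measurable by adaptedness of the shifted BSDE solution, which is both cleaner and avoids any continuity claim.
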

\begin{proof}
Recall from Proposition \ref{prop.optim.charac} that the optimal controls are given as the pointwise maximizer of the transformed Hamiltonian, i.e. they satisfy a.e. $\alpha_t \in \mathcal{A}(t, X, \mu, \tilde Z_t)$. The process $Z$ comes from the solution $(Y, Z)$ of the infinite horizon BSDE
$$Y_t = Y_T + \int_t^T( \tilde{H}(X_s, \mu, Z_s) - \lambda Y_s )ds - \int_t^T Z_s dW_s.$$
Using Filippov's implicit function Theorem or its variant as measurable maximum Theorem \cite[Theorem 18.19]{IDA}, we can find a measurable map $\alpha' :\mathbb{R}^d \times \mathbb{R}^d \rightarrow A$ such that $\tilde{h}(x, \mu, \alpha'(x, z), z) = \tilde{H}(x, \mu, z)$ for any $x, z \in \mathbb{R}^d \times \mathbb{R}^d$. Now, by Proposition \ref{prop: Mkv BSDE}, there exists a deterministic function $v:\mathbb{R}^d \rightarrow \mathbb{R}^d$ such that the solution $Z$ fulfills a.e.\ $Z_t = v(X_t)$. We have thus constructed an optimal feedback control $\alpha = \alpha'(X_s, v(X_s))$ for the control problem under $\mu$. Uniqueness under assumption \ref{asmp: sngl} follows from Proposition \ref{prop.optim.charac}.
\end{proof}
In the following, we can thus restrict our set of admissible controls to the set $\mathbb{A}_M$ of feedback controls. In particular, under the assumptions above (including \ref{asmp: sngl}), if a solution to the mean field game exists, its optimal control is in feedback form. Under slight abuse of notation, depending on the context, elements $\alpha$ of $\mathbb{A}_M$ can denote processes both as elements maps $\mathbb{R}^d \rightarrow A$ or the process in $\alpha(X_\cdot) \in \mathbb{A}$.\par

\subsection{Existence of stationary distributions}
Here, we will discuss the dynamics of our state variable for fixed $\mu \in \mathcal{P}(\mathbb{R}^d)$ and $\alpha \in \mathbb{A}_M$.
The advantage of working with the feedback controls is that with them, the coefficients become truly time homogeneous. In particular, for any $\alpha \in \mathbb{A}_M$ our process $X$ under $\mathbb{P}^{\mu, \alpha}$ is a weak solution to the time homogenous Markovian SDE
\begin{equation}\label{wk Mkv SDE}
dX_s = b(X_s, \mu, \alpha(X_s))ds + \sigma(X_s)dW^{\mu, \alpha}_s.
\end{equation}
Under Assumption \ref{asmp: erg}, \cite[Theorem 7.2.1]{StroockVaradhan} and \cite[Theorem 5.4.20]{KS} together imply that $X$ admits the strong Markov property under $\mathbb{P}^{\mu, \alpha}$. Recall that we write $\mathbb{S} = \partial B_0(R)$ and $\mathbb{S}' = \partial B_0(R')$.
\begin{theorem}\label{thm: stat MFG}
    Assume the assumptions \ref{asmp: stnd} and \ref{asmp: erg} hold. If $\alpha \in \mathbb{A}_M$, under $\mathbb{P}^{\mu, \alpha}$, the state process admits a unique stationary distribution $\mu_\infty \in \mathcal{P}(\mathbb{R}^d)$.
\end{theorem}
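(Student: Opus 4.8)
The plan is to reduce the statement to a Khasminskii-type recurrence argument, following \cite{Khasminskii}. Fix $\mu \in \mathcal{P}(\mathbb{R}^d)$ and $\alpha \in \mathbb{A}_M$. As already recorded, under $\mathbb{P}^{\mu,\alpha}$ the state process $X$ is a weak solution of the time-homogeneous Markovian SDE \eqref{wk Mkv SDE} and enjoys the strong Markov property, with generator $\mathcal{L}\varphi(x) = b(x,\mu,\alpha(x))^\top \nabla\varphi(x) + \frac12 tr(\Sigma(x)D^2\varphi(x))$ on smooth $\varphi$. The structural input is that $V(x) := \|x\|^2$ is a Lyapunov function: by the drift condition, Assumption \ref{asmp: erg}$(vi)$, for $\|x\| \geq R'$ we have $\mathcal{L}V(x) = 2\big(x^\top b(x,\mu,\alpha(x)) + \tfrac12 tr(\Sigma(x))\big) \leq -2k < 0$, while on the compact ball $B_0(R)$ Assumption \ref{asmp: erg}$(iii)$ bounds $b$ and $\sigma$, so $\mathcal{L}V$ is bounded there, and $V$ is coercive.

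First I would prove existence. Applying Dynkin's formula to $V$ along \eqref{wk Mkv SDE} with a standard localization/stopping argument (needed because $\sigma$ may be unbounded outside $B_0(R)$, although it grows at most linearly), the negative drift outside $B_0(R')$ together with the bound on $\mathcal{L}V$ inside $B_0(R')$ yields $\sup_{t\geq0}\mathbb{E}^{\mu,\alpha}[\|X_t\|^2] < \infty$. By Markov's inequality the occupation measures $\pi_T := \frac1T\int_0^T \mathbb{P}^{\mu,\alpha}\circ X_s^{-1}\,ds$ form a tight family in $\mathcal{P}(\mathbb{R}^d)$. The transition semigroup is weak Feller, since well-posedness of the martingale problem under Assumption \ref{asmp: erg} (\cite[Theorem 7.2.1]{StroockVaradhan}) together with continuity of the coefficients in $x$ gives continuous dependence on the initial condition; hence by the Krylov--Bogolyubov theorem every weak accumulation point of $(\pi_T)_{T>0}$ is an invariant probability measure, which produces at least one stationary distribution $\mu_\infty$.

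The heart of the matter, and the step I expect to be most delicate, is uniqueness. The additional tool is non-degeneracy: since $\sigma$ takes values in $GL_d(\mathbb{R})$ and, by Assumption \ref{asmp: erg}$(iii)$, $\|\sigma\|$ and $\|\sigma^{-1}\|$ are bounded by $\Lambda$ on $B_0(R)$, the operator $\mathcal{L}$ is uniformly elliptic on the closed annulus $\{R' \leq \|x\| \leq R\}$. Combining this with the recurrence toward $B_0(R')$ supplied by $V$, I would carry out Khasminskii's embedded-chain construction on the spheres $\mathbb{S}' = \partial B_0(R')$ and $\mathbb{S} = \partial B_0(R)$: let $\tau_0 < \tau_1 < \cdots$ be the successive instants at which $X$, having first touched $\mathbb{S}$, returns to $\mathbb{S}'$ (alternating hitting times of $\mathbb{S}'$ and $\mathbb{S}$). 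Using $V$ as a supermartingale-up-to-drift one shows the cycle lengths $\tau_{k+1}-\tau_k$ have finite expectation, uniformly over the starting point on the relevant sphere, while uniform ellipticity on the annulus provides a Doeblin/minorization condition for the Markov chain $(X_{\tau_k})_{k\geq0}$ on $\mathbb{S}'$, hence a unique invariant probability for that chain. The regeneration formula $\mu_\infty(B) = \mathbb{E}\big[\int_{\tau_0}^{\tau_1}\mathbb{1}_B(X_s)\,ds\big]\big/\mathbb{E}[\tau_1-\tau_0]$ (with the embedded chain started in its invariant law) then pins down the continuous-time invariant measure uniquely, so any invariant $\mu_\infty'$ must equal it. The main obstacle throughout is that $b$ and $\sigma$ are controlled only on $B_0(R)$, with $\sigma$ merely continuous and at most linearly growing outside and no Lipschitz or global boundedness hypothesis available, so every estimate touching the region $\|x\| > R$ must go through the Lyapunov inequality and localization rather than through pathwise uniqueness or uniform bounds; this is workable precisely because the embedded chain and all ellipticity-based arguments live on the compact annulus.
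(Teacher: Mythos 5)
Your high-level strategy for uniqueness matches the paper: embed a Markov chain on the spheres $\mathbb{S}'$, $\mathbb{S}$ via alternating hitting times, show the cycle lengths have finite expectation through the Lyapunov estimate, establish a Doeblin minorization using nondegeneracy on the annulus, and pin down the continuous-time invariant measure through the cycle/regeneration formula. The paper does exactly this, following \cite[Theorems 3.9, 4.1, Corollary 4.4]{Khasminskii}.

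However, your existence argument has a genuine gap, and it interacts with a feature of the problem you only half-acknowledge. You invoke Krylov--Bogolyubov, which requires the transition semigroup to be (at least weakly) Feller; you claim this follows from well-posedness of the martingale problem together with ``continuity of the coefficients in $x$.'' But the controlled drift is $x \mapsto b(x,\mu,\alpha(x))$, and the feedback $\alpha:\mathbb{R}^d\to A$ is produced by Filippov/measurable-maximum selection (Theorem~\ref{thm: MKV MFG}) and is only measurable, not continuous. So the drift is merely measurable in $x$, and there is no elementary reason for Feller continuity. To salvage the Feller property you would need parabolic Krylov--Safonov/Harnack estimates for uniformly elliptic operators with measurable coefficients --- which is precisely the machinery the paper deploys, but it deploys it once to get the Doeblin condition, from which existence, uniqueness, and the regeneration formula all follow simultaneously via \cite[Chapter V, \S 5, case b)]{Doob} and the Khasminskii construction. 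In other words, the paper does not separate existence from uniqueness; both come out of the same harmonic-analysis input (Gilbarg--Trudinger elliptic regularity and the Harnack inequality applied to $Lu=0$ on the annulus, with a smooth Urysohn approximation to handle indicator boundary data). Your uniqueness paragraph correctly identifies uniform ellipticity on the annulus as the key, but treats the minorization as if it drops out automatically; the paper shows this requires a nontrivial detour through weak $W^{2,d}$ solutions, the generalized It\^o formula of Krylov, and a limiting argument to pass from smooth boundary data to general Borel sets. You would need to spell this out, and once you have it, Krylov--Bogolyubov becomes superfluous --- so the cleaner move is to drop the Krylov--Bogolyubov step entirely and obtain existence from the cycle decomposition, as the paper does.
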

\begin{proof}
    Under assumption \ref{asmp: erg} (iii), we can apply \cite[Theorem 3.9]{Khasminskii} to find that under $\mathbb{P}^{\mu, \alpha}$, $X$ is recurrent with respect to $B_0(R')$. In fact, the required regularity, i.e.\ that the process does not explode, is an immediate consequence of the drift condition, and the Lyapunov function used can simply be chosen as $\Vert \cdot \Vert^2 - (R')^2$. In particular, under the dynamics of \eqref{wk Mkv SDE}, if started at some point $x_0 \in B_0(R')^\complement$, the expected hitting time of $B_0(R')$ is bounded by $\frac{1}{k}(\Vert x_0 \Vert^2 - (R')^2)$ by \cite[Theorem 3.9]{Khasminskii}.\par
    Therefore, under Assumption \ref{asmp: erg}, the Assumption (B) from \cite[4.4]{Khasminskii} is fulfilled. Thus, by \cite[Theorem 4.1, Corollary 4.4]{Khasminskii}, there exists a unique stationary distribution $\mu_\infty$. Note that in the proof of these results, the authors have implicitly assumed Lipschitzness of $b$ and $\mu$. In the following, we will briefly recall where this regularity is needed. In \cite[Lemma 4.6]{Khasminskii}, the authors apply \cite[Chapter V, §5, case b)]{Doob} to the discretization $\tilde{X}_i := X_{\tau_i}$ for the sequence of stopping times defined by $\tau_0 := 0$ and for $n \geq 1$, $\tau_n := \inf\lbrace t \geq \tau'_n \vert \Vert X_t \Vert = R\rbrace$, and $\tau'_n := \inf \lbrace t \geq \tau_{n-1} \vert \Vert X_t \Vert = R'\rbrace$. By the strong Markov property of $X$, the $(\tilde{X}_i)_{i \geq 1}$ form a $\mathbb{S}$ valued Markov chain. To show the required Döblin condition (D') of \cite[Chapter V, §5, case b)]{Doob}, as an intermediate step, it suffices to first find a constant $\Theta>0$ and a measure $\nu$ on $\mathbb{S}$ such that for any $x \in \mathbb{S}'$ and any Borel $A \in \mathcal{B}(\mathbb{S})$, we have $\mathbb{P}^{\mu, \alpha}_x[X_{\tau_1} \in A] \geq \Theta \nu(A)$.\par
    In \cite[Remark 3.10]{Khasminskii}, using \cite[III.21, VI]{Miranda}, existence of such $\Theta$ is justified for Lipschitz coefficients. In our case without regularity, we will use an argument from the proofs of \cite[Lemma 8]{Veretennikov97} or \cite[Theorem 1]{Malyshkin01} relying on Harnack's inequality for elliptic PDEs with only measurable coefficients. In fact, a stronger result can be shown in a twofold approximation scheme. First, one fixes a compact subset $A \subset \mathbb{S}$, for which by a smooth version of Urysohn's lemma (see e.g.\ \cite[Theorem 2.6.1.]{Conlon}) there exists a sequence of $[0, 1]$ valued functions $\phi_n$ that are smooth in a neighbourhood of $B_0(R)$ and converge pointwise to $\mathbb{1}_A$. Let $L = \frac{1}{2}tr(\Sigma(x) \nabla^2) + b(x, \mu, \alpha(x))^\top \nabla$ be the generator of \eqref{wk Mkv SDE}. By \cite[Theorem 9.15.]{Gilbarg}, for every $n$, there exists $u_n \in W^{2, d}(B_0(R))$ such that $Lu_n = 0$ a.e.\ on $B_0(R)$ and $u_n = \phi_\epsilon$ on $\mathbb{S}$. Here, $W^{2, d}(B_0(R))$ denotes the Sobolev space of square integrable functions on $B_0(R)$ with square integrable weak derivatives of order at most $d$, and the latter equality is to be understood for the continuous version of $u$ that exists by \cite[Corollary 7.11., Theorem 7.25.]{Gilbarg}.\par
    By the generalized Itô's formula \cite[2.10, Theorem 1]{Krylov08}, for any $x \in B_0(R)$, we thus have $u_n(x) = \mathbb{E}^{\mu, \alpha}_x[\phi_n(X_\tau)]$. By Harnack's inequality, as stated in \cite[Corollary 9.25.]{Gilbarg}, and using the proof of \cite[Theorem 2.5.]{Gilbarg} to extend the inequality to any closed subset of $B_0(R)$, there exists a constant $\Theta$ only dependent of $d$, $R'$, $R$ and $\Lambda$ such that uniformly over $A$ and all $n$, we have $\mathbb{E}^{\mu, \alpha}_x[\phi_n(X_{\tau_1})] = u_n(x) \geq \Theta u_n(y) = \Theta\mathbb{E}^{\mu, \alpha}_{y}[\phi_n(X_{\tau_1})]$ for any two $x, y \in \overline{B_0(R')}$. We stress here that this $\Theta$ can be chosen to independently of $\mu$ and $\alpha$. Taking $n \rightarrow \infty$ shows $\mathbb{P}_x^{\mu, \alpha}[X_{\tau_1} \in A] \geq \Theta \mathbb{P}_{y}^{\mu, \alpha}[X_{\tau_1} \in A]$. To conclude for general Borel $A$, note that by \cite[Theorem 13.6]{Klenke}, $(\mathbb{P}_x^{\mu, \alpha} + \mathbb{P}_{y}^{\mu, \alpha} )\circ (X_{\tau_1})^{-1}$ is a Radon measure so that it suffices to apply the previous argument to a sequence of compact sets $A_n \subset A$ that fulfill $(\mathbb{P}_x^{\mu, \alpha} + \mathbb{P}_{y}^{\mu, \alpha} )[X_{\tau_1} \in A \setminus A_n] \rightarrow 0$, showing  $\mathbb{P}_x^{\mu, \alpha}[X_{\tau_1} \in A] \geq \Theta \mathbb{P}_{y}^{\mu, \alpha}[X_{\tau_1} \in A]$ for any $x,y \in \overline{B_0(R')}$ and any Borel $A$. In particular, $\nu$ can be chosen as $\mathbb{P}_x^{\mu, \alpha}\circ(X_{\tau_1})^{-1}$ for any arbitrary $x \in \overline{B_0(R')}$.
\end{proof}
Before we study the speed of convergence, let us take a closer look into the discretization $(\tilde{X}_n)_n$ used in \cite[Lemma 4.6]{Khasminskii} that we considered in the previous proof. We denote its one and $n$ step kernel as $q$ and $q^{(n)}$ under $\mathbb{P}^{\mu, \alpha}$. After bypassing the regularity issue, we can continue following the proof of \cite[Lemma 4.6]{Khasminskii} to derive the Döblin condition $q(x, A) \geq \Theta \nu (A)$ uniformly over $x \in \mathbb{S}$ and $A \in \mathcal{B}(\mathbb{S})$ for the same $\Theta$ and $\nu$ as we have found above. By \cite[Chapter V, §5, case b)]{Doob}, $(\tilde{X}_n)_n$ admits a stationary distribution $q^\infty \in \mathcal{P}(\mathbb{S})$ with $d_{TV}(q^{(n)}, q^\infty) \leq (1-C)^{n-1}$. Let us also note at this point that by \cite[Theorem 3.9, Corollary 3.3]{Khasminskii}, there exists a constant $\kappa > 0$ only dependent on $\Lambda$, $R$, $R'$ and $k$ such that for any $x \in \mathbb{S}$, we have $\mathbb{E}_x[\tau_1] < \kappa$.\par
After finding $q^\infty$, we can see using \cite[Theorem 4.1]{Khasminskii} that if we define $\mu_\infty \in \mathcal{P}(\mathbb{R}^d)$ by
$$\int_{\mathbb{R}^d} \phi d\mu_\infty = \frac{1}{\int_\mathbb{S}\mathbb{E}^{\mu, \alpha}_y[\tau_1]q^\infty(dy)} \int_\mathbb{S}  \mathbb{E}^{\mu, \alpha}_y \left[\int_0^{\tau_1}\phi(X_t) dt \right]q^\infty(dy)$$
for all $\phi$ bounded and measurable, $\mu_\infty$ is the unique stationary measure for $X$ under $\mathbb{P}^{\mu, \alpha}$ we found above.
\begin{theorem}\label{thm: ces cnv}
    Under the same setting as the previous result, the marginal distributions of $X$ converge uniformly towards $\mu_\infty$ in the Césaro sense:
    $$\left\vert \frac{1}{T} \int_0^T \mathbb{E}^{\mu, \alpha} [\phi(X_t) ] dt - \int_{\mathbb{R}^d} \phi(x) \mu_\infty(dx) \right\vert \leq \frac{\Gamma}{T}$$
    for any $T > 0$ and measurable $\phi$ with $\vert \phi \vert \leq 1$ for a constant $\Gamma$ that only depends on $\Lambda$, $R$, $R'$, $k$ and $\bar{\upsilon}$.
\end{theorem}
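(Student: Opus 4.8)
The plan is to upgrade the regeneration structure isolated in the discussion preceding the statement into a quantitative renewal--reward estimate. Recall the stopping times $\tau_0=0\le\tau_1<\tau_2<\cdots$, the embedded $\mathbb S$-valued chain $\tilde X_n=X_{\tau_n}$ with one-step kernel $q$ satisfying the uniform D\"oblin condition $q(x,\cdot)\ge\Theta\nu(\cdot)$, its unique invariant law $q^\infty$ with $d_{TV}(\mu q^{(n)},q^\infty)\le(1-\Theta)^{n-1}$ for every $\mu\in\mathcal P(\mathbb S)$, and the bound $\mathbb E^{\mu,\alpha}_x[\tau_1]\le\kappa$ for $x\in\mathbb S$, where $\Theta,\kappa$ depend only on $\Lambda,R,R',k$. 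I would introduce the cycle-reward and cycle-length functions on $\mathbb S$,
$$g(x):=\mathbb E^{\mu,\alpha}_x\!\left[\int_0^{\tau_1}\phi(X_t)\,dt\right],\qquad \ell(x):=\mathbb E^{\mu,\alpha}_x[\tau_1],$$
so that $|g|\le\ell\le\kappa$, and, by the representation of $\mu_\infty$ recalled above, $\int\phi\,d\mu_\infty=\bar g/\bar\ell$ with $\bar g:=\int_{\mathbb S}g\,dq^\infty$ and $\bar\ell:=\int_{\mathbb S}\ell\,dq^\infty$. A first step is a uniform lower bound $\bar\ell\ge\underline\ell>0$: writing $\tau_1'$ for the first hitting time of $\mathbb S'$, the strong Markov property at $\tau_1'$ gives $\ell(x)\ge\inf_{y\in\mathbb S'}\mathbb E^{\mu,\alpha}_y[\text{time to reach }\mathbb S]$, and since the process stays in $B_0(R)$ until it reaches $\mathbb S$, It\^o's formula applied to $\|X_t\|^2$ together with the uniform bounds on $\sigma$ and $b$ on $B_0(R)$ from Assumption~\ref{asmp: erg} forces this infimum to be at least $(R^2-(R')^2)/\Lambda'$ for an explicit $\Lambda'=\Lambda'(\Lambda,R)$; hence $\underline\ell=\underline\ell(\Lambda,R,R')>0$.

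Next I would decompose the occupation integral over regeneration cycles. For $n\ge1$ put $R_n:=\int_{\tau_{n-1}}^{\tau_n}\phi(X_t)\,dt$, $\Delta_n:=\tau_n-\tau_{n-1}$, and $\mathcal G_n:=\mathcal F_{\tau_n}$; then $\sigma(T):=\inf\{n\ge1:\tau_n>T\}$ is a $(\mathcal G_n)$-stopping time. By the strong Markov property at $\tau_{n-1}$ and $\tilde X_{n-1}\in\mathbb S$ for $n\ge2$, one has $\mathbb E[R_n\mid\mathcal G_{n-1}]=g(\tilde X_{n-1})$ and $\mathbb E[\Delta_n\mid\mathcal G_{n-1}]=\ell(\tilde X_{n-1})$, while $\mathbb E^{\mu,\alpha}_\upsilon[R_1]$ and $\mathbb E^{\mu,\alpha}_\upsilon[\Delta_1]$ are bounded in terms of $\bar\upsilon$ through the inward drift condition (expected first passage of $\upsilon$ to $B_0(R')$ is at most $\tfrac1k\int(\|x\|^2-(R')^2)^+\,\upsilon(dx)$, followed by a uniformly bounded time to reach $\mathbb S$). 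Summing these identities and using $d_{TV}(\mathrm{Law}(\tilde X_j),q^\infty)\le(1-\Theta)^{j-1}$ with $|g|,|\ell|\le\kappa$ (so that $\sum_{j\ge1}2\kappa(1-\Theta)^{j-1}=2\kappa/\Theta<\infty$) yields, uniformly in $n\ge1$,
$$\left|\mathbb E\!\left[\int_0^{\tau_n}\phi(X_t)\,dt\right]-(n-1)\bar g\right|\le K,\qquad\big|\mathbb E[\tau_n]-(n-1)\bar\ell\big|\le K,$$
with $K=K(\Lambda,R,R',k,\bar\upsilon)$.

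To pass to a deterministic horizon $T$, I would evaluate these identities at the random index $\sigma(T)$. Since $|\phi|\le1$, $\big|\int_0^T\phi(X_t)\,dt-\int_0^{\tau_{\sigma(T)}}\phi(X_t)\,dt\big|\le\tau_{\sigma(T)}-T\le\Delta_{\sigma(T)}$, and a Wald/optional-stopping argument applied to the martingales $\sum_{i=2}^{n}(R_i-g(\tilde X_{i-1}))$ and $\sum_{i=2}^{n}(\Delta_i-\ell(\tilde X_{i-1}))$ (legitimate once $\mathbb E[\sigma(T)]<\infty$ with a linear-in-$T$ bound, obtained from $\underline\ell>0$ and a uniform lower bound on $\mathbb P^{\mu,\alpha}_x(\tau_1\ge c_0)$ for $x\in\mathbb S$) gives $\mathbb E[\int_0^{\tau_{\sigma(T)}}\phi\,dt]=\tfrac{\bar g}{\bar\ell}\,\mathbb E[\tau_{\sigma(T)}]+O(1)$ and $T\le\mathbb E[\tau_{\sigma(T)}]\le T+\mathbb E[\Delta_{\sigma(T)}]$, where $\mathbb E[\Delta_{\sigma(T)}]$ is the size-biased mean of a cycle length and is bounded by a uniform constant once a uniform second moment $\sup_{x\in\mathbb S}\mathbb E^{\mu,\alpha}_x[\tau_1^2]<\infty$ is in hand (a standard exponential-moment estimate for exit times under the drift condition). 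Collecting all the $O(1)$ contributions into a single $\Gamma=\Gamma(\Lambda,R,R',k,\bar\upsilon)$ and dividing by $T$ gives $\big|\tfrac1T\int_0^T\mathbb E^{\mu,\alpha}[\phi(X_t)]\,dt-\int\phi\,d\mu_\infty\big|\le\Gamma/T$, and running the same argument with $-\phi$ in place of $\phi$ handles the other sign. The conceptual steps are all routine renewal theory; the main obstacle, and where the bulk of the work lies, is making every cycle-level quantity uniform in $\mu\in\mathcal P(\mathbb R^d)$ and $\alpha\in\mathbb A_M$: the uniform D\"oblin constant $\Theta$ and the uniform upper bound $\kappa$ on $\mathbb E_x^{\mu,\alpha}[\tau_1]$ are already furnished by the construction in Theorem~\ref{thm: stat MFG} (Harnack's inequality for measurable coefficients and \cite[Theorem~3.9]{Khasminskii}), but the uniform lower bound $\underline\ell$ on the mean cycle length, the uniform second-moment bound on $\tau_1$, and the linear bound on $\mathbb E[\sigma(T)]$ each need a separate nondegeneracy argument built from the uniform bounds in Assumption~\ref{asmp: erg} and the inward drift condition, and it is these that pin the dependence of $\Gamma$ to the stated parameters alone.
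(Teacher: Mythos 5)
Your regeneration--cycle decomposition, the uniform D\"oblin constant $\Theta$, the representation $\int\phi\,d\mu_\infty=\bar g/\bar\ell$, and the uniform lower bound $\underline\ell$ on the mean cycle length (via It\^o's formula on $\|X\|^2$) all match what the paper does, and the parameter accounting for $\Gamma$ is tracked correctly. The gap lies in the passage from the deterministic-$n$ estimates $\bigl|\mathbb E[\int_0^{\tau_n}\phi\,dt]-(n-1)\bar g\bigr|\le K$ and $\bigl|\mathbb E[\tau_n]-(n-1)\bar\ell\bigr|\le K$ to the random index $\sigma(T)$. You cannot ``evaluate these identities at the random index $\sigma(T)$'': those estimates rest on $d_{TV}(\mathrm{Law}(\tilde X_j),q^\infty)\le(1-\Theta)^{j-1}$ applied termwise, and conditioning on $\{\sigma(T)>j\}$ (equivalently $\{\tau_j\le T\}$) tilts the law of $\tilde X_j$, so the termwise bound no longer applies. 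The Wald martingales $\sum_{i\ge2}(R_i-g(\tilde X_{i-1}))$ and $\sum_{i\ge2}(\Delta_i-\ell(\tilde X_{i-1}))$ that you invoke only remove the ``reward minus conditional mean'' fluctuation and reduce the problem to bounding $\bigl|\mathbb E[\sum_{i=2}^{\sigma(T)}g(\tilde X_{i-1})]-\bar g\,\mathbb E[\sigma(T)-1]\bigr|$ (and the analogue for $\ell$); this remaining quantity is exactly the nontrivial part, and it does not follow from what you have written.

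The paper closes precisely this gap through Lemma~\ref{lem: Moustakides}: one introduces the Poisson-equation corrector $h(x)=\sum_{i\ge1}\int g\,d(p^{(i)}(x,\cdot)-q^\infty)$, which is bounded by $\|g\|_\infty/\Theta$ thanks to the D\"oblin constant, so that $M_n=\sum_{i=1}^n g(\tilde X_i)-n\bar g+h(\tilde X_n)$ is a martingale with \emph{uniformly bounded} increments. Optional stopping then yields $\bigl|\mathbb E[\sum_{i=1}^{N}g(\tilde X_i)]-\bar g\,\mathbb E[N]\bigr|\le 2\|g\|_\infty/\Theta$ for every integrable stopping time $N$, with no further moment conditions. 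Your proposal lacks this corrector and instead needs uniform second-moment bounds on $\tau_1$, a size-biased overshoot estimate for $\mathbb E[\Delta_{\sigma(T)}]$, and a uniform-integrability argument for the (unbounded-increment) Wald martingales --- all of which the paper sidesteps, and none of which, by itself, supplies the missing ergodic estimate at the random horizon. If you replace the Wald martingales by the Poisson-equation martingale (for both $g$ and $\ell$), the rest of your renewal-reward bookkeeping closes, and the extra second-moment machinery becomes unnecessary.
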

\begin{remark}
    Under stronger conditions, it is possible to further derive convergence of the marginals to the stationary distribution. Indeed, in case $\sigma$ is uniformly bounded and $k$ is big enough, the marginals converge uniformly with a polynomial rate as it can be seen in \cite{Veretennikov97, Veretennikov00}. In case of bounded coefficients and if $k$ is replaced by $k\Vert x \Vert$, this convergence can be made exponential as seen in \cite{Veretennikov88}.
\end{remark}
\begin{proof}
    Let us fix some $T\geq 0$. We define the random variable $N_T := \min\lbrace n \geq 1 \vert \tau_n \geq T \rbrace$. Then,
    $$\mathbb{E}^{\mu, \alpha}_\upsilon\left[\int_0^T \phi(X_t)dt\right] = \mathbb{E}^{\mu, \alpha}_\upsilon\left[\int_0^{\tau_1} \phi(X_t)dt\right] + \mathbb{E}^{\mu, \alpha}_\upsilon\left[\sum_{i = 1}^{N_T}\int_{\tau_i}^{\tau_{i+1}}\phi(X_t)dt\right] - \mathbb{E}^{\mu, \alpha}_\upsilon\left[\int_T^{\tau_{N_T+1}} \phi(X_t)dt\right]$$
    First, we want to show that $N_T$ is integrable. For this, let us find a uniform lower bound on $\mathbb{E}^{\mu, \alpha}_x[\tau_1]$ for $ x \in \mathbb{S}$. Note that $\mathbb{E}^{\mu, \alpha}_x[\tau_1] \geq \mathbb{E}^{\mu, \alpha}_x[\tau_1 - \tau'_1] = \mathbb{E}^{\mu, \alpha}_x[\mathbb{E}^{\mu, \alpha}_{X_{\tau'_1}}[\tau_1]]$ since for any  initial value on $\mathbb{S}'$, we have $\tau'_1 = 0$. Let us now fix $y \in \mathbb{S}'$ arbitrarily. By Itô's Lemma and assumption \ref{asmp: erg}, we have for any $t \geq 0$
    \begin{equation*}
        \begin{split}
            &\mathbb{E}^{\mu, \alpha}_y[\Vert X_{t \wedge \tau_1}\Vert^2] = (R')^2 + \mathbb{E}^{\mu, \alpha}_y\left[\int_0^{t \wedge \tau_1} (2 X_s)^\top b(X_s, \mu, \alpha(X_s)) + tr(\Sigma(X_s))ds\right]\\
            \leq & (R')^2 + (2R\Lambda + d \Lambda^2)\mathbb{E}^{\mu, \alpha}_y[t \wedge \tau_1].
        \end{split}
    \end{equation*}
    As $\tau_1$ is integrable, we get for $x \in \mathbb{S}$ that $\mathbb{E}^{\mu, \alpha}_x[\tau_1] \geq \mathbb{E}^{\mu, \alpha}_x[\mathbb{E}^{\mu, \alpha}_{X_{\tau'_1}}[\tau_1]] \geq \frac{R^2 - (R')^2}{2R\Lambda + d \Lambda^2} =: \xi >0$. Hence, $T \geq \mathbb{E}_\upsilon^{\mu, \alpha}[\sum_{i = 1}^{N_T - 1} \mathbb{E}^{\mu, \alpha}_{X_{\tau_i}}[\tau_1]] \geq \xi (\mathbb{E}^{\mu, \alpha}_\upsilon[N_T] - 1)$ and $N_T$ is integrable.\par
    By the strong Markov property and Fubini's Theorem, we have
    \begin{equation*}
        \begin{split}
            &\mathbb{E}^{\mu, \alpha}_\upsilon\left[\sum_{i = 1}^{N_T}\int_{\tau_i}^{\tau_{i+1}}\phi(X_t)dt\right] = \mathbb{E}^{\mu, \alpha}_\upsilon\left[\sum_{i = 1}^\infty \mathbb{1}_{\lbrace \tau_{i-1} < T\rbrace } \int_{\tau_i}^{\tau_{i+1}} \phi(X_t) dt\right] \\
            =& \mathbb{E}^{\mu, \alpha}_\upsilon \left[\sum_{i = 1}^\infty \mathbb{1}_{\lbrace \tau_{i-1} < T\rbrace} \mathbb{E}^{\mu, \alpha}_\upsilon\left[\int_{\tau_i}^{\tau_{i+1}} \phi(X_t) dt \vert \mathcal{F}_{\tau_i}\right]\right]=  \mathbb{E}^{\mu, \alpha}_\upsilon \left[\sum_{i = 1}^\infty \mathbb{1}_{\lbrace \tau_{i-1} < T\rbrace} \mathbb{E}^{\mu, \alpha}_\upsilon\left[\int_{\tau_i}^{\tau_{i+1}} \phi(X_t) dt \vert X_{\tau_i}\right]\right] \\
            =&  \mathbb{E}^{\mu, \alpha}_\upsilon \left[\sum_{i = 1}^{N_T}\mathbb{E}^{\mu, \alpha}_{X_{\tau_i}}\left[\int_0^{\tau_1} \phi(X_t) dt\right]\right].
        \end{split}
    \end{equation*}
    As $\mathbb{S} \rightarrow \mathbb{R}, x\mapsto \mathbb{E}^{\mu, \alpha}_x[\int_0^{\tau_1} \phi(X_t)dr]$ is a measurable map bounded by $\kappa$, computing the right hand side reduces to analyzing the discrete chain $(\tilde{X}_i)_i$ together with the discrete filtration $\tilde{\mathcal{F}}_i := \mathcal{F}_{\tau_i}$. Indeed, $(\tilde{X}_i)_i$ is a Markov chain under $\tilde{\mathcal{F}}$, and further, $N_T$ is a $\tilde{\mathcal{F}}$ stopping time. By Lemma \ref{lem: Moustakides},
    $$\left\vert \mathbb{E}^{\mu, \alpha}_\upsilon \left[\sum_{i = 1}^{N_T}\mathbb{E}^{\mu, \alpha}_{X_{\tau_i}}\left[\int_0^{\tau_1} \phi(X_t) dt\right]\right] - \mathbb{E}_{\upsilon}^{\mu, \alpha}[N_T]\int_\mathbb{S} \mathbb{E}^{\mu, \alpha}_y \left[\int_0^{\tau_1}\phi(X_t)dt\right]q^\infty(dy)\right\vert  \leq \frac{2 \kappa}{\Theta}.$$
    Also, note that $\mathbb{E}^{\mu, \alpha}_\upsilon[\int_0^{\tau_1} \phi(X_t) dt - \int_T^{\tau_{N_T}+1} \phi(X_t)dt]$ is bounded by a constant depending only on $\bar{\upsilon}$ and $\kappa$. Hence, there exists a constant $\Gamma$ only depending on $\Lambda$, $R$, $R'$, $k$ and $\bar{\upsilon}$ such that
    $$\left\vert \mathbb{E}^{\mu, \alpha}_\upsilon\left[\int_0^T \phi(X_t)dt\right] - \int_\mathbb{S} \mathbb{E}^{\mu, \alpha}_y\left[\int_0^{\tau_1} \phi(X_t) dt\right] q^\infty(dy)\mathbb{E}^{\mu, \alpha}_\upsilon[N_T] \right\vert \leq \frac{\Gamma}{2}.$$
    For $\phi = 1$, we can thus deduce $\left\vert \frac{1}{\int_\mathbb{S}\mathbb{E}^{\mu, \alpha}_y[\tau_1]q^\infty(dy)} - \frac{\mathbb{E}^{\mu, \alpha}_\upsilon[N_T]}{T}\right\vert \leq \frac{\Gamma}{2T\int_\mathbb{S}\mathbb{E}^{\mu, \alpha}_y[\tau_1]q^\infty(dy)}$. For arbitrary $\phi$, we thus get
    \begin{equation*}
        \begin{split}
            &\left\vert \frac{1}{T} \mathbb{E}^{\mu, \alpha}_\upsilon\left[\int_0^T \phi(X_t)dt\right] -\int \phi(y) \mu_\infty(dy) \right\vert\\
            \leq &\frac{\Gamma}{2T} + \int_\mathbb{S} \mathbb{E}^{\mu, \alpha}_y\left[\int_0^{\tau_1} \phi(X_t) dt\right] q^\infty(dy)\left\vert\frac{\mathbb{E}^{\mu, \alpha}_\upsilon[N_T]}{T} - \frac{1}{\int_\mathbb{S}\mathbb{E}^{\mu, \alpha}_y[\tau_1]q^\infty(dy)}\right\vert            \leq \frac{\Gamma}{T}
        \end{split}
    \end{equation*}
    which concludes the proof.
\end{proof}
Let us here restate the results from \cite[Lemma 2.3, Theorem 2.1]{Moustakides} in the way we use them.
\begin{lemma}\label{lem: Moustakides}
    Let $(\theta_n)_{n\geq 1}$ be a discrete time Markov chain with respect to a filtration $\mathcal{F}$ valued in a Polish space $(E, \mathcal{B}(E))$ with transition kernel $p$ that satisfies a Döblin condition in the sense that there is a constant $\Theta > 0$ and Borel measure $\nu$ on $E$ such that for all $x \in E$ and Borel $A \in \mathcal{B}(E)$, we have $p(x, A) \geq \Theta \nu(A)$. Then, $\theta$ admits a stationary distribution $p^\infty$, and for any integrable $\mathcal{F}$ stopping time and any measurable bounded $g$ on $E$, we have $\vert \mathbb{E}[\sum_{i = 1}^N g(\theta_i)] - \mathbb{E}[N]\int_E g dp^\infty \vert \leq \frac{2 \Vert g \Vert_\infty}{\Theta}$.
\end{lemma}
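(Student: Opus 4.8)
The statement is a restatement of \cite[Lemma 2.3, Theorem 2.1]{Moustakides}, so one option is simply to invoke that reference; below I outline a self-contained argument based on the Poisson equation and optional stopping. Throughout write $P f(x):=\int_E f(y)\,p(x,dy)$ and $\operatorname{osc}(f):=\sup f-\inf f$ for bounded measurable $f$. Since $p(x,E)=1\ge\Theta\,\nu(E)$, after replacing $\nu$ by $\nu/\nu(E)$ and $\Theta$ by $\Theta\,\nu(E)$ we may and do assume $\nu$ is a probability measure and $\Theta\in(0,1]$; this is exactly the situation in the application, where $\nu=\mathbb{P}^{\mu,\alpha}_x\circ(X_{\tau_1})^{-1}$ is a probability measure.

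\emph{Step 1 (minorization and the invariant measure).} Write the Döblin splitting $p(x,\cdot)=\Theta\nu+(1-\Theta)r(x,\cdot)$, where $r(x,A):=(p(x,A)-\Theta\nu(A))/(1-\Theta)$ is a genuine probability kernel (nonnegative by the Döblin bound, total mass one). Hence for any bounded $f$ and all $x,y$, $Pf(x)-Pf(y)=(1-\Theta)\bigl(\int f\,dr(x,\cdot)-\int f\,dr(y,\cdot)\bigr)$, and centering $f$ at $(\sup f+\inf f)/2$ gives $|Pf(x)-Pf(y)|\le(1-\Theta)\operatorname{osc}(f)$, so $\operatorname{osc}(Pf)\le(1-\Theta)\operatorname{osc}(f)$. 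Iterating yields $\operatorname{osc}(P^kf)\le(1-\Theta)^k\operatorname{osc}(f)$; applied to indicators this shows $(p^n(x,\cdot))_n$ is total-variation Cauchy uniformly in $x$, giving a unique invariant probability $p^\infty$ (this is the content of \cite[Chapter V, §5, case b)]{Doob}, already used in the proof of Theorem~\ref{thm: stat MFG}).

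\emph{Step 2 (Poisson equation).} Put $\bar g:=\int_E g\,dp^\infty$ and $\tilde g:=g-\bar g$, so $\int\tilde g\,dp^\infty=0$. Define $h:=\sum_{k\ge0}P^k\tilde g$. By invariance of $p^\infty$ each $P^k\tilde g$ is again $p^\infty$-centered, so $\inf P^k\tilde g\le0\le\sup P^k\tilde g$ and therefore $\|P^k\tilde g\|_\infty\le\operatorname{osc}(P^k\tilde g)\le(1-\Theta)^k\operatorname{osc}(g)$; the series converges uniformly, $h$ is bounded, and $\operatorname{osc}(h)\le\sum_{k\ge0}(1-\Theta)^k\operatorname{osc}(g)=\operatorname{osc}(g)/\Theta$. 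Interchanging $P$ with the uniformly convergent sum gives $Ph=\sum_{k\ge1}P^k\tilde g=h-\tilde g$, i.e. $h$ solves $h-Ph=\tilde g$.

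\emph{Step 3 (telescoping and optional stopping).} Since $(\theta_n)$ is Markov for $\mathcal F$, $\mathbb E[h(\theta_{n+1})\mid\mathcal F_n]=(Ph)(\theta_n)$, so $\bar M_n:=h(\theta_{n+1})-(Ph)(\theta_n)$ is a martingale-difference sequence for $\mathcal F$ with $|\bar M_n|\le2\|h\|_\infty$. The Poisson equation rewrites $\tilde g(\theta_i)=h(\theta_i)-(Ph)(\theta_i)=\bigl(h(\theta_i)-h(\theta_{i+1})\bigr)+\bar M_i$, so summing over $i=1,\dots,N$ and telescoping,
$$\sum_{i=1}^N\bigl(g(\theta_i)-\bar g\bigr)=h(\theta_1)-h(\theta_{N+1})+\sum_{i=1}^N\bar M_i.$$
Because $N$ is integrable and the partial sums of $\bar M$ have increments bounded by $2\|h\|_\infty$, $\sum_{i=1}^{n\wedge N}\bar M_i$ is dominated by the integrable variable $2\|h\|_\infty N$ and converges a.s.\ to $\sum_{i=1}^N\bar M_i$, whence $\mathbb E\bigl[\sum_{i=1}^N\bar M_i\bigr]=0$ by dominated convergence. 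Since $\mathbb E[h(\theta_1)]$ and $\mathbb E[h(\theta_{N+1})]$ both lie in $[\inf h,\sup h]$, taking expectations gives
$$\Bigl|\mathbb E\Bigl[\textstyle\sum_{i=1}^N g(\theta_i)\Bigr]-\mathbb E[N]\,\bar g\Bigr|=\bigl|\mathbb E[h(\theta_1)]-\mathbb E[h(\theta_{N+1})]\bigr|\le\operatorname{osc}(h)\le\frac{\operatorname{osc}(g)}{\Theta}\le\frac{2\|g\|_\infty}{\Theta},$$
which is the claim (recalling $\bar g=\int_E g\,dp^\infty$).

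\textbf{Main obstacle.} Everything is elementary once the scheme is in place; the delicate point is getting the constant exactly $2\|g\|_\infty/\Theta$, which forces one to (i) contract \emph{oscillations} rather than sup-norms under $P$, (ii) use that $P^k\tilde g$ is $p^\infty$-centered to bound its sup-norm by its oscillation, (iii) bound the final telescoped term by $\operatorname{osc}(h)$ rather than $2\|h\|_\infty$, and (iv) normalize $\nu$ to a probability measure so that $\operatorname{osc}(g)/\Theta\le 2\|g\|_\infty/\Theta$ is the right target. A secondary point is the optional-stopping justification, but this needs only integrability of $N$ and bounded martingale increments, and—crucially—no enlargement of the filtration, since $h(\theta_n)$ and $\bar M_n$ are already $\mathcal F$-adapted (this is what makes the Poisson-equation route cleaner than an equivalent regeneration/coupling proof, where one must track that $N$ remains a stopping time after adjoining the Bernoulli split variables).
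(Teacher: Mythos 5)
Your proof is correct and follows essentially the same route as the paper's: existence of $p^\infty$ via the Döblin/Doob argument, the same Poisson-equation solution $h$, the same martingale $\sum_{i\le n}g(\theta_i)-n\int g\,dp^\infty+h(\theta_{n+1})$ (up to indexing), and optional stopping with bounded increments and integrable $N$; the only difference is that you unwind the paper's citation of \cite[Theorem 2.1]{Moustakides} into a self-contained argument. Your explicit normalization of $\nu$ to a probability measure is the same implicit assumption the paper makes when quoting the rate $(1-\Theta)^{n-1}$, and it holds in the only application of the lemma.
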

\begin{proof}
    By \cite[Chapter V, §5, case b)]{Doob}, $p^\infty$ exists, and the $n$ step transition kernels $p^{(n)}$ are geometrically ergodic in the sense that $d_{TV}(p^{(n)}(x, \cdot), p^\infty) \leq (1-\Theta)^{n - 1}$. In particular, we can define $h(x) := \sum_{i = 1}^\infty \int_E h(y) d(p^{(i)}(x, dy) - p^\infty(dy))$ with $\Vert h \Vert_\infty \leq \frac{\Vert g \Vert_\infty}{\Theta}$. One can thus check that in our setting, $\theta$ satisfied the assumptions of \cite[Theorem 2.1]{Moustakides}. Now, although we are working with a larger filtration, one can still be see that $(\sum_{i=1}^n g(\theta_i) - n\int_E gdp^\infty + h(\theta_n))_{n \geq 1}$ is still a martingale. As the increments are uniformly bounded, the result follows from the optional stopping Theorem.
\end{proof}

\subsection{The stationary and the invariant mean field games}
To prove Theorem \ref{thm: inv mfg exst}, as an intermediate step, we are going to first introduce a new kind of game that we call the stationary mean field game. Importantly, for such games, our state process starts again at a fixed initial distribution $\upsilon$. We have chosen this notion since any solution to the stationary mean field game will admit a stationary distribution, but the state process will not necessarily stay invariant. For the definition of the stationary mean field game, recall that for any $\mu \in \mathcal{P}(\mathbb{R}^d)$ and $\alpha \in \mathbb{A}_M$, under $\mathbb{P}^{\mu, \alpha}_\upsilon$, the state process $X$ is a strong Markov process for which a stationary distribution is defined as a distribution that will stay invariant if chosen as the initial distribution of $X$, i.e.\ for any $t \geq 0$, we have $\mathbb{P}^{\mu, \alpha}_\mu \circ (X_t)^{-1} = \mu$.
\begin{definition}
    We call $\mu \in \mathcal{P}(\mathbb{R}^d)$ a solution to the stationary mean-field game, if there is $\alpha = \alpha(X_\cdot) \in \mathbb{A}_M$ so that $V^{\mu} = J^{\mu}(\alpha)$ and $\mu$ is an stationary measure of $X$ under $\mathbb{P}^{\mu, \alpha}_\upsilon$.
\end{definition}
As we will see, the reason we have introduced this kind of game is that is falls into the framework of our previously analyzed games, and at the same time, it is closely related to the invariant mean field game.
\begin{theorem}\label{thm: stMFG}
    Assume assumptions \ref{asmp: stnd}; \ref{asmp: sngl} and \ref{asmp: erg}. Then, there exists a solution to the stationary mean-field game.
\end{theorem}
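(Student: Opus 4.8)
Theorem \ref{thm: stMFG} asserts existence of a solution to the stationary mean field game. The plan is to embed the stationary mean field game into the framework of the (non-extended) infinite horizon mean field game that we have already solved, so that the existence follows from a fixed point argument analogous to—but separate from—Theorem \ref{thm: exst NE}. The key observation is that the stationary mean field game, despite its different consistency requirement, is still governed by the same optimal control structure: for a fixed $\mu \in \mathcal{P}(\mathbb{R}^d)$, by Theorem \ref{thm: MKV MFG} the optimal control exists, is unique under Assumption \ref{asmp: sngl}, and is in feedback form $\alpha = \alpha(X_\cdot) \in \mathbb{A}_M$; and by Theorem \ref{thm: stat MFG}, under Assumption \ref{asmp: erg} the controlled state process $X$ under $\mathbb{P}^{\mu,\alpha}_\upsilon$ admits a unique stationary distribution $\mu_\infty = \mu_\infty(\mu) \in \mathcal{P}(\mathbb{R}^d)$. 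Thus the stationary mean field game reduces to finding a fixed point of the map $\Phi: \mu \mapsto \mu_\infty(\mu)$ on $\mathcal{P}(\mathbb{R}^d)$.

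First I would set up the domain carefully. Because the drift condition in Assumption \ref{asmp: erg}(iv) forces the state process inward, the stationary distributions $\mu_\infty(\mu)$ all have uniformly controlled tails—this tightness comes from the Lyapunov estimate $\Vert x\Vert^2 - (R')^2$ and the bound on the expected return time $\kappa$ from \cite[Theorem 3.9, Corollary 3.3]{Khasminskii}, uniformly over $\mu$ and $\alpha$. Hence there is a compact (in the weak topology) convex subset $\mathcal{K} \subset \mathcal{P}(\mathbb{R}^d)$ that is invariant under $\Phi$; I would verify this tightness bound explicitly from the formula
$$\int_{\mathbb{R}^d}\phi\, d\mu_\infty = \frac{1}{\int_\mathbb{S}\mathbb{E}^{\mu,\alpha}_y[\tau_1]q^\infty(dy)}\int_\mathbb{S}\mathbb{E}^{\mu,\alpha}_y\Big[\int_0^{\tau_1}\phi(X_t)\,dt\Big]q^\infty(dy)$$
applied to $\phi(x) = \Vert x\Vert^2 \wedge N$, using the return-time bounds. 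Then the task is to apply Schauder's (or Schauder–Tychonoff's) fixed point theorem, so the only remaining ingredient is continuity of $\Phi$ on $\mathcal{K}$.

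The main obstacle will be establishing continuity of $\mu \mapsto \mu_\infty(\mu)$. This decomposes into two continuity statements: (i) continuity of the optimal feedback control $\mu \mapsto \alpha^\mu$, or rather of the induced drift $\mu \mapsto b(\cdot, \mu, \alpha^\mu(\cdot))$, which follows from the stability of the infinite horizon BSDE (Lemma \ref{lem: inft bsde stab}) combined with Assumption \ref{asmp: erg}(v) (joint continuity of $b,f$ in $(\mu,a)$ with respect to setwise convergence of $\mu$) and the Berge/measurable maximum theorem argument already used in Lemma \ref{lem: cntrcont}; and (ii) continuity of the map sending a drift coefficient to the stationary distribution of the associated SDE. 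For (ii) one cannot expect this in general, but here the Harnack/Döblin estimates in the proof of Theorem \ref{thm: stat MFG} give a uniform lower bound $q(x,A) \geq \Theta\nu(A)$ with $\Theta$ independent of $\mu$, so the discrete chain $(\tilde X_n)_n$ is uniformly geometrically ergodic, and $q^\infty$, $\mu_\infty$ depend continuously on the kernel; one would show that if $\mu^n \to \mu$ setwise then the one-step kernels $q^{\mu^n}$ converge (e.g. in total variation on $\mathbb{S}$, via convergence of $\mathbb{P}^{\mu^n,\alpha^n}_x\circ(X_{\tau_1})^{-1}$, which itself follows from the weak convergence of the laws of the controlled process together with the PDE representation $u_n(x) = \mathbb{E}^{\mu,\alpha}_x[\phi_n(X_{\tau_1})]$ and elliptic stability), and then invoke uniform ergodicity to pass to the limit in the stationary distributions. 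The delicate point throughout is that we have no regularity on $b$, so all convergence arguments must go through the weak-solution/BSDE machinery and the measurable-coefficient elliptic estimates rather than classical SDE stability.

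Once continuity is in hand, Schauder's theorem yields a fixed point $\hat\mu \in \mathcal{K}$ with $\mu_\infty(\hat\mu) = \hat\mu$. Taking $\hat\alpha = \hat\alpha^{\hat\mu}$ the (unique, feedback) optimal control for the control problem with interaction $\hat\mu$, we then have by construction $V^{\hat\mu} = J^{\hat\mu}(\hat\alpha)$ and $\hat\mu$ is a stationary measure of $X$ under $\mathbb{P}^{\hat\mu,\hat\alpha}_\upsilon$, i.e. $\mathbb{P}^{\hat\mu,\hat\alpha}_{\hat\mu}\circ(X_t)^{-1} = \hat\mu$ for all $t\ge 0$; this is exactly a solution to the stationary mean field game. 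I would close by noting that this construction is genuinely within the scope of the earlier results in the sense that the optimal-control analysis (Proposition \ref{prop.optim.charac}) and the Markovian reduction (Theorem \ref{thm: MKV MFG}) are reused verbatim, and only the fixed point step is new, tailored to the finite-dimensional space $\mathcal{P}(\mathbb{R}^d)$ rather than $\mathcal{Q}$.
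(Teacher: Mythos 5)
Your approach—working directly on $\mathcal{P}(\mathbb{R}^d)$ and seeking a fixed point of $\Phi\colon \mu \mapsto \mu_\infty(\mu)$—is genuinely different from the paper's, which instead embeds the stationary game into the general infinite horizon framework by defining new coefficients $\overline{b}(t,X,\mu,a) := b(X_t, \mathfrak{S}(\mu), a)$, $\overline{f}(t,X,\mu,a):= f(X_t, \mathfrak{S}(\mu), a)$ on a compact subset $\mathcal{Q}^0 \subset \mathcal{Q}$ and then invoking the fixed point argument of Theorem \ref{thm: exst NE}. Your route is conceptually cleaner (it makes the ``invariant measure of the best response'' structure transparent), but it has a genuine gap.

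The gap is a topology mismatch in your Schauder step. You obtain compactness of $\mathcal{K} \subset \mathcal{P}(\mathbb{R}^d)$ from tightness, i.e.\ in the \emph{weak} topology. But Assumption \ref{asmp: erg}(v) only gives continuity of $b, f$ in $\mu$ with respect to \emph{setwise} convergence, which is a strictly stronger mode of convergence. Consequently, the continuity of $\Phi$ you sketch in step (i)—passing continuity of the coefficients through the BSDE stability Lemma \ref{lem: inft bsde stab} and the Berge/measurable-maximum argument of Lemma \ref{lem: cntrcont}—requires $\mu^n \to \mu$ setwise, whereas weak compactness of $\mathcal{K}$ only supplies weakly convergent subsequences. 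A weakly convergent sequence of measures need not converge setwise, so the driver $\tilde{H}(\cdot, \mu^n, \cdot)$ need not converge and the whole continuity chain breaks. Note that in $\mathcal{P}(\mathbb{R}^d)$ a tight family is \emph{not} setwise compact in general (you would need a common dominating measure with uniformly integrable densities, as in the definition of $\tilde{\mathcal{Q}}^T$); you claim tightness bounds but never address this domination, and one would expect proving it directly for the family of invariant measures to require a uniform-in-$(\mu,\alpha)$ \emph{upper} Harnack bound, which is more delicate than the lower Döblin bound used in Theorem \ref{thm: stat MFG}. The paper's embedding sidesteps the whole issue: it never needs any compactness on $\mathcal{P}(\mathbb{R}^d)$, because compactness lives in $\mathcal{Q}^0 \subset \mathcal{Q}$ (which is compact under $\tau$ by the general theory), and the map $\mathfrak{S}\colon \mathcal{Q}^0 \to \mathcal{P}(\mathbb{R}^d)$ is shown to be continuous into the \emph{setwise} topology—which is exactly the mode of convergence for which Assumption \ref{asmp: erg}(v) is stated—via the uniform Ces\`aro rate $d_{TV}(\mathfrak{D}^T(\mu), \mathfrak{S}(\mu)) \leq \Gamma/T$ from Theorem \ref{thm: ces cnv} and the Vitali--Hahn--Saks theorem. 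If you want to salvage the direct $\mathcal{P}(\mathbb{R}^d)$ approach you would need to establish setwise precompactness of $\Phi(\mathcal{K})$; otherwise, the embedding into $\mathcal{Q}^0$ is the more economical route.
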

\begin{proof}
The goal is to embed the stationary mean field game into our general framework. To do so, we need to rewrite the game with coefficients taking arguments from $\mathcal{P}(\mathcal{C})$ instead of $\mathcal{P}(\mathbb{R}^d)$. As we know that the stationary distributions arise as a Césaro limits of the marginal distributions, we will first define a proper subset of $\mathcal{P}(\mathcal{C})$ with elements that admit such a limit.\par
Under our assumptions, using Theorem \ref{thm: ces cnv}, under any $\mathbb{P}^{\mu, \alpha}_\upsilon$, the state process $X$ admits a stationary distribution $\hat{p}^{\mu, \alpha} \in \mathcal{P} (\mathbb{R}^d)$, and $\mathbb{P}^{\mu, \alpha}_\upsilon\circ(X_t)^{-1}$ converges in Cesàro sense towards it. More concretely, we can find a constant $\Gamma$ that only depends on $\Lambda$, $R$, $R'$, $k$ and $\bar{\upsilon}$ such that uniformly for all $\mu, \alpha$,
$$\left\vert \frac{1}{T} \mathbb{E}^{\mathbb{P}^{\mu, \alpha}}_\upsilon\left[\int_0^T \phi(X_t) dt\right] - \int \phi(y) \hat{p}^{\mu, \alpha}(dy)\right \vert \leq \frac{\Gamma}{T}$$
for any measurable $\phi$ with $\Vert \phi \Vert_\infty \leq 1$. Let us for $\mu \in \mathcal{P}(\mathcal{C})$ write $\mu_t = \mu \circ (\omega \mapsto \omega_t)^{-1}$ for the marginal distribution at time $t$. For any $T>0$, let us define $\mathfrak{D}^T: \mathcal{P}(\mathcal{C}) \rightarrow \mathcal{P}(\mathbb{R}^d), \mu \mapsto \frac{1}{T} \int_0^T \mu_t dt$, i.e.\ for any bounded measurable $\phi: \mathbb{R}^d \rightarrow \mathbb{R}$, we would have $\int_{\mathbb{R}^d} \phi(x) \mathfrak{D}^T(\mu)(dx) = \frac{1}{T} \int_0^T \int_{\mathbb{R}} \phi(x) \mu_t(dx) dt = \int_\mathcal{C} \frac{1}{T} \int_0^T \phi(\omega_t) dt \mu(d\omega)$. Then, the previous Cesàro convergence can also be rewritten in the form $d_{TV}(\mathfrak{D}^T(\mathbb{P}^{\mu, \alpha} \circ X^{-1}), \hat{p}^{\mu, \alpha}) \leq \frac{\Gamma}{T}$.\par
Since for any measurable bounded $\phi :\mathbb{R}^d\rightarrow\mathbb{R}$ and any $T$, the map $\mathcal{C} \rightarrow \mathbb{R}^d, \omega \mapsto \frac{1}{T} \int_0^T \phi( \omega_t) dt$ is bounded measurable, we know that the map $\mathfrak{D}^T$ is sequential continuous with respect to the topology of setwise convergence on $\mathcal{P}(\mathbb{R}^d)$.\par
Now, let us define
$$\mathcal{Q}^0 := \lbrace \mu \in \mathcal{Q} \vert \exists \mu_\infty \in \mathcal{P}(\mathbb{R}^d), \forall T: d_{BL}(\mathfrak{D}^T(\mu), \mu_\infty) \leq \frac{\Gamma}{T}\rbrace.$$
By our previous discussion, we already know that for any $\mu, \alpha$, we have $\mathbb{P}^{\mu, \alpha} \in \mathcal{Q}^0$. Also, note that the $\mu_\infty$ above is unique as it is the limit of the Césaro means. We can thus define a map $\mathfrak{S}: \mathcal{Q}^0 \rightarrow \mathcal{P}^0, \mu \mapsto \mu_\infty$.\par
We would like to show that $\mathcal{Q}^0$ is closed. Consider any sequence $(\mu^n)_{n\geq 1} \subset \mathcal{Q}^0$ so that $\mu^n \rightarrow \mu$ for some $\mu \in \mathcal{Q}$. We write $\mu^n_\infty = \mathfrak{S}(\mu^n)$. Let us fix any $A \in \mathcal{B}(\mathbb{R}^d)$. For an arbitrary $\epsilon > 0$, we can find $T>0$ such that $\frac{\Gamma}{T} < \frac{\epsilon}{3}$. For such $T$, we have already shown that $\mathfrak{D}^T(\mu^n) \rightarrow \mathfrak{D}^T(\mu)$ showing that $(\mathfrak{D}^T(\mu^n)(A))_n$ is a Cauchy sequence. There is thus $N>0$ such that for any $n,m \geq N$, we have $\vert \mathfrak{D}^T(\mu^n)(A) - \mathfrak{D}^T(\mu^m)(A)\vert<\frac{\epsilon}{3}$. Thus, for any $n, m \geq N$, we have $\vert\mu^n_\infty(A)-\mu^m_\infty(A)\vert< \epsilon$, proving that $(\mu^n_\infty(A))$ is also a Cauchy sequence. We can thus define the set function $\mu_\infty: \mathcal{B}(\mathbb{R}^d)\rightarrow [0, 1]$ via $\mu_\infty(A) := \lim_{n\rightarrow\infty}\mu^n_\infty(A)$. By the Vitali-Hahn-Saks Theorem \cite[Theorem IX.10]{Doob}, $\mu_\infty$ defines a Borel probability measure. As $\mu_\infty$ is the limit of the $\mu^n_\infty$ with respect to the topology of setwise convergence, we know that for any $A \in \mathcal{P}(\mathbb{R}^d)$ and any $T \geq 0$, we have $\vert \mathfrak{D}^T(\mu)(A) - \mu_\infty(A)\vert = \lim_{n \rightarrow \infty} \vert \mathfrak{D}^T(\mu^n)(A) - \mu^n_\infty(A)\vert \leq \frac{\Gamma}{T}$, thus $d_{TV}(\mathfrak{D}^T(\mu), \mu_\infty) < \frac{\Gamma}{T}$ and $\mu \in \mathcal{Q}^0$. This shows that $\mathcal{Q}^0$ is closed. As it is a subset of $\mathcal{Q}$, this implies $\mathcal{Q}^0$ is compact. Further, as $\mu_\infty = \mathfrak{S}(\mu)$, this shows that $\mathfrak{S}$ is continuous with respect to the topology of setwise convergence on $\mathcal{P}(\mathbb{R}^d)$.\par
Note that by Theorem \ref{thm: ces cnv}, we have for any $\mu$ and $\alpha$ that $\mathfrak{S}(\mathbb{P}^{\mu, \alpha}\circ X^{-1})$ is the stationary distribution for $X$ under $\mathbb{P}^{\mu, \alpha}$.\par
Let us now define the coefficients $\overline{b}: [0, \infty) \times \mathcal{C} \times \mathcal{Q}^0 \times A \rightarrow \mathbb{R}, (t, X, \mu, a) \mapsto b(X_t, \mathfrak{S}(\mu), a)$ and $\overline{f}: [0, \infty) \times \mathcal{C} \times \mathcal{Q}^0 \times A \rightarrow \mathbb{R}, (t, X, \mu, a) \mapsto f(X_t, \mathfrak{S}(\mu), a)$. Using these coefficients, we can define a mean field game in the sense of Theorem \ref{thm: exst NE} as it is no problem to restrict our search for a solution to the compact subset $\mathcal{Q}^0 \subset \mathcal{P}(\mathcal{C})$. Further, setwise continuity of $f$ and $b$ implies that $\overline{f}$ and $\overline{b}$ fulfill assumption \ref{asmp: cont}.\par
We can apply the fixed point argument of Theorem \ref{thm: exst NE}, to find a solution $(\mu, \alpha(X_\cdot)) \in \mathcal{Q}^0 \times \mathbb{A}_M$ to the new mean field game. By construction, $(\mathfrak{S}(\mu), \alpha(X_\cdot))$ is a solution to the stationary mean field game.
\end{proof}
\begin{proof}[Theorem \ref{thm: inv mfg exst}]
    We will show that the solution $(\mu, \alpha(X_\cdot))$ for the stationary game that was obtained in Theorem \ref{thm: stMFG} also solves the invariant mean field game.\par
    Besides the initial distribution, we can see that under $\mathbb{P}^{\mu, \alpha}_\upsilon$ and $\mathbb{P}^{\mu, \alpha}_\mu$, the state process $X$ solves the same SDE. Hence, by our notion of stationary measures, it is immediate that if $\mu$ is a stationary measure for $X$ under $\mathbb{P}^{\mu, \alpha}_\upsilon$, it is invariant under $\mathbb{P}^{\mu, \alpha}_\mu$, and vice versa.\par
    It remains to check that $\alpha$ is optimal for the control problem given $\mu$. We know that an optimal control is given by the maximizer of the Hamiltonian. For the control problem we have solved
    $$Y_t = Y_T + \int_t^T(H(X_s,\mu, Z_s) - \lambda Y_s)ds - \int_t^T Z_s dW_s$$
    and that its unique solution satisfies $Z_s = v(X_s)$ or some deterministic $v: \mathbb{R}^d \rightarrow \mathbb{R}^d$.\par
    Now, since $\alpha$ was optimal for the stationary mean-field game, we know that the map $\alpha: \mathbb{R}^d \rightarrow A$ was found such that for any $x$, we a.e.\ have $h(x, \mu, \alpha(x), v(x)) = H(x, \mu, v(x))$. In particular, this is satisfied pointwise and does not depend on the initial distribution. This implies that $\alpha$ is an optimal control for the stationary mean-field problem which in conclusion proves that $(\mu, \alpha)$ is a solution to the stationary mean-field game.
\end{proof}

\appendix

\section{The locally completed filtration}\label{apdx: filtr}
Recall that we have defined our locally completed filtration as $\mathbb{F} = (\mathcal{F}_t)_{t\ge 0}:= (\cap_{\mathbb{Q} \in \mathfrak{E}} \mathcal{F}^\mathbb{Q}_t)_{t\ge 0}$ where $\mathcal{F}^\mathbb{Q}$ is the $\mathbb{Q}$-completed natural filtration over the family $\mathbb{Q} \in \mathfrak{E}$ of locally equivalent probability measures. The underlying $\sigma$ algebra $\mathcal{F}$ of our probability space is completed the same way. 
In this first appendix we give some elementary properties of this filtration and show why we think it is the right choice of filtration for our control problem.\par
One major problem with the standard completion of filtrations is that two given distinct measures $\mathbb{Q}$ and $\mathbb{Q}'$ in the set $\mathfrak{E}$ can, in general, not be well defined on each others completions $\mathcal{F}^{\mathbb{Q}}$ and $\mathcal{F}^{\mathbb{ Q}'}$. Our construction fixes this issue.
\begin{proposition}
\label{Prop.A1}
    Any $\mathbb{Q} \in \mathfrak{E}$ admits a unique extension on $\mathcal{F}$. 
    Moreover, elements of $\mathfrak{E}$ remain locally equivalent with respect to $\mathbb{F}$ in the sense that for any $\mathbb{Q}, \mathbb{Q}' \in \mathfrak{E}$ and any $T > 0$, we have $\mathbb{Q}_{\vert \mathcal{F}_T} \sim \mathbb{Q}'_{\vert \mathcal{F}_T}$.
\end{proposition}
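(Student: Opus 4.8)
The plan is to treat the two assertions in order, the first being essentially bookkeeping and the second requiring one small but load-bearing trick.

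\emph{The extension to $\mathcal{F}$.} For existence I would simply observe that, since $\mathbb{Q}$ itself belongs to $\mathfrak{E}$, we have $\mathcal{F} = \bigcap_{\mathbb{Q}' \in \mathfrak{E}} \mathcal{F}^{\mathbb{Q}'} \subseteq \mathcal{F}^{\mathbb{Q}}$, so the canonical extension of $\mathbb{Q}$ to its own completion $\mathcal{F}^{\mathbb{Q}}$ (which the paper already grants) restricts to a probability measure on $\mathcal{F}$ still agreeing with $\mathbb{Q}$ on $\mathcal{F}^0$. Uniqueness follows from the usual description of completions: any $A \in \mathcal{F} \subseteq \mathcal{F}^{\mathbb{Q}}$ can be written $A = B \,\triangle\, N$ with $B \in \mathcal{F}^0$ and $N \subseteq M$ for some $M \in \mathcal{F}^0$ with $\mathbb{Q}(M) = 0$; any extension $\mu$ of $\mathbb{Q}$ from $\mathcal{F}^0$ to $\mathcal{F}$ must satisfy $\mu(M) = \mathbb{Q}(M) = 0$, hence $\mu(N) = 0$ and $\mu(A) = \mu(B) = \mathbb{Q}(B)$, a value that does not depend on $\mu$. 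Write $\bar{\mathbb{Q}}$ for this extension; the same argument records that $\bar{\mathbb{Q}}(A) = \mathbb{Q}(B)$ whenever $A \in \mathcal{F}$ is represented as above.

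\emph{Local equivalence on $\mathcal{F}_T$.} Fix $\mathbb{Q},\mathbb{Q}' \in \mathfrak{E}$ and $T > 0$; by symmetry it suffices to prove $\bar{\mathbb{Q}}(A) = 0 \Rightarrow \bar{\mathbb{Q}'}(A) = 0$ for every $A \in \mathcal{F}_T$. The naive attempt — represent $A$ by an $\mathcal{F}^0_T$-set modulo a $\mathbb{Q}$-negligible set and then carry that negligible set over to $\mathbb{Q}'$ — fails directly, and this is the one genuine difficulty: the set $M$ witnessing $A \in \mathcal{F}^{\mathbb{Q}}_T$ lies only in the full, infinite-horizon $\sigma$-algebra $\mathcal{F}^0$, on which $\mathbb{Q}$ and $\mathbb{Q}'$ are typically mutually \emph{singular}; local equivalence gives control only on $\mathcal{F}^0_T$. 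The device I would use to bypass this is to pass to $\mathbb{Q}'' := \tfrac12(\mathbb{Q} + \mathbb{Q}')$. One checks at once that $\mathbb{Q}'' \in \mathfrak{E}$ — on each $\mathcal{F}^0_T$ it has the same null sets as $\mathbb{Q}$ and $\mathbb{Q}'$, hence as $\mathbb{P}$ — and, crucially, that $\mathbb{Q} \ll \mathbb{Q}''$ and $\mathbb{Q}' \ll \mathbb{Q}''$ on \emph{all} of $\mathcal{F}^0$.

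Now take $A \in \mathcal{F}_T \subseteq \mathcal{F}^{\mathbb{Q}''}_T = \sigma(\mathcal{F}^0_T \cup \mathcal{N}^{\mathbb{Q}''})$ and write $A = B \,\triangle\, N$ with $B \in \mathcal{F}^0_T$ and $N \subseteq M$ for some $M \in \mathcal{F}^0$ with $\mathbb{Q}''(M) = 0$. Since $\mathbb{Q},\mathbb{Q}' \ll \mathbb{Q}''$ on $\mathcal{F}^0$ we get $\mathbb{Q}(M) = \mathbb{Q}'(M) = 0$, hence (noting $B,N \in \mathcal{F}$) $\bar{\mathbb{Q}}(N) \le \bar{\mathbb{Q}}(M) = 0$ and $\bar{\mathbb{Q}'}(N) = 0$, and therefore $\bar{\mathbb{Q}}(A) = \mathbb{Q}(B)$ and $\bar{\mathbb{Q}'}(A) = \mathbb{Q}'(B)$. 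If $\bar{\mathbb{Q}}(A) = 0$ then $\mathbb{Q}(B) = 0$ with $B \in \mathcal{F}^0_T$, and the local equivalence of $\mathbb{Q}$ and $\mathbb{Q}'$ on $\mathcal{F}^0_T$ (both being equivalent to $\mathbb{P}$ there) gives $\mathbb{Q}'(B) = 0$, i.e.\ $\bar{\mathbb{Q}'}(A) = 0$, as desired. The remaining ingredients are routine: the structure theorem for a $\sigma$-algebra generated by a sub-$\sigma$-algebra together with a $\sigma$-ideal of negligible sets, and the elementary verification that $\tfrac12(\mathbb{Q}+\mathbb{Q}') \in \mathfrak{E}$. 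The essential idea is the passage to the dominating convex combination $\mathbb{Q}''$, which converts ``locally equivalent'' into ``globally dominated'' and thereby pushes the exceptional sets back down into $\mathcal{F}^0_T$, where local equivalence applies.
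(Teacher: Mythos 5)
Your proposal is correct and follows essentially the same route as the paper's proof: both rely on the key observation that $\tfrac12(\mathbb{Q}+\mathbb{Q}')\in\mathfrak{E}$ globally dominates $\mathbb{Q}$ and $\mathbb{Q}'$, which yields a representative $B\in\mathcal{F}^0_T$ of $A$ modulo a set that is negligible simultaneously for $\mathbb{Q}$ and $\mathbb{Q}'$, after which local equivalence on $\mathcal{F}^0_T$ closes the argument. The only differences are cosmetic (writing the completion via a symmetric difference versus a union, and whether the negligible set is tracked through a dominating $M\in\mathcal{F}^0$ or directly).
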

\begin{proof}
Any $\mathbb{Q} \in \mathfrak{E}$ admits a unique extension on $\mathcal{F}^\mathbb{Q}$ and thus on $\mathcal{F} \subset \mathcal{F}^\mathbb{Q}$ as well. Recall that any $A \in \mathcal{F}^\mathbb{Q}$ can be written as $A = B \cup N$ with $B \in \mathcal{F}^0$ and $N \in \mathcal{N}^\mathbb{Q}$ and the extension is defined as $\mathbb{Q}[A] := \mathbb{Q}[B]$. For any $T > 0$ and any $A \in \mathcal{F}^\mathbb{Q}_T$, by \cite[Problem 2.7.3]{KS}, we can always find $B \in \mathcal{F}^0_T$ so that $A \triangle B \in \mathcal{N}^\mathbb{Q}$ and for any such $B$, we have $\mathbb{Q}[A] = \mathbb{Q}[B]$ where $\triangle$ denotes the symmetric difference $A \triangle B = (A \cup B) \setminus (A \cap B)$.\par
For local equivalence, consider any two measures $\mathbb{Q}, \mathbb{Q}' \in \mathfrak{E}$ and $T > 0$. First, note that $\mathfrak{E}$ is a convex subspace of the vector space of signed finite measures on $\Omega$. In particular, $\bar{\mathbb{Q}} := \frac{\mathbb{Q} + \mathbb{Q}'}{2} \in \mathfrak{E}$. For such, even on infinite horizon, we have $\mathbb{Q}, \mathbb{Q}' \ll \bar{\mathbb{Q}}$. Now, consider any $A \in \mathcal{F}_T$ so that $\mathbb{Q}'[A] = 0$. Since $A \in \mathcal{F}_T \subset \mathcal{F}^{\bar{\mathbb{Q}}}_T$, there exists $B \in \mathcal{F}^0_T$ so that $A \triangle B \in \mathcal{N}^{\bar{\mathbb{Q}}}$. Thus, we have both $A \triangle B \in \mathcal{N}^{\mathbb{Q}'}$ and $A \triangle B \in \mathcal{N}^\mathbb{Q}$ so that $\mathbb{Q}[A] = \mathbb{Q}[B]$ and $\mathbb{Q}'[A] = \mathbb{Q}'[B] = 0$. Since $\mathbb{Q}_{\vert \mathcal{F}^0_T} \sim \mathbb{Q}'_{\vert \mathcal{F}^0_T}$ by assumption, we have $\mathbb{Q}[B] = \mathbb{Q}'[B] = 0$ and hence $\mathbb{Q}[A] = 0$ as well. This shows $\mathbb{Q}_{\vert \mathcal{F}_T} \sim \mathbb{Q}'_{\vert \mathcal{F}_T}$.
\end{proof}
An important consequence is that for $\mathbb{F}$ adapted processes, the notion of indistinguishability coincides for all $\mathbb{Q} \in \mathfrak{E}$.\par

Next, notice that $\mathbb{F}$ can indeed be understood as some sort of locally completed filtration. Let us define $\mathcal{N}^T := \lbrace A \subset \Omega \vert \exists B \in \mathcal{F}^0_T: \mathbb{P}[B] = 0, A\subset B \rbrace$. Note that for any locally equivalent measure $\mathbb{Q} \in \mathfrak{E}$ the sets in $\mathcal{N}^T$ are $\mathbb{Q}$-negligible as well, and we could have also defined $\mathcal{N}^T$ using $\mathbb{Q}$.
We denote by $\mathcal{N}_0 = \cup_{T \geq 0} \mathcal{N}^T$ the set of all negligible sets within a finite horizon. 
By definition, we have $\mathcal{N}_0 \subset \mathcal{F}_0$.
\begin{proposition}
\label{Prop.FF.right.cont}
    $\mathbb{F}$ is a right continuous filtration.
\end{proposition}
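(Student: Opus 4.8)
The plan is to reduce the statement to the right-continuity of each of the individually completed filtrations $\mathbb{F}^{\mathbb{Q}}$, $\mathbb{Q}\in\mathfrak{E}$, and then to obtain that right-continuity by combining Blumenthal's $0$–$1$ law under the reference measure $\mathbb{P}$ with the local equivalence built into the definition of $\mathfrak{E}$. For the first reduction, since each $(\mathcal{F}^{\mathbb{Q}}_t)_{t\ge0}$ is itself a filtration, the two intersections in $\bigcap_{s>t}\bigcap_{\mathbb{Q}\in\mathfrak{E}}\mathcal{F}^{\mathbb{Q}}_s$ may be reordered, so
$$\mathcal{F}_{t+}=\bigcap_{s>t}\mathcal{F}_s=\bigcap_{s>t}\bigcap_{\mathbb{Q}\in\mathfrak{E}}\mathcal{F}^{\mathbb{Q}}_s=\bigcap_{\mathbb{Q}\in\mathfrak{E}}\bigcap_{s>t}\mathcal{F}^{\mathbb{Q}}_s=\bigcap_{\mathbb{Q}\in\mathfrak{E}}\mathcal{F}^{\mathbb{Q}}_{t+}.$$
Hence it is enough to prove $\mathcal{F}^{\mathbb{Q}}_{t+}=\mathcal{F}^{\mathbb{Q}}_t$ for every fixed $\mathbb{Q}\in\mathfrak{E}$, and since $\supseteq$ is trivial only the inclusion $\mathcal{F}^{\mathbb{Q}}_{t+}\subseteq\mathcal{F}^{\mathbb{Q}}_t$ is at stake.

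Fix $\mathbb{Q}\in\mathfrak{E}$ and $A\in\mathcal{F}^{\mathbb{Q}}_{t+}$. For each $n\ge1$ we have $A\in\mathcal{F}^{\mathbb{Q}}_{t+1/n}=\sigma(\mathcal{F}^0_{t+1/n}\cup\mathcal{N}^{\mathbb{Q}})$, and by the standard structure of such a completion there is $C_n\in\mathcal{F}^0_{t+1/n}$ with $\mathbb{Q}[A\triangle C_n]=0$, i.e.\ $\mathbb{E}^{\mathbb{Q}}[\mathbb{1}_A\mid\mathcal{F}^0_{t+1/n}]=\mathbb{1}_{C_n}=\mathbb{1}_A$ $\mathbb{Q}$-a.s. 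The $\sigma$-algebras $(\mathcal{F}^0_{t+1/n})_{n\ge1}$ decrease to $\bigcap_n\mathcal{F}^0_{t+1/n}=\bigcap_{s>t}\mathcal{F}^0_s=:\mathcal{F}^0_{t+}$, so the reverse martingale convergence theorem gives $\mathbb{1}_A=\lim_n\mathbb{E}^{\mathbb{Q}}[\mathbb{1}_A\mid\mathcal{F}^0_{t+1/n}]=\mathbb{E}^{\mathbb{Q}}[\mathbb{1}_A\mid\mathcal{F}^0_{t+}]$ $\mathbb{Q}$-a.s.; in particular this conditional expectation is $\{0,1\}$-valued, so $\mathbb{1}_A=\mathbb{1}_D$ $\mathbb{Q}$-a.s.\ for the genuine event $D:=\{\mathbb{E}^{\mathbb{Q}}[\mathbb{1}_A\mid\mathcal{F}^0_{t+}]=1\}\in\mathcal{F}^0_{t+}$. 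Equivalently, one may simply quote the standard fact that completion commutes with passing to right limits of a filtration, which yields $\mathcal{F}^{\mathbb{Q}}_{t+}=\sigma(\mathcal{F}^0_{t+}\cup\mathcal{N}^{\mathbb{Q}})$.

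It remains to push $D$ down from $\mathcal{F}^0_{t+}$ into $\mathcal{F}^{\mathbb{Q}}_t$. Under the reference measure $\mathbb{P}=\upsilon\times\mathbb{P}_W$ the coordinate $\omega$ is a Brownian motion independent of $\xi$, so a standard Blumenthal $0$–$1$ argument (see e.g.\ \cite[Theorem 2.7.7]{KS}) shows that the $\mathbb{P}$-augmentation of $\mathbb{F}^0$ is right-continuous; hence there is $B\in\mathcal{F}^0_t$ with $\mathbb{P}[D\triangle B]=0$. Now $D\in\mathcal{F}^0_{t+}\subseteq\mathcal{F}^0_{t+1}$ and $B\in\mathcal{F}^0_t\subseteq\mathcal{F}^0_{t+1}$, so $D\triangle B\in\mathcal{F}^0_{t+1}$; since $\mathbb{Q}\in\mathfrak{E}$ means $\mathbb{Q}_{\vert\mathcal{F}^0_{t+1}}\sim\mathbb{P}_{\vert\mathcal{F}^0_{t+1}}$, we conclude $\mathbb{Q}[D\triangle B]=0$, and therefore $\mathbb{Q}[A\triangle B]\le\mathbb{Q}[A\triangle D]+\mathbb{Q}[D\triangle B]=0$. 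Thus $A\in\sigma(\mathcal{F}^0_t\cup\mathcal{N}^{\mathbb{Q}})=\mathcal{F}^{\mathbb{Q}}_t$, which proves $\mathcal{F}^{\mathbb{Q}}_{t+}=\mathcal{F}^{\mathbb{Q}}_t$ and, together with the first step, the right-continuity of $\mathbb{F}$.

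The crux is the passage in the third paragraph: because the elements of $\mathfrak{E}$ are only \emph{locally} equivalent to $\mathbb{P}$ and may be mutually singular on $\mathcal{F}^0$, one cannot identify $\mathcal{F}^{\mathbb{Q}}_t$ with $\mathcal{F}^{\mathbb{P}}_t$ and transport right-continuity verbatim; the resolution is that the relevant symmetric differences $D\triangle B$ always lie in a finite-horizon $\sigma$-algebra $\mathcal{F}^0_{t+1}$, on which $\mathbb{Q}\sim\mathbb{P}$, so each null-set identification is legitimate. A minor secondary point to check is the reverse-martingale step, namely that $\mathbb{1}_A$ — being $\mathbb{Q}$-a.s.\ equal to an $\mathcal{F}^0_{t+1/n}$-measurable function for every $n$ — is in fact $\mathbb{Q}$-a.s.\ equal to an $\mathcal{F}^0_{t+}$-measurable one.
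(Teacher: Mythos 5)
Your proof is correct and follows essentially the same approach as the paper's: approximate $A$ by a set in $\mathcal{F}^0_{t+}$, push it down to $\mathcal{F}^0_t$ via right-continuity of the $\mathbb{P}$-augmented natural filtration (Blumenthal), then transfer the null symmetric difference to $\mathbb{Q}$ using the local equivalence on a finite-horizon $\sigma$-algebra. The only cosmetic differences are that you make the reduction to a fixed $\mathbb{Q}$ explicit by reordering the intersections at the outset and obtain the $\mathcal{F}^0_{t+}$-approximant via reverse martingale convergence, where the paper builds it as an explicit $\limsup$ of the approximating sets and then takes the conditional-probability level set.
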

\begin{proof}
As the intersection of $\sigma$ algebras is a $\sigma$ algebra, $\mathbb{F}$ is again an increasing family of $\sigma$ algebras and hence a filtration. To prove that it is right continuous, let us, for any $t \geq 0$ consider any $A \in \cap_{\epsilon > 0} \mathcal{F}_{t + \epsilon}$. Now, for any $\mathbb{Q} \in \mathfrak{E}$ and $\epsilon > 0$, there exists $B^\epsilon \in \mathcal{F}^0_{t + \epsilon}$ with $A \triangle B^\epsilon \in \mathcal{N}^\mathbb{Q}$. In particular, for any $N \geq 1$, we have $A \triangle (\cup_{n \geq N} B^{\frac{1}{n}}) \subset \cup_{n \geq N} (A \triangle B^{\frac{1}{n}}) \subset \mathcal{N}^{\mathbb{Q}}$. Thus, if we define $B := \cap_{N \geq 1} \cup_{n \geq N} B^{1/n} \in \cap_{\epsilon > 0} \mathcal{F}^0_{t + \epsilon}$, we have
$$A \triangle B = A^\complement \triangle (\cup_{N\geq 1} (\cup_{n \geq N} B^{\frac{1}{n}})^\complement) \subset \cup_{N \geq 1} (A^\complement \triangle(\cup_{n \geq N} B^{\frac{1}{n}})^\complement ) = \cup_{N \geq 1} (A \triangle(\cup_{n \geq N} B^{\frac{1}{n}})) \subset \mathcal{N}^\mathbb{Q}.$$
Now, by \cite[Theorem 7.2.2.]{Durrett}, if we define $B' = \lbrace \omega \vert \mathbb{E}[\mathbb{1}_{B}\vert \mathcal{F}^0_t](\omega) = 1 \rbrace \in \mathcal{F}^0_t$, we have $\mathbb{P}[B \triangle B']
=0$. As $B \triangle B' \in \cap_{\epsilon > 0} \mathcal{F}^0_{t + \epsilon}$, by local equivalence, we also have $\mathbb{Q}[B \triangle B'] = 0$ showing $B \triangle B' \in \mathcal{N}^\mathbb{Q}$. Conclusively, we have $A \triangle B' = (A \triangle B) \triangle (B \triangle B') \subset (A \triangle B) \cup (B \triangle B')$ and thus $A \triangle B' \in \mathcal{N}^\mathbb{Q}$ which shows $A \in \mathcal{F}^\mathbb{Q}_t$.
Since $\mathbb{Q} \in \mathfrak{E}$ was arbitrary, this shows $A \in \mathcal{F}_t$ and thus right continuity of $\mathcal{F}$.
\end{proof}
The finite horizon filtration $(\mathcal{F}_t)_{t\in [0, T]}$ thus satisfies the usual conditions for every $T > 0$. It is thus possible to apply any of the standard stochastic calculus operations that are contained within a finite time horizon such as 
stochastic integration or martingale inequalities.
The main reason we are considering the set $\mathfrak{E}$ is to consider Wiener processes with drift. 
Now that we have proven right continuity, by \cite[Theorem 3.3.4]{JS}, for any $\mathbb{Q} \in \mathfrak{E}$, there exists a $(\mathbb{P}, \mathbb{F})$ martingale $D^\mathbb{Q}$ so that for any $t \geq 0$, we have $\frac{d\mathbb{Q}_{\vert \mathcal{F}_t}}{d\mathbb{P}_{\vert \mathcal{F}_t}} = D^\mathbb{Q}_t$. 
By \cite[Theorem 3.3.24]{JS}, there exists a predictable process $\beta^\mathbb{Q}$ so that $W^\mathbb{Q}:= W - \beta^\mathbb{Q}$ is a $\mathbb{Q}$ Wiener process. 
Another interesting property of $\mathbb{F}$ is that these Wiener processes are compatible with $\mathbb{F}$. Clearly, $W^\mathbb{Q}$ is adapted with respect to $\mathbb{F}$, and since $\mathbb{F} \subset \mathbb{F}^\mathbb{Q}$, any $\mathcal{F}_t$ stays independent of $(W^\mathbb{Q}_{t+s} - W^\mathbb{Q}_t)_{s \geq 0}$ under $\mathbb{Q}$.\par
An important property that we require is the predictable representation property, or whether the martingale representation Theorem holds. We can see that this immediately follows from the finite horizon version of this property.
\begin{proposition}\label{prop: prp}
    For any $\mathbb{Q} \in \mathfrak{E}$, let $M$ be a $(\mathbb{Q}, \mathbb{F})$ local martingale. Then, there exists a $\mathbb{F}$ progressively measurable process $H$ such that for any $t \geq 0$,
    $$M_t = M_0 + \int_0^t H_s dW^{\mathbb{Q}}_s.$$
\end{proposition}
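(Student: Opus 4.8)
The plan is to reduce the infinite-horizon predictable representation property to the finite-horizon one, which is known to hold on each $[0,T]$ since $(\mathcal{F}_t)_{t\in[0,T]}$ satisfies the usual conditions with respect to $\mathbb{Q}_{\vert\mathcal{F}_T}$ (this is exactly the content of Proposition \ref{Prop.FF.right.cont}, plus the fact that $W^{\mathbb{Q}}$ is a $(\mathbb{Q},\mathbb{F})$ Wiener process with $\mathbb{F}$ possessing on each finite horizon the Brownian filtration up to completion). First I would fix $\mathbb{Q}\in\mathfrak{E}$ and a $(\mathbb{Q},\mathbb{F})$ local martingale $M$. For each integer $n\ge 1$, the stopped process $M^{\cdot\wedge n}$ (or rather $M$ restricted to $[0,n]$) is a $(\mathbb{Q},\mathbb{F})$ local martingale on the finite horizon $[0,n]$, so by the classical martingale representation theorem on the Brownian filtration (e.g. \cite[Theorem 3.4.15]{KS} applied on $[0,n]$, using that $W^{\mathbb{Q}}$ is a $(\mathbb{Q},\mathbb{F})$ Wiener process) there exists an $\mathbb{F}$-progressively measurable process $H^{(n)}$ on $[0,n]$, integrable with respect to $W^{\mathbb{Q}}$, such that $M_t = M_0 + \int_0^t H^{(n)}_s\, dW^{\mathbb{Q}}_s$ for all $t\in[0,n]$.

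Next I would argue consistency: for $m\le n$, the two representations $\int_0^\cdot H^{(n)}_s dW^{\mathbb{Q}}_s$ and $\int_0^\cdot H^{(m)}_s dW^{\mathbb{Q}}_s$ agree on $[0,m]$ as continuous processes, hence by the uniqueness of the integrand in a stochastic integral representation (the bracket of the difference vanishes, so $H^{(n)} = H^{(m)}$ $dt\times d\mathbb{Q}$-a.e. on $[0,m]\times\Omega$). This lets me glue the $H^{(n)}$ into a single $\mathbb{F}$-progressively measurable process $H$ on $[0,\infty)$ by setting $H_s := H^{(n)}_s$ for $s\in[n-1,n)$; progressive measurability on $[0,\infty)$ follows since it holds on each $[0,n]$. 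Then for any $t\ge 0$, choosing $n\ge t$, we have $M_t = M_0 + \int_0^t H_s\, dW^{\mathbb{Q}}_s$, which is the claim.

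The only genuinely delicate point — and the one I would spend care on — is that the finite-horizon martingale representation theorem really does apply to $(\mathbb{Q},\mathbb{F})$ and not merely to $(\mathbb{Q},\mathbb{F}^{\mathbb{Q}})$ or $(\mathbb{P},\mathbb{F}^0)$. Here I would invoke that, on each $[0,T]$, $\mathbb{F}$ coincides with the $\mathbb{Q}_{\vert\mathcal{F}_T}$-completion of the natural filtration of $W$ (equivalently of $W^{\mathbb{Q}}$, since the two generate the same completed filtration via Girsanov): indeed $\mathcal{F}_T \supset \mathcal{F}^0_T \vee \mathcal{N}^T$ and, by Proposition \ref{Prop.A1}, $\mathbb{F}$ is squeezed between $\mathbb{F}^0$ and each $\mathbb{F}^{\mathbb{Q}}$ with all elements of $\mathfrak{E}$ locally equivalent, so the negligible sets match. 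With this identification the standard Brownian representation theorem applies verbatim on $[0,T]$, and the rest is the routine gluing described above. An alternative, essentially equivalent route is to cite directly the finite-horizon predictable representation property as recorded in \cite{JS} for right-continuous completed filtrations supporting a Brownian motion, and then perform the same $n\to\infty$ consistency argument.
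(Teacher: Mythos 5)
Your overall strategy is exactly the paper's: restrict $M$ to a finite horizon $[0,T]$, apply the classical Brownian predictable representation there, and glue the resulting integrands as $T\to\infty$. Your gluing step is in fact more explicit than the paper's (which simply says "by considering arbitrary big $t$, such a process $H$ can be defined on the whole time interval"): you correctly invoke uniqueness of the integrand via the vanishing of the quadratic variation of the difference, and the progressive measurability of the glued process follows as you say.

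The place where your argument has a real (though small) gap is the "delicate point" paragraph. It is \emph{not} true that $\mathbb{F}$ restricted to $[0,T]$ coincides with the $\mathbb{Q}_{\vert\mathcal{F}_T}$-completion of the natural filtration. By construction $\mathcal{F}_s$ already contains all of $\mathcal{N}_0=\cup_{T'>0}\mathcal{N}^{T'}$, in particular negligible sets dominated by $\mathcal{F}^0_{T'}$-measurable $\mathbb{P}$-null sets with $T'>T$; for example the $\mathbb{P}$-null event that the path $\omega$ is differentiable at some point of $[T,T+1]$ lies in every $\mathcal{F}_s$ but not in $\sigma(\mathcal{F}^0_s\cup\mathcal{N}^T)$. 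So the "squeezing between $\mathbb{F}^0$ and $\mathbb{F}^{\mathbb{Q}}$" does not pin $\mathbb{F}$ down to the $[0,T]$-completion, and the sandwich argument does not close. The paper handles this differently: it notes that $\mathcal{F}_s$ and the finite-horizon completion $\mathbb{F}^t_s=\sigma(\mathcal{F}^0_s\cup\mathcal{N}^s)$ differ only by $\mathbb{Q}$-null sets, then replaces $M$ by a c\`adl\`ag modification that is $\mathbb{F}^t$-adapted (via \cite[Exercise II.2.17]{RY}), and applies the classical PRP to $(\mathbb{Q},\mathbb{F}^t)$; the resulting integrand is then trivially $\mathbb{F}$-progressively measurable. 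Your "alternative route" of citing a PRP "for right-continuous completed filtrations supporting a Brownian motion" faces the same obstruction, since the PRP requires the filtration to be the Brownian one up to null sets, not merely a right-continuous complete filtration in which $W^{\mathbb{Q}}$ happens to be a Brownian motion. In short: correct strategy, but the reduction to the finite-horizon Brownian PRP needs the modification argument rather than the identity of filtrations you assert.
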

\begin{proof}
Let $t>0$ be fixed and note that $\mathbb{F}_{\vert [0, t]}$ satisfies the usual conditions for the probability space $(\Omega, \mathbb{Q}, \mathcal{F}_t)$, on which $M_{\vert [0, t]}$ is a $(\mathbb{Q}, \mathbb{F}_{\vert [0, t]})$ local martingale. Thus, it admits a c\`adl\`ag modification. We can also consider $\mathbb{F}^t = (\sigma(\mathcal{F}^0_s \cup \mathcal{N}^s))_{\vert s \in [0, t]}$ as the finite time completion of the natural filtration. As $\mathbb{F}^t$ is right continuous, by \cite[Exercise II.2.17]{RY}, one can consider a c\`adl\`ag modification of $M$ that is adapted to $\mathbb{F}^t$. Now, $W^{\mathbb{Q}}$ is also indistinguishable from a $\mathbb{F}^t$ adapted process. In such a setting, it is well known that $W^\mathbb{Q}$ has the predictable representation property with respect to $\mathbb{F}^t$, see e.g.\ \cite[Theorem 14.5.1]{CE}. Hence, there is an $\mathbb{F}^t$ and thus also $\mathbb{F}$-progressively measurable process $H$, such that for $s \in [0, t]$,
$$ M_s = M_0 + \int_0^s H_r dW^\mathbb{Q}_r.$$
By considering arbitrary big $t$, such a process $H$ can be defined on the whole time interval $[0, \infty)$.
\end{proof}

\section{Infinite time horizon BSDEs with strictly monotonous drifts}\label{apdx: IBSDE}
\subsection{Wellposedness and stability}
Let us rephrase some existing results for infinite time horizon BSDEs such as those of \cite{Royer04} within our context. For a given $\mathbb{F}$ progressively measurable driver $F: [0, \infty) \times \Omega \times \mathbb{R} \times \mathbb{R}^d \rightarrow \mathbb{R}$, we say that progressively measurable $\mathbb{R}$ and $\mathbb{R}^d$ valued processes $Y$ and $Z$ are a solution to the infinite horizon BSDE if for any $t \leq T$, we have
\begin{equation}\label{gnr inft BSDE}
    Y_t = Y_T + \int_t^T F(s, \omega, Y_s, Z_s) ds - \int_t^T Z_s dW_s.
\end{equation}
There are different sets of assumptions that can guarantee existence and uniqueness for solutions of \eqref{gnr inft BSDE}. 
We will work with the version as stated in \cite[Theorem 2.1]{Royer04}:
\begin{theorem}\label{thm: Royer}
Assume that $F$ satisfies
\begin{itemize}
    \item $F$ is uniformly Lipschitz in $z$ over all $t, \omega, y$ with Lipschitz constant $C$.
    \item $F$ is continuous in $y$ and there is a continuous increasing $\phi: [0, \infty) \rightarrow [0, \infty)$ such that for any $t, \omega, y, z$ we have $\vert F(t, \omega, y, z) \vert \leq \vert F(t, \omega, 0, z) \vert + \phi(\vert y\vert)$
    \item There is $\lambda>0$ such that $(y-y')(F(t, \omega, y, z) - F(t, \omega, y', z)) \leq - \lambda(y - y')^2$ for any $t, \omega, y, y', z$. 
    \item There is $M >0$ such that for any $t, \omega$ we have $\vert F(t, \omega, 0, 0) \vert \leq M$
\end{itemize}
Then, there exists a unique solution $(Y, Z)$ to \eqref{gnr inft BSDE} within the class of $Y$ that are uniformly bounded and $Z$ that are $dt \times \mathbb{P}$ square integrable on $[0, T] \times \Omega$ for any $T>0$. Further, this solution satisfies 
$$|Y_t|\le \frac{M}{\lambda}\quad \text{and}\quad \mathbb{E}\left[\int_0^\infty e^{-2\lambda s} \Vert Z_s \Vert^2 ds \right] < \infty.
$$
\end{theorem}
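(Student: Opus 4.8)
The statement is in essence \cite[Theorem 2.1]{Royer04}, so the plan is to reproduce the standard argument for infinite horizon BSDEs with strictly monotone drivers in our setting, the only genuinely new point being that everything is run over the locally completed filtration $\mathbb{F}$. This is legitimate because, by Proposition~\ref{Prop.FF.right.cont}, $\mathbb{F}$ restricted to any $[0,T]$ satisfies the usual conditions, and by Proposition~\ref{prop: prp} it has the predictable representation property with respect to $W$. The construction proceeds by finite horizon approximation: for each $n\ge 1$ let $(Y^n,Z^n)$ be the unique solution on $[0,n]$ of \eqref{gnr inft BSDE} with terminal condition $Y^n_n=0$ (existence, uniqueness and the comparison principle being the classical results for monotone drivers that are Lipschitz in $z$ — the four hypotheses here are exactly the finite horizon monotone conditions, and the dissipativity constant $\lambda>0$ only helps). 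Extend $(Y^n,Z^n)$ to $[0,\infty)$ by $0$ after time $n$.

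\emph{A priori bounds.} First $|Y^n_t|\le M/\lambda$: comparing $(Y^n,Z^n)$ on $[0,n]$ with the constant solution $(M/\lambda,0)$ of the driver $\equiv 0$, it suffices to note that the monotonicity together with $|F(s,\omega,0,0)|\le M$ forces $F(s,\omega,M/\lambda,0)\le F(s,\omega,0,0)-M\le 0$, and symmetrically $F(s,\omega,-M/\lambda,0)\ge 0$. Next apply It\^o's formula to $e^{-2\lambda s}|Y^n_s|^2$ on $[0,T]$ with $T\le n$; decomposing $F(s,Y^n_s,Z^n_s)=\bigl(F(s,Y^n_s,Z^n_s)-F(s,0,Z^n_s)\bigr)+\bigl(F(s,0,Z^n_s)-F(s,0,0)\bigr)+F(s,0,0)$ and using, in turn, the monotonicity, the $z$-Lipschitz bound and Young's inequality, together with $|Y^n|\le M/\lambda$, yields a constant $K=K(M,\lambda,C)$ with $\sup_n\mathbb{E}\bigl[\int_0^n e^{-2\lambda s}\|Z^n_s\|^2\,ds\bigr]\le K$ and, more precisely, a tail estimate $\mathbb{E}\bigl[\int_T^n e^{-2\lambda s}\|Z^n_s\|^2\,ds\bigr]\le c\,e^{-2\lambda T}$ uniformly in $n\ge T$.

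\emph{Convergence.} Fix $n\le m$ and work on $[0,n]$ with $\delta Y:=Y^n-Y^m$, $\delta Z:=Z^n-Z^m$, so $\delta Y_n=-Y^m_n$ with $\mathbb{E}[|\delta Y_n|^2]\le (M/\lambda)^2$. Choosing $\rho>0$ with $\rho\ge C^2-2\lambda$ and applying It\^o to $e^{\rho s}|\delta Y_s|^2$, the monotonicity makes $s\mapsto \mathbb{E}[e^{\rho s}|\delta Y_s|^2]$ nondecreasing, whence $\sup_{t\le T}\mathbb{E}[|Y^n_t-Y^m_t|^2]\le e^{-\rho(n-T)}(M/\lambda)^2$ for $T\le n$. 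Feeding this into an It\^o expansion of $e^{-2\lambda s}|\delta Y_s|^2$ on $[0,n]$, and using the tail estimate above to control the interval $[n,m]$, shows $(Z^n)_n$ is Cauchy in $L^2([0,\infty)\times\Omega,\,e^{-2\lambda s}\,ds\otimes d\mathbb{P})$ and $Y^n_t\to Y_t$ in $L^2$ for each $t$ (hence a.s.\ along a subsequence). Passing to the limit in $Y^n_t=Y^n_T+\int_t^T F(s,Y^n_s,Z^n_s)\,ds-\int_t^T Z^n_s\,dW_s$ for $t\le T$ — using continuity of $F$ in $(y,z)$, the growth bound $|F(s,\omega,y,z)|\le |F(s,\omega,0,z)|+\phi(|y|)$, the $z$-Lipschitz property and Vitali's theorem for the driver integral — identifies $(Y,Z)$ as a solution of \eqref{gnr inft BSDE}; the bound $|Y_t|\le M/\lambda$ passes to the limit and $\mathbb{E}[\int_0^\infty e^{-2\lambda s}\|Z_s\|^2\,ds]\le K$ follows by Fatou.

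\emph{Uniqueness.} If $(Y,Z)$ and $(Y',Z')$ are two solutions with $Y,Y'$ uniformly bounded and $Z,Z'$ square integrable on every $[0,T]$, the same $e^{\rho s}|\delta Y_s|^2$ computation gives $\mathbb{E}[|\delta Y_t|^2]\le e^{-\rho(T-t)}\mathbb{E}[|\delta Y_T|^2]\le e^{-\rho(T-t)}(\|Y\|_\infty+\|Y'\|_\infty)^2$; letting $T\to\infty$ with $\rho>0$ gives $Y=Y'$, after which $\int_t^\cdot(Z_s-Z'_s)\,dW_s=\int_t^\cdot\bigl(F(s,Y_s,Z_s)-F(s,Y'_s,Z'_s)\bigr)\,ds$ is a continuous finite variation local martingale, hence vanishes, so $Z=Z'$. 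Throughout, the stochastic integrals in the It\^o formulas are true martingales on each finite interval because $Y^n$ (resp.\ $Y$) is bounded and $Z^n$ (resp.\ $Z$) is square integrable there. The main obstacle is the convergence step: one must juggle the two exponential weights so that the terminal datum at time $n$, which is only bounded rather than small, and the $Z$-mass of $(Y^m,Z^m)$ on the leftover interval $[n,m]$ are simultaneously controlled; once the right weights $\rho$ and $2\lambda$ are in place the remaining estimates are routine.
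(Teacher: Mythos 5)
The paper itself does not prove this theorem: it cites \cite[Theorem 2.1]{Royer04} and only adds a short discussion of why the result still holds over the locally completed filtration $\mathbb{F}$. You are therefore reconstructing Royer's proof from scratch, which is legitimate, and your treatment of the filtration issue via Propositions \ref{Prop.FF.right.cont} and \ref{prop: prp}, as well as the a priori estimates ($|Y^n|\le M/\lambda$ by comparison; the $e^{-2\lambda s}$-weighted $Z$-bound by It\^o), are correct and match the intended argument.

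However, there is a genuine gap in the convergence and uniqueness steps. You apply It\^o to $e^{\rho s}|\delta Y_s|^2$ with $\rho>0$, $\rho\ge C^2-2\lambda$. With that choice the drift is nonnegative, so $s\mapsto \mathbb{E}[e^{\rho s}|\delta Y_s|^2]$ is indeed nondecreasing — but this yields
$\mathbb{E}[|\delta Y_t|^2]\le e^{+\rho(n-t)}\mathbb{E}[|\delta Y_n|^2]$,
not $e^{-\rho(n-T)}(M/\lambda)^2$; you have flipped the sign in the exponent, and since $\rho>0$ this bound blows up as $n\to\infty$ rather than providing Cauchyness. To obtain decay with an $L^2$/It\^o argument you would have to use the weight $e^{-\rho s}$, which forces $\rho\le 2\lambda-C^2$, and hence would require the extra hypothesis $2\lambda>C^2$. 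No such relation between the dissipativity constant $\lambda$ and the $z$-Lipschitz constant $C$ is assumed in the theorem, so the argument as written does not close. Exactly the same sign error invalidates the uniqueness step. The standard way around this (and what Royer actually does) is to linearize the difference of drivers, write $F(s,Y^n_s,Z^n_s)-F(s,Y^m_s,Z^m_s)=a_s\,\delta Y_s+b_s\cdot\delta Z_s$ with $a_s\le-\lambda$ by monotonicity and $|b_s|\le C$, remove the $Z$-term by the Girsanov change of measure $d\mathbb{Q}=\mathcal{E}(\int_0^\cdot b_s\,dW_s)_n\,d\mathbb{P}$, and conclude $\delta Y_t=\mathbb{E}^{\mathbb{Q}}\bigl[e^{\int_t^n a_r\,dr}\,\delta Y_n\,\big|\,\mathcal{F}_t\bigr]$, hence $|\delta Y_t|\le\frac{M}{\lambda}e^{-\lambda(n-t)}$ almost surely with no constraint on $C$. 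Once this pathwise decay is in hand, your subsequent It\^o estimate on $e^{-2\lambda s}|\delta Y_s|^2$ gives the Cauchy property for $Z^n$, and the limit passage and uniqueness argument go through as you describe.
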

Additionally, both $Y$ and $Z$ can be approximated by solutions of finite horizon BSDEs. As proven in \cite[Theorem 1.2.]{Pardoux98}, for any $T\geq 0$, the BSDEs on the finite horizon $[0, T]$
\begin{equation}\label{gnr fnt BSDE}
    Y_t = \int_t^T F(s, \omega, Y_s, Z_s)ds - \int_t^T Z_s dW_s
\end{equation}
admits a unique solution $(Y^T, Z^T)$. 
In the following, we will adapt the usual convention that we extend solutions to finite horizon BSDEs to the whole time interval $[0, \infty)$ by defining $Y^T_t = 0$ and $Z^T_t = 0$ for $t \geq T$.
The proof of \cite[Theorem 2.1]{Royer04} additionally shows
\begin{lemma}\label{lem: YZ aprx}
    For $(Y^T, Z^T)$ and $(Y, Z)$ as above, we can approximate
    $$\vert Y_t - Y_t^T \vert \leq \frac{M}{\lambda} e^{\lambda(t -T)}\quad \mathbb{P}\text{-a.s.}$$
    and
    $$\mathbb{E}\left[\int_0^\infty e^{-2\lambda s} \Vert Z_s - Z^T_s  \Vert^2 ds \right] \leq \overline{M} (1 + T)  e^{-2\lambda T}$$
    for some constant $\overline{M}>0$ not depending on $T$, but fully determined by $M$, $\lambda$, $\phi$ and the Lipschitz constant of $F$ in $z$.
\end{lemma}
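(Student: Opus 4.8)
The plan is to run the classical a priori/stability estimate for Lipschitz--monotone BSDEs, keeping careful track of the constants and of the exponential weight $e^{-2\lambda s}$. Set $\delta Y := Y - Y^T$ and $\delta Z := Z - Z^T$; on every interval $[0,T]$ these solve the linear BSDE
\[
\delta Y_t = \delta Y_T + \int_t^T \big(F(s,Y_s,Z_s) - F(s,Y^T_s,Z^T_s)\big)\,ds - \int_t^T \delta Z_s\,dW_s ,
\]
with $\delta Y_T = Y_T$ (recall $Y^T_T = 0$) and $|\delta Y_T| \le M/\lambda$ by Theorem~\ref{thm: Royer}.

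For the pointwise bound on $Y - Y^T$ I would linearize: write $F(s,Y_s,Z_s)-F(s,Y^T_s,Z^T_s) = -a_s\,\delta Y_s + b_s\cdot\delta Z_s$, where by strict monotonicity in $y$ one may take $a_s \ge \lambda$ (set $a_s := -(F(s,Y_s,Z_s)-F(s,Y^T_s,Z_s))/\delta Y_s$ off $\{\delta Y_s=0\}$ and $a_s := \lambda$ on it), and by the uniform Lipschitz property in $z$, $\|b_s\| \le C$ (via the usual measurable linearization $b_s := \|\delta Z_s\|^{-2}(F(s,Y^T_s,Z_s)-F(s,Y^T_s,Z^T_s))\,\delta Z_s$). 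On the finite horizon $[0,T]$ the filtration satisfies the usual conditions by Proposition~\ref{Prop.FF.right.cont}, so Girsanov applies with the bounded kernel $b$; under the equivalent measure $\mathbb Q$ the process $e^{-\int_0^t a_s\,ds}\delta Y_t$ is a bounded martingale, whence $\delta Y_t = e^{\int_0^t a_s\,ds}\mathbb E^{\mathbb Q}\big[e^{-\int_0^T a_s\,ds}\delta Y_T \mid \mathcal F_t\big]$ and, using $a_s \ge \lambda$ and $|\delta Y_T| \le M/\lambda$, one obtains $|\delta Y_t| \le \tfrac{M}{\lambda}e^{\lambda(t-T)}$. (Equivalently, invoke the stability-in-terminal-condition estimate already implicit in the proof of \cite[Theorem~2.1]{Royer04}.)

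For the $Z$-estimate I would split $\mathbb E\!\int_0^\infty e^{-2\lambda s}\|\delta Z_s\|^2\,ds$ into the parts over $[0,T]$ and $[T,\infty)$ (on the latter $Z^T \equiv 0$, so $\delta Z = Z$). On $[0,T]$, apply It\^o's formula to $e^{-2\lambda s}|\delta Y_s|^2$, integrate, and take expectations (the stochastic integral is a true martingale since $\delta Y$ is bounded and $Z,Z^T$ are square-integrable on bounded intervals by Theorem~\ref{thm: Royer} and \cite{Pardoux98}); bounding
\[
2\delta Y_s\big(F(s,Y_s,Z_s)-F(s,Y^T_s,Z^T_s)\big) \le -2\lambda|\delta Y_s|^2 + 2C^2|\delta Y_s|^2 + \tfrac12\|\delta Z_s\|^2
\]
(monotonicity on the $y$-increment; Lipschitz plus Young on the $z$-increment) and absorbing the $\tfrac12\|\delta Z_s\|^2$ term gives
\[
\tfrac12\,\mathbb E\!\int_0^T e^{-2\lambda s}\|\delta Z_s\|^2\,ds \le e^{-2\lambda T}\,\mathbb E|\delta Y_T|^2 + 2C^2\,\mathbb E\!\int_0^T e^{-2\lambda s}|\delta Y_s|^2\,ds .
\]
Plugging in $|\delta Y_T| \le M/\lambda$ and the just-proved bound $e^{-2\lambda s}|\delta Y_s|^2 \le (M/\lambda)^2 e^{-2\lambda T}$ (uniform in $s \in [0,T]$) yields the factor $T$ from the window length, i.e.\ a bound of the form $C_1(1+T)e^{-2\lambda T}$. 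On $[T,\infty)$, apply It\^o's formula to $e^{-2\lambda s}|Y_s|^2$ on $[T,T']$, isolate $\int_T^{T'}e^{-2\lambda s}\|Z_s\|^2\,ds$ on the left, bound $2Y_sF(s,Y_s,Z_s) \le -2\lambda|Y_s|^2 + c(M,\lambda,C) + \tfrac12\|Z_s\|^2$ using monotonicity against $F(s,0,Z_s)$ together with $|F(s,0,z)| \le M + C\|z\|$ and Young, absorb, take expectations, and let $T' \to \infty$ using $e^{-2\lambda T'}\mathbb E|Y_{T'}|^2 \to 0$ and the finiteness of $\mathbb E\!\int_0^\infty e^{-2\lambda s}\|Z_s\|^2\,ds$ from Theorem~\ref{thm: Royer}; this gives $\mathbb E\!\int_T^\infty e^{-2\lambda s}\|Z_s\|^2\,ds \le C_2 e^{-2\lambda T}$. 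Adding the two pieces yields the claim with $\overline M := C_1 + C_2$, depending only on $M$, $\lambda$, the Lipschitz constant $C$ (and, if one routes the tail estimate through the growth function, on $\phi$).

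The computations are all routine; the points needing genuine care are (i) choosing the weight $e^{-2\lambda s}$ and the right splitting of the driver increment so that the $+\|\delta Z_s\|^2$ term produced by It\^o's formula is absorbed rather than amplified, and (ii) extracting exactly the $(1+T)e^{-2\lambda T}$ rate --- the linear-in-$T$ factor is an artifact of integrating a quantity that is uniformly of size $e^{-2\lambda T}$ over a window of length $T$, so one must not bound $\int_0^T e^{-2\lambda s}|\delta Y_s|^2\,ds$ merely by a constant. The martingale property of the various stochastic integrals on finite sub-horizons (needed to take expectations) is immediate from Proposition~\ref{Prop.FF.right.cont} and the square-integrability built into Theorem~\ref{thm: Royer}.
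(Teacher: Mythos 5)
Your proposal is correct and follows the standard a priori estimate route, which is precisely what the paper relies on by citing the proof of \cite[Theorem 2.1]{Royer04}: linearization plus Girsanov plus the martingale representation for the pointwise bound on $\delta Y$, and It\^o on the weighted square $e^{-2\lambda s}|\delta Y_s|^2$ (respectively $e^{-2\lambda s}|Y_s|^2$ on the tail) with the monotonicity and Lipschitz splittings of the driver increment to absorb the $\|\delta Z_s\|^2$ term for the $L^2$ bound on $\delta Z$. Your bookkeeping of the factor $(1+T)$ as the window length integrated against a uniformly-$e^{-2\lambda T}$-sized integrand is exactly the right observation; you also (correctly) notice in passing that the constant $\overline M$ you obtain does not in fact need $\phi$ --- only $M$, $\lambda$, $C$ --- which is consistent with the lemma as $\phi$ is only needed to guarantee existence of $(Y^T, Z^T)$ via Pardoux's theorem rather than for the estimate itself.
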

Note that in \cite{Royer04}, they are working with a different filtration. Namely, their $\mathcal{F}_0$ is trivial, and further, the whole filtration is completed in the usual manner, to locally like we do. The results are still applicable in our framework. Considering the proof of \cite[Theorem 2.1]{Royer04}, we can see that in our filtration, the finite time approximations $(Y^T, Z^T)$ still exist and Lemma \ref{lem: YZ aprx} can still be shown to hold which is sufficient to conclude existence. In particular, all the $(Y^T, Z^T)$ and their limit are adapted to our filtration $\mathbb{F}$. The proof of uniqueness does not rely on these properties of the filtration as well as well.\par
In particular, not only the existence and uniqueness result transfer to our setup, but also the comparison result \cite[Theorem 2.2]{Royer04}.\par
Lastly, let us prove an infinite horizon version of the well known stability property of BSDEs. Here, we have deliberately chosen a form that suits our setup.
\begin{lemma}\label{lem: inft bsde stab}
    Let us be given drivers $(F^n)_{n \geq 1}$ and $F$ that satisfy the assumptions above with common constants. Additionally, assume $F^n$ and $F$ are uniformly Lipschitz in $y$ as well. Let $(Y^n, Z^n)$ and $(Y, Z)$ be the unique solutions of the respective infinite horizon BSDEs. Further, we assume that for any $0 < t < T$, we have $\mathbb{E}[(\int_t^T  F^n(s, \omega, Y_s, Z_s) - F(s, \omega, Y_s, Z_s)ds)^2] \rightarrow 0$. Then, for any $t > 0$, we have $Y^n_t \rightarrow Y_t$ a.s. and
    $$\mathbb{E}\left[\int_0^\infty e^{-2\lambda s} \Vert Z^n_s - Z_s \Vert^2 ds\right] \rightarrow 0.$$
\end{lemma}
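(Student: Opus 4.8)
The plan is to run a standard BSDE stability estimate, but carefully keeping track of the infinite-horizon structure via the exponential weight $e^{-2\lambda s}$ and the strict monotonicity of the drivers in $y$. First I would write, for fixed $0 \le t \le T$, the difference $\Delta Y = Y^n - Y$, $\Delta Z = Z^n - Z$ and apply It\^o's formula to $e^{-2\lambda s}|\Delta Y_s|^2$ on $[t,T]$. This yields
\begin{equation*}
  e^{-2\lambda t}|\Delta Y_t|^2 + \mathbb{E}\!\left[\int_t^T e^{-2\lambda s}\|\Delta Z_s\|^2 ds\,\Big|\,\mathcal{F}_t\right]
  = \mathbb{E}\!\left[e^{-2\lambda T}|\Delta Y_T|^2\,\Big|\,\mathcal{F}_t\right] + \mathbb{E}\!\left[\int_t^T e^{-2\lambda s}\big(2\Delta Y_s\,\delta_s - 2\lambda |\Delta Y_s|^2\big)\,ds\,\Big|\,\mathcal{F}_t\right],
\end{equation*}
where $\delta_s := F^n(s,\omega,Y^n_s,Z^n_s) - F(s,\omega,Y_s,Z_s)$. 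I would split $\delta_s = \big(F^n(s,Y^n_s,Z^n_s)-F^n(s,Y_s,Z^n_s)\big) + \big(F^n(s,Y_s,Z^n_s) - F^n(s,Y_s,Z_s)\big) + \big(F^n(s,Y_s,Z_s) - F(s,Y_s,Z_s)\big)$. The first bracket is controlled by strict monotonicity: $2\Delta Y_s\cdot(\text{first bracket}) \le -2\lambda|\Delta Y_s|^2$, which cancels against (and in fact dominates) the $-2\lambda|\Delta Y_s|^2$ term and lets me absorb it. The second bracket is handled by the uniform Lipschitz constant $C$ in $z$ together with Young's inequality $2|\Delta Y_s| C\|\Delta Z_s\| \le \tfrac12\|\Delta Z_s\|^2 + 2C^2|\Delta Y_s|^2$; the $\tfrac12\|\Delta Z_s\|^2$ is absorbed into the left side, and the residual $2C^2|\Delta Y_s|^2$ term is handled by noting $|\Delta Y_s|$ is uniformly bounded by $2M/\lambda$ (Theorem \ref{thm: Royer}) so that, after choosing $\lambda$ large is not available — instead I treat it with Gr\"onwall in $t$, or more simply I move it under the integral and note it is dominated by $\frac{4M^2}{\lambda^2}$ times the length, then handle it together with the remaining term. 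The third bracket is the given hypothesis term: by Cauchy--Schwarz, $\mathbb{E}[\int_t^T e^{-2\lambda s} |\Delta Y_s \delta_s|\,ds] \le \frac{2M}{\lambda}\,\mathbb{E}[(\int_t^T \delta_s\,ds)^2]^{1/2}\to 0$ (after also noting $\int_t^T e^{-2\lambda s}(\cdots)ds$ can be bounded via Jensen).

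Since an arbitrarily large terminal time $T$ is involved, I would exploit the uniform a priori bound $|\Delta Y_T| \le \tfrac{2M}{\lambda}$ so that the terminal term $e^{-2\lambda T}|\Delta Y_T|^2 \le \tfrac{4M^2}{\lambda^2}e^{-2\lambda T}$ is \emph{uniform in $n$} and vanishes as $T\to\infty$. Concretely: fix $\varepsilon>0$; choose $T$ large so that the terminal contribution and the tail $\mathbb{E}[\int_T^\infty e^{-2\lambda s}\|\Delta Z_s\|^2 ds]$ (which is $\le \overline M(1+T)e^{-2\lambda T}$-type bound, or more directly bounded using $\int_0^\infty e^{-2\lambda s}\|Z^n_s\|^2 ds$ and $\int_0^\infty e^{-2\lambda s}\|Z_s\|^2ds$ being finite and, via the same estimate applied on $[0,\infty)$, uniformly controlled) are each $<\varepsilon$; then for that fixed $T$, send $n\to\infty$ using the hypothesis on $[0,T]$ to kill the third-bracket term. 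This gives $\limsup_n \big(|\Delta Y_0|^2 + \mathbb{E}[\int_0^\infty e^{-2\lambda s}\|\Delta Z_s\|^2 ds]\big) \le C'\varepsilon$, and since $\varepsilon$ is arbitrary the $Z$-convergence follows. For the pointwise convergence $Y^n_t \to Y_t$ a.s., I would first get $\mathbb{E}[|\Delta Y_t|^2]\to 0$ from the same inequality (taking expectations, not conditional), hence $L^2$ and then a.s. convergence along a subsequence; to upgrade to the full sequence one notes the limit is forced to be $Y_t$, or simply states convergence in probability, or invokes that the whole argument shows $\sup_{t\ge 0}\mathbb{E}[|\Delta Y_t|^2]\to 0$ which with a standard Borel--Cantelli/subsequence argument plus uniqueness of limits gives a.s. convergence (for each fixed $t$ one may pass to a subsequence; since every subsequence has a further a.s.-convergent sub-subsequence with the same limit $Y_t$, the full sequence converges a.s.).

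The step I expect to be the main obstacle is making the tail control of $\mathbb{E}[\int_T^\infty e^{-2\lambda s}\|\Delta Z_s\|^2\,ds]$ genuinely \emph{uniform in $n$}. The cleanest route is to run the It\^o estimate above directly on the full interval $[0,\infty)$ (letting $T\to\infty$ inside it first, legitimate because $e^{-2\lambda T}|\Delta Y_T|^2\to 0$ deterministically by the uniform bound on $Y,Y^n$), obtaining
\begin{equation*}
  |\Delta Y_0|^2 + c\,\mathbb{E}\!\left[\int_0^\infty e^{-2\lambda s}\|\Delta Z_s\|^2\,ds\right]
  \le C''\,\mathbb{E}\!\left[\int_0^\infty e^{-2\lambda s}|\Delta Y_s|\,|\tilde\delta_s|\,ds\right],
\end{equation*}
with $\tilde\delta_s := F^n(s,Y_s,Z_s)-F(s,Y_s,Z_s)$ and $c,C''>0$ absolute constants depending only on $\lambda,C,M$; then bound the right side by $\tfrac{2M}{\lambda}\big(\sum_k e^{-\lambda k}\mathbb{E}[(\int_k^{k+1}|\tilde\delta_s|ds)^2]^{1/2}\big)$-type sum, split it at a large index $K$ (tail small uniformly in $n$ because $|\tilde\delta_s|\le |F^n(s,0,0)|+|F(s,0,0)| + \phi(|Y_s|) + \phi(|Y_s|) + 2C\|Z_s\| \le 2M + 2\phi(M/\lambda) + 2C\|Z_s\|$ is $e^{-2\lambda s}$-integrable uniformly in $n$), and for each of the finitely many blocks $[k,k+1]$, $k<K$, apply the given hypothesis. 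Everything else is a routine stability computation; the only care needed is consistent bookkeeping of the exponential weights and the uniform $L^\infty$ bound on the $Y$-processes.
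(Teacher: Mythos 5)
Your proposed approach is genuinely different from the paper's (which passes through the finite-horizon approximations of Lemma~\ref{lem: YZ aprx} and invokes a finite-horizon stability result on the fixed interval $[0,T]$), and unfortunately it has a gap at its central step. Before getting to that: there is a sign slip in your It\^o identity — the correct form has $+2\lambda|\Delta Y_s|^2$ (from $d(e^{-2\lambda s})$) rather than $-2\lambda|\Delta Y_s|^2$; the strict monotonicity then \emph{exactly} cancels this, which is the best you can hope for, not an extra margin.

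The real problem is the $z$-Lipschitz cross term. After the cancellation just described and Young's inequality $2C|\Delta Y_s|\,\|\Delta Z_s\| \le (1-\eta)\|\Delta Z_s\|^2 + \tfrac{C^2}{1-\eta}|\Delta Y_s|^2$, what you actually get on $[0,\infty)$ is
\begin{equation*}
 |\Delta Y_0|^2 + \eta\,\mathbb{E}\!\left[\int_0^\infty e^{-2\lambda s}\|\Delta Z_s\|^2\,ds\right]
 \;\le\; \frac{C^2}{1-\eta}\,\mathbb{E}\!\left[\int_0^\infty e^{-2\lambda s}|\Delta Y_s|^2\,ds\right]
 + 2\,\mathbb{E}\!\left[\int_0^\infty e^{-2\lambda s}|\Delta Y_s|\,|\tilde\delta_s|\,ds\right],
\end{equation*}
and not the cleaner inequality you wrote. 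The extra term on the right is of order a constant (it is bounded by $\frac{2C^2M^2}{\lambda^3(1-\eta)}$ via the $L^\infty$ bound on $Y,Y^n$), not a quantity that vanishes as $n\to\infty$ or that can be re-absorbed on the left — the monotonicity budget $2\lambda$ was already spent cancelling the $e^{-2\lambda s}$ weight, and there is nothing left to absorb the $C^2/(1-\eta)$. A backward Gr\"onwall argument in $t$ (which you suggest as an alternative) produces a factor $e^{2C^2(T-t)}$ that overwhelms the $e^{-2\lambda T}$ decay of the terminal term unless $\lambda > C^2$. The paper explicitly does \emph{not} restrict the discount rate, so this case must be covered, and your argument does not cover it.

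The paper sidesteps this entirely. Lemma~\ref{lem: YZ aprx} (extracted from Royer's proof) gives uniform-in-$n$ approximation of each infinite-horizon solution by the corresponding finite-horizon solution: $|Y^n_t - Y^{n,T}_t| \le \tfrac{M}{\lambda}e^{\lambda(t-T)}$ and $\mathbb{E}[\int_0^\infty e^{-2\lambda s}\|Z^n_s - Z^{n,T}_s\|^2\,ds] \le \bar M(1+T)e^{-2\lambda T}$. Crucially, the $Y$ bound in that lemma comes from a linearization/Girsanov argument, not from an $L^2$ It\^o estimate, so it does not suffer from the $z$-Lipschitz issue; the $Z$ bound is then obtained by re-inserting the exponentially small $Y$ bound into the It\^o estimate (which is where the $(1+T)$ factor comes from). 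After choosing $T$ so that these approximation errors are below $\epsilon$, the remaining comparison is between $(Y^{n,T},Z^{n,T})$ and $(Y^T,Z^T)$ on the \emph{fixed} finite interval $[0,T]$, where the finite-horizon stability result \cite[Theorem 2.1]{HuPeng} applies — there the $e^{2C^2T}$-type constant is finite and harmless because $T$ no longer varies. Without some version of this two-step structure (approximate by finite horizon uniformly in $n$, then apply finite-horizon stability), the direct argument does not close.
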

\begin{proof}
Let $\overline{M}$ be as in Lemma \ref{lem: YZ aprx}. By assumption, the drivers of the BSDEs that $(Y^n, Z^n)$ and $(Y, Z)$ solve share a common Lipschitz constant $C$ in $z$, a common monotonicity constant $\lambda$ in $y$ and are all uniformly bounded by $M$. Particularly, Lemma \ref{lem: YZ aprx} holds uniformly over all $n$. Consider an arbitrary $\epsilon > 0$. Let us pick a $T \geq 0$ such that $\overline{M} (1 + T) e^{-2\lambda T} < \frac{\epsilon}{9}$. We have
\begin{equation*}
    \begin{split}
        &\mathbb{E}\left[\int_0^\infty e^{-2\lambda s} \Vert Z^n_s - Z_s \Vert ^2 ds\right] \leq  3 \mathbb{E}\left[\int_0^\infty e^{-2\lambda s} \Vert Z^n_s - Z^{n, T}_s \Vert ^2 ds\right] \\
        + & 3 \mathbb{E}\left[\int_0^\infty e^{-2\lambda s} \Vert Z^{n, T}_s - Z^T_s \Vert ^2 ds\right] + 3 \mathbb{E}\left[\int_0^\infty e^{-2\lambda s} \Vert Z^T_s - Z_s \Vert ^2 ds\right]
    \end{split}
\end{equation*}
where $Z^{n, T}$ and $Z^T$ are the respective finite time horizon approximations as in Lemma \ref{lem: YZ aprx}. By our choice of $T$, we have using Lemma \ref{lem: YZ aprx} that
\begin{equation*}
    \begin{split}
        & 3\mathbb{E}\left[\int_0^\infty e^{-2\lambda s} \Vert Z^n_s - Z^{n, T} \Vert^2 ds\right] + 3\mathbb{E}\left[\int_0^\infty e^{-2\lambda s} \Vert Z_s - Z^T \Vert^2 ds\right]\\
        \leq& 6\overline{M}(1 + T) e^{-2\lambda T} <\frac{2 \epsilon}{3}
    \end{split}
\end{equation*}
for any $n$. Now also on finite time horizon, the drivers $F^n$ and $F$ share a common Lipschitz constant $C$ in $z$, a common monotonicity constant $\lambda$ in $y$ and are all uniformly bounded by $M$. For these finite time horizon BSDE with monotonous drift that we have additionally assumed to be Lipschitz, we can apply \cite[Theorem 2.1]{HuPeng} to find $\mathbb{E}\left[\int_0^T \Vert Z^{n, T}_s - Z^T_s \Vert^2 ds\right] \rightarrow 0$. For big $n$, we thus have
$$\mathbb{E}\left[\int_0^\infty e^{-2\lambda s} \Vert Z^{n, T}_s - Z^T_s \Vert^2 ds\right] < \frac{\epsilon}{9}.$$
Together, this shows
$$\mathbb{E}\left[\int_0^\infty e^{-2\lambda s} \Vert Z^n_s - Z_s \Vert^2 ds\right] < \epsilon$$
and thus convergence to zero.\par
For the $Y^n$, let us also consider $Y^{n, T}$ and $Y^T$ as finite time horizon approximations as in Lemma \ref{lem: YZ aprx}. It is again possible to choose $T$ big enough such that $\vert Y^n_t - Y^{n, T}_t \vert < \frac{\epsilon}{3}$ and $\vert Y_t - Y^T_t \vert < \frac{\epsilon}{3}$. Further for such $T > 0$, \cite[Theorem 2.1]{HuPeng} again implies $\vert Y^{n, T}_t - Y^T_t \vert < \frac{\epsilon}{3}$ for sufficiently big $n$. Together, this also shows $Y^n_t \rightarrow Y_t$.
\end{proof}
\begin{remark}
    If we instead assume for any $y, z$ we have $dt \times \mathbb{P}$ a.e.\ that $F^n(t, \omega, y, z) \rightarrow F(t, \omega, y, z)$ we can prove the convergence result without additionally assuming Lipschitz properties in $y$ of the drivers using \cite[Theorem 1.3]{Pardoux98}.
\end{remark}
\subsection{Markovian properties}
Here, we will discuss the case where the randomness of $F$ only stems from a forward time homogeneous Markov SDE of the form
$$dX_s = b(X_s)ds + \sigma(X_s)dW_S$$
with a random intitial value $X_0 = \xi$. We assume that $b$ and $\sigma$ are chosen such that $X$ fulfills the strong Markov property (it e.g.\ suffices if the martingale problem is well posed). In this case, for the backward equation, we assume that the driver is given as $F(s, \omega, y, z) = G(X_s(\omega), y, z)$. Any randomness or time dependence thus can only enter through the Markov process $X$. We assume that even defined this way, $G$ is taken such that $F$ satisfies the previously found conditions for existence and uniqueness of the infinite horizon BSDE.\par
\begin{proposition}\label{prop: Mkv BSDE}
    Let $(Y, Z)$ be the unique solution for the BSDE described above. Then, there exist deterministic measurable functions $u:\mathbb{R}^d\rightarrow \mathbb{R}$, $v:\mathbb{R}^d \rightarrow \mathbb{R}^d$ independent of the initial distribution, such that for any $t \geq 0$, a.s. $Y_t = u(X_t)$ and $dt \times \mathbb{P}$ a.e.\ $Z_t = v(X_t)$.
\end{proposition}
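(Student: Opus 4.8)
The plan is to obtain the Markovian (``Feynman–Kac'') representation by the standard route of first constructing the solution as a limit of finite-horizon approximations, each of which admits such a representation, and then passing to the limit. First I would recall from Lemma \ref{lem: YZ aprx} that the infinite-horizon solution $(Y,Z)$ is the limit (in the sense: $Y^T_t \to Y_t$ a.s. and $\mathbb{E}[\int_0^\infty e^{-2\lambda s}\Vert Z^T_s - Z_s\Vert^2\,ds]\to 0$) of the solutions $(Y^T,Z^T)$ of the finite-horizon BSDEs \eqref{gnr fnt BSDE} on $[0,T]$ with driver $F(s,\omega,y,z)=G(X_s(\omega),y,z)$ and zero terminal condition. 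For each fixed $T$, the classical theory of Markovian BSDEs (e.g. via Picard iteration: each iterate is of the form $Y^{T,k}_t = u^{T,k}(t,X_t)$ because $X$ is a time-homogeneous strong Markov process and the driver depends on $\omega$ only through $X_s$, and the class of such ``randomness only through $X_t$'' processes is stable under the Picard map by the Markov property applied at time $t$) yields deterministic measurable functions $u^T:[0,T]\times\mathbb{R}^d\to\mathbb{R}$ and $v^T:[0,T]\times\mathbb{R}^d\to\mathbb{R}^d$, independent of the initial distribution, with $Y^T_t = u^T(t,X_t)$ a.s. and $Z^T_t = v^T(t,X_t)$ for $dt\times\mathbb{P}$-a.e. $(t,\omega)$.

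Next, using time-homogeneity, I would exploit that the law of the driver from time $t$ onward depends only on $X_t$, so that $u^T(t,x)$ in fact depends on $t$ and $T$ only through $T-t$; writing $u^T(t,x)=\tilde u_{T-t}(x)$ and similarly $v^T(t,x)=\tilde v_{T-t}(x)$, the a.s. bound $\vert Y_t - Y^T_t\vert \le \frac{M}{\lambda}e^{\lambda(t-T)}$ from Lemma \ref{lem: YZ aprx}, together with the fact that $X$ admits a transition density / the process is nondegenerate enough that null sets are detected, shows that for $\mathbb{P}$-a.e.\ value $x$ attained by $X_t$ the sequence $\tilde u_{T-t}(x)$ is Cauchy as $T\to\infty$. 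Define $u(x):=\lim_{r\to\infty}\tilde u_r(x)$ on the set where the limit exists (a measurable function, e.g. via $\limsup$) and arbitrarily elsewhere; then $u$ is deterministic, measurable, and independent of the initial distribution, and $Y_t = u(X_t)$ a.s.\ for each $t\ge 0$ by the convergence $Y^T_t\to Y_t$.

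For $Z$, I would argue similarly but in $L^2(dt\times\mathbb{P})$: since $\mathbb{E}[\int_0^\infty e^{-2\lambda s}\Vert Z^{T_n}_s - Z_s\Vert^2\,ds]\to 0$ along a sequence $T_n\to\infty$, passing to a further subsequence gives $Z^{T_n}_s(\omega)\to Z_s(\omega)$ for $dt\times\mathbb{P}$-a.e. $(s,\omega)$; and $Z^{T_n}_s = \tilde v_{T_n - s}(X_s)$. Thus on a set of full $dt\times\mathbb{P}$ measure, $\tilde v_{T_n-s}(X_s)$ converges, and $Z_s$ coincides $dt\times\mathbb{P}$-a.e.\ with $v(X_s)$ where $v(x):=\lim_n \tilde v_{T_n-s}(x)$ — but here one must check this limit depends on $x$ alone and not on $s$; this follows again from time-homogeneity (the conditional law of $(X_{s+\cdot})$ given $X_s=x$ is independent of $s$, so the finite-horizon $Z$-functions $\tilde v_r$ genuinely depend only on the remaining horizon $r$), so $v(x) = \lim_{r\to\infty}\tilde v_r(x)$ is well-defined a.e. and measurable, and $Z_t = v(X_t)$ for $dt\times\mathbb{P}$-a.e. $(t,\omega)$.

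The main obstacle I expect is the measure-theoretic bookkeeping in the $Z$-part: the finite-horizon identity $Z^T_t = v^T(t,X_t)$ holds only $dt\times\mathbb{P}$-a.e., and one must be careful that the null sets can be chosen consistently across the countable family $\{T_n\}$ and that the subsequential a.e.\ limit indeed factors through $X_t$ with a single deterministic $v$ independent of $t$ and of $\upsilon$; the clean way to handle this is to fix the reference initial law once (the bounds and the Markov structure are uniform in $\upsilon$, so the functions $u,v$ built for one $\upsilon$ serve for all), and to use that for a time-homogeneous Markov process the map $x\mapsto$ (law of the future) is the same at every time, which pins down $\tilde u_r,\tilde v_r$ as functions of the remaining horizon only. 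The construction of $u^T,v^T$ in the first step — which I would only sketch, citing the standard Markovian BSDE machinery adapted to our locally completed filtration as in the rest of the appendix — is routine once the Picard-iteration argument is set up.
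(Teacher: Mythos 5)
Your treatment of the $Y$-part is essentially sound and genuinely differs from the paper's route: you construct $u$ as the limit of the finite-horizon value functions $\tilde u_r$, relying on the uniform a.s.\ bound $\vert Y_t - Y^T_t\vert \le \frac{M}{\lambda}e^{\lambda(t-T)}$ from Lemma~\ref{lem: YZ aprx} to deduce that $\tilde u_r(x)$ is Cauchy uniformly in $x$. The paper instead avoids finite-horizon approximations altogether: it writes down the infinite-horizon BSDE started at time $t$ with a general $\mathcal{F}_t$-measurable initial value $\chi$, applies Doob--Dynkin to get $Y^{t,\chi}_t = u^t(\chi)$, and then uses uniqueness (in law) of the infinite-horizon BSDE to prove directly that $u^t$ does not depend on $t$. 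Both are valid; the paper's argument is shorter and does not need the approximation lemma, while yours makes the Feynman--Kac structure of the finite-horizon problems explicit.

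The $Z$-part, however, has a genuine gap at exactly the spot you flag and then wave away. You obtain, along a subsequence, $\tilde v_{T_n - s}(X_s(\omega)) \to Z_s(\omega)$ for $dt\times\mathbb{P}$-a.e.\ $(s,\omega)$, and want to conclude $Z_s = v(X_s)$ for a single deterministic $v(x) := \lim_{r\to\infty}\tilde v_r(x)$. But the two statements are not equivalent. The subsequence $T_n$ is fixed, so the remaining horizons $T_n - s$ form \emph{different} sequences tending to infinity for different $s$. If $(s_1,\omega_1)$ and $(s_2,\omega_2)$ both lie in the full-measure set and $X_{s_1}(\omega_1) = X_{s_2}(\omega_2) = x$, you know that $\tilde v_{T_n - s_1}(x)$ and $\tilde v_{T_n - s_2}(x)$ each converge, but to possibly different limits --- nothing in the $L^2$ estimate of Lemma~\ref{lem: YZ aprx} rules out that $\tilde v_r(x)$ oscillates without converging as $r\to\infty$, with different shifted subsequences settling at different values. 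Unlike the $Y$-case, there is no uniform pointwise bound on $Z^T_s - Z_s$ that would make $\tilde v_r(x)$ Cauchy for every (or even a.e.) $x$; the control is only in $L^2(dt\times\mathbb{P})$, and after the change of variables $r=T-s$ the resulting bound $\mathbb{E}\bigl[\int_0^T e^{2\lambda r}\Vert\tilde v_r(X_{T-r})-Z_{T-r}\Vert^2 dr\bigr]\le \overline M(1+T)$ does not decay. Time-homogeneity is already encoded in the notation $\tilde v_r$ and does not supply the missing Cauchy property. Absent further regularity on $G$ (which the paper does not assume), the approximation scheme cannot produce a single deterministic $v$.

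The paper circumvents this entirely for $Z$: it observes that $Y_t - Y_0 = u(X_t) - u(X_0)$ is a strongly additive continuous additive semimartingale functional of $X$, that $W_t - W_0$ is as well, and then invokes the structure theory of additive functionals of Markov processes (\c{C}inlar, Jacod, Protter, Sharpe) to write the quadratic covariation $\langle Y - Y_0,\, W^i - W^i_0\rangle_\cdot = \int_0^\cdot v^i(X_s)\,ds$ for deterministic measurable $v^i$, and finally Lebesgue differentiation to identify $Z_t = v(X_t)$ a.e. This is a structurally different tool and the reason it is used here is precisely the obstruction your approach runs into.
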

\begin{proof}
First, we are proving the property for $Y$. Consider any $t \geq 0$ and a $\mathcal{F}_t$ measurable $\mathbb{R}^d$ valued $\chi$. We write $X^{t, \chi}$ for the SDE that starts at $t$ with $X_t = \chi$. We can consider the filtration $\mathbb{F}^t = (\mathcal{F}^t_{s})_{s\geq 0}$ as the local completion of $(\sigma(\chi, W^t_{[0, s]}))_{s\geq 0}$ with $W^t_{\cdot} = W_{t + \cdot} - W_t$. Note that $W^t$ is a $\mathbb{F}^t$ Wiener process. Then, the infinite horizon BSDE
$$Y^{t, \chi}_s = Y^{t, \chi}_T + \int_s^T G(X^{t, \chi}_r, Y^{t, \chi}_r, Z^{t, \chi}_r)dr - \int_s^T Z^{t, \chi}_r dW^t_r$$
for any $t \leq s \leq T < \infty$ is still uniquely solvable. As $(Y^{t, \chi}, Z^{t, \chi})$ needs to be $\mathbb{F}^t$ adapted, by the Doob-Dynkin Lemma, there exists a deterministic measurable function $u^t:\mathbb{R}^d\rightarrow\mathbb{R}$ so that $Y^{t, \chi}_t = u^t(\chi)$.\par
Let us take $\chi = x$ for a fixed $x \in \mathbb{R}^d$ for now. Then, $Y^{t, x}_{t + \cdot}$ and $Y^{0, x}$ solve the same infinite horizon BSDE. In fact, as $(X^{t, x}_{t + \cdot}, W^t)$ has the same law as $(X^{0, x}, W)$, we can construct these two solutions on a common space. As we assumed pathwise uniqueness for the BSDE, this implies $u^t(x) = Y^{t, x}_t = Y^{0, x}_0 = u^0(x)$. Thus, the $u^t$ are independent of $t$, and we simply write $u$ instead.\par
Now, take $\chi$ as $X_t$ where $X$ is the SDE starting at $0$. As $X$ and $X^{t, X_t}$ are indistinguishable on $[t, \infty)$, both $(Y, Z)$ and $(Y^{t, X_t}, Z^{t, X_t})$ are solution to the same infinite horizon BSDE on $[t, \infty)$. By uniqueness of the solution to the infinite horizon BSDE, we must a.s. have $Y = Y^{t, X_t}$ on $[t, \infty)$, in particular $Y_t = Y^{t, X_t}_t = u(X_t)$.\par
For $Z$, let us first note that by above, $Y_t - Y_0 = u(X_t) - u(X_0)$ is a strongly additive continuous additive semimartingale functional of $X$. Further, by assumption, $W_t - W_0$ is a strongly additive continuous additive semimartingale as well, where we note that $W_0$ does not need to stay zero after a time shift. Thus, by \cite[Proposition 3.56, Theorem 6.27]{Cinlar80} using (iii) in convention \cite[Convention 3.6]{Cinlar80} on $Y_t - Y_0 \pm (W^i_t - W^i_0)$ for each $1 \leq i \leq d$, we can find deterministic measurable functions $v^i:\mathbb{R}^d \rightarrow \mathbb{R}$ such that the quadratic covariation can a.s. be written as $\langle Y_t - Y_0, W^i_t - W^i_0 \rangle = \int_0^\cdot Z^i_s ds = \int_0^\cdot v^i(X_s)ds$. Now, by Lebesgue's differentiation theorem, if we write $v = (v^i)_{1 \leq i \leq d}$, must have a.e.\ that $Z_t = v(X_t)$.
\end{proof}

\section{Set-valued maps}\label{apdx: setvm}
Here, we want to clarify the different continuity notions that we are working with. Specifically, we focus on upper hemicontinuity, which is a common notion in game theory and control problems. Throughout the  literature, there are different definitions which we will specify now. We will write set-valued maps $\phi$ from some space $\mathcal{X}$ to $\mathcal{Y}$ with the double arrow $\phi: \mathcal{X} \twoheadrightarrow \mathcal{Y}$.\par
Let us begin with the version as introduced in \cite{IDA} that we will distinguish from our other notion by calling it topologically upper hemicontinuous.
\begin{definition}
    A map $\phi: \mathcal{X} \twoheadrightarrow \mathcal{Y}$ between topological spaces is called topologically upper hemicontinuous if for any $x \in \mathcal{X}$ and any open set $O \subset \mathcal{Y}$ with $\phi(x) \subset O$ there exists an open set $U \subset \mathcal{X}$ containing $x$ such that for any $x' \in U$, we have $\phi(x') \subset O$.
\end{definition}
This notion is particularly useful in regard of selection or fixed point theorems.\par
In case that $\mathcal{X}$ and $\mathcal{Y}$ are metric, \cite{Carmona15} introduced a slightly different notion.
\begin{definition}
    We call $\phi: (\mathcal{X}, d_\mathcal{X}) \twoheadrightarrow (\mathcal{Y}, d_\mathcal{Y})$ metrically upper hemicontinuous if for all $x\in \mathcal{X}$ and $\epsilon > 0$, there exists $\delta > 0$ such that $\phi(B_x(\delta)) \subset B_{\phi(x)}(\epsilon)$ with the open balls taken with the respective metrics. 
\end{definition}
One can immediately see that $\phi$ being topologically upper hemicontinuous implies that it is metrically upper hemicontinuous as well, since $B_{\phi(x)}(\epsilon) = \cup_{y \in \phi(x)} B_y(\epsilon)$ is an open set containing $\phi(x)$. It is often more convenient to work with metrically hemicontinuous upper maps.\par 
Fortunately, in the case $\phi(x)$ is compact set-valued, the two notions are in fact equivalent. To prove the converse, let us consider some $\phi:\mathcal{X} \twoheadrightarrow \mathcal{Y}$ that is metrically upper hemicontinuous and compact set-valued. Fix $x\in \mathcal{X}$ and an arbitrary open set $\phi(x) \subset O \subset \mathcal{Y}$. Then, the function $\phi(x) \rightarrow \mathbb{R}, y \mapsto d(y, O^c) := \inf_{u \in O^c} d(y, u)$ is continuous and by construction strictly positive. As $\phi(x)$ is compact it has a strictly positive minimum $\epsilon$ so that $\phi(x) \subset B_{\phi(x)}(\epsilon) \subset O$. For upper hemicontinuous $\phi$ there must thus exist a $\delta$ such that $\phi(B_x(\delta)) \subset B_{\phi(x)}(\epsilon) \subset O$ proving that $\phi$ is topologically upper hemicontinuous as well. In particular, in case of compact set-valued maps, we do not distinguish between these two notions and only call them upper hemicontinuous.\par
We can also consider normal, singleton valued functions as set-valued functions. As a function between general topological spaces, any continuous singleton valued function is immediately topological upper hemicontinuous. Particularly in case it maps between metric spaces, it is directly metrically upper hemicontinuous as well. That these two notions coincide in this case can also be justified by the compactness of points.\par
We can also rewrite the definition of metrically upper hemicontinuous in an equivalent more convenient form.
\begin{proposition}\label{prop: metr uhc}
    $\phi:(\mathcal{X}, d_\mathcal{X}) \twoheadrightarrow (\mathcal{Y}, d_\mathcal{Y})$ is metrically upper hemicontinuous iff for any sequence $(x^n)_{n \geq 1} \subset \mathcal{X}$ that converges towards some $x \in \mathcal{X}$
    $$\sup_{y^n \in \phi(x^n)}\inf_{y \in \phi(x)} d_\mathcal{Y}(y^n, y) \rightarrow 0.$$
\end{proposition}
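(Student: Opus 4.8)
The claim is an equivalence between the $\epsilon$-$\delta$ formulation of metric upper hemicontinuity and the sequential formulation using $\sup_{y^n\in\phi(x^n)}\inf_{y\in\phi(x)}d_\mathcal{Y}(y^n,y)\to0$. Both directions are standard metric-space arguments, so I would simply lay them out carefully.

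For the forward implication, suppose $\phi$ is metrically upper hemicontinuous and let $x^n\to x$. Fix $\epsilon>0$; by definition there is $\delta>0$ with $\phi(B_x(\delta))\subset B_{\phi(x)}(\epsilon)$. Since $x^n\to x$, there is $N$ such that $x^n\in B_x(\delta)$ for all $n\ge N$, hence $\phi(x^n)\subset B_{\phi(x)}(\epsilon)$. By the description $B_{\phi(x)}(\epsilon)=\bigcup_{y\in\phi(x)}B_y(\epsilon)$, every $y^n\in\phi(x^n)$ satisfies $\inf_{y\in\phi(x)}d_\mathcal{Y}(y^n,y)<\epsilon$, so taking the supremum over $y^n\in\phi(x^n)$ gives $\sup_{y^n\in\phi(x^n)}\inf_{y\in\phi(x)}d_\mathcal{Y}(y^n,y)\le\epsilon$ for all $n\ge N$. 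Since $\epsilon$ was arbitrary, the sequential condition holds.

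For the converse, I would argue by contraposition: if $\phi$ is not metrically upper hemicontinuous, there is $x\in\mathcal{X}$ and $\epsilon>0$ such that for every $\delta>0$ the inclusion $\phi(B_x(\delta))\subset B_{\phi(x)}(\epsilon)$ fails. Applying this with $\delta=1/n$ produces a point $x^n\in B_x(1/n)$ and a point $y^n\in\phi(x^n)$ with $y^n\notin B_{\phi(x)}(\epsilon)$, i.e. $\inf_{y\in\phi(x)}d_\mathcal{Y}(y^n,y)\ge\epsilon$. Then $x^n\to x$, but $\sup_{y^n\in\phi(x^n)}\inf_{y\in\phi(x)}d_\mathcal{Y}(y^n,y)\ge\epsilon$ for all $n$, so the quantity does not tend to zero, contradicting the sequential condition. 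Hence the sequential condition implies metric upper hemicontinuity.

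The argument is entirely routine; the only point requiring a modicum of care is the use of the axiom of (countable) choice to extract the sequences $(x^n)$ and $(y^n)$ in the converse direction, and the observation that $B_{\phi(x)}(\epsilon)=\bigcup_{y\in\phi(x)}B_y(\epsilon)$ so that membership in $B_{\phi(x)}(\epsilon)$ is exactly the statement $\inf_{y\in\phi(x)}d_\mathcal{Y}(\cdot,y)<\epsilon$. There is no real obstacle here; this is a bookkeeping lemma recording the translation between the two notions used interchangeably in the rest of the paper.
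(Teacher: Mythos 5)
Your proof is correct and follows essentially the same route as the paper's: the forward direction is the direct $\epsilon$-$\delta$ argument using $B_{\phi(x)}(\epsilon)=\bigcup_{y\in\phi(x)}B_y(\epsilon)$, and the converse is the same contrapositive argument extracting $x^n\in B_x(1/n)$ and $y^n\in\phi(x^n)\setminus B_{\phi(x)}(\epsilon)$.
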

\begin{proof}
    We consider any $(x^n)_{n \geq 1}$ that converge to some $x$. First, assume that $\phi$ is metrically upper hemicontinuous as in our first definition. Let $\epsilon > 0$ be arbitrary, then there is $\delta > 0$ such that $\phi(B_x(\delta)) \subset B_{\phi(x)}(\epsilon)$. As $(x^n)_{n \geq 1}$ converge to $x$, there is $N$ so that for $n \geq N$, we have $x^n \in B_x(\delta)$ and thus for any $y^n \in \phi(x^n)$ also $y^n \in B_{\phi(x)}(\epsilon)$ which by definition gives $\inf_{y\in \phi(x)} d_\mathcal{Y}(y^n, y) < \epsilon$. As this holds for any $y^n \in \phi(x^n)$ and $\epsilon > 0$ this shows $\sup_{y^n \in \phi(x^n)}\inf_{y \in \phi(x)} d_\mathcal{Y}(y^n, y) \rightarrow 0$.\par
    For the reverse implication we show the contrapositive. Assume that $\phi$ is not metrically upper hemicontinuous. Then, there exists $\epsilon > 0$ so that for any $\delta > 0$ the set $\phi(B_x(\delta)) \setminus B_{\phi(x)}(\epsilon)$ is non empty. Thus, for any $n \geq 1$, we can choose $y^n \in \phi(B_x(\frac{1}{n})) \setminus B_{\phi(x)}(\epsilon)$. By construction, for any $n$ there is $x^n \in B_x(\frac{1}{n})$ with $y^n \in \phi(x^n)$ and in particular, the $x^n$ converge towards $x$. Further, we have $\inf_{y \in \phi(x)} d_\mathcal{Y}(y^n, y) \geq \epsilon$ and hence $\sup_{y^n \in \phi(x^n)} \inf_{y\in \phi(x)} d_\mathcal{Y}(y^n, y) \geq \epsilon > 0$.
\end{proof}
Let us prove two additional properties that will be useful later.
\begin{lemma}\label{lem: uhc comp}
    Let $\phi: (\mathcal{X}, d_\mathcal{X}) \twoheadrightarrow (\mathcal{Y}, d_\mathcal{Y})$ metrically upper hemicontinuous and $\psi:(\mathcal{Y}, d_\mathcal{Y}) \rightarrow (Z, d_Z)$ be uniformly continuous. Then, the composition $\psi\circ \phi: \mathcal{X} \twoheadrightarrow Z, x \mapsto \lbrace \psi(y) \vert y \in \phi(x) \rbrace$ is a metrically upper hemicontinuous set-valued map.
\end{lemma}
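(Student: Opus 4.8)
Lemma~\ref{lem: uhc comp} states that if $\phi: (\mathcal{X}, d_\mathcal{X}) \twoheadrightarrow (\mathcal{Y}, d_\mathcal{Y})$ is metrically upper hemicontinuous and $\psi:(\mathcal{Y}, d_\mathcal{Y}) \to (Z, d_Z)$ is uniformly continuous, then $\psi \circ \phi$ is metrically upper hemicontinuous. The plan is to verify the sequential criterion of Proposition~\ref{prop: metr uhc} directly. So I would take an arbitrary sequence $(x^n)_{n\geq 1} \subset \mathcal{X}$ with $x^n \to x$ in $\mathcal{X}$, and aim to show that $\sup_{z^n \in (\psi\circ\phi)(x^n)} \inf_{z \in (\psi\circ\phi)(x)} d_Z(z^n, z) \to 0$.

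First I would unpack what an element $z^n \in (\psi\circ\phi)(x^n)$ is: by definition $z^n = \psi(y^n)$ for some $y^n \in \phi(x^n)$. For the target, given any $y \in \phi(x)$ we have $\psi(y) \in (\psi\circ\phi)(x)$, so $\inf_{z \in (\psi\circ\phi)(x)} d_Z(z^n, z) \leq \inf_{y \in \phi(x)} d_Z(\psi(y^n), \psi(y))$. The key step is then to control $d_Z(\psi(y^n), \psi(y))$ by the uniform modulus of continuity of $\psi$ applied to $d_\mathcal{Y}(y^n, y)$. Precisely, fix $\varepsilon > 0$; uniform continuity of $\psi$ gives a $\delta > 0$ such that $d_\mathcal{Y}(y, y') < \delta$ implies $d_Z(\psi(y), \psi(y')) < \varepsilon$. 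By metric upper hemicontinuity of $\phi$ and Proposition~\ref{prop: metr uhc}, there is $N$ such that for $n \geq N$ we have $\sup_{y^n \in \phi(x^n)} \inf_{y \in \phi(x)} d_\mathcal{Y}(y^n, y) < \delta$. Hence for $n \geq N$ and any $y^n \in \phi(x^n)$, one can pick $y \in \phi(x)$ with $d_\mathcal{Y}(y^n, y) < \delta$, so that $d_Z(\psi(y^n), \psi(y)) < \varepsilon$, which gives $\inf_{z \in (\psi\circ\phi)(x)} d_Z(\psi(y^n), z) < \varepsilon$. Taking the supremum over $y^n \in \phi(x^n)$ and then letting $\varepsilon \to 0$ yields the claim.

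There is essentially no serious obstacle here — the only point requiring mild care is the order of quantifiers: one must note that the $\delta$ from uniform continuity does not depend on the base point $y \in \phi(x)$, which is exactly why \emph{uniform} (rather than mere) continuity of $\psi$ is needed; if $\psi$ were only continuous, the $\delta$ would vary with $y$ and the argument would break since the infimum over $\phi(x)$ could approach a point where $\psi$ has poor modulus. A small additional remark is that if $\phi(x) = \emptyset$ the infimum is over the empty set, but in all our applications the values are nonempty (and one may simply adopt the usual convention), so this is not a genuine issue. I would therefore present the proof as a short direct verification via the sequential characterisation.

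\begin{proof}[Lemma \ref{lem: uhc comp}]
    We verify the sequential criterion of Proposition \ref{prop: metr uhc}. Let $(x^n)_{n \geq 1} \subset \mathcal{X}$ converge to some $x \in \mathcal{X}$, and fix $\varepsilon > 0$. Since $\psi$ is uniformly continuous, there is $\delta > 0$ such that $d_\mathcal{Y}(y, y') < \delta$ implies $d_Z(\psi(y), \psi(y')) < \varepsilon$ for all $y, y' \in \mathcal{Y}$. Since $\phi$ is metrically upper hemicontinuous, by Proposition \ref{prop: metr uhc} there is $N \geq 1$ such that for all $n \geq N$,
    $$\sup_{y^n \in \phi(x^n)} \inf_{y \in \phi(x)} d_\mathcal{Y}(y^n, y) < \delta.$$
    Now let $n \geq N$ and let $z^n \in (\psi \circ \phi)(x^n)$ be arbitrary, so that $z^n = \psi(y^n)$ for some $y^n \in \phi(x^n)$. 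By the displayed inequality, there exists $y \in \phi(x)$ with $d_\mathcal{Y}(y^n, y) < \delta$, and hence $d_Z(z^n, \psi(y)) = d_Z(\psi(y^n), \psi(y)) < \varepsilon$. Since $\psi(y) \in (\psi \circ \phi)(x)$, this gives $\inf_{z \in (\psi\circ\phi)(x)} d_Z(z^n, z) < \varepsilon$. As $z^n \in (\psi\circ\phi)(x^n)$ was arbitrary, we conclude $\sup_{z^n \in (\psi\circ\phi)(x^n)} \inf_{z \in (\psi\circ\phi)(x)} d_Z(z^n, z) \leq \varepsilon$ for all $n \geq N$. Since $\varepsilon > 0$ was arbitrary, this supremum tends to zero, and by Proposition \ref{prop: metr uhc}, $\psi \circ \phi$ is metrically upper hemicontinuous.
\end{proof}
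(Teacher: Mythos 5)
Your proof is correct and follows essentially the same argument as the paper's: invoke the sequential characterisation from Proposition \ref{prop: metr uhc}, use the uniform modulus $\delta$ from uniform continuity of $\psi$ (which, as you rightly note, must not depend on the base point $y \in \phi(x)$), and apply metric upper hemicontinuity of $\phi$ to find $N$ so that for $n \geq N$ each $y^n \in \phi(x^n)$ has a $\delta$-close point in $\phi(x)$. The only cosmetic difference is that the paper buffers with $\epsilon/2$ to obtain a strict inequality, whereas you conclude $\leq \epsilon$ and then let $\epsilon \to 0$, which is equally valid.
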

\begin{proof}
Consider any $x^n \rightarrow x$ in $\mathcal{X}$. Let $\epsilon >0$ be arbitrary. Since $\psi$ is uniformly continuous, there exits $\delta > 0$ such that for any $y, y' \in \mathcal{Y}$ with $d_\mathcal{Y}(y, y') < \delta$, we have $d_Z(\psi(y), \psi(y')) < \frac{\epsilon}{2}$. Now by metric upper hemicontinuity, we can find $N \geq 1$ such that for any $n \geq N$, we have 
$$\sup_{y^n \in \phi(x^n)} \inf_{y \in \phi(x)} d_\mathcal{Y}(y^n, y) < \delta.$$
Let us consider an $z^n \in \psi(\phi(x^n))$. For such, there exists $y^n \in  \phi(x^n)$ with $z^n = \psi(y^n)$. By our choice of $N$, for $n \geq N$, there must exists $\hat{y}^n \in \phi(x)$ with $d_\mathcal{Y}(y^n, y) < \delta$. By uniform continuity, this implies $d_Z(z^n, \psi(\hat{y}^n)) = d_Z(\psi(y^n), \psi(\hat{y}^n)) < \frac{\epsilon}{2}$. In particular, since $\psi(\hat{y}^n) \in \psi(\phi(x))$, this implies
$$\sup_{z^n \in \psi(\phi(x^n))} \inf_{z \in \psi(\phi(x))} d_Z(z^n, z) \leq \frac{\epsilon}{2} < \epsilon.$$
As $\epsilon$ was arbitrary, this proves metric upper hemicontinuity.
\end{proof}
Recall that unless specified otherwise, we metrize the product of metric spaces by the maximum metric.
\begin{lemma}\label{lem: uhc prod}
    Let $\phi: (\mathcal{X}, d_\mathcal{X}) \twoheadrightarrow (\mathcal{Y}, d_\mathcal{Y})$ and $\phi': (\mathcal{X}, d_\mathcal{X}) \twoheadrightarrow (\mathcal{Y}', d_{\mathcal{Y}'})$ be two metrically upper hemicontinuous maps. Then, $(\phi, \phi'): (\mathcal{X}, d_\mathcal{X}) \mapsto (\mathcal{Y} \times \mathcal{Y}', d_{\mathcal{Y}, \mathcal{Y}'}), x \mapsto \phi(x) \times \phi'(x)$ is again metrically upper hemicontinuous.
\end{lemma}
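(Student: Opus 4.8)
The statement to prove is Lemma \ref{lem: uhc prod}: if $\phi$ and $\phi'$ are metrically upper hemicontinuous, then so is the product map $(\phi,\phi')$ with the maximum metric on $\mathcal{Y}\times\mathcal{Y}'$.

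My plan is to work directly from the sequential characterization in Proposition \ref{prop: metr uhc}, since that is the cleanest route and the lemma is elementary. First I would fix an arbitrary sequence $x^n \to x$ in $\mathcal{X}$ and a point $(y^n, z^n) \in \phi(x^n)\times\phi'(x^n)$; the goal is to bound $\inf_{(y,z)\in\phi(x)\times\phi'(x)} d_{\mathcal{Y},\mathcal{Y}'}((y^n,z^n),(y,z))$ and show the supremum of this over all such choices tends to $0$. The key observation is that the infimum over the product set factorizes: since $d_{\mathcal{Y},\mathcal{Y}'}((y^n,z^n),(y,z)) = \max(d_\mathcal{Y}(y^n,y), d_{\mathcal{Y}'}(z^n,z))$, and $y,z$ range independently over $\phi(x)$ and $\phi'(x)$, we get
\[
\inf_{(y,z)} \max(d_\mathcal{Y}(y^n,y), d_{\mathcal{Y}'}(z^n,z)) = \max\Bigl(\inf_{y\in\phi(x)} d_\mathcal{Y}(y^n,y),\ \inf_{z\in\phi'(x)} d_{\mathcal{Y}'}(z^n,z)\Bigr).
\]
Taking the supremum over $(y^n,z^n)\in\phi(x^n)\times\phi'(x^n)$ and using that $\sup\max \le \max(\sup,\sup)$ (one can bound each coordinate's contribution separately), this is at most
\[
\max\Bigl(\sup_{y^n\in\phi(x^n)}\inf_{y\in\phi(x)} d_\mathcal{Y}(y^n,y),\ \sup_{z^n\in\phi'(x^n)}\inf_{z\in\phi'(x)} d_{\mathcal{Y}'}(z^n,z)\Bigr).
\]
By the metric upper hemicontinuity of $\phi$ and $\phi'$ together with Proposition \ref{prop: metr uhc}, both terms inside the $\max$ converge to $0$ as $n\to\infty$, hence so does the maximum, and therefore so does the original supremum. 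Invoking the reverse direction of Proposition \ref{prop: metr uhc} then gives that $(\phi,\phi')$ is metrically upper hemicontinuous.

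There is no real obstacle here — the only point requiring a line of care is the factorization of the infimum over a product set under the max-metric, and the interchange of $\sup$ over the product with the coordinatewise suprema; both are straightforward once one writes $d_{\mathcal{Y},\mathcal{Y}'}$ out explicitly. One could equally give a direct $\epsilon$--$\delta$ argument: given $\epsilon>0$, pick $\delta,\delta'$ witnessing upper hemicontinuity of $\phi,\phi'$ at $x$, set $\tilde\delta=\min(\delta,\delta')$, and observe that $(\phi,\phi')(B_x(\tilde\delta)) \subset B_{\phi(x)}(\epsilon)\times B_{\phi'(x)}(\epsilon) = B_{\phi(x)\times\phi'(x)}(\epsilon)$ where the last equality is again the max-metric identity. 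I would present whichever is shorter; the sequential version keeps it to a few lines.
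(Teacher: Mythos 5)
Your proof is correct and takes essentially the same route as the paper: both invoke the sequential characterization from Proposition \ref{prop: metr uhc} and exploit that the maximum metric lets the two coordinates be controlled independently. Your factorization of the infimum and supremum over the product set simply spells out the detail that the paper leaves implicit in the phrase "as we are working with the maximum metric."
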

\begin{proof}
    Consider any $x^n$ converging towards some $x$ in $\mathcal{X}$. For any $\epsilon > 0$, there must exist a common $N$ such that $\sup_{y^n \in \phi(x^n)} \inf_{y \in \phi(x)} d_\mathcal{Y}(y^n, y) < \epsilon$ and $\sup_{y^n \in \phi'(x^n)} \inf_{y \in \phi'(x)} d_\mathcal{Y}(y^n, y) < \epsilon$. As we are working with the maximum metric on $\mathcal{Y} \times \mathcal{Y}'$, this shows metric upper hemicontinuity.
\end{proof}

\bibliographystyle{plain}
\bibliography{refs.bib}
\end{document}